\documentclass[10pt]{article}

\usepackage[dvips]{graphicx}
\usepackage{cite}
\usepackage{amsmath,amssymb,subfigure,epsfig,bbm,stmaryrd,MnSymbol,mathrsfs,url}
\usepackage{pstricks}
\usepackage{subfigure}
\graphicspath{{../figures/}} \DeclareGraphicsExtensions{.eps}
\usepackage{amstext}
\usepackage{array,xcolor}
\usepackage{color,soul}
\usepackage{hyperref}
\hoffset=-.5in \textwidth=430pt

\DeclareMathOperator{\sign}{sign}
\DeclareMathAlphabet{\mathpzc}{OT1}{pzc}{m}{it}
\DeclareFontFamily{OT1}{pzc}{}
\DeclareFontShape{OT1}{pzc}{m}{it}{<-> s * [1.2] pzcmi7t}{}
\DeclareMathAlphabet{\mathpzc}{OT1}{pzc}{m}{it}
\DeclareMathAlphabet{\mathbbit}{U}{bbm}{m}{sl}

\setcounter{tocdepth}{5}

\usepackage{algorithm,mdframed}
\usepackage{algorithmic}

\usepackage[percent]{overpic}
\usepackage{amsmath,mathtools}
\usepackage[all,cmtip]{xy}
\usepackage{chemfig}

\newcounter{example}

\newtheorem{theorem}{Theorem}[section]
\newtheorem{lemma}[theorem]{Lemma}
\newtheorem{proposition}[theorem]{Proposition}

\newtheorem{definition}[theorem]{Definition}

\newenvironment{proof}[1][\it Proof:]{\begin{trivlist}
\item[\hskip \labelsep {\bfseries #1}]}{\qed \end{trivlist}}

\newcommand{\qed}{\nobreak \ifvmode \relax \else
      \ifdim\lastskip<1.5em \hskip-\lastskip
      \hskip1.5em plus0em minus0.5em \fi \nobreak
      \vrule height0.75em width0.5em depth0.25em\fi}

\newcommand{\supp}{\mbox{supp}}
\newcommand{\rank}{\mbox{rank}}

\newcommand{\bdelta}{\boldsymbol{\delta}}

\newcommand{\balpha}{\boldsymbol{\alpha}}

\newcommand{\Ip}{\mathcal{I}_\oplus}
\newcommand{\In}{\mathcal{I}_\ominus}

\newcommand{\Kp}{\mathcal{K}_\oplus}
\newcommand{\Kn}{\mathcal{K}_\ominus}
\newcommand{\Np}{\mathcal{N}_\oplus}
\newcommand{\Nn}{\mathcal{N}_\ominus}

\newcommand{\Beta}{{\boldsymbol{\beta}}}

\newcommand{\Blambda}{{\boldsymbol{\Lambda}}}
\newcommand{\Bz}{{\boldsymbol{z}}}
\newcommand{\By}{{\boldsymbol{y}}}
\newcommand{\Bh}{{\boldsymbol{h}}}

\newcommand*\hexbrace[2]{%
  \overset{#2}{\overbrace{\rule{#1}{0pt}}}}

\usepackage{mathtools,amssymb,graphicx}
\newcommand*{\seq}{\mathrel{\text{\raisebox{.25ex}{\rotatebox[origin=c]{180}{$\cong$}}}}}

\newcommand*{\nseq}{\mathrel{\text{\raisebox{.25ex}{\rotatebox[origin=c]{180}{$\ncong$}}}}}

\usepackage[T1]{fontenc}
\usepackage[utf8]{inputenc}
\usepackage[font=small,labelfont=bf,tableposition=top]{caption}

\usepackage{easybmat}
\usepackage{multirow,bigdelim}

\usepackage{makeidx}
\makeindex

\usepackage{multirow,bigdelim}
\usepackage{blkarray}

\begin{document}
\title{\vspace{-2cm}\bf{Convex Cardinal Shape Composition}}   

\author{Alireza Aghasi and Justin Romberg\thanks{School of Electrical and Computer Engineering, Georgia Institute of Technology, Atlanta, GA. Emails: {\tt aaghasi@ece.gatech.edu} and {\tt jrom@ece.gatech.edu}.}}

\date{}    

\maketitle

\begin{abstract}
We propose a new shape-based modeling technique for applications in imaging problems. Given a collection of shape priors (a shape dictionary), we define our problem as choosing the right dictionary elements and geometrically composing them through basic set operations to characterize desired regions in an image. This is a combinatorial problem solving which requires an exhaustive search among a large number of possibilities. We propose a convex relaxation to the problem to make it computationally tractable. We take some major steps towards the analysis of the proposed convex program and characterizing its minimizers. Applications vary from shape-based characterization, object tracking, optical character recognition, and shape recovery in occlusion, to other disciplines such as the geometric packing problem.

\end{abstract}

{\bf Keywords:} Image Segmentation, Chan-Vese Model, Mumford–Shah Functional, Shape-Based Modeling, Compressive Sensing, Sparse Recovery, Optical Character Recognition, Geometric Packing

\section{Introduction}
\label{sec:intro}
In many machine vision applications, the main objective is to identify and characterize regions of interest in an image. For instance, in image segmentation, the main goal is to label and aggregate pixels that are visually or statistically related. Specifically, in the binary case \cite{chan2001active, pal1993review}, a given image $D$ is partitioned into disjoint regions $\Sigma$ and $D\setminus \Sigma$, so that ``similar'' pixels lie on the same segments. In object tracking, objects of interest are identified and traced in consecutive frames of a video stream. Optical character recognition (OCR) is another example, where the characters inside an image need to be identified and correctly labeled to model a natural perception.


It often happens that the geometry of interest is  composed of simpler building-blocks. Moreover, if we have a fixed collection of simple prototype shapes, being composed of few prototypes may be a way of regularizing the problem.

Some interesting imaging problems can be viewed as characterization problems, where the target geometry can be approximated as a composition of known prototype shapes. For instance, in OCR, a word may be considered as the union of distinct or overlapping characters. Characterizing the inclusion in the image using alphabetic characters as the prototype shapes may allow us to identify the constituting letters. Another example is tracking multiple known objects across video frames, where they may overlap or become occluded by simple optical obstacles. In both examples, applying basic set operations among a number of prototype shapes allows us to express the target geometry.


In the past few decades considerable effort has been devoted to variational models and level set type methods linked to the image segmentation problem \cite{chan2003variational, chan2001active, osher2001level}. Specifically in the context of using shape priors to perform the segmentation, the earliest work is reported by Leventon \emph{et al.} \cite{leventon2000statistical}. The main idea is to setup the problem as an active contour evolution using a shape model prior. The parametric shape model is built by applying principal component analysis (PCA) on the signed distance maps of the training shapes. The work was later improved by performing a direct optimization within the subspace spanned by the principal components \cite{tsai2001curve, tsai2003shape}. Other frameworks such as
direct application of the shape constraints to the zero level of the embedding function \cite{chen2002using}, and simultaneously imposing shape priors to multiple objects in the image \cite{cremers2006multiphase} were considered to improve the segmentation efficiency. However, despite their widespread use, the majority of the level set-based techniques are cast as non-convex optimizations, where global solutions are often inaccessible. While the use of convex models for the image segmentation has been considered in some of the more recent works (e.g., see \cite{chan2006algorithms, brown2012completely, chambolle2012convex}), incorporating prior shape information into a convex formulation has remained a challenge \cite{song2013optimal}. This paper will present a convex method to incorporate the geometry of prior shapes into the segmentation framework.

In the context of binary image segmentation, consider a fixed collection of prototype shapes (a shape dictionary), and a given composition rule (such as the union operation). The segmentation problem of interest is determining a partitioner $\Sigma$ that is representable by the composition of some elements in the dictionary. To  promote geometrically simpler outcomes, the number of constituting elements in $\Sigma$ may be limited to a fixed level.

Despite its intuitive nature, the aforementioned problem is combinatorial, and its solution requires an exhaustive search among a large number of possibilities. Focusing on a composition rule that allows both component inclusion and exclusion (through the set union and set difference operations), in this paper we will study computationally tractable techniques that follow similar performance patterns as the combinatorial problem. Our main strategy is to provide a convex relaxation to the original problem.
-eps-converted-to.pdf
\begin{figure}[t!]%
\centering
\subfigure[][]{\includegraphics[width=84mm]{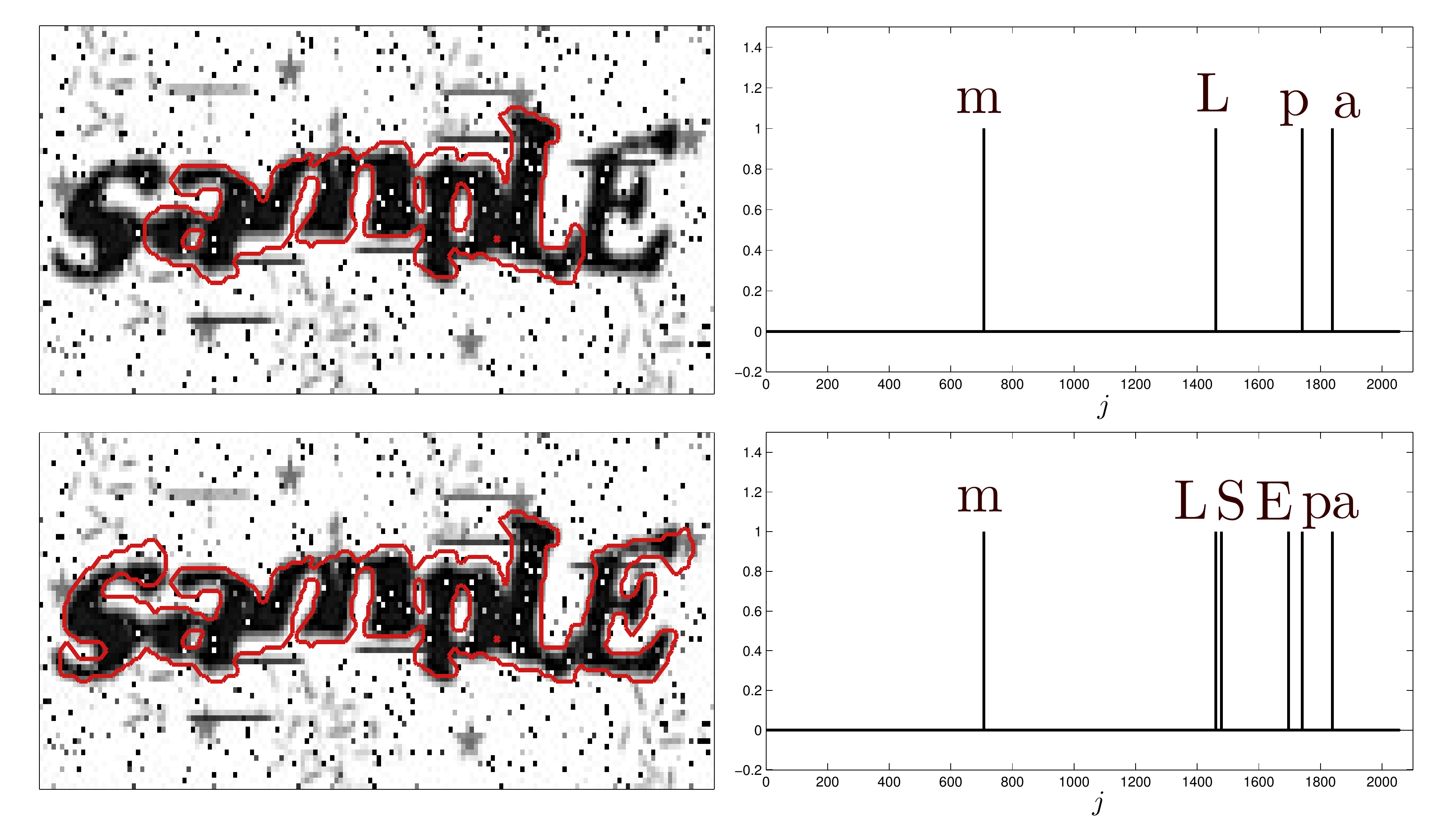}}
\subfigure[][]{\includegraphics[width=41mm]{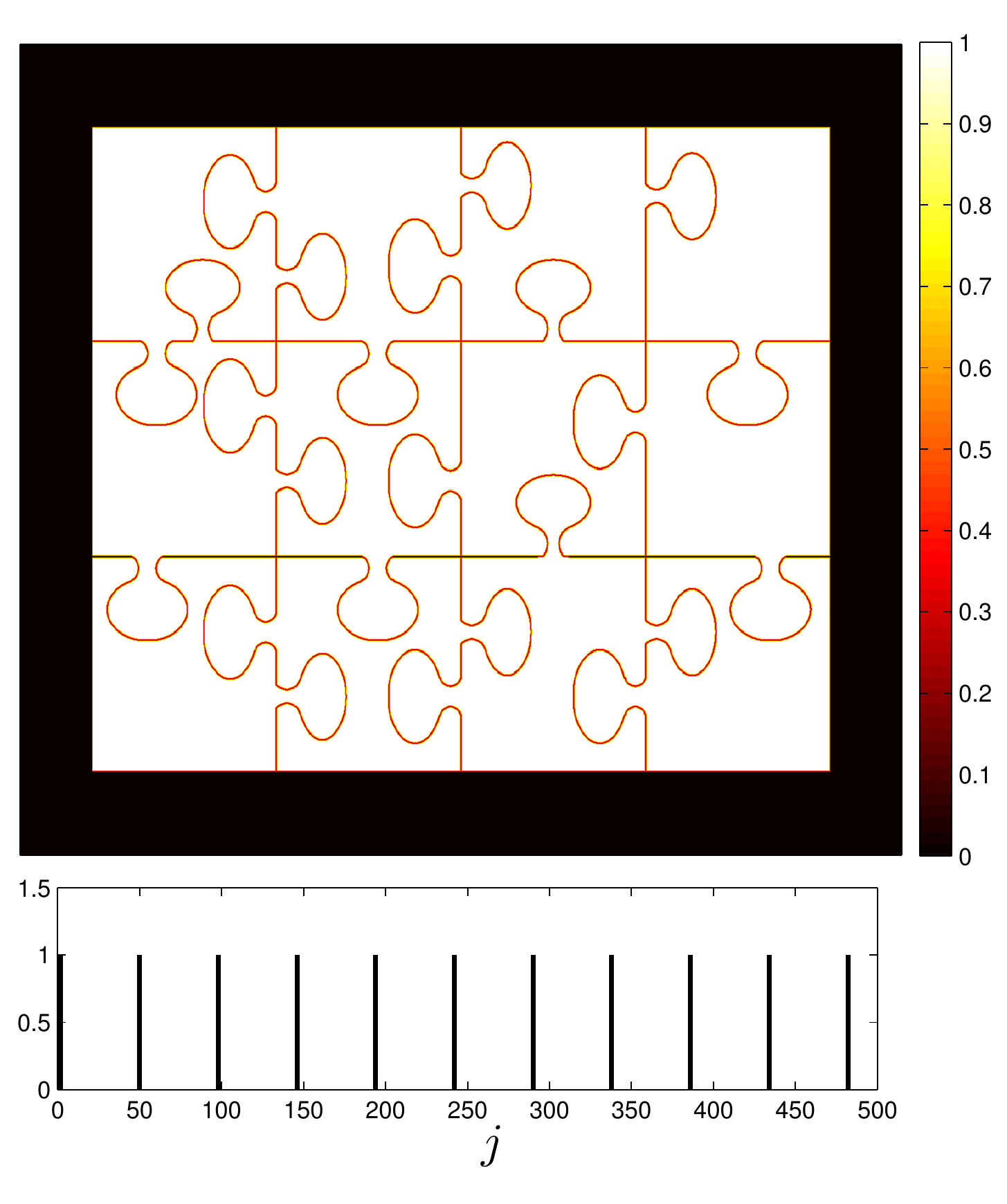}}
\caption{(a) An image segmentation problem with a dictionary of characters as the shape priors; along with the segmentation, the constituting characters are also identified; (b) Solving a puzzle by applying the proposed convex shape composition algorithm}%
\label{fig1intro}%
\end{figure}

Before presenting the technical details of the segmentation model and the proposed convex relaxation, we would like to motivate the reader by presenting some related results. Figure \ref{fig1intro}(a) presents a challenging OCR problem where aside from the clutter present in the image, the characters are rotated and overlapped. Using lower and upper-case letters distributed throughout the imaging domain as the prototype shapes, we have shown the outcome of the segmentation when the number of constituting elements are limited to 4 (the top panel) and 6 (the bottom panel). The stem plots next to each figure report the active elements in the segmentation, from which the underlying letters are identified. Among the most favorable features of our convex model are the simple formulation and the convenience in tuning the free parameter related to the number of active shapes (which often sweeps integer values).

Figure \ref{fig1intro}(b) illustrates another binary segmentation problem, where $\Sigma$ is simply a rectangular puzzle pad. The prototype shapes used in the dictionary are rotated and displaced elements of a puzzle. When the number of constituting elements is limited to 12, the optimal partitioner corresponds to the puzzle solution, which matches the outcome of our algorithm. Details for both examples are available in Section \ref{sec5}.

After this quick overview, we will spend the remainder of this section to a more technical presentation of the problem, specifically focusing on the binary image segmentation task as a variational problem \cite{chan2003variational}.

%
%
%
%
%
%
%
%

\subsection{Shape Composition and Binary Image Segmentation}
For an imaging domain $D\subset\mathbb{R}^d$ with pixel values $u(x)$, $x\in D$, a binary segmentation corresponds to partitioning $D$ into two disjoint regions $\Sigma$ and $D\setminus \Sigma$, where each region encompasses similar pixels. A well-known variational model is determining $\Sigma$ (and accordingly $D\setminus \Sigma$) via the minimization
\begin{equation}\label{eq1}
\Sigma^*=\operatorname*{arg\,min}_{\Sigma}\;\;\gamma(\Sigma)+\int_{\Sigma}\Pi_{in}(x)\;\mbox{d}x + \int_{D\setminus \Sigma}\Pi_{ex}(x)\;\mbox{d}x,
\end{equation}
where $\gamma(\Sigma)$ is a regularization term promoting a desired structure, and $\Pi_{in}(.)\geq 0$ and $\Pi_{ex}(.) \geq 0$ are some image-dependent inhomogeneity measures.

A widely-used measure is the one proposed by Chan and Vese \cite{chan2001active}, which takes $\Pi_{in}(x)=(u(x)-\tilde u_{in})^2$ and $\Pi_{ex}(x)=(u(x)-\tilde u_{ex})^2$, for scalar values $\tilde u_{in}$ and $\tilde u_{ex}$. These scalars are chosen to be representatives of the average pixel value within each region. The resulting partitions $\Sigma^*$ and $D\setminus \Sigma^*$ are expected to aggregate pixels that take values around $\tilde u_{in}$ and $\tilde u_{ex}$, respectively. The Chan-Vese model is in fact a particular case of the Mumford-Shah functional, used as a criterion of optimality for segmenting an image \cite{mumford1989optimal}.

In a more general but closely related framework \cite{cremers2007review}, the inhomogeneity measures are modeled as the log-likelihood function
\[\Pi_r(x) = - \log  \mathpzc{p}\big(\upsilon(x)|\;\theta_r\big) \qquad r:in,ex,
\]
where $\upsilon(x)$ is the similarity feature of interest (such as intensity, texture, etc) and $\theta_r$ parameterizes the probability density function $\mathpzc{p}(.)$. Regardless of the approach taken, throughout the paper we keep the notation general and only assume
\[\forall x\in D: \qquad \Pi_r(x)\geq 0, \qquad r:in,ex.
\]

When $\Pi_{in}$ and $\Pi_{ex}$ are fixed and known a priori,
\[\int_{D\setminus \Sigma}\Pi_{ex}(x)\;\mbox{d}x = \int_{D}\Pi_{ex}(x)\;\mbox{d}x - \int_{ \Sigma}\Pi_{ex}(x)\;\mbox{d}x,
\]
and since the first term on the right-hand side is constant with respect to $\Sigma$, an equivalent formulation of (\ref{eq1}) is
\begin{equation}\label{eq2}
\Sigma^*=\operatorname*{arg\,min}_{\Sigma}\;\;\gamma(\Sigma)+\int_{\Sigma}\big(\Pi_{in}(x) - \Pi_{ex}(x)\big)\;\mbox{d}x.
\end{equation}

In the majority of the techniques developed to address the segmentation problem (\ref{eq1}), the algorithms alternate between minimizing the cost and updating the inhomogeneity measures $\Pi_{in}$ and $\Pi_{ex}$ \cite{chan2001active, cremers2007review}. We will take $\Pi_{in}$ and $\Pi_{ex}$ to be fixed and known, as our focus will be on solving the variational problem (\ref{eq1}). More details about the choice of the inhomogeneity measures will be provided in the simulations section.

When the regularization term is included in the cost, additional structure inherited by the nature of $\gamma$ affects $\Sigma^*$. Well-known regularizations used in the context of shape recovery often promote boundary smoothness and compactness on the resulting partitioner \cite{chan2001active, osher2003geometric}.

As mentioned earlier, we consider a different way of regularizing the segmentation problem, where $\Sigma$ inherits its geometric features from a set of prototype shapes. We consider $\Sigma$ to be the result of a set-algebraic ``composition rule'' on a number of prototype shapes.

Given a dictionary of shapes $\mathfrak{D}=\{\mathcal{S}_1, \mathcal{S}_2, \cdots ,\mathcal{S}_{n_s}\}$, a shape composition rule, $\mathpzc{R}$, is a set algebraic expression (formed by basic operations, such as the union, intersection and set difference), which imposes a certain model for the objects sought. A natural choice for $\mathpzc{R}$, also used in \cite{aghasi2013sparse}, is
\begin{equation}\label{eq3}
\mathpzc{R}_{\;\Ip,\In}\triangleq\big(\bigcup_{j\in \Ip}\mathcal{S}_j\big) \big\backslash \big(\bigcup_{j\in \In}\mathcal{S}_j\big),
\end{equation}
where a combination of suitable elements in the dictionary through the union and relative complement reconstructs $\Sigma$. We require $\Ip$ and $\In$ not to be redundant, which intuitively means that removing any index from either set will change the resulting geometry (in Section \ref{secdef} we will provide a precise definition of non-redundancy).

With the dictionary in place, determining the non-redundant index sets $\Ip$ and $\In$ is the main objective of our segmentation problem, i.e.,
\begin{equation}\label{eq4}
\hspace{-2.5cm}\mbox{\scriptsize(SC)\normalsize}\qquad\qquad\quad \{ {\Ip}^*, {\In}^*\}= \operatorname*{arg\,min}_{\Ip, \In} \int_{\mathpzc{R}_{\;\Ip,\In}} \big(\Pi_{in}(x) - \Pi_{ex}(x)\big)\;\mbox{d}x.
\end{equation}
For $n_s$ elements in the dictionary, the number of possible regions that (\ref{eq3}) can produce counts to\footnote{To count the possibilities, we consider the total number of ways to deposit the shapes into 3 bins and exclude from that the cases that the bin associated with $\Ip$ is empty}
\[\sum_{k_\oplus+k_\ominus\leq n_s}\dbinom{n_s}{ (n_s - k_\oplus - k_\ominus),k_\oplus, k_\ominus} - \sum_{k_\ominus=1}^ {n_s}\dbinom{n_s}{k_\ominus} = 3^{n_s}-2^{n_s}+1.
\]
Addressing (\ref{eq4}) would therefore require a search among an exponentially large number of possibilities. We will henceforth refer to problem (\ref{eq4}) as the basic shape composition (SC) problem.

To promote simpler compositions and avoid shape redundancy, we will limit the number of shapes in the reconstruction. We regularize the SC problem by restricting the representation to at most $s$ shapes
\begin{align}
\label{eq5}
\hspace{-.3cm}\mbox{\scriptsize(Cardinal-SC)\normalsize}\qquad\min_{\Ip, \In} \int_{\mathpzc{R}_{\;\Ip,\In}} \big(\Pi_{in}(x) - \Pi_{ex}(x)\big)\;\mbox{d}x\qquad \mbox{s.t.}:\quad |\Ip|+|\In|\leq s.
\end{align}
Solving (\ref{eq5}) is the central focus of this paper and will be referred to as the cardinal shape composition (Cardinal-SC) problem. Addressing the Cardinal-SC problem also requires an exhaustive search among a large number of possibilities, specifically
\[1+\sum_{k=1}^s\dbinom{n_s}{k}(2^k-1),
\]
cases\footnote{To count the possibilities, we select $k$ shapes from the $n_s$ shapes and count the number of ways to deposit them into two bins, excluding the empty $\Ip$ case. The total number of possibilities requires a sum over $k=1,\cdots,s$ (plus the no-shape selection case)}. This does not give rise to a computationally tractable process, especially for dense and over-complete dictionaries. We thus need to explore close alternatives to the Cardinal-SC problem that can be addressed in a computationally efficient manner.

As we discuss in the next section, basic set operations among the shapes can be modeled by superimposing the corresponding characteristic functions. We exploit this property along with ideas from convex analysis to derive a convex proxy to the Cardinal-SC problem as
\begin{equation}\label{eq5x}
\min_{\balpha} \int_{D}\max\Big(\big(\Pi_{in}(x) - \Pi_{ex}(x)\big)\mathpzc{L}_{\boldsymbol{\alpha}}(x), \big(\Pi_{in}(x) - \Pi_{ex}(x)\big)^-   \Big)\;\mbox{d}x \quad s.t. \quad  \|\balpha\|_1\leq \tau.
\end{equation}
Here $\mathpzc{L}_{\boldsymbol{\alpha}}(x)$ is a linear combination of the characteristic functions associated with the dictionary elements;
\begin{equation}\label{eq9xyz}
\mathpzc{L}_{\boldsymbol{\alpha}}(x)\triangleq\sum_{j=1}^{n_s} \alpha_j \chi_{\mathcal{S}_j}(x), \qquad \mbox{where}\qquad \chi_\mathcal{S}(x)\triangleq\left\{
\begin{array}{cr}
1 & x\in \mathcal{S}\\ 0 & x\notin \mathcal{S}
\end{array}.
\right.
\end{equation}

Roughly speaking, in relating the minimizer of (\ref{eq5x}) to the optimal index sets associated with the Cardinal-SC problem, the active $\alpha_j$ values identify the active shapes in the composition and their sign determines the index set ($\Ip$ or $\In$) they belong to. Inspired by ideas from sparse recovery, the $\ell_1$ constraint is used to control the number of active shapes in the final representation.

The main objective of this paper is to provide a more detailed
presentation of (\ref{eq5x}) as a proxy to the Cardinal-SC problem. Our presentation is equipped with some level of analysis over basic setups, where the relaxation performs a similar job as the Cardinal-SC problem. Moreover, we will present generic sufficient conditions under which the proposed proxy perfectly recovers a target composition. Some of the techniques employed to analyze the problem are adopted from the compressive sensing and structured recovery literature \cite{foucart2013mathematical}. However, the overall analysis theme, to the best of our knowledge, is the first of its kind and can be used as a benchmark for future extensions of the problem. The examples in the simulation section serve as the complementary justification to our analysis.

\subsection{Notation and Organization}
Our presentation mainly relies on multidimensional calculus. We use bold characters to denote vectors and matrices. Considering a matrix $\boldsymbol{A}$ and the index sets $\Gamma_1$, and $\Gamma_2$, we use $\boldsymbol{A}_{\Gamma_1,:}$ to denote the matrix obtained by restricting the rows of $\boldsymbol{A}$ to $\Gamma_1$. Similarly, $\boldsymbol{A}_{:,\Gamma_2}$ denotes the restriction of $\boldsymbol{A}$ to the columns specified by $\Gamma_2$, and $\boldsymbol{A}_{\Gamma_1,\Gamma_2}$ is the submatrix with the rows and columns restricted to $\Gamma_1$ and $\Gamma_2$, respectively. For a vector $\boldsymbol{a}=[a_1,\cdots,a_n]^T$, we use $\boldsymbol{a}_i$ to denote the $i$-th element of $\boldsymbol{a}$, i.e., $\boldsymbol{a}_i=a_i$. A quick reference table containing the notations mainly used throughout the paper is included in the Appendix.

The remainder of this paper is organized as follows. In Section \ref{sec2} we present a procedure that leads us to the convex relaxation of the Cardinal-SC problem. Section \ref{sec3} is devoted to some basic definitions and preliminary results on the performance of the proposed convex surrogate. More extensive technical details and convex analysis tools are presented in Section \ref{sec4}. The main theme in this section is the possibility of recovering a target composition through the proposed convex proxy. Section \ref{sec5} presents some experiments and simulations to support the ideas developed in the paper. Section \ref{sec6} ultimately provides the concluding remarks and discusses the possible future directions. To help with the flow, the majority of the technical proofs are moved to Section \ref{sec:proof}.

\section{A Convex Relaxation to the Shape Composition Problem}\label{sec2}
This section discusses the rationale for using (\ref{eq5x}) as a convex proxy to the Cardinal-SC problem.

\subsection{Shape Composition by Superimposing Characteristic Functions} \label{sec-pseudo} In \cite{aghasi2011parametric, aghasi2013geometric, aghasi2013sparse}, Aghasi \emph{et al.} introduced the notion of a \emph{pseudo-logical} property. This property states that basic arithmetic operations among compactly-supported Lipschitz functions (called \emph{knolls} in \cite{aghasi2013sparse}) can approximately model certain set operations among the function supports. Due to particular differentiability requirements, the pseudo-logical property in \cite{aghasi2011parametric, aghasi2013geometric, aghasi2013sparse} was only discussed in the context of Lipschitz functions and remained an approximation. In this paper, we show that composing shapes can be \emph{exactly} characterized by superimposing characteristic functions.

Consider two given shapes $\mathcal{S}_1$ and $\mathcal{S}_2$ and the corresponding characteristic functions $\chi_{\mathcal{S}_1}$ and $\chi_{\mathcal{S}_2}$ as defined in (\ref{eq9xyz}). Basic set operations ($\cup$, $\setminus$ and $\cap$) on $\mathcal{S}_1$ and $\mathcal{S}_2$ can be related to basic arithmetic operation on the corresponding characteristic functions via the following three facts (illustrated in Figure \ref{fig1}):
\begin{equation}\label{eq6}
\supp^+ (\alpha_1\chi_{\mathcal{S}_1}+\alpha_2\chi_{\mathcal{S}_2})= \mathcal{S}_1\cup \mathcal{S}_2 \qquad (\alpha_1>0, \alpha_2>0),
\end{equation}
\begin{equation}
\label{eq7}
\supp^+ (\alpha_1\chi_{\mathcal{S}_1}-\alpha_2\chi_{\mathcal{S}_2})= \mathcal{S}_1\setminus \mathcal{S}_2  \qquad (\alpha_2>\alpha_1>0),
\end{equation}
and
\begin{equation}
\label{eq8}
\supp^+ (\alpha_1\chi_{\mathcal{S}_1} \alpha_2\chi_{\mathcal{S}_2})= \mathcal{S}_1\cap \mathcal{S}_2  \qquad (\alpha_1\alpha_2>0).
\end{equation}
Here $\supp^+(.)$ denotes the positive support of the corresponding functions, i.e., for a given function $\chi(x)$
\begin{equation}\label{eqsupp}
\supp^+(\chi)\triangleq\supp(\chi^+),
\end{equation}
where $\chi^+$ returns the $\chi$ values when $\chi>0$ and zero otherwise.

\begin{figure}%
\centering
\subfigure[][]{\includegraphics[width=44mm]{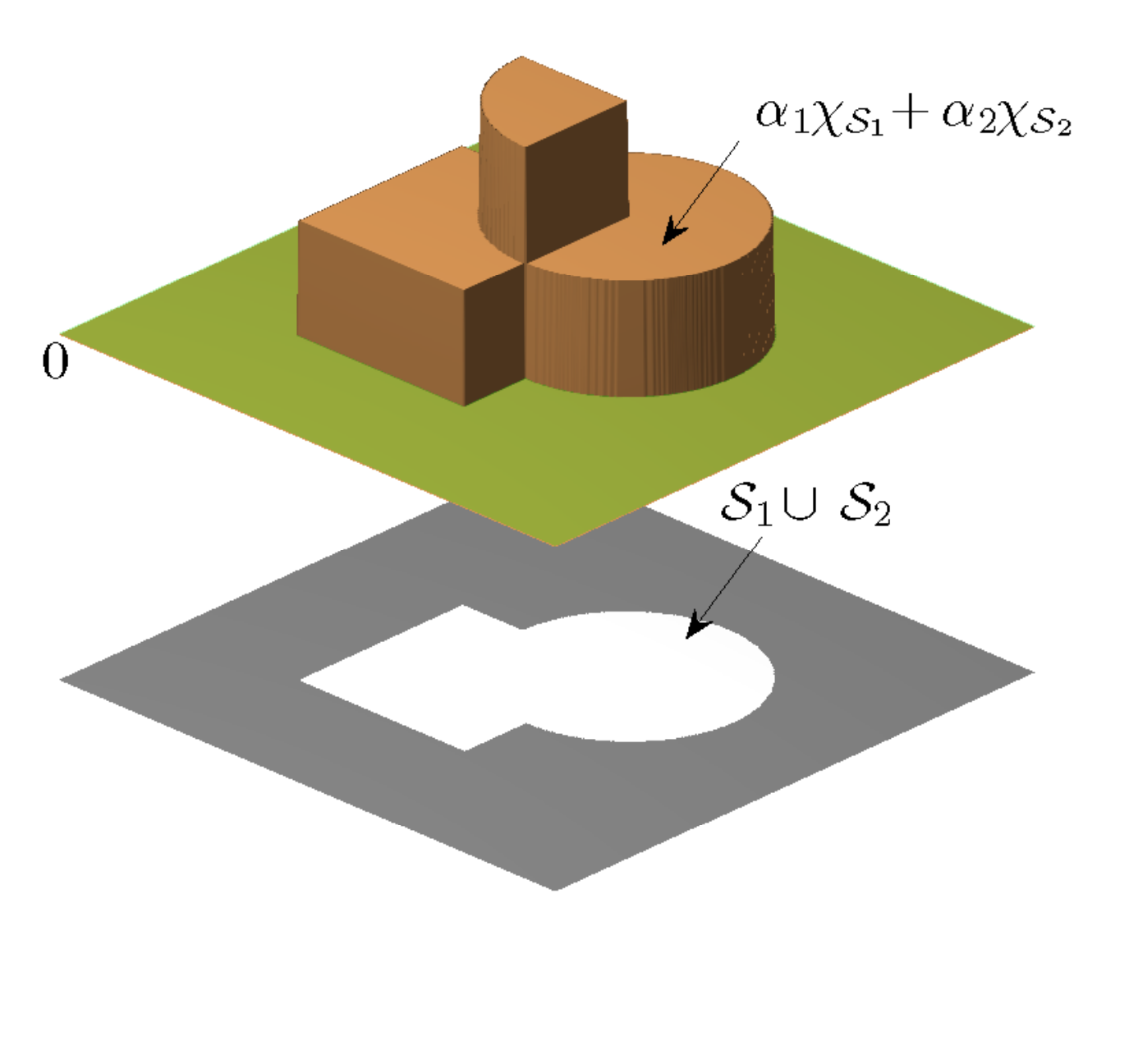}}
\subfigure[][]{\includegraphics[width=44mm]{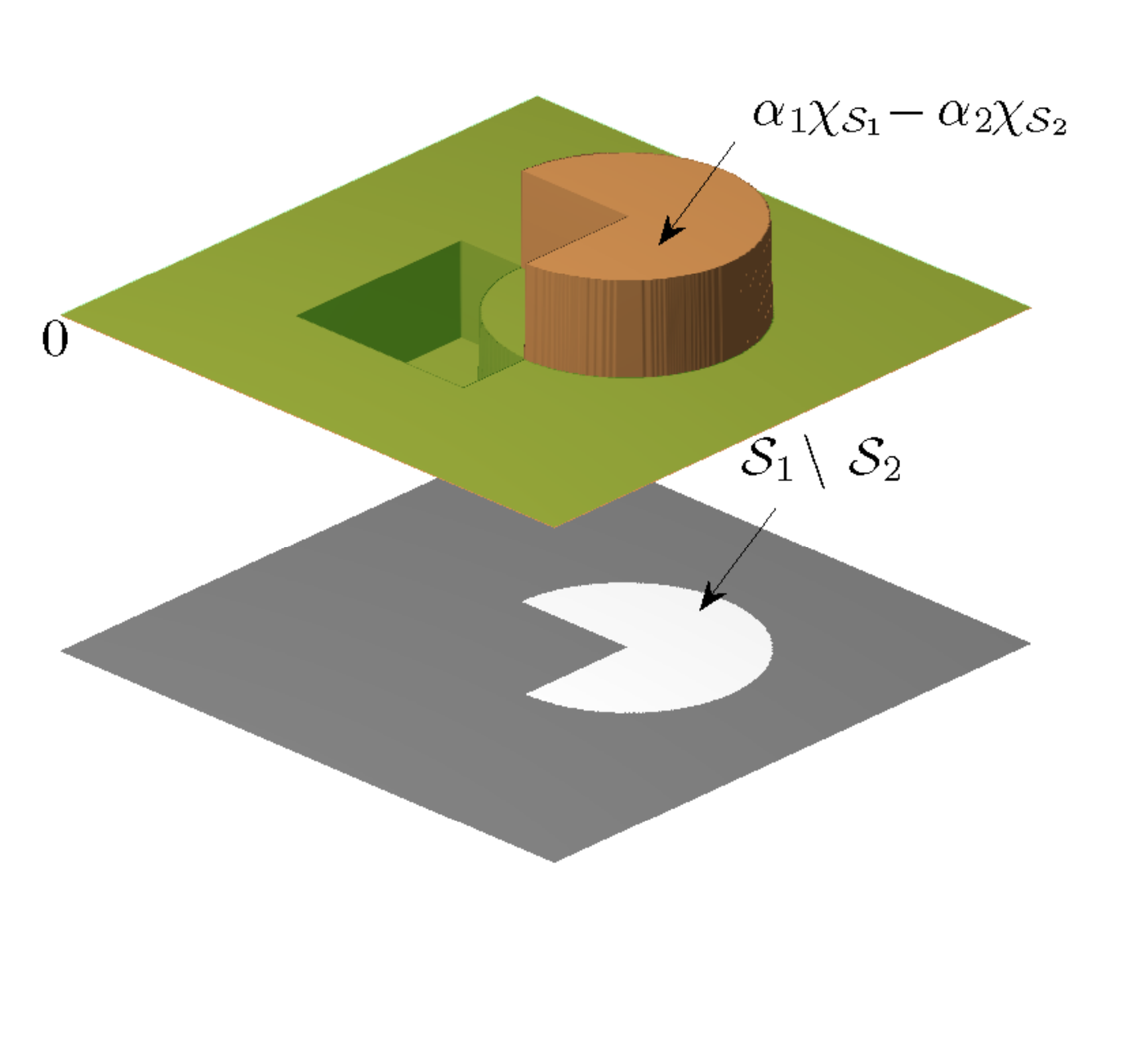}}
\subfigure[][]{\includegraphics[width=44mm]{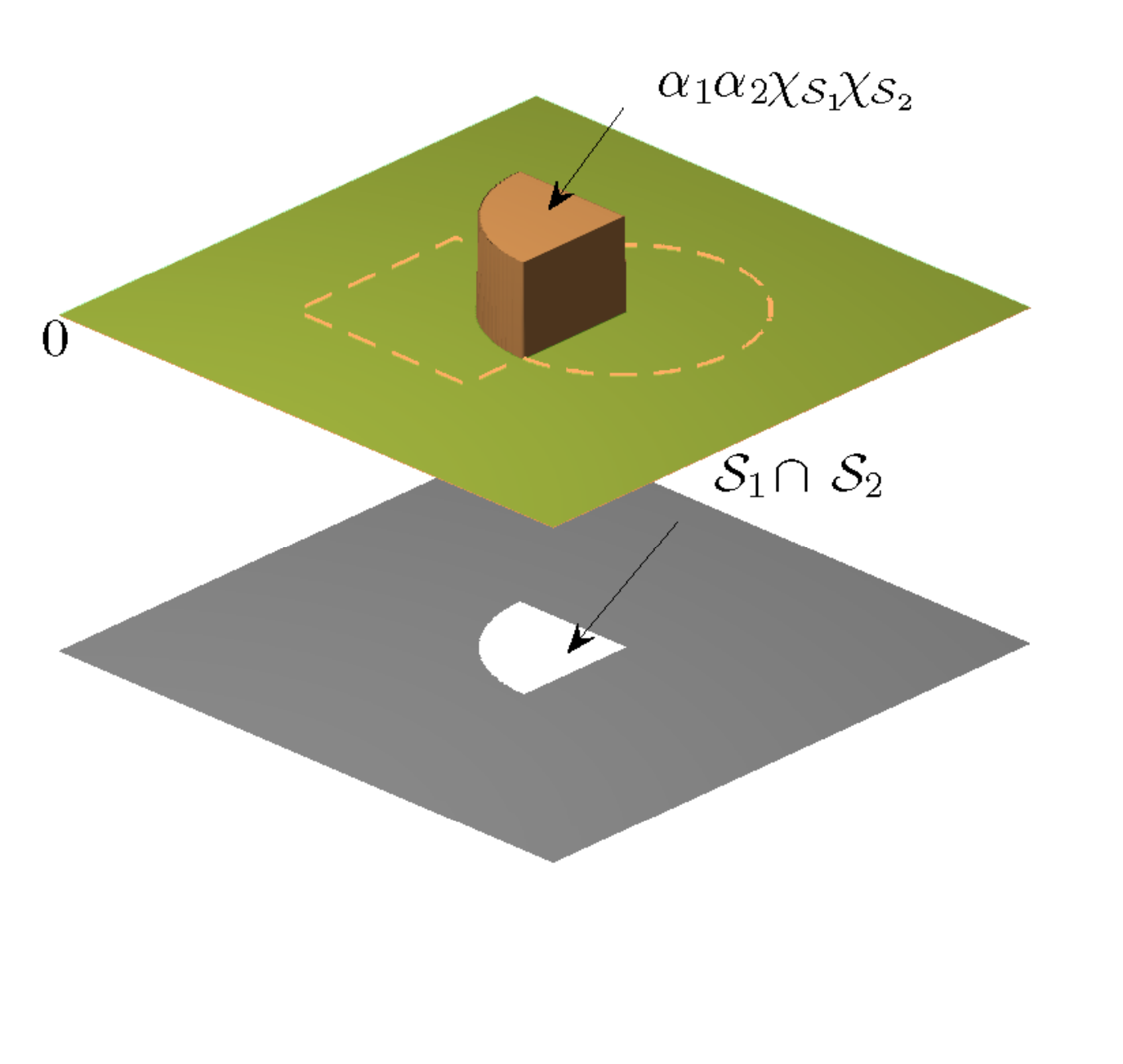}}
\caption{Representation of shape compositions via basic arithmetic operations on the corresponding characteristic functions: (a) union of two shapes, (b) difference of two shapes, (c) intersection of two shapes}%
\label{fig1}%
\end{figure}

Based on the preceding facts, a linear combination of characteristic functions
\begin{equation*}
\mathpzc{L}_{\boldsymbol{\alpha}}(x) = \sum_{j=1}^{n_s} \alpha_j \chi_{\mathcal{S}_j}(x),
\end{equation*}
could be a representative of $\mathpzc{R}_{\;\Ip,\In}$ for suitably tuned $\alpha_j$ values. This is codified with the following result which is proved in Section \ref{sec:proof}:
\begin{proposition} \label{th1}
Given a collection of shapes $\{\mathcal{S}_j\}_{j=1}^{n_s}$ in $D$, for any composition $\Sigma = (\underset{j\in \Ip}{\bigcup}\mathcal{S}_j) \backslash (\underset{j\in \In}{\bigcup}\mathcal{S}_j)$, there exist scalars $\alpha_j$ such that  $\mathpzc{L}_{\boldsymbol{\alpha}}(x)=\hspace{-.3cm}\underset{j\in \Ip \cup \In}{\sum} \alpha_j \chi_{\mathcal{S}_j}(x)$ satisfies\vspace{-.3cm}
\begin{equation}\left\{
     \begin{array}{lr}
       \mathpzc{L}_{\boldsymbol{\alpha}}(x)\geq 1 &  x \in \Sigma\\
       \mathpzc{L}_{\boldsymbol{\alpha}}(x)\leq 0 &  x \notin \Sigma
     \end{array}.   \right.
     \label{eq9a}
\end{equation}
\end{proposition}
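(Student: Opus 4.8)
The plan is to give an explicit construction of the coefficients $\alpha_j$, assigning positive weights to the included shapes ($j\in\Ip$) and negative weights to the excluded ones ($j\in\In$), and then to verify the two pointwise inequalities by a direct case analysis on the membership of $x$ in the dictionary shapes. Concretely, I would set $\alpha_j=1$ for every $j\in\Ip$ and $\alpha_j=-|\Ip|$ for every $j\in\In$, so that the magnitude of each negative coefficient is large enough to cancel the maximal possible positive contribution at any single point. Here I assume $\Ip\neq\emptyset$, which is the only meaningful case given the non-redundancy requirement on the index sets.

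With this choice, for an arbitrary $x\in D$ I would introduce the active-index counts $p(x)=|\{j\in\Ip:\,x\in\mathcal{S}_j\}|$ and $n(x)=|\{j\in\In:\,x\in\mathcal{S}_j\}|$, so that directly from the definition $\mathpzc{L}_{\boldsymbol{\alpha}}(x)=p(x)-|\Ip|\,n(x)$. The structure of the composition $\Sigma=(\bigcup_{j\in\Ip}\mathcal{S}_j)\setminus(\bigcup_{j\in\In}\mathcal{S}_j)$ translates directly into statements about these counts: $x\in\Sigma$ holds precisely when $p(x)\geq 1$ and $n(x)=0$, whereas $x\notin\Sigma$ holds exactly when either $p(x)=0$ or $n(x)\geq 1$.

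The verification then splits into the natural cases. When $x\in\Sigma$, no excluded shape is active, so $\mathpzc{L}_{\boldsymbol{\alpha}}(x)=p(x)\geq 1$, giving the first inequality of (\ref{eq9a}). When $x\notin\Sigma$, I would distinguish whether some excluded shape is active: if $n(x)\geq 1$ then $\mathpzc{L}_{\boldsymbol{\alpha}}(x)=p(x)-|\Ip|\,n(x)\leq |\Ip|-|\Ip|=0$, using $p(x)\leq|\Ip|$; and if $n(x)=0$ then necessarily $p(x)=0$ (otherwise $x$ would lie in $\Sigma$), so $\mathpzc{L}_{\boldsymbol{\alpha}}(x)=0\leq 0$. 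This exhausts all possibilities and establishes the second inequality as well.

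There is essentially no hard obstacle here; the only design decision that matters is choosing the negative coefficients large enough in magnitude that a single active excluded shape dominates the combined contribution of all included shapes occupying the same point. The value $|\Ip|$ is the tightest such choice, though any constant $M\geq|\Ip|$ would serve equally well. In this sense the argument is simply the quantitative, many-shape analogue of the two-shape set-difference identity (\ref{eq7}), promoted from a statement about positive supports to the sharper separation $\mathpzc{L}_{\boldsymbol{\alpha}}\geq 1$ on $\Sigma$ and $\mathpzc{L}_{\boldsymbol{\alpha}}\leq 0$ off $\Sigma$.
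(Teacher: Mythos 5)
Your construction is correct and is essentially the paper's own argument: the paper likewise sets $\alpha_j=1$ for $j\in\Ip$ and $\alpha_j=-\eta$ for $j\in\In$ with $\eta\geq\sup_x\sum_{j\in\Ip}\chi_{\mathcal{S}_j}(x)$, of which your choice $\eta=|\Ip|$ is a valid instance, and then verifies the inequalities by the same case analysis on which shapes contain $x$. No issues.
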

A corollary of Proposition \ref{th1} is the existence of $ \mathpzc{L}_{\boldsymbol{\alpha}}(x)$ such that $\supp^+\big(\mathpzc{L}_{\boldsymbol{\alpha}}(x)\big)=\Sigma$. The separation between the values of $\mathpzc{L}_{\boldsymbol{\alpha}}(x)$ inside and outside $\Sigma$ makes the representation stable.

Using a function to characterize a region in a domain is a well-established technique in the imaging community, mainly discussed in the context of the level set method \cite{osher2003geometric}. Considering a level set function $\phi(x)$, which characterizes the boundaries of a region $\Sigma$ by its zero-level set (and takes positive values inside $\Sigma$), the image-dependent objective in (\ref{eq2}) may be written as
\begin{equation}\label{eq22}
\int_{D}\big(\Pi_{in}(x)-\Pi_{ex}(x)\big)H\big(\phi(x)\big)\;\mbox{d}x ,
\end{equation}
where $H(\phi) = \boldsymbol{1}_{(0,\infty)}(\phi)$, is the Heaviside function. Analogously, the smoothing and compactness regularizers can be cast in terms of $\phi$ to produce an overall functional merely in terms of $\phi$ (see examples in \cite{chan2001active, aubert2006mathematical}). This way of modeling allows casting the segmentation problem as a variational problem in terms of $\phi$.

Regardless of the variational approach taken to carry out the minimization, based on Proposition \ref{th1} and the functional representation (\ref{eq22}), we suggest the following surrogate to the basic SC problem:
\begin{equation}\label{eq9x}
\min_{\balpha} \int_{D} \big(\Pi_{in}(x) - \Pi_{ex}(x)\big)H\big(\mathpzc{L}_{\boldsymbol{\alpha}}(x)\big)\;\mbox{d}x\qquad \mbox{s.t.}\quad \mathpzc{L}_{\boldsymbol{\alpha}}(x) = \sum_{j=1}^{n_s} \alpha_j \chi_{\mathcal{S}_j}(x),
\end{equation}
where the $\alpha_j$ coefficients are used to identify the shape composition elements.

Regarding this model, few remarks are in order. First, the proposed surrogate requires searching for the minimizer in a subspace spanned by the shape characteristics, while the basic SC problem requires searching among a certain class of objects that follow the composition pattern (\ref{eq3}). Based on Proposition \ref{th1}, we are however certain that all possible shape compositions have a representation in the subspace associated with the surrogate model. In other words, the optimization domain associated with the surrogate model entirely covers the corresponding optimization domain associated with the basic SC problem.

Second, to characterize a region, our modeling relies on the positive support of the function $\mathpzc{L}_{\boldsymbol{\alpha}}(x)$, in spite of the level set method which concerns a function's zero-level set to identify the region boundaries. The distinction is made more clear by noting that the zero-level set of $\mathpzc{L}_{\boldsymbol{\alpha}}(x)$ does not necessarily identify a region boundaries and can easily be a set with nonzero Lebesgue measure (e.g., see the green regions in Figure \ref{fig1}).

We would like to note that the ultimate goal in our problem is identifying suitable elements of a shape dictionary, composing which according to (\ref{eq3}) yield an optimal segmentation. The level set discussions are mainly brought here to provide the proper transition between our framework and the classic segmentation techniques.

%

\subsection{The Convex Formulation}

The objective function in (\ref{eq22}) is non-convex. In \cite{chan2006algorithms} this problem is relaxed into a convex form by replacing $H(\phi)$ with a bounded version of $\phi$,
\begin{align}\label{eq23}
\min_{0\leq \phi\leq 1} \int_{D}\big(\Pi_{in}(x)- \Pi_{ex}(x)\big)\phi(x)\;\mbox{d}x.
\end{align}
With this setup, the optimal $\phi$ function takes a value of 1 where $\Pi_{in}(x)<\Pi_{ex}(x)$ and vanishes where $\Pi_{in}(x)>\Pi_{ex}(x)$. For any $v\in(0,1)$, the $v$-level set of the optimal $\phi(.)$ function identifies the optimal shape \cite{chan2006algorithms}.

To bias the segmentation outcomes with the composition rule $\mathpzc{R}_{\;\Ip,\In}$, similar to the previous section, one may think of replacing $\phi$ with $\mathpzc{L}_{\boldsymbol{\alpha}}(x)$ in (\ref{eq23}) and cast the optimization in terms of $\boldsymbol{\alpha}$. However, identifying a shape through the $v$-level set of $\mathpzc{L}_{\boldsymbol{\alpha}}(x)$ for any $v\in(0,1)$, and still exploring the composition features (\ref{eq6}) and (\ref{eq7}) require $\mathpzc{L}_{\boldsymbol{\alpha}}(x)$ to take values beyond $[0,1]$ (see equation (\ref{eq9a})). In other words, we need to determine a reasonably tight convex formulation that pushes $\mathpzc{L}_{\boldsymbol{\alpha}}(x)$ towards $[1,\infty)$ anywhere that $\Pi_{in}(x)<\Pi_{ex}(x)$ and towards $(-\infty,0]$ where $\Pi_{in}(x)>\Pi_{ex}(x)$.

To proceed with deriving a convex formulation, we rewrite the objective in (\ref{eq9x}) as
\begin{align}\label{eq22x}\nonumber
\int_{D}\big(\Pi_{in}(x)-\Pi_{ex}(x)\big)H\big(\mathpzc{L}_{\boldsymbol{\alpha}}(x)\big)\;\mbox{d}x  =& \int_{D}\big(\Pi_{in}(x)-\Pi_{ex}(x)\big)^+H\big(\mathpzc{L}_{\boldsymbol{\alpha}}(x)\big)\;\mbox{d}x \\& - \int_{D}\big(\Pi_{ex}(x)-\Pi_{in}(x)\big)^+H\big(\mathpzc{L}_{\boldsymbol{\alpha}}(x)\big)\;\mbox{d}x.
\end{align}
For a fixed $x$, only one of the integrands on the right hand side of (\ref{eq22x}) can be nonzero. Based on the sign of $(\Pi_{in}-\Pi_{ex})$ we can consider a certain convex relaxation to each integrand. These relaxations are $(\Pi_{in} - \Pi_{ex})\max(\mathpzc{L}_{\boldsymbol{\alpha}},0)$ and $(\Pi_{in} - \Pi_{ex})\min(\mathpzc{L}_{\boldsymbol{\alpha}},1)$ when $(\Pi_{in} - \Pi_{ex})$ is respectively positive and negative (Figure \ref{fig0}). An alternative relaxation to (\ref{eq9x}) is therefore
\begin{equation*}
\int_{D}\big(\Pi_{in}(x) - \Pi_{ex}(x)\big)^+\max\big(\mathpzc{L}_{\boldsymbol{\alpha}}(x),0\big)\;\mbox{d}x - \int_{D}\big(\Pi_{ex}(x)-\Pi_{in}(x)\big)^+\min\big(\mathpzc{L}_{\boldsymbol{\alpha}}(x),1\big)\;\mbox{d}x.
\end{equation*}
This objective can be written in a more concise form as
\begin{equation}\label{eq22xxx}
\int_{D}\max\Big(\big(\Pi_{in}(x) - \Pi_{ex}(x)\big)\mathpzc{L}_{\boldsymbol{\alpha}}(x), \big(\Pi_{in}(x) - \Pi_{ex}(x)\big)^-   \Big)\;\mbox{d}x,
\end{equation}
where $\Pi^-$ returns the $\Pi$ values when $\Pi<0$ and zero otherwise.

\begin{figure}%
\centering
\subfigure[][]{\includegraphics[width=66mm]{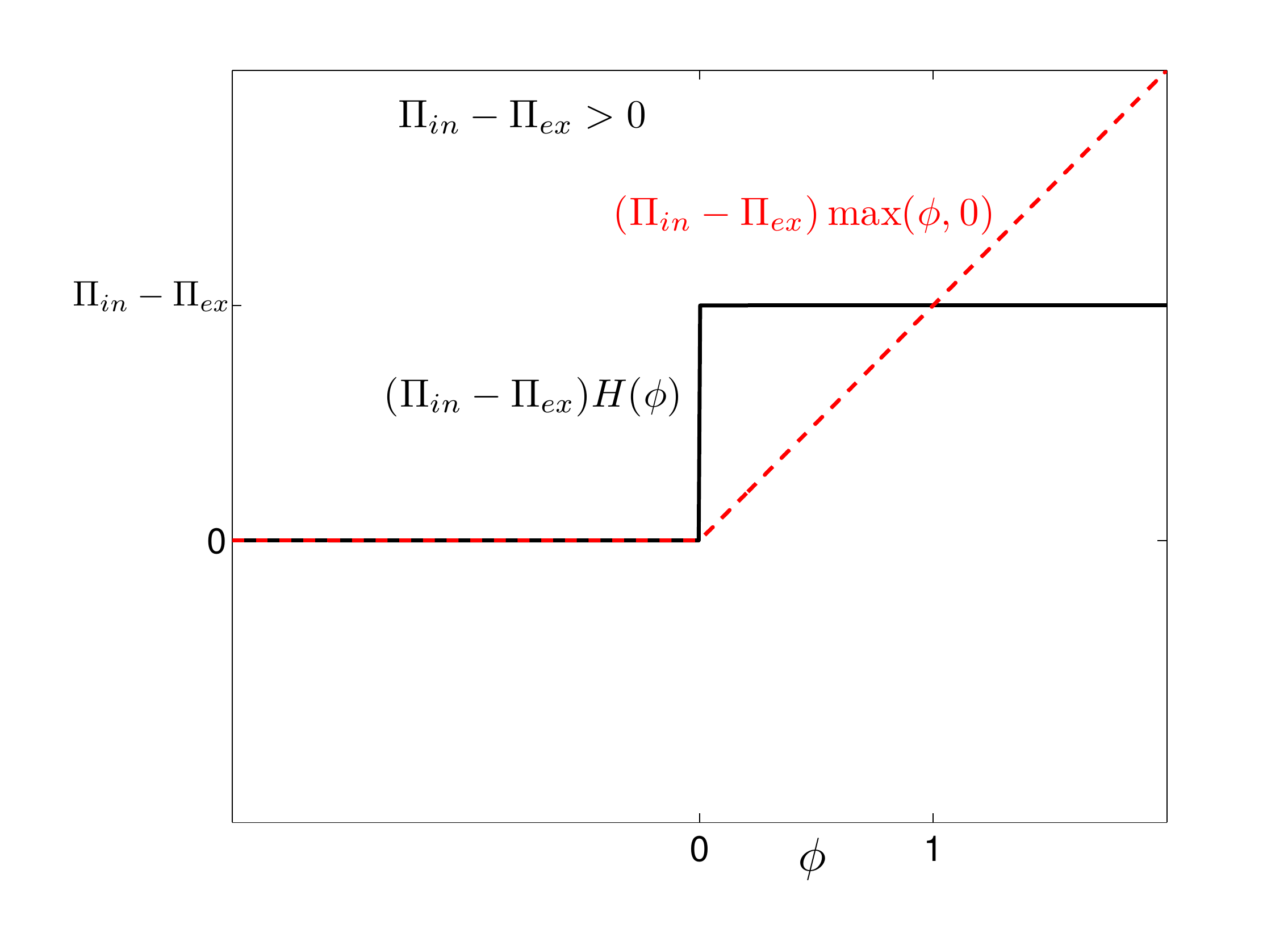}}\hspace{-.5cm}
\subfigure[][]{\includegraphics[width=66.5mm]{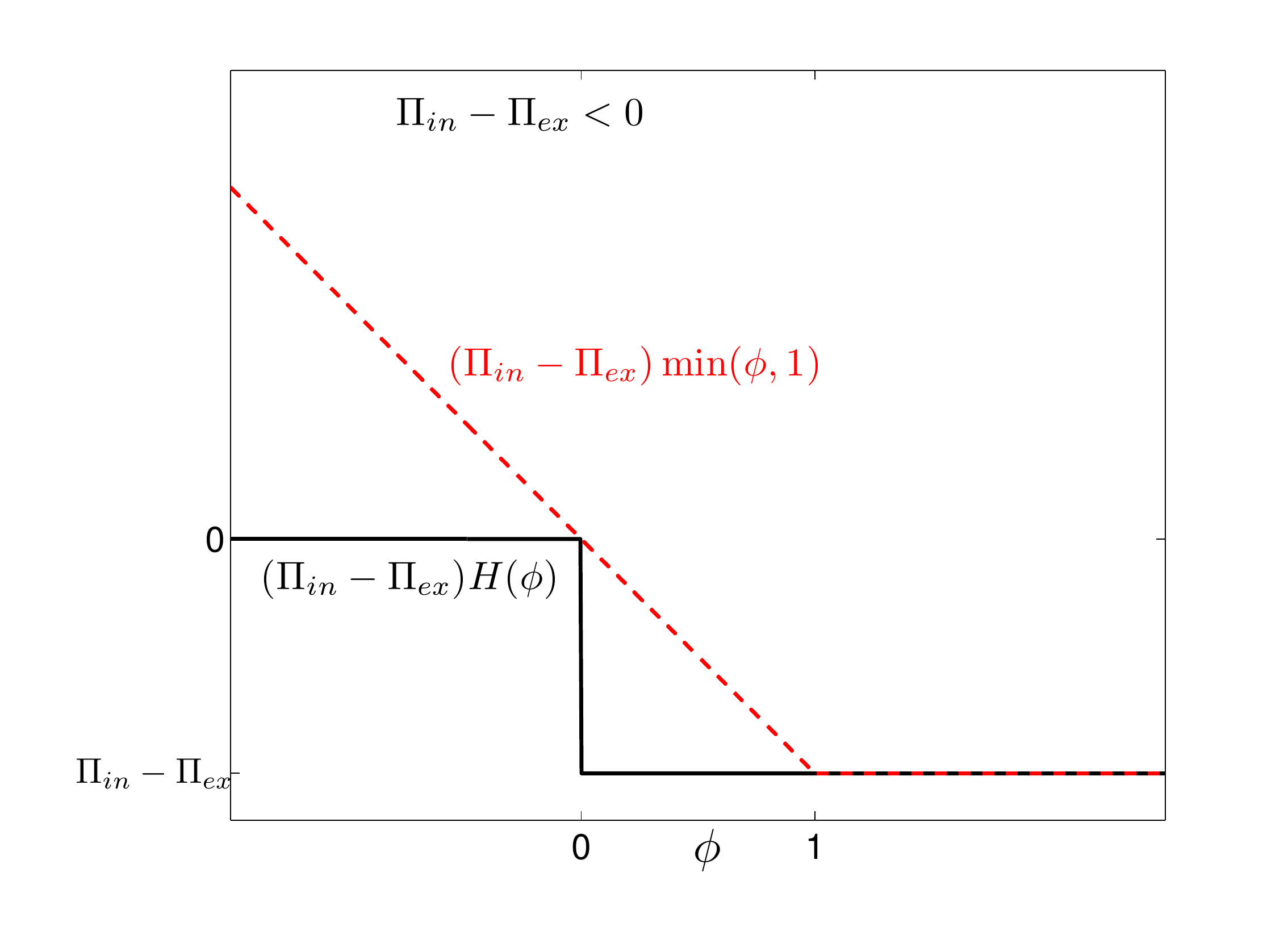}}
\caption{The proposed convexification based on the sign of $\Pi_{in}-\Pi_{ex}$}\label{fig0}
\end{figure}

Minimization of (\ref{eq22xxx}) is our proposed convex proxy to the SC problem. To impose the cardinality constraint on the number of active shapes, inspired by the ongoing trend in compressive sensing \cite{candes2006robust, foucart2013mathematical, candes2013simple}, the cardinality of $\boldsymbol{\alpha}$ may be controlled by imposing an $\ell_1$-constraint on $\boldsymbol{\alpha}$. Ultimately,
we suggest the following minimization as a convex proxy to the Cardinal-SC problem, and refer to it as the sparse-convex shape composition (Sparse-CSC) problem:
\begin{equation}\label{eq22y}
\hspace{-.03cm}\mbox{\scriptsize(Sparse-CSC)\normalsize}\;\; \min_{\|\boldsymbol{\alpha}\|_1\leq \tau} \int_{D}\max\Big(\big(\Pi_{in}(x) - \Pi_{ex}(x)\big)\mathpzc{L}_{\boldsymbol{\alpha}}(x), \big(\Pi_{in}(x) - \Pi_{ex}(x)\big)^-   \Big)\;\mbox{d}x.
\end{equation}
The Sparse-CSC problem may also appear in a regularized form as
\begin{equation}\label{eq22yy}
\min_{\balpha} \int_{D}\max\Big(\big(\Pi_{in}(x) - \Pi_{ex}(x)\big)\mathpzc{L}_{\boldsymbol{\alpha}}(x), \big(\Pi_{in}(x) - \Pi_{ex}(x)\big)^-   \Big)\;\mbox{d}x + \lambda \|\balpha\|_1.
\end{equation}
The parameters $\tau$ and $\lambda$ are free parameters that control the level of sparsity. In this paper we specifically focus on the constrained form (\ref{eq22y}), as tuning $\tau$ is often performed in a more controllable way.

From an implementation standpoint, either one of the convex programs (\ref{eq22y}) or (\ref{eq22yy}) are what we need to address, in order to perform a segmentation based on the sparse composition of prototype shapes.

\section{Preliminary Notions and Results}\label{sec3}
We start this section by providing a more formal definition of the fundamental concepts such as shape, disjointness and the non-redundancy of a composition. In Section \ref{disjoint_sec} we demonstrate the equivalent performance of the Cardinal-SC and the Sparse-CSC in the case of disjoint dictionary elements. As a second extreme case, in Section \ref{sec:loc} we discuss the similar performance of the two problems when we make an extreme assumption about the inhomogeneity measures. These results bring a more intuitive sense about the problem without getting involved in the complexity of the model.


\subsection{Basic Concepts}\label{secdef}
For a detailed study of the shape composition problem, we need to provide solid statements about the fundamental concepts. We take the initial step by presenting our definition of a shape.

\begin{definition} Given the imaging domain $D\subset \mathbb{R}^d$, we call $\mathcal{S}\subset D$ a \emph{shape} if
\begin{itemize}
\item[(I)] $\mathcal{S}$ is a closed set (hence, the union of finitely many closed sets) in $D$.
\item[(II)] $\mathcal{S}$ is not a null set, i.e.,
\[
\int_\mathcal{S} \mbox{d}\mu(x)>0,
\]
where $\mu$ is the standard Lebesgue measure\footnote{In this paper we use the simpler notation $\int_\mathcal{S}\mbox{d}x$ to denote a volumetric integration.} on $\mathbb{R}^d$. To express this property, throughout the paper we equivalently use the notation $\mbox{int}(\mathcal{S})\neq\emptyset$, where $\mbox{int}(.)$ denotes the set interior.
\end{itemize}
\end{definition}
As a concrete example, in case of $d=2$, a finite collection of closed discs can be viewed as a single shape even if they are disjoint. However, a line segment cannot be classified as a shape in $\mathbb{R}^2$, since it violates the second property.

For two given sets $S_1$ and $S_2$ in $\mathbb{R}^d$, we will often make use of the following notations:
\begin{itemize}
\item $S_1\seq S_2$, when $\mbox{int}(S_1) = \mbox{int}(S_2)$,
\item $S_1\nseq S_2$, when $\mbox{int}(S_1) \neq \mbox{int}(S_2)$.
\end{itemize}
These notations assist us to establish precise arguments regarding the shape composition problem. Pragmatically, they are the means to compare regions without taking into account the boundary points (and the null sets). We hereby introduce the notion of disjoint shapes which plays a major role in our subsequent discussions.

\begin{definition}
Two shapes $\mathcal{S}_1$ and $\mathcal{S}_2$ are called disjoint if
\[\mathcal{S}_1 \cap \mathcal{S}_2\seq \emptyset.
\]
\end{definition}
In other words, the intersection of two disjoint shapes can at most be a null set.

The non-redundancy of a composition is another concept that we alluded to earlier, but requires a more precise exposition.

\begin{definition}\label{nonred:def}
Given a set of shapes $\{\mathcal{S}_j\}_{j\in \Ip\cup \In}$, a composition $\mathpzc{R}_{\;\Ip,\In} = ({\bigcup}_{j\in \Ip}\mathcal{S}_j) \backslash ({\bigcup}_{j\in \In}\mathcal{S}_j)$ is called non-redundant, if excluding any shape from the composition results in a measure change, i.e.,
\[\forall j'\in \Ip: \quad \mathpzc{R}_{\;\Ip\setminus \{j'\},\In} \nseq \mathpzc{R}_{\;\Ip,\In},
\]
and
\[\forall j'\in \In: \quad \mathpzc{R}_{\;\Ip,\In\setminus \{j'\}} \nseq \mathpzc{R}_{\;\Ip,\In}.
\]
\end{definition}

\subsection{Disjoint Dictionary Elements}\label{disjoint_sec}
Here we consider a simple scenario where we can easily establish that the solutions to the Sparse-CSC and the Cardinal-SC are identical. We consider the case where the dictionary elements are disjoint (viz., every two shapes in the dictionary are disjoint). The analysis of this problem will provide insight into the more general case of overlapping shapes.

In the case of disjoint dictionary elements (DDE), the non-redundancy of $\mathpzc{R}_{\;\Ip,\In}$ requires $\In=\emptyset$ (see Definition \ref{nonred:def}). Consequently, the composition rule is simplified to $\mathpzc{R}_{\;\Ip}=\bigcup_{j\in\Ip}\mathcal{S}_j$ and the Cardinal-SC problem becomes
\begin{equation}
\label{eq25}
\Ip^*= \operatorname*{arg\,min}_{\Ip} \int_{\mathpzc{R}_{\Ip}} \big(\Pi_{in}(x) - \Pi_{ex}(x)\big)\;\mbox{d}x \qquad \mbox{s.t.}:\quad |\Ip|\leq s.
\end{equation}
Under the DDE conditions, $\Ip^*$ can be uniquely determined by making mild assumptions about the average inhomogeneity measures over the shapes. For this purpose, given a dictionary of shapes $\{\mathcal{S}_j\}_{j=1}^{n_s}$ we define the following quantities for every dictionary element:
\begin{equation*}
P_j \triangleq \int_{\mathcal{S}_j}  \big(\Pi_{in}(x) - \Pi_{ex}(x)\big)^+ \; \mbox{d}x\geq 0, \qquad Q_j \triangleq \int_{\mathcal{S}_j}  \big(\Pi_{ex}(x) - \Pi_{in}(x)\big)^+ \; \mbox{d}x\geq 0.
\end{equation*}
Clearly, when the dictionary elements are disjoint, for any $\Ip \subset \{1,\cdots,n_s\}$
\begin{equation*}
\int_{\mathpzc{R}_{\Ip}} \big(\Pi_{in}(x) - \Pi_{ex}(x)\big)\;\mbox{d}x= \sum\limits_{j\in \Ip} (P_j-Q_j).
\end{equation*}
Based on this observation, we sort the shapes upon their $(P_j-Q_j)$ values. That is, considering an index order $j_1,j_2,\cdots, j_{n_s}$ such that
\begin{equation}\label{eq27a}
P_{j_1}-Q_{j_1}\leq P_{j_2}-Q_{j_2}\leq \cdots\leq P_{j_m}-Q_{j_m}<0\leq P_{j_{m+1}}-Q_{j_{m+1}} \leq \cdots\leq P_{j_{n_s}}-Q_{j_{n_s}}.
\end{equation}
Determining $\Ip^*$ is now straightforward:
\begin{proposition}\label{prop:disjoint}
Given a set of disjoint shapes $\{\mathcal{S}_j\}_{j=1}^{n_s}$ and the image dependent measures $\Pi_{in}(x)$ and $\Pi_{ex}(x)$, consider the general progression (\ref{eq27a}) for the shape indices $1,\cdots, n_s$. If the $s$-th inequality in (\ref{eq27a}) is strict and $s\leq m$, the solution to the Cardinal-SC problem (\ref{eq25}) is unique and
\begin{equation}\label{eq27b}
\Ip^* = \{j_1,j_2,\cdots , j_s\}.
\end{equation}
\end{proposition}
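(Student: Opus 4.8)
The plan is to reduce the Cardinal-SC problem (\ref{eq25}) to a purely combinatorial selection problem and then settle it with a rearrangement argument. By the disjointness of the dictionary elements, the objective over any index set $\Ip$ collapses to the finite sum $\sum_{j\in\Ip}(P_j-Q_j)$, exactly as recorded above; I would write $v_k := P_{j_k}-Q_{j_k}$ for the sorted values appearing in (\ref{eq27a}), so that $v_1\leq\cdots\leq v_{n_s}$, and note that the hypothesis $s\leq m$ forces $v_1,\ldots,v_s<0$. The task is thus to minimize $\sum_{j\in\Ip}v_j$ subject to $|\Ip|\leq s$.

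First I would show that every optimal $\Ip$ uses the full budget, $|\Ip|=s$. Indeed, if $|\Ip|<s$ then, since there are $m\geq s>|\Ip|$ strictly negative values, at least one index $j$ with $v_j<0$ lies outside $\Ip$; adjoining it keeps the cardinality within $s$ while strictly decreasing the objective, contradicting optimality. Hence any minimizer selects exactly $s$ indices.

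Next I would pin down the optimal value and the minimizing set via a rearrangement inequality. Sorting the values of an arbitrary size-$s$ set $\Ip$ as $w_1\leq\cdots\leq w_s$, the key fact is that $w_i\geq v_i$ for every $i$: the $i$-th smallest among any $s$ chosen values is at least the $i$-th smallest overall, because fewer than $i$ of the global values lie strictly below $v_i$. Summing gives $\sum_{j\in\Ip}v_j\geq\sum_{k=1}^{s}v_k$, with equality precisely when $w_i=v_i$ for all $i$. Since $\{j_1,\ldots,j_s\}$ attains this bound, it is a minimizer, which establishes (\ref{eq27b}) as \emph{a} solution.

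The step requiring care is uniqueness, and this is exactly where the strict $s$-th inequality enters. At equality the analysis above gives $w_s=v_s$, so an optimal $\Ip$ consists of $s$ indices each with value $\leq v_s$. Because $v_s<v_{s+1}$, the set of indices with value $\leq v_s$ is precisely $\{j_1,\ldots,j_s\}$ --- no more and no fewer --- so a size-$s$ subset drawn entirely from it must be the whole collection, forcing $\Ip^*=\{j_1,\ldots,j_s\}$ and excluding the swapped selections that ties at the threshold would otherwise allow. The only genuine subtlety, then, is the rearrangement inequality $w_i\geq v_i$ together with the careful bookkeeping of ties at the cutoff; the negativity of the first $s$ sorted values makes everything else routine.
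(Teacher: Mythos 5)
Your proof is correct. The paper itself offers no proof of this proposition (``We will skip the proof for Proposition 3.5 as it is trivial''), and your argument --- reducing the objective to $\sum_{j\in\Ip}(P_j-Q_j)$ by disjointness, showing every minimizer saturates the cardinality budget because $s\leq m$ guarantees enough strictly negative values, invoking the rearrangement bound $w_i\geq v_i$, and using the strict $s$-th inequality to exclude ties at the cutoff --- is exactly the routine selection argument the authors are alluding to, with the uniqueness bookkeeping that they wave away spelled out correctly.
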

We will skip the proof for Proposition \ref{prop:disjoint} as it is trivial. We, however, note that the condition $s\leq m$ makes the sparsity constraint in (\ref{eq25}) active. For $s>m$ the Cardinal-SC problem (\ref{eq25}) is equivalent to the corresponding SC problem, where the cardinality constraint is disregarded.

\begin{theorem}
Suppose the assumptions of Proposition \ref{prop:disjoint} hold. Then, for $\tau=s$, the minimizer to the Sparse-CSC program (denoted as $\balpha^*$) is unique and $\Ip^* = \{j:\balpha^*_j = 1\}$.
\end{theorem}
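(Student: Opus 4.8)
The plan is to exploit disjointness to collapse the functional in (\ref{eq22y}) into a separable, coordinatewise convex objective, and then to solve the resulting linear program by a greedy exchange argument. First I would use that the shapes are pairwise disjoint: up to a null set the supports of the $\chi_{\mathcal{S}_j}$ do not overlap, so $\mathpzc{L}_{\balpha}(x)=\alpha_j$ for $x\in\mathcal{S}_j$ and $\mathpzc{L}_{\balpha}(x)=0$ on $D\setminus\bigcup_j\mathcal{S}_j$. On that last region the integrand equals $\max\big(0,(\Pi_{in}-\Pi_{ex})^-\big)=0$, so it contributes nothing and the objective splits over the shapes. Splitting each $\int_{\mathcal{S}_j}$ according to the sign of $\Pi_{in}-\Pi_{ex}$ and inserting the definitions of $P_j$ and $Q_j$, a short computation gives the separable form
\[
F(\balpha)=\sum_{j=1}^{n_s} g_j(\alpha_j),\qquad g_j(t)=P_j\max(t,0)-Q_j\min(t,1),
\]
where each $g_j$ is convex and piecewise linear, with slope $-Q_j$ on $(-\infty,0)$, slope $P_j-Q_j$ on $(0,1)$, and slope $P_j$ on $(1,\infty)$.

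Next I would argue that every minimizer lies in the box $[0,1]^{n_s}$. Since $P_j,Q_j\geq 0$, resetting any negative coordinate to $0$, or lowering any coordinate above $1$ back down to $1$, never increases $g_j$ and only frees $\ell_1$ budget; hence we may restrict to $[0,1]^{n_s}$, on which $\|\balpha\|_1=\sum_j\alpha_j$ and $g_j(\alpha_j)=(P_j-Q_j)\alpha_j$. With $\tau=s$ the problem becomes the linear program of minimizing $\sum_j(P_j-Q_j)\alpha_j$ over $\{0\leq\alpha_j\leq 1,\ \sum_j\alpha_j\leq s\}$.

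Finally I would solve this linear program greedily. The marginal gain from raising $\alpha_j$ is $-(P_j-Q_j)=Q_j-P_j$, so the budget $s$ is spent on the coordinates with most negative $P_j-Q_j$, each saturated at $1$; budget must in fact be exhausted, since any unsaturated coordinate with negative coefficient could be raised to improve $F$. Under the progression (\ref{eq27a}) with $s\leq m$ these coordinates are exactly $j_1,\dots,j_s$, all with $P_{j_k}-Q_{j_k}<0$, giving $\alpha^*_{j_k}=1$ for $k\leq s$ and $\alpha^*_j=0$ otherwise, which recovers $\Ip^*=\{j_1,\dots,j_s\}$ from Proposition \ref{prop:disjoint}. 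For uniqueness I would run an exchange argument: writing $c_j=P_j-Q_j$, any competing feasible point must shift mass $\delta$ from an active index $j_k$ ($k\leq s$) to an inactive one $j_l$ ($l>s$), changing the cost by $(c_{j_l}-c_{j_k})\delta$; since $c_{j_k}\leq c_{j_s}<c_{j_{s+1}}\leq c_{j_l}$ by the strict $s$-th inequality, this change is strictly positive, so no alternative minimizer survives.

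The main obstacle is the uniqueness claim rather than the value of the minimizer. One must check carefully that the reduction to $[0,1]^{n_s}$ discards no minimizers — in particular the degenerate coordinates with $P_j=0$, where $g_j$ is flat beyond $t=1$ and budget could otherwise be wasted without penalty — and that the exchange argument rules out fractional as well as integral competitors. The strictness of the $s$-th inequality in (\ref{eq27a}), together with $s\leq m$, is precisely what makes the boundary swap strictly costly and thereby pins down the unique saturated solution.
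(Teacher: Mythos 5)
Your proposal is correct and follows essentially the same route as the paper: disjointness collapses the convex objective into the separable sum $\sum_j F_j(\alpha_j)$ with $F_j(\alpha)=P_j\max(\alpha,0)-Q_j\min(\alpha,1)$, and the $\ell_1$ budget $\tau=s$ is then optimally spent saturating $\alpha_{j_1},\dots,\alpha_{j_s}$ at $1$. Your box-reduction and exchange argument simply makes explicit the uniqueness step that the paper dispatches in one sentence, and it correctly uses the strictness of the $s$-th inequality together with $s\leq m$ exactly where needed.
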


\begin{proof}
We use the disjoint property among the shapes (and accordingly the non-overlapping property among their characteristic functions) to simplify the underlying convex cost as follows:
\begin{align*}\nonumber
&\int_{D}\big(\Pi_{in}(x) - \Pi_{ex}(x)\big)^+\max\big(\sum_{j=1}^{n_s} \alpha_j \chi_{\mathcal{S}_j}(x),0\big)\;\mbox{d}x \\ \nonumber
 =&\int_{D}\big(\Pi_{in}(x) - \Pi_{ex}(x)\big)^+\Big(\sum_{j=1}^{n_s} \chi_{\mathcal{S}_j}(x)  \max( \alpha_j ,0)\Big)\;\mbox{d}x\\ \nonumber
 =&\int_{D}\Big(\sum_{j=1}^{n_s}  \big(\Pi_{in}(x) - \Pi_{ex}(x)\big)^+ \chi_{\mathcal{S}_j}(x)  \max( \alpha_j ,0)\Big)\;\mbox{d}x
 \\ \nonumber
 =&\;\sum_{j=1}^{n_s} \max( \alpha_j ,0) \int_{D}  \big(\Pi_{in}(x) - \Pi_{ex}(x)\big)^+ \chi_{\mathcal{S}_j}(x)\; \mbox{d}x
 \\
 =&\;\sum_{j=1}^{n_s} P_j\max( \alpha_j ,0).
\end{align*}
In a similar fashion
\[\int_{D}\big(\Pi_{ex}(x) - \Pi_{in}(x)\big)^+\min\big(\sum_{j=1}^{n_s} \alpha_j \chi_{\mathcal{S}_j}(x),1\big)\;\mbox{d}x = \sum_{j=1}^{n_s} Q_j\min( \alpha_j ,1),
\]
and therefore in the DDE case
\[
 \int_{D}\max\Big(\big(\Pi_{in}(x) - \Pi_{ex}(x)\big)\mathpzc{L}_{\boldsymbol{\alpha}}(x), \big(\Pi_{in}(x) - \Pi_{ex}(x)\big)^-   \Big)\;\mbox{d}x = \sum_{j=1}^{n_s} F_j(\alpha_j)
\]
where
\begin{equation*}
F_j(\alpha)\triangleq P_j\max( \alpha ,0)  - Q_j\min( \alpha ,1).
\end{equation*}
In other words, under the DDE condition the convex objective reduces to the sum of separable costs in terms of $\balpha$ components. As depicted in Figure \ref{fig5}, the underlying costs take their minima at 0 or 1, depending on the sign of $P_j-Q_j$. Moreover,
\begin{equation*}
\min_{\alpha} F_j(\alpha)=\left\{
     \begin{array}{cc}
       P_j-Q_j & 0\leq P_j\leq Q_j\\
       0 & 0\leq Q_j\leq P_j
     \end{array}.
   \right.
\end{equation*}
To minimize $\sum_{j=1}^{n_s} F_j(\alpha_j)$ subject to $\|\balpha\|_1\leq s$, for $s\leq m$, we need to assign unit values to the coefficients $\alpha_{j_1},\alpha_{j_2},\cdots, \alpha_{j_s}$ and set the others to zero, otherwise, we are not minimizing the objective sum to the maximum extent. Comparing this result with (\ref{eq27b}) reveals that $\Ip^* = \{j:\balpha^*_j = 1\}$.
\end{proof}

The preceding results confirm the possibility of using Sparse-CSC as a proxy to the Cardinal-SC problem for the proposed setup. Aside from the DDE condition, the mild conditions stated in Proposition \ref{prop:disjoint}
guarantee the uniqueness of the solution for the Cardinal-SC problem and the convex proxy, and allow us to show their equivalence.

\begin{figure}%
\centering
\subfigure[][]{\includegraphics[width=60mm]{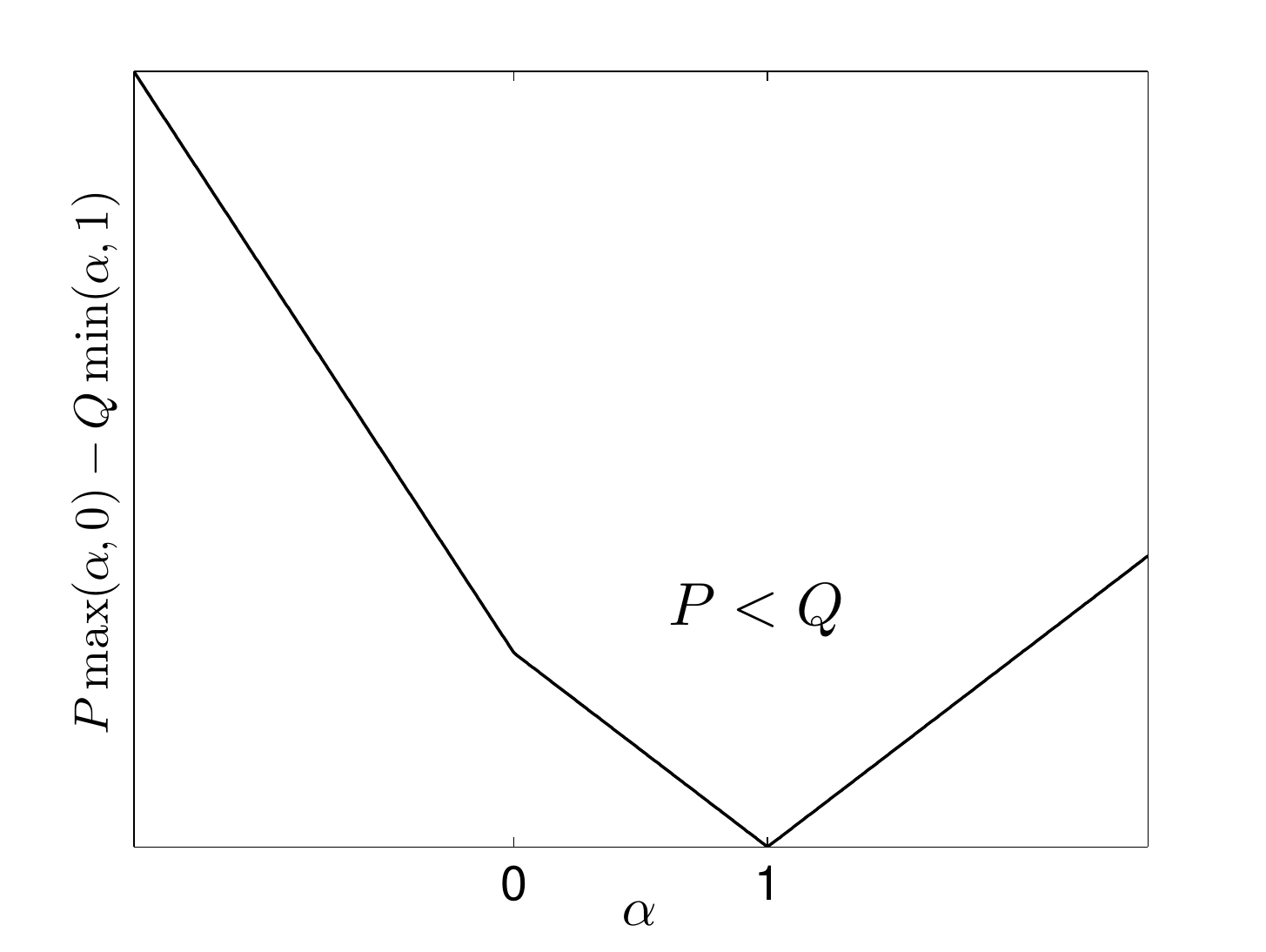}}
\subfigure[][]{\includegraphics[width=60mm]{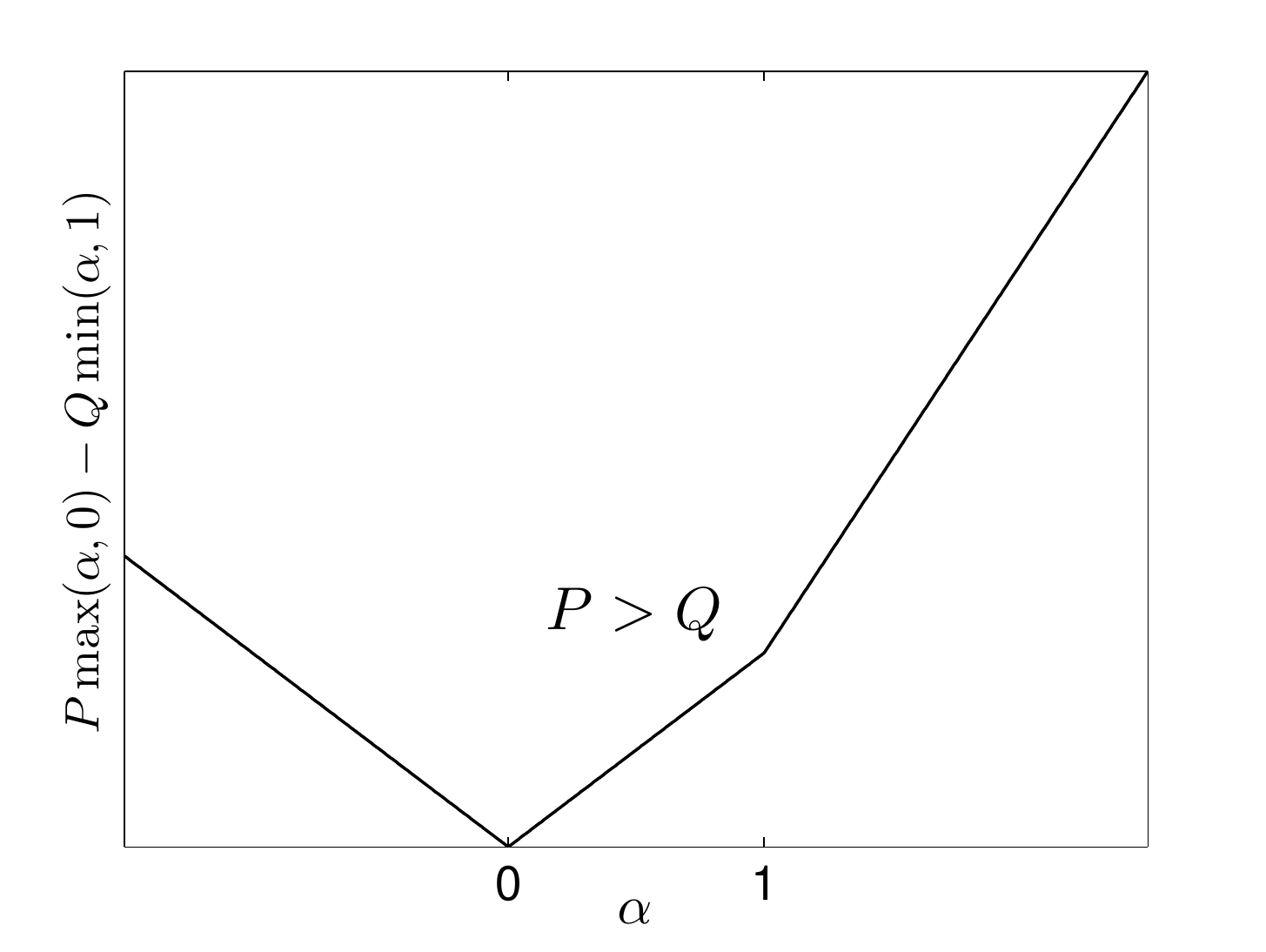}}
\caption{The plot of $F(\alpha) = P\max( \alpha ,0)  - Q\min( \alpha ,1)$, which takes its minimum at 1 when $P<Q$ and takes 0 as the minimizer when $P>Q$.}\label{fig5}
\end{figure}

\subsection{Lucid Object}\label{sec:loc}
In the previous section we considered an extreme case of non-overlapping dictionary elements. In this section we consider another extreme case, which reveals new facts about the similar performance of the SC problem and the convex formulation.

\begin{definition}
For a given region $\Sigma\subset D$, the lucid object condition (LOC) holds if
\[\left\{\begin{array}{lc}
\Pi_{in}(x)<\Pi_{ex}(x) & x\in \Sigma\\
\Pi_{in}(x)>\Pi_{ex}(x) & x\in D\setminus\Sigma
\end{array}.
\right.
\]
\end{definition}
As an example, consider an image where the pixel values are bounded as
\begin{equation*}
\left\{\begin{array}{lc}
0<u(x)<\frac{1}{2} &x\in\Sigma\\
\frac{1}{2}<u(x)<1 &x\in D\setminus \Sigma
\end{array},
\right.
\end{equation*}
and the Chan-Vese inhomogeneity measures $\Pi_{in}(x)=(u(x)-1/4)^2$ and $\Pi_{ex}(x)=(u(x)-3/4)^2$ are considered. It is straightforward to verify that in such setup, the LOC holds for $\Sigma$.

When the LOC holds for $\Sigma$ and there exists a composition of the dictionary elements linked to $\Sigma$, the outcomes of the SC problems and the proposed convex proxy maintain some general properties that will be detailed.
\begin{proposition}\label{th3}
Consider $\Sigma\subset D$ and a dictionary of shapes $\{\mathcal{S}_j\}_{j=1}^{n_s}$. If the LOC holds for $\Sigma$ and there exists a unique non-redundant representation $\mathpzc{R}_{\;\Ip^\Sigma,\In^\Sigma}$ such that $\Sigma = \mbox{cl}(\mathpzc{R}_{\;\Ip^\Sigma,\In^\Sigma})$, then
\begin{equation*}
\{\Ip^\Sigma,\In^\Sigma\} = \operatorname*{arg\,min}_{\Ip, \In} \int_{\mathpzc{R}_{\;\Ip,\In}} \big(\Pi_{in}(x) - \Pi_{ex}(x)\big)\;\mbox{d}x.
\end{equation*}
\end{proposition}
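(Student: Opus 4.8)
The plan is to recast the combinatorial minimization as a pure region-comparison problem and exploit the fact that, under the LOC, the integrand $g(x) = \Pi_{in}(x) - \Pi_{ex}(x)$ has a fixed sign on each side of $\Sigma$: we have $g<0$ on $\Sigma$ and $g>0$ on $D\setminus\Sigma$. Consequently, among \emph{all} measurable subregions $R\subseteq D$, the functional $\int_R g\,\mbox{d}x$ is minimized exactly by collecting the whole negative part and discarding the whole positive part, i.e.\ by $R=\Sigma$ (note the set $\{g=0\}$ is empty under the strict LOC). Since $\Sigma=\mbox{cl}(\mathpzc{R}_{\;\Ip^\Sigma,\In^\Sigma})$, the two regions differ only on the boundary of the composition, which is contained in the boundaries of the finitely many generating shapes; under the mild (and implicitly needed) regularity that these boundaries are null, $\int_{\mathpzc{R}_{\;\Ip^\Sigma,\In^\Sigma}} g\,\mbox{d}x=\int_{\Sigma} g\,\mbox{d}x$. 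It therefore suffices to show that $\int_\Sigma g\,\mbox{d}x$ is the global minimum and is attained \emph{only} by $\{\Ip^\Sigma,\In^\Sigma\}$.

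For the lower bound, I would fix an arbitrary competing (non-redundant) composition $R=\mathpzc{R}_{\;\Ip,\In}$ and split it along $\Sigma$, writing
\[
\int_R g\,\mbox{d}x = \int_{R\cap\Sigma} g\,\mbox{d}x + \int_{R\setminus\Sigma} g\,\mbox{d}x, \qquad \int_\Sigma g\,\mbox{d}x = \int_{R\cap\Sigma} g\,\mbox{d}x + \int_{\Sigma\setminus R} g\,\mbox{d}x .
\]
Subtracting gives $\int_R g\,\mbox{d}x - \int_\Sigma g\,\mbox{d}x = \int_{R\setminus\Sigma} g\,\mbox{d}x - \int_{\Sigma\setminus R} g\,\mbox{d}x$. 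Since $R\setminus\Sigma\subseteq D\setminus\Sigma$ (where $g>0$) the first term is $\geq 0$, and since $\Sigma\setminus R\subseteq\Sigma$ (where $g<0$) the subtracted term is $\leq 0$; hence $\int_R g\,\mbox{d}x\geq\int_\Sigma g\,\mbox{d}x$, which establishes that $\{\Ip^\Sigma,\In^\Sigma\}$ is a minimizer.

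For uniqueness I would track the equality case. If a composition $R$ attains the minimum, then both $\int_{R\setminus\Sigma} g\,\mbox{d}x=0$ and $\int_{\Sigma\setminus R} g\,\mbox{d}x=0$. Here the \emph{strictness} in the LOC is essential: because $g>0$ everywhere on $D\setminus\Sigma$, a vanishing integral forces $R\setminus\Sigma$ to be a null set, and likewise $g<0$ on $\Sigma$ forces $\Sigma\setminus R$ to be null. Thus $R\seq\Sigma$, so $\mathpzc{R}_{\;\Ip,\In}$ is another composition whose closure is $\Sigma$. As a feasible point of the SC problem it is non-redundant, so the hypothesis that $\Sigma$ has a \emph{unique} non-redundant representation yields $\{\Ip,\In\}=\{\Ip^\Sigma,\In^\Sigma\}$.

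I expect the delicate step to be this last passage from a measure-theoretic coincidence of regions ($R\seq\Sigma$) to equality of the index sets. One must verify that $R\seq\Sigma$ together with $\mathpzc{R}_{\;\Ip^\Sigma,\In^\Sigma}\seq\Sigma$ really places both $\{\Ip,\In\}$ and $\{\Ip^\Sigma,\In^\Sigma\}$ within the scope of the single uniqueness hypothesis; this is where the regularity of shapes (null boundaries, and $\mbox{cl}(R)=\Sigma$ following from $R\seq\Sigma$) and the explicit enforcement of non-redundancy on the minimizer do the work, preventing redundant re-expressions of $\Sigma$ from being counted as distinct minimizers. The sign and inequality bookkeeping of the first two paragraphs is routine; the conceptual content lives entirely in how the strict LOC inequalities combined with uniqueness of representation collapse the argmin to a single pair of index sets.
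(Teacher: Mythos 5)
Your proof is correct and follows essentially the same route as the paper: the identical decomposition of $\int_{R}g$ and $\int_{\Sigma}g$ over $R\cap\Sigma$, $R\setminus\Sigma$, and $\Sigma\setminus R$, the same sign argument from the strict LOC inequalities, and the same final appeal to uniqueness of the non-redundant representation to pass from $R\seq\Sigma$ to equality of index sets. Your explicit handling of the equality case (both difference sets forced to be null) is just the contrapositive of the paper's "any $\Sigma'\nseq\Sigma$ is strictly worse," and your remarks on null boundaries make explicit a regularity assumption the paper leaves implicit.
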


An immediate interpretation of Proposition \ref{th3} is, under the conditions stated, the SC problem identifies the constituting dictionary elements associated with $\Sigma$. This result is readily generalizable to the case that there are multiple non-redundant representation $\mathpzc{R}_{\;\Ip,\In}$ linked to $\Sigma$, and $\mathpzc{R}_{\;\Ip^\Sigma,\In^\Sigma}$ is the unique representation with the fewest number of elements. In this case
\[\{\Ip^\Sigma,\In^\Sigma\} = \operatorname*{arg\,min}_{\Ip, \In} \int_{\mathpzc{R}_{\;\Ip,\In}} \big(\Pi_{in}(x) - \Pi_{ex}(x)\big)\;\mbox{d}x\quad s.t. \quad |\Ip|+|\In|\leq |\Ip^{\Sigma}|+|\In^{\Sigma}|.
\]
We now draw our attention to the Sparse-CSC and its performance under the LOC.
\begin{proposition}\label{prop4}
Consider $\Sigma\subset D$ and a dictionary of shapes $\{\mathcal{S}_j\}_{j=1}^{n_s}$. If the LOC holds for $\Sigma$, then for any $\balpha\in\mathbb{R}^{n_s}$ the convex cost (\ref{eq22xxx}) is lower bounded as
\begin{equation*}
\int_{D}\max\Big(\big(\Pi_{in}(x) - \Pi_{ex}(x)\big)\mathpzc{L}_{\boldsymbol{\alpha}}(x), \big(\Pi_{in}(x) - \Pi_{ex}(x)\big)^-   \Big)\;\mbox{d}x\geq -\int_\Sigma \Pi_{ex}(x)\;\mbox{d}x.
\end{equation*}
\end{proposition}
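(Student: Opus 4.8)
The plan is to split the imaging domain $D$ into the two regions dictated by the LOC, namely $\Sigma$ and $D\setminus\Sigma$, and to establish a pointwise lower bound on the integrand over each region separately. Writing $g(x)=\Pi_{in}(x)-\Pi_{ex}(x)$ for brevity, the LOC guarantees $g(x)<0$ on $\Sigma$ and $g(x)>0$ on $D\setminus\Sigma$; in particular $g$ never vanishes and these two regions exhaust $D$. This clean sign dichotomy is exactly what lets us resolve the $\max$ defining the integrand on each piece.

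On $D\setminus\Sigma$, where $g(x)>0$, the negative part $\big(\Pi_{in}(x)-\Pi_{ex}(x)\big)^-$ vanishes, so the integrand reduces to $\max\big(g(x)\mathpzc{L}_{\boldsymbol{\alpha}}(x),0\big)\geq 0$. On $\Sigma$, where $g(x)<0$, we have $\big(\Pi_{in}(x)-\Pi_{ex}(x)\big)^- = g(x)$, and a short case check (comparing $g\mathpzc{L}_{\boldsymbol{\alpha}}$ against $g$ depending on whether $\mathpzc{L}_{\boldsymbol{\alpha}}(x)\leq 1$ or $\mathpzc{L}_{\boldsymbol{\alpha}}(x)>1$) shows the integrand equals $g(x)\min\big(\mathpzc{L}_{\boldsymbol{\alpha}}(x),1\big)$. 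Since $\min(\mathpzc{L}_{\boldsymbol{\alpha}},1)\leq 1$ and $g<0$ on $\Sigma$, multiplying the inequality $\min(\mathpzc{L}_{\boldsymbol{\alpha}},1)\leq 1$ by the negative quantity $g$ reverses it, yielding the pointwise bound $g(x)\min\big(\mathpzc{L}_{\boldsymbol{\alpha}}(x),1\big)\geq g(x)$ on $\Sigma$.

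Combining these two pointwise estimates and integrating gives
\[
\int_D \max\Big(\big(\Pi_{in}(x)-\Pi_{ex}(x)\big)\mathpzc{L}_{\boldsymbol{\alpha}}(x),\,\big(\Pi_{in}(x)-\Pi_{ex}(x)\big)^-\Big)\;\mbox{d}x \;\geq\; \int_\Sigma g(x)\;\mbox{d}x \;=\; \int_\Sigma \big(\Pi_{in}(x)-\Pi_{ex}(x)\big)\;\mbox{d}x .
\]
The final step invokes the standing nonnegativity assumption $\Pi_{in}\geq 0$: since $\int_\Sigma \Pi_{in}(x)\;\mbox{d}x\geq 0$, the right-hand side is at least $-\int_\Sigma \Pi_{ex}(x)\;\mbox{d}x$, which is precisely the claimed lower bound.

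I do not expect a genuine obstacle here, since the argument is entirely pointwise and separates cleanly across the LOC partition. The only step demanding care is the sign reversal on $\Sigma$: because $g<0$ there, the inequalities flip upon multiplication, and one must track which of $g\mathpzc{L}_{\boldsymbol{\alpha}}$ or $g$ attains the maximum. Organizing that computation through the identity $\max(g\mathpzc{L}_{\boldsymbol{\alpha}},g)=g\min(\mathpzc{L}_{\boldsymbol{\alpha}},1)$ makes the bookkeeping transparent and forestalls sign errors.
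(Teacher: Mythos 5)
Your proof is correct and follows essentially the same route as the paper: the LOC kills one branch of the $\max$ on each of $\Sigma$ and $D\setminus\Sigma$, the first piece is nonnegative, and the second is controlled by $\min(\mathpzc{L}_{\boldsymbol{\alpha}},1)\leq 1$ together with the sign of $\Pi_{in}-\Pi_{ex}$ on $\Sigma$; your pointwise identity $\max(g\mathpzc{L}_{\boldsymbol{\alpha}},g)=g\min(\mathpzc{L}_{\boldsymbol{\alpha}},1)$ is just a tidier rendering of the paper's decomposition over the level sets of $\mathpzc{L}_{\boldsymbol{\alpha}}$. Your explicit final appeal to $\Pi_{in}\geq 0$ to pass from $\int_\Sigma(\Pi_{in}-\Pi_{ex})\,\mbox{d}x$ to $-\int_\Sigma\Pi_{ex}\,\mbox{d}x$ is a step the paper leaves implicit (it stops at the slightly stronger bound), and including it is correct.
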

As a matter of fact, the lower bound in Proposition \ref{prop4} can be attained and the corresponding minimizer is related to $\Sigma$ as follows:
\begin{theorem}\label{th5}
Suppose the LOC holds for $\Sigma$ and there exists at least one composition of the dictionary elements for which $\Sigma = \mbox{cl}(\mathpzc{R}_{\;\Ip,\In})$. Then, $\balpha^*$ minimizes the cost
\begin{equation}\label{eq31}
\int_{D}\max\Big(\big(\Pi_{in}(x) - \Pi_{ex}(x)\big)\mathpzc{L}_{\boldsymbol{\alpha}}(x), \big(\Pi_{in}(x) - \Pi_{ex}(x)\big)^-   \Big)\;\mbox{d}x
\end{equation}
if and only if
\begin{equation}\label{eq32}
\left\{\begin{array}{lc}
\mathpzc{L}_{\boldsymbol{\alpha^*}}(x)\geq 1&x\in\mbox{int}(\Sigma)\\
\mathpzc{L}_{\boldsymbol{\alpha^*}}(x)\leq 0&x\in \mbox{int}(D\setminus \Sigma)\\
\end{array}.
\right.
\end{equation}
\end{theorem}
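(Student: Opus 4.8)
The plan is to reduce the claim to a pointwise minimization of the integrand and then integrate. Throughout, write $g(x)\triangleq\Pi_{in}(x)-\Pi_{ex}(x)$ and $L(x)\triangleq\mathpzc{L}_{\balpha}(x)$, so that the cost (\ref{eq31}) is $J(\balpha)=\int_D m_{\balpha}(x)\,dx$ with pointwise integrand $m_{\balpha}(x)=\max\big(g(x)L(x),\,g^-(x)\big)$. Since $\Sigma$ is closed, $D$ decomposes into the open set $\mbox{int}(\Sigma)$, the open set $D\setminus\Sigma=\mbox{int}(D\setminus\Sigma)$, and the boundary $\partial\Sigma\subset\Sigma$, which I will treat as Lebesgue--null (a mild regularity assumption on the shapes, and hence invisible to the integral). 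On the first piece the LOC gives $g<0$, and on the second it gives $g>0$, with both inequalities \emph{strict}; this strictness is what will make the characterization an exact equivalence.

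First I would record the pointwise behaviour. Unwinding the $\max$ exactly as in the passage leading to (\ref{eq22xxx}), the integrand equals $g(x)\min(L(x),1)$ wherever $g<0$ and equals $g(x)\max(L(x),0)$ wherever $g>0$. On $\mbox{int}(\Sigma)$, from $\min(L,1)\le 1$ and $g<0$ we get $m_{\balpha}(x)\ge g(x)$, with equality if and only if $L(x)\ge 1$; on $D\setminus\Sigma$, from $\max(L,0)\ge 0$ and $g>0$ we get $m_{\balpha}(x)\ge 0$, with equality if and only if $L(x)\le 0$. Integrating these two pointwise bounds gives, for every $\balpha$,
\[
J(\balpha)\;\ge\;\int_\Sigma\big(\Pi_{in}(x)-\Pi_{ex}(x)\big)\,dx,
\]
and, because the sign of $g$ is strict, equality holds precisely when $L\ge 1$ almost everywhere on $\mbox{int}(\Sigma)$ and $L\le 0$ almost everywhere on $D\setminus\Sigma$.

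It remains to see that this lower bound is attained, which is exactly the content of Proposition \ref{th1}: since $\Sigma=\mbox{cl}(\mathpzc{R}_{\Ip,\In})$ for some composition, there is a coefficient vector whose $\mathpzc{L}_{\balpha}$ satisfies $\mathpzc{L}_{\balpha}\ge 1$ on $\Sigma$ and $\mathpzc{L}_{\balpha}\le 0$ off $\Sigma$, so that the bound above is met with equality. Hence the minimum value of (\ref{eq31}) equals $\int_\Sigma(\Pi_{in}-\Pi_{ex})\,dx$. The ``if'' direction is then immediate: any $\balpha^*$ obeying (\ref{eq32}) makes $m_{\balpha^*}$ coincide with its pointwise minimum almost everywhere, so $J(\balpha^*)$ equals the minimum value. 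For the ``only if'' direction, if $\balpha^*$ is a minimizer then $J(\balpha^*)$ equals that same value, which forces $m_{\balpha^*}$ to equal its pointwise minimum almost everywhere; by the strict clean equivalences of the previous step this forces $\mathpzc{L}_{\balpha^*}\ge 1$ a.e.\ on $\mbox{int}(\Sigma)$ and $\mathpzc{L}_{\balpha^*}\le 0$ a.e.\ on $D\setminus\Sigma$.

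The step I expect to be the main obstacle is upgrading these almost-everywhere inequalities to the everywhere-on-the-interior statements written in (\ref{eq32}), together with the careful disposal of the null sets involved (the boundary $\partial\Sigma$ and the jump sets $\partial\mathcal{S}_j$ of the characteristic functions). The natural way to close the gap is to exploit that $\mathpzc{L}_{\balpha^*}=\sum_j\alpha^*_j\chi_{\mathcal{S}_j}$ is piecewise constant, being constant on the interior of each cell of the arrangement generated by $\{\mathcal{S}_j\}_{j=1}^{n_s}$. Every such cell that meets $\mbox{int}(\Sigma)$ (respectively $D\setminus\Sigma$) in positive measure must then carry the constant value forced by the a.e.\ condition, so the inequalities of (\ref{eq32}) propagate to the whole of the corresponding open set. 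I would spend most of the effort making this cell-by-cell, null-set bookkeeping rigorous, since the underlying convex-analysis computation on $m_{\balpha}$ is entirely routine.
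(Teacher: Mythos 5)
Your argument is correct and is essentially the paper's own proof: you establish the lower bound $\int_\Sigma(\Pi_{in}-\Pi_{ex})\,\mbox{d}x$ (this is exactly the bound $-\int_\Sigma\Delta_{ex}\,\mbox{d}x$ proved in Proposition \ref{prop4}, which the paper derives by decomposing the two integrals over the level regions of $\mathpzc{L}_{\balpha}$ rather than pointwise), invoke Proposition \ref{th1} for attainability, and read off (\ref{eq32}) from the equality case of a nonnegative integrand forced to vanish. Your pointwise organization is a mild streamlining, and the a.e.-to-everywhere issue you flag at the end is one the paper's proof also passes over silently, so it does not distinguish the two arguments.
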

From Theorem \ref{th5} we can immediately see that $\balpha^*$ is not unique (e.g., if $\balpha^*$ is a minimizer, $k\balpha^*$ is also a minimizer for any $k>1$). However, for any minimizer $\balpha^*$ we have
\[\supp^+(\mathpzc{L}_{\balpha^*}(x)) = \Sigma,
\]
which means that by minimizing the convex cost (\ref{eq22xxx}) we can identify the underlying object $\Sigma$.

The identification of $\Sigma$ is the common outcome of the SC problem and the proposed convex proxy. However, the link that Proposition \ref{th1} establishes between an arbitrary composition and a vector satisfying (\ref{eq32}), makes us hopeful about identifying the constituting elements of $\Sigma$ by inspecting the entries of $\balpha^*$. In this context,
an $\ell_1$ restriction on $\balpha$ is capable of making $\balpha^*$ unique and more conveniently linked to a specific composition. Moreover, such restriction allows us to control the number of shape elements appearing in the representation of $\Sigma$.

The remainder of the paper is devoted to the analysis of the Sparse-CSC under more general conditions. The main theme is the possibility of using the proposed convex proxy as a tool to extract the constituting elements of a composition along with the segmented image. While a general framework is considered throughout the analysis, we will later revisit the LOC as a particular problem to which the proposed tools can be readily applied.

\section{Extended Tools and Convex Analysis}\label{sec4}

Both the information embedded within the image, as well as the overlapping of the dictionary elements play key roles in characterizing the minimizers of the Sparse-CSC program. In this section we will have a deeper study of the convex problem, and provide some analysis tools to characterize the possible outcomes of the problem.

In Section \ref{DSDsec} we introduce a process that extracts the disjoint components of overlapping shapes, as a simplifying tool for the analysis of the Sparse-CSC problem. In Section \ref{Sec:Rel} we establish a bijective relationship between the representation of a composition in the shape domain and the $\balpha$-domain. To construct a bijection, we relate the two representations via a linear program and discuss the uniqueness conditions for the proposed problem. In Section \ref{sec-conv} we derive sufficient conditions under which a target vector $\balpha$ minimizes the Sparse-CSC problem. The derivation is performed in a general setup, where there are no restrictive assumptions about the overlapping of the dictionary elements and the spatial variability of the inhomogeneity measures. The developed tools are employed in Section \ref{sec:accrecovery} to derive sufficient conditions under which the Sparse-CSC program extracts the constituting elements of a target composition.

\subsection{Disjoint Shape Decomposition} \label{DSDsec}
As detailed in Section \ref{disjoint_sec}, for a dictionary with disjoint shapes it is quite straightforward to infer the solution to the Cardinal-SC problem and relate it to the solution of the corresponding convex proxy. To pave the analysis path for the case of overlapping shapes, in this section we propose a procedure to decompose overlapping shapes into non-overlapping ones and exploit that to approach the general shape composition problem.

Given $n$ overlapping shapes $\mathcal{S}_1,\mathcal{S}_2,\cdots,\mathcal{S}_n$, consider an $n$-dimensional vector $\boldsymbol{J}\in\{0,1\}^n\setminus \{0\}^n$. A set $\Omega$ with nonempty interior is called a \emph{shapelet} if
\begin{equation}\label{eq10}
\Omega = \bigcap_{j=1}^n\Theta_{\boldsymbol{J}_j}(\mathcal{S}_j),
\end{equation}
where $\boldsymbol{J}_j$ denotes the $j$-th element of $\boldsymbol{J}$, and
\begin{equation}\label{eq10xyz}
\Theta_m(\mathcal{S}) =
\left\{
\begin{array}{lr}
       \mathcal{S}&m=1\\
       \mbox{cl}(\mathcal{S}^c)&m=0
     \end{array}.
   \right.
\end{equation}
Here, $\mbox{cl}(.)$ denotes the closure of the set. The appearance of $\mbox{cl}(.)$ guarantees that the resulting shapelets maintain the properties of a shape. We will call $\boldsymbol{J}$ the constructor vector associated with $\Omega$. As some of the intersections in the form of (\ref{eq10}) might be null sets, the number of shapelets, denoted as $n_\Omega$, is at most $2^n-1$.

We index the shapelets as $\Omega_i$ and the corresponding constructor vectors as $\boldsymbol{J}^{(i)}$ for $i=1,2,\cdots,n_\Omega$. Furthermore, for every shape $\mathcal{S}_j$ we define the index set
\begin{equation}\label{eq11}
\mathcal{I}_j \triangleq \{i:\boldsymbol{J}^{(i)}_j =1\},
\end{equation}
which corresponds to the shapelets inside $\mathcal{S}_j$. We henceforth use the terminology \emph{disjoint shape decomposition} (DSD) for the process of generating the $\Omega$ shapelets from a collection of given shapes $\mathcal{S}_1, \mathcal{S}_2, \cdots, \mathcal{S}_n$. We succinctly write
\begin{equation}\label{eq12}
\{\Omega_i,\boldsymbol{J}^{(i)}\}_{i=1}^{n_\Omega} = \mbox{DSD}\Big( \{\mathcal{S}_j\}_{j=1}^{n}\Big).
\end{equation}

Furthermore, we may form a binary matrix $\boldsymbol{B}\in \{0,1\}^{n_\Omega\times n }$ by stacking up the transposed constructor vectors $\boldsymbol{J}^{(i)}$ from the DSD process (\ref{eq12}); more precisely
\begin{equation*}
\boldsymbol{B}_{i,j}=\boldsymbol{J}_j^{(i)}.
\end{equation*}
When $\boldsymbol{B}$ is resulted from the DSD process over a specific set of shapes, we call it the corresponding \emph{bearing matrix}, and when its construction involves all the elements of the dictionary,  we refer to it as the \emph{dictionary bearing matrix}.

\begin{figure}[t]
    \centering
    \includegraphics[width=55mm]{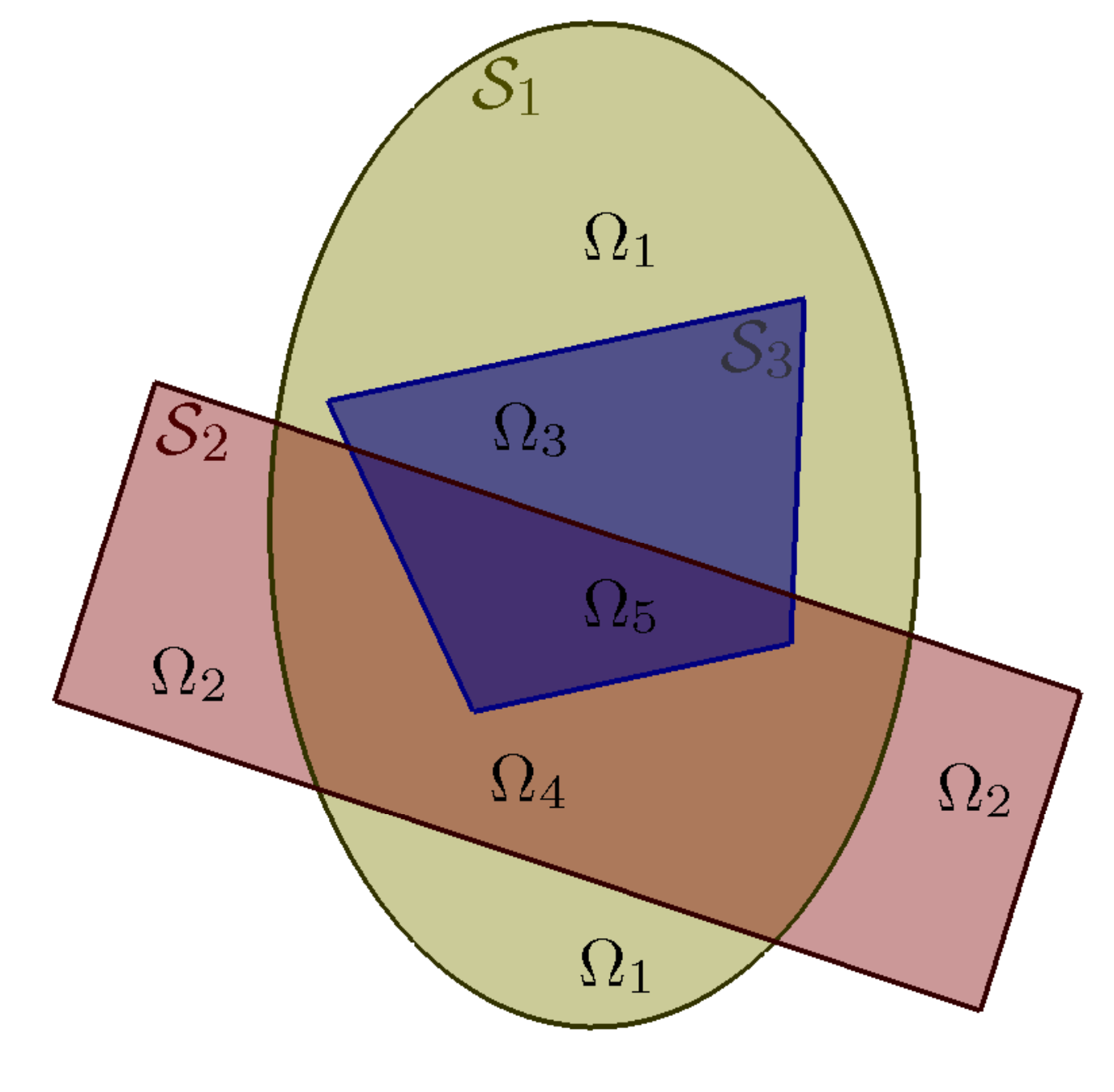}\quad
    \begin{tabular}[b]{c}\vspace{1cm}
    \begin{tabular}[b]{c|c|c}
      $i$ & $\boldsymbol{J}^{(i)}$ & $\Omega_i = \bigcap_{j=1}^3 \Theta_{\boldsymbol{J}_j^{(i)}}(\mathcal{S}_j)$\\ \hline
      \vspace{-.3cm}& & \\[.02cm]
      $1$ & $[1,0,0]^T$ & $\mathcal{S}_1 \cap \bar{\mathcal{S}_2^c} \cap \bar{\mathcal{S}_3^c}$\\
      $2$ & $[0,1,0]^T$ & $\bar{\mathcal{S}_1^c} \cap \mathcal{S}_2 \cap \bar{\mathcal{S}_3^c}$\\
      $3$ & $[1,0,1]^T$ & $\mathcal{S}_1 \cap \bar{\mathcal{S}_2^c} \cap \mathcal{S}_3$\\
      $4$ & $[1,1,0]^T$ & $\mathcal{S}_1 \cap \mathcal{S}_2 \cap \bar{\mathcal{S}_3^c}$\\
      $5$ & $[1,1,1]^T$ & \hspace{.01 cm} $\mathcal{S}_1 \cap \mathcal{S}_2 \cap \mathcal{S}_3$
    \end{tabular}\\[-.7cm]
     \vspace{.5cm}$\boldsymbol{B}=\begin{pmatrix} 1 & 0 & 0\\ 0 & 1 & 0\\ 1 & 0 & 1\\ 1 & 1 & 0\\ 1 & 1 & 1 \end{pmatrix}$
    \end{tabular}
    \captionlistentry[table]{A table beside a figure}
    \caption{An example of disjoint shape decomposition, where the shapes $\mathcal{S}_1$, $\mathcal{S}_2$ and $\mathcal{S}_3$ are decomposed into the shapelets $\Omega_1,\cdots,\Omega_5$. The constructor vectors are listed in the table and the corresponding bearing matrix is presented as $\boldsymbol{B}$. To highlight the pattern we succinctly used the notation $\bar{O}$ to represent $\mbox{cl}(O)$. For this example $\mathcal{I}_1=\{1,3,4,5\}$, $\mathcal{I}_2=\{2,4,5\}$ and $\mathcal{I}_3=\{3,5\}$}\label{fig2}
  \end{figure}

\begin{proposition}\label{pr2}
Given $n$ (overlapping) shapes $\mathcal{S}_1, \mathcal{S}_2,\cdots,\mathcal{S}_n$, for the corresponding shapelets constructed through the DSD process the following properties hold:
\begin{itemize}
\item[(a)] $\Omega_{i_1}\cap\Omega_{i_2} \seq \emptyset,\qquad\qquad$ (for $i_1,i_2\in\{1,2,\cdots,n_\Omega\}, \quad i_1\neq i_2$)
\item[(b)]$\bigcup_{i=1}^{n_\Omega} \Omega_i = \bigcup_{j=1}^{n}\mathcal{S}_j$,
\item[(c)]$\mathcal{S}_j = \bigcup_{i\in \mathcal{I}_j} \Omega_i$.
\end{itemize}
\end{proposition}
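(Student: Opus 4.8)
My plan is to prove (a) by a direct separation argument, to deduce (b) from (c), and to establish (c) through a distributive set identity, leaving the treatment of the discarded null shapelets as the delicate point.

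For (a), fix $i_1\neq i_2$. Their constructor vectors differ, so there is a coordinate $j$ with, say, $\boldsymbol{J}^{(i_1)}_j=1$ and $\boldsymbol{J}^{(i_2)}_j=0$. Definition (\ref{eq10}) then forces $\Omega_{i_1}\subseteq\Theta_1(\mathcal{S}_j)=\mathcal{S}_j$ and $\Omega_{i_2}\subseteq\Theta_0(\mathcal{S}_j)=\mbox{cl}(\mathcal{S}_j^c)$, so that $\Omega_{i_1}\cap\Omega_{i_2}\subseteq\mathcal{S}_j\cap\mbox{cl}(\mathcal{S}_j^c)=\partial\mathcal{S}_j$, the last step using that $\mathcal{S}_j$ is closed. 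I would then invoke the elementary fact that the boundary of a closed set has empty interior: an open ball contained in $\partial\mathcal{S}_j\subseteq\mathcal{S}_j$ would lie in $\mbox{int}(\mathcal{S}_j)$, which is disjoint from $\partial\mathcal{S}_j$. Since any subset of a set with empty interior has empty interior, $\mbox{int}(\Omega_{i_1}\cap\Omega_{i_2})=\emptyset$, i.e. $\Omega_{i_1}\cap\Omega_{i_2}\seq\emptyset$.

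To obtain (b) from (c), note that every shapelet carries a nonzero constructor vector, so each index $i$ lies in $\mathcal{I}_j$ for at least one $j$; hence $\bigcup_{j}\mathcal{I}_j=\{1,\dots,n_\Omega\}$ and $\bigcup_{j}\bigcup_{i\in\mathcal{I}_j}\Omega_i=\bigcup_{i}\Omega_i$. Granting (c), the left side equals $\bigcup_{j}\mathcal{S}_j$, which is exactly (b). For (c) itself, the inclusion $\bigcup_{i\in\mathcal{I}_j}\Omega_i\subseteq\mathcal{S}_j$ is immediate, since $i\in\mathcal{I}_j$ means $\boldsymbol{J}^{(i)}_j=1$ and thus $\Omega_i\subseteq\mathcal{S}_j$. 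For the reverse inclusion I would use the tautology $\mathcal{S}_k\cup\mbox{cl}(\mathcal{S}_k^c)=\mathbb{R}^d$, writing $\mathcal{S}_j=\mathcal{S}_j\cap\bigcap_{k\neq j}\big(\Theta_1(\mathcal{S}_k)\cup\Theta_0(\mathcal{S}_k)\big)$ and distributing the intersection over the unions; this exhibits $\mathcal{S}_j$ as the union of all candidate intersections $\bigcap_{k}\Theta_{\boldsymbol{J}_k}(\mathcal{S}_k)$ over constructor vectors $\boldsymbol{J}$ with $\boldsymbol{J}_j=1$.

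The main obstacle is the gap between that union over all candidates and the union over $\mathcal{I}_j$, which retains only the candidates of nonempty interior: the two differ exactly by those candidates with $\boldsymbol{J}_j=1$ that were discarded as null. Each discarded piece has empty interior, so the identity holds verbatim up to a set of empty interior, i.e. in the $\seq$ sense of part (a). To upgrade this to the stated set equality one must exclude lower-dimensional appendages of the $\mathcal{S}_k$ (otherwise a sliver of $\mathcal{S}_j$ not meeting any full-dimensional shapelet could be lost); this is precisely the regularity already built into the paper's notion of a shape, where not being a null set is identified with $\mbox{int}(\mathcal{S})\neq\emptyset$. Under that standing assumption every discarded sliver is absorbed into the full-dimensional shapelets, the finite union $\bigcup_{i\in\mathcal{I}_j}\Omega_i$ is closed and recovers all of $\mathcal{S}_j$, and (c) — together with (a)–(b) — follows. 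I expect verifying this absorption to be the only genuinely subtle part of the argument.
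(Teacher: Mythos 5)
Your proof of (a) matches the paper's in substance but is more careful at one point: the paper asserts $\Theta_{\boldsymbol{J}_j^{(i_1)}}(\mathcal{S}_j) = \big(\Theta_{\boldsymbol{J}_j^{(i_2)}}(\mathcal{S}_j)\big)^c$, which is not literally true since $\Theta_0(\mathcal{S})=\mbox{cl}(\mathcal{S}^c)\neq \mathcal{S}^c$; your observation that the intersection lands in $\mathcal{S}_j\cap\mbox{cl}(\mathcal{S}_j^c)=\partial\mathcal{S}_j$, which has empty interior for a closed set, is the correct repair. For (b) and (c) you take a genuinely different route: the paper proves (b) first, by induction on $n$ (adding one shape at a time and tracking how the shapelets refine), and then derives (c) by re-running essentially the same computation with $\mathcal{S}_{j_0}$ singled out; you instead obtain (c) directly from the distributive identity $\mathcal{S}_j=\mathcal{S}_j\cap\bigcap_{k\neq j}\big(\Theta_1(\mathcal{S}_k)\cup\Theta_0(\mathcal{S}_k)\big)$ and read off (b) from $\bigcup_j\mathcal{I}_j=\{1,\dots,n_\Omega\}$. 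Your order of deduction is shorter and dispenses with the induction, which buys nothing here. The one delicate point --- that the distributive expansion produces \emph{all} candidate intersections, whereas $\mathcal{I}_j$ indexes only those of nonempty interior --- is present in both proofs, and you are the more honest about it: the paper dismisses it with ``neglecting the empty shapelets does not affect the proof,'' but the discarded candidates are merely of empty interior, not empty, so a point of $\mathcal{S}_j$ lying only in discarded candidates could in principle be lost (e.g.\ a one-dimensional filament of $\mathcal{S}_j$ buried in the interior of another shape, which is not excluded by the paper's bare definition of a shape as closed and non-null). Strictly speaking, then, (b) and (c) hold only in the $\seq$ sense unless one adds the regularity you invoke ($\mathcal{S}=\mbox{cl}(\mbox{int}(\mathcal{S}))$); since $\seq$-equality is all the paper uses downstream, this is a defect of the statement rather than of your argument, and your proof is otherwise complete.
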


Putting this into words, the DSD process performs a partitioning on each shape (ignoring the measure-zero boundaries). In the simplest case of two overlapping shapes $\mathcal{S}_1$ and $\mathcal{S}_2$, the three possible shapelets are $\Omega_1=\mbox{cl}(\mathcal{S}_1^c)\cap\mathcal{S}_2$, $\Omega_2=\mathcal{S}_1\cap\mathcal{S}_2$ and $\Omega_3=\mathcal{S}_1\cap\mbox{cl}(\mathcal{S}_2^c)$ for which one can easily verify the properties (a)-(c). Figure \ref{fig2} provides another example for $n=3$ where $n_\Omega=5<2^3-1$.

\begin{theorem}\label{th2x}
For a given set of shapes $\{\mathcal{S}_j\}_{j\in \Ip\cup \In}$, consider the shapelets resulted by applying the DSD process. If the composition $\mathpzc{R}_{\;\Ip,\In} = ({\bigcup}_{j\in \Ip}\mathcal{S}_j) \backslash ({\bigcup}_{j\in \In}\mathcal{S}_j)$ is non-redundant, then
\begin{itemize}
\item[(a)] for each $j\in \Ip$, there exists a (unique) shapelet $\Omega \subset \mathcal{S}_j$ such that \vspace{-.1cm}
\[\forall j'\in (\Ip\cup \In) \; \mbox{and}\; j'\neq j : \quad \Omega \cap \mathcal{S}_{j'}\seq \emptyset;
\]
\item[(b)]\vspace{-.1cm} for each $j\in \In$, there exist some shapelets $\Omega \subset \mathcal{S}_j \cap (\underset{\hat j\in \Ip}{\bigcup}\mathcal{S}_{\hat j})$ such that \vspace{-.3cm}
\[\forall j'\in \In \; \mbox{and}\; j'\neq j : \quad \Omega \cap \mathcal{S}_{j'}\seq \emptyset.
\]
\end{itemize}
\end{theorem}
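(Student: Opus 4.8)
The plan is to read off both claims directly from the definition of non-redundancy (Definition \ref{nonred:def}) by translating each ``removal of a single index'' into a statement about shapelets. Throughout, I apply the DSD process to the shapes $\{\mathcal{S}_j\}_{j\in\Ip\cup\In}$, so that every shapelet $\Omega$ corresponds to a unique constructor vector $\boldsymbol{J}\in\{0,1\}^{|\Ip\cup\In|}$, with $\boldsymbol{J}_k=1$ meaning $\Omega\subset\mathcal{S}_k$ and $\boldsymbol{J}_k=0$ meaning $\Omega\cap\mathcal{S}_k\seq\emptyset$ (by Proposition \ref{pr2} and the construction (\ref{eq10})). The key bookkeeping step is to express, for each index, the region whose measure changes under removal as a union of shapelets with a prescribed sign pattern in the constructor vector.

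For part (a), fix $j\in\Ip$. Since deleting a positive shape can only shrink the composition, $\mathpzc{R}_{\;\Ip\setminus\{j\},\In}\subseteq\mathpzc{R}_{\;\Ip,\In}$, so non-redundancy forces the difference set $\mathpzc{R}_{\;\Ip,\In}\setminus\mathpzc{R}_{\;\Ip\setminus\{j\},\In}$ to have positive measure. I would compute this difference and identify it (up to a null set) with $\mathcal{S}_j\setminus\bigcup_{k\in(\Ip\cup\In)\setminus\{j\}}\mathcal{S}_k$, i.e. the points that lie in $\mathcal{S}_j$ but in no other element of the composition. In constructor-vector language this is exactly the shapelet whose vector is the standard basis vector $e_j$ (a $1$ in the $j$-th slot, zeros elsewhere). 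Positive measure means this shapelet has nonempty interior and is therefore a genuine shapelet $\Omega\subset\mathcal{S}_j$ with $\Omega\cap\mathcal{S}_{j'}\seq\emptyset$ for every $j'\neq j$; uniqueness is automatic, since the two requirements $\Omega\subset\mathcal{S}_j$ and $\Omega\cap\mathcal{S}_{j'}\seq\emptyset$ for all $j'\neq j$ pin the constructor vector down to $e_j$, and distinct shapelets have distinct constructor vectors.

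For part (b), fix $j\in\In$. Now deleting a negative shape can only enlarge the composition, so $\mathpzc{R}_{\;\Ip,\In}\subseteq\mathpzc{R}_{\;\Ip,\In\setminus\{j\}}$ and non-redundancy forces $\mathpzc{R}_{\;\Ip,\In\setminus\{j\}}\setminus\mathpzc{R}_{\;\Ip,\In}$ to have positive measure. I would show this difference equals (modulo a null set) $\mathcal{S}_j\cap(\bigcup_{\hat j\in\Ip}\mathcal{S}_{\hat j})\setminus\bigcup_{k\in\In\setminus\{j\}}\mathcal{S}_k$: these are precisely the points that $\mathcal{S}_j$ alone was carving out of the positive union. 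Decomposing this region over the DSD partition (Proposition \ref{pr2}(b)), it is a union of shapelets whose constructor vectors carry a $1$ in slot $j$, a $0$ in every slot of $\In\setminus\{j\}$, and at least one $1$ among the slots of $\Ip$. Each such shapelet $\Omega$ satisfies $\Omega\subset\mathcal{S}_j\cap(\bigcup_{\hat j\in\Ip}\mathcal{S}_{\hat j})$ and $\Omega\cap\mathcal{S}_{j'}\seq\emptyset$ for all $j'\in\In\setminus\{j\}$, and positive measure of the union guarantees at least one of them is non-null, which proves existence.

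The routine-but-delicate part, and the only place I expect friction, is justifying the two set identities up to measure zero: the difference sets are naturally written with ordinary complements, whereas the shapelets in (\ref{eq10}) use closures $\mbox{cl}(\mathcal{S}^c)$, so I must argue that replacing $\mathcal{S}^c$ by $\mbox{cl}(\mathcal{S}^c)$ (and the attendant unions and intersections) only alters things on the measure-zero boundaries. This is exactly where the $\seq$ relation and property (II) of a shape (nonempty interior is equivalent to positive measure) do the work, letting me pass freely between ``positive measure'', ``nonempty interior'', and ``a valid shapelet'' without the boundary terms interfering.
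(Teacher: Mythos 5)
Your proposal is correct and follows essentially the same route as the paper's proof: both reduce non-redundancy of index $j$ to the positive measure of the difference set obtained by removing $\mathcal{S}_j$, compute that difference via set algebra to be $\mathcal{S}_j\cap\mathcal{S}_+^c\cap\mathcal{S}_-^c$ (part (a)) or $\mathcal{S}_j\cap\mathcal{S}_+\cap\mathcal{S}_-^c$ (part (b)), and then invoke the partitioning property of the DSD process (Proposition \ref{pr2}) to express that region as a union of shapelets with the prescribed constructor-vector pattern, pinning down uniqueness in (a) by the uniqueness of the constructor vector. The closure-versus-complement issue you flag is exactly the null-set bookkeeping the paper absorbs into the $\seq$ relation.
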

As an illustrative example, Figure \ref{fig3} shows the resulting shapelets of the DSD process on four shapes $\mathcal{S}_1,\cdots,\mathcal{S}_4$. The composition $\mathpzc{R}_{\;\{1,2\},\{3,4\}} $ is a non-redundant representation. It can be readily observed that part (a) of Theorem \ref{th2x} applies to $\Omega_1\subset \mathcal{S}_1$, and $\Omega_3\subset \mathcal{S}_2$. Likewise, part (b) applies to $\Omega_5\subset \mathcal{S}_3$, and to the multiple shapelets $\Omega_{8}$, $\Omega_{9}$ and $\Omega_{11}$ which are in $\mathcal{S}_4$.

\subsection{A Bijective Relationship Between the Shape Composition and the Combination of Characteristics} \label{Sec:Rel} Proposition \ref{th1} establishes a relationship between $\mathpzc{R}_{\;\Ip,\In}$ and $\mathpzc{L}_{\boldsymbol{\alpha}}(x)$. To avoid an exhaustive search associated with the shape composition problem, our proposed strategy is determining the $\alpha_j$ coefficients as a proxy to selecting the index sets $\Ip$ and $\In$. For this purpose we need to provide a procedure that relates the two representations. More specifically:
\begin{itemize}
\item[(1)] Given $\boldsymbol{\alpha}$ and knowing that $\mathpzc{L}_{\boldsymbol{\alpha}}(x)$ corresponds to a non-redundant composition $\mathpzc{R}_{\;\Ip,\In}$, how can we determine $\Ip$ and $\In$?
\item[(2)] Given $\{\mathcal{S}_j\}_{j=1}^{n_s}$ and the index sets $\Ip$ and $\In$, how can we determine an ${\boldsymbol{\alpha}}$ vector that is a representative for $\mathpzc{R}_{\;\Ip,\In}$?
\end{itemize}
To address the first question, for a given $\boldsymbol{\alpha}$ we define \begin{equation}\label{eq13}
\left\{
     \begin{array}{l}
       \xi^+(\balpha) \triangleq \big\{j: j\in\{1,2,\cdots, n_s\}, \alpha_j>0\big\}\\
       \xi^-(\balpha) \triangleq \big\{j: j\in\{1,2,\cdots ,n_s\}, \alpha_j<0\big\}
     \end{array}.
     \right.
\end{equation}
Knowing that $\mathpzc{L}_{\boldsymbol{\alpha}}(x)$ corresponds to an $\mathpzc{R}_{\;\Ip,\In}$, the natural choices for $\Ip$ and $\In$ are $\xi^+(\balpha)$ and $\xi^-(\balpha)$, respectively.

To address the second question, from Proposition \ref{th1} we are always certain about the existence of an $\balpha$ vector such that
\begin{equation}\label{eq14}
\supp^+\big(\mathpzc{L}_{\boldsymbol{\alpha}}(x) \big) = \mathpzc{R}_{\;\Ip,\In}.
\end{equation}
In fact, as inferred from the proof of Proposition \ref{th1}, there is always an infinite number of possible $\balpha$ vectors that satisfy (\ref{eq14}). As a requirement for our subsequent analysis, we will introduce a procedure that limits the corresponding $\balpha$ vectors to a certain set, potentially a singleton (a single element set). For this purpose, we will pose the process of determining the representative $\balpha$ as a linear program and will discuss the uniqueness conditions for the proposed problem. This process is closely linked to the proposed convex framework and specifically designed to facilitate the analysis.

Given $\{\mathcal{S}_j\}_{j\in \Ip \cup \In}$, we perform a DSD to generate a set of disjoint shapelets, i.e.,
\begin{equation}\label{eq15}
\{\Omega_i,\boldsymbol{J}^{(i)}\}_{i=1}^{n_\Omega} = \mbox{DSD}\Big( \{\mathcal{S}_j\}_{j\in \Ip \cup \In}\Big).
\end{equation}
As the $\Omega_i$ shapelets are disjoint, and $\mathcal{S}_j = \bigcup_{i\in \mathcal{I}_j} \Omega_i$  for each $j\in \Ip \cup \In$, we can write
\begin{align*}
\mathpzc{R}_{\;\Ip,\In} = \big(\bigcup_{j\in \Ip}\mathcal{S}_j\big) \big\backslash \big(\!\bigcup_{j\in \In}\mathcal{S}_j\big) = \big(   \bigcup_{i\in\!\!\! \underset{j\in\Ip}{\bigcup}\!\!\! \mathcal{I}_j}\!\!\Omega_i\big) \big\backslash \big(   \bigcup_{i\in\!\!\! \underset{j\in\In}{\bigcup}\!\!\! \mathcal{I}_j}\!\!\Omega_i\big)
=   \bigcup_{i\in (\!\!\underset{j\in\Ip}{\bigcup}\!\!\! \mathcal{I}_j)\setminus  (\!\!\underset{j\in\In}{\bigcup}\!\!\! \mathcal{I}_j) } \!\!\!\!\Omega_i.
\end{align*}
To simplify the future formulations, we will make use of the following notations:
\begin{equation*}
\Np \triangleq \bigcup_{j\in \Ip}\mathcal{I}_j \qquad \mbox{and} \qquad \Nn \triangleq \bigcup_{j\in \In}\mathcal{I}_j.
\end{equation*}
The sets $\Np$ and $\Nn$ contain the indices of the shapelets that are in $\bigcup_{j\in \Ip}\mathcal{S}_j$ and $\bigcup_{j\in \In}\mathcal{S}_j$, respectively. Clearly, the two sets are not necessarily disjoint.

In constructing an $\balpha$ vector, we need to make sure that for every $i\in \Np\setminus \Nn$ we have
\begin{equation}\label{eq16}
\mathpzc{L}_{\boldsymbol{\alpha}}(x)>0, \quad x\in \Omega_i
\end{equation}
and for every $i\in\Nn$:
\begin{equation}\label{eq17}
\mathpzc{L}_{\boldsymbol{\alpha}}(x)\leq 0, \quad x\in \Omega_i.
\end{equation}
Conditions (\ref{eq16}) and (\ref{eq17}) basically assure that (\ref{eq14}) is satisfied. We proceed by defining
\begin{equation}\label{eq18}
\left\{
\begin{array}{l}
       \Kp^{(i)} \triangleq \{j:j\in\Ip, \boldsymbol{J}_j^{(i)}=1\}\\[.2cm]
       \Kn^{(i)} \triangleq \{j:j\in\In, \boldsymbol{J}_j^{(i)}=1\}
     \end{array}.
   \right.
\end{equation}
The set $\Kp^{(i)}$ (correspondingly $\Kn^{(i)}$) contains the index of the shapes $\{\mathcal{S}_j\}_{j\in \Ip}$ (correspondingly $\{\mathcal{S}_j\}_{j\in \In}$) that overlap with $\Omega_i$.

\begin{figure}[t]
    \centering
    \centering \includegraphics[width=52mm]{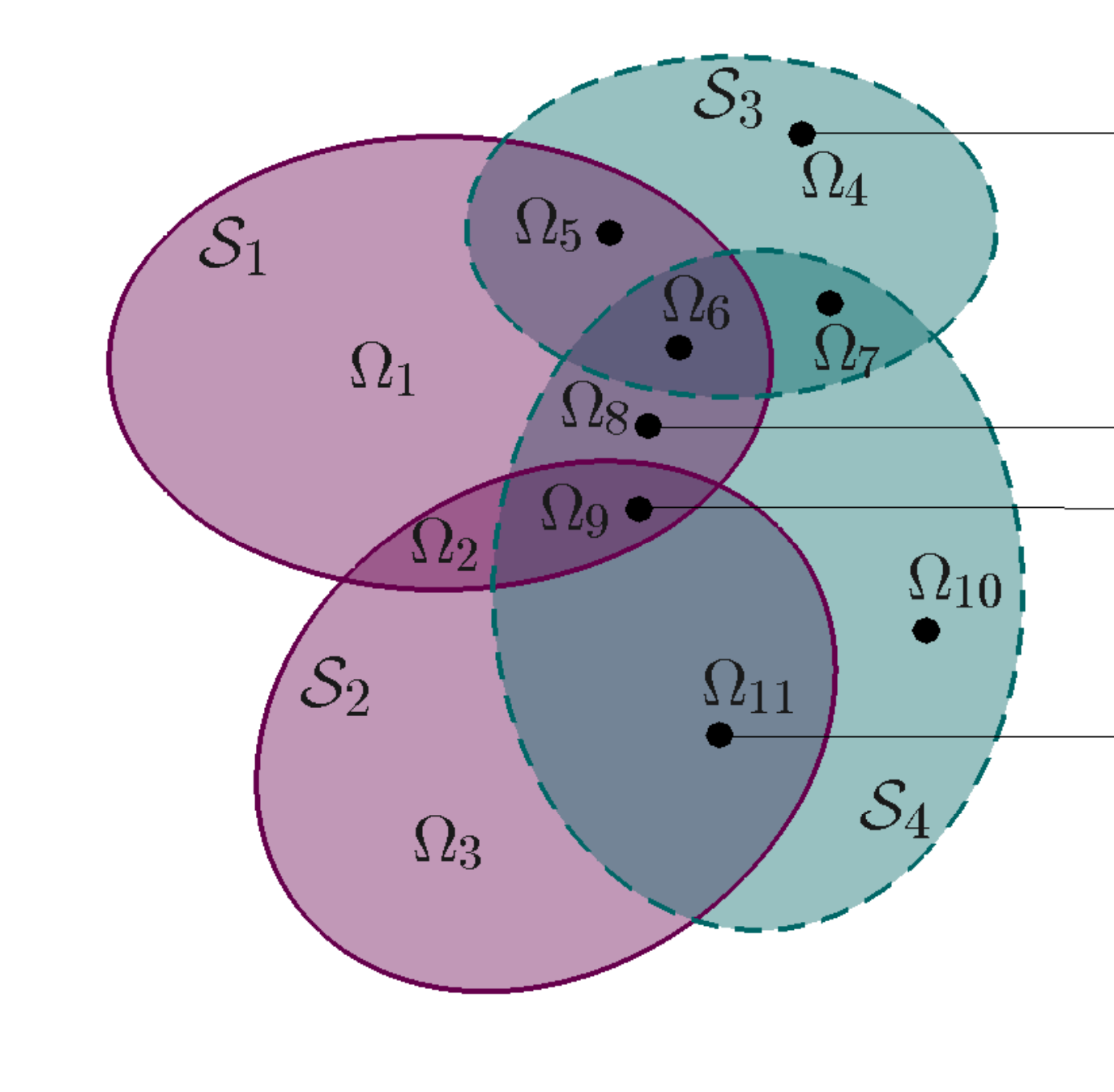}\setlength{\tabcolsep}{2pt}
    \begin{tabular}[b]{ccc}
      $\boldsymbol{J}^{(4)}=[0,0,1,0]^T$& \quad$\therefore$&\quad $\alpha_3\leq 0$\vspace{.25cm}\\ \vspace{.25cm}\vdots & &\vdots \\
       $\boldsymbol{J}^{(8)}=[1,0,0,1]^T$& \quad$\therefore$&\quad $\alpha_4\leq -1$\\
       $\boldsymbol{J}^{(9)}=[1,1,0,1]^T$& \quad$\therefore$& \quad  $\alpha_4\leq -2$\vspace{.15cm}\\\vspace{.15cm}\vdots & &\vdots \\
       $\boldsymbol{J}^{(11)}=[0,1,0,1]^T$& \quad$\therefore$& \quad $\alpha_4\leq -1$\vspace{.15cm}\\\vspace{.75cm}
    \end{tabular}
    \captionlistentry[table]{A table beside a figure}
    \caption{Finding the $\balpha$ vector corresponding to $(\mathcal{S}_1\cup\mathcal{S}_2) \setminus (\mathcal{S}_3\cup \mathcal{S}_4)$: based on the DSD applied, we have $\Np = \mathcal{I}_1\cup \mathcal{I}_2 = \{1,2,3,5,6,8,9,11\}$ and $\Nn=\mathcal{I}_3\cup \mathcal{I}_4 = \{4,5,6,7,8,9,10,11\}$. After setting $\alpha_1$ and $\alpha_2$ to 1, we need to choose $\alpha_3$ and $\alpha_4$ in a way that in all shapelets marked with a black circle (indexed by $\Nn$) the value of $\mathpzc{L}_{\balpha}(x)$ is non-positive. This requirement gives rise to eight linear inequalities in terms of $\alpha_3$ and $\alpha_4$. The linear constraints can be deduced by referring to the constructor vector of each shapelet. For instance, for $\Omega_{11}$ we have $\Kp^{(11)}=\{2\}$ and $\Kn^{(11)}=\{4\}$ and based on (\ref{eq20}) the corresponding inequality is $\alpha_4\leq -1$.}\label{fig3}
  \end{figure}

If for $j\in\Ip$ we set $\alpha_j=1$, then (\ref{eq16}) is automatically satisfied. To assure that (\ref{eq17}) holds, we need to impose
\begin{equation}\label{eq19}
\forall i\in \Nn \qquad:\qquad\sum_{j\in \Kp^{(i)}} \alpha_j + \sum_{j\in \Kn^{(i)}} \alpha_j\leq 0.
\end{equation}
Since $\alpha_j=1$ for $j\in\Ip$, (\ref{eq19}) is equivalent to
\begin{equation}\label{eq20}
\forall i\in \Nn \qquad:\qquad \sum_{j\in \Kn^{(i)}} \alpha_j\leq -| \Kp^{(i)}|,
\end{equation}
where $|\;.\;|$ represents the set cardinality. The linear inequality constraints (\ref{eq20}) define a convex set for the permissible $\alpha_j$ coefficients when $j\in\In$. These constraints are met when the $\alpha_j$ values are sufficiently negative for $j\in\In$. To limit the number of possibilities we consider solving the linear program
\begin{equation}\label{eq21}
\left\{
\begin{array}{lll}
       \max & \underset{j\in \In}{\sum}\alpha_j &\\
       s.t.&\underset{j\in \Kn^{(i)}}{\sum} \alpha_j\leq -| \Kp^{(i)}|&\quad\quad \forall i\in \Nn
     \end{array},
   \right.
\end{equation}
which is capable of having a unique solution.

Summarily, given $\{\mathcal{S}_j\}_{j\in \Ip\cup\In}$, $\Ip$ and $\In$, determining the corresponding $\balpha$ is performed through the following \emph{linkage process}:
\emph{
\begin{itemize}
\item[(I)] for $j\in\Ip$, set $\alpha_j=1$;
\item[(II)] for $j\in\In$, the $\alpha_j$ values are selected to be a maximizer to the linear program (\ref{eq21}).
\end{itemize}}

\noindent We again make a reference to Figure \ref{fig3} as an illustrative example.

Corresponding to any vector $\balpha^\dagger$ obtained through the linkage process, we consider a vector $\Beta^\dagger\in \mathbb{R}^{n_\Omega}$ as
\begin{equation*}
\beta_i^\dagger  = \sum_{j\in\Kp^{(i)}\cup\Kn^{(i)}}\alpha_j^\dagger, \quad i=1,2,\cdots, n_\Omega.
\end{equation*}
The quantities $\beta_i^\dagger$ are simply the value of $\mathcal{L}_{\balpha^\dagger}(x)$ over $\mbox{int}(\Omega_i)$ (Figure \ref{fig6}). Denoting the bearing matrix associated with (\ref{eq15}) as
\[ \boldsymbol{B}^\mathpzc{R} = \big[\boldsymbol{J}^{(i)}_j\big],
\]
the shape and shapelet coefficients may simply be related via
\begin{equation}\label{eq21x}
\Beta^\dagger = \boldsymbol{B}^\mathpzc{R} \balpha^\dagger.
\end{equation}
The following result states that when $\mathpzc{R}_{\;\Ip,\In}$ is non-redundant, regardless of the maximizer's uniqueness in (\ref{eq21}),  there are some general properties that hold for $\balpha^\dagger$ and $\Beta^\dagger$.

\begin{figure}[t]
    \centering\vspace{-1cm}
    \centering \includegraphics[width=68mm]{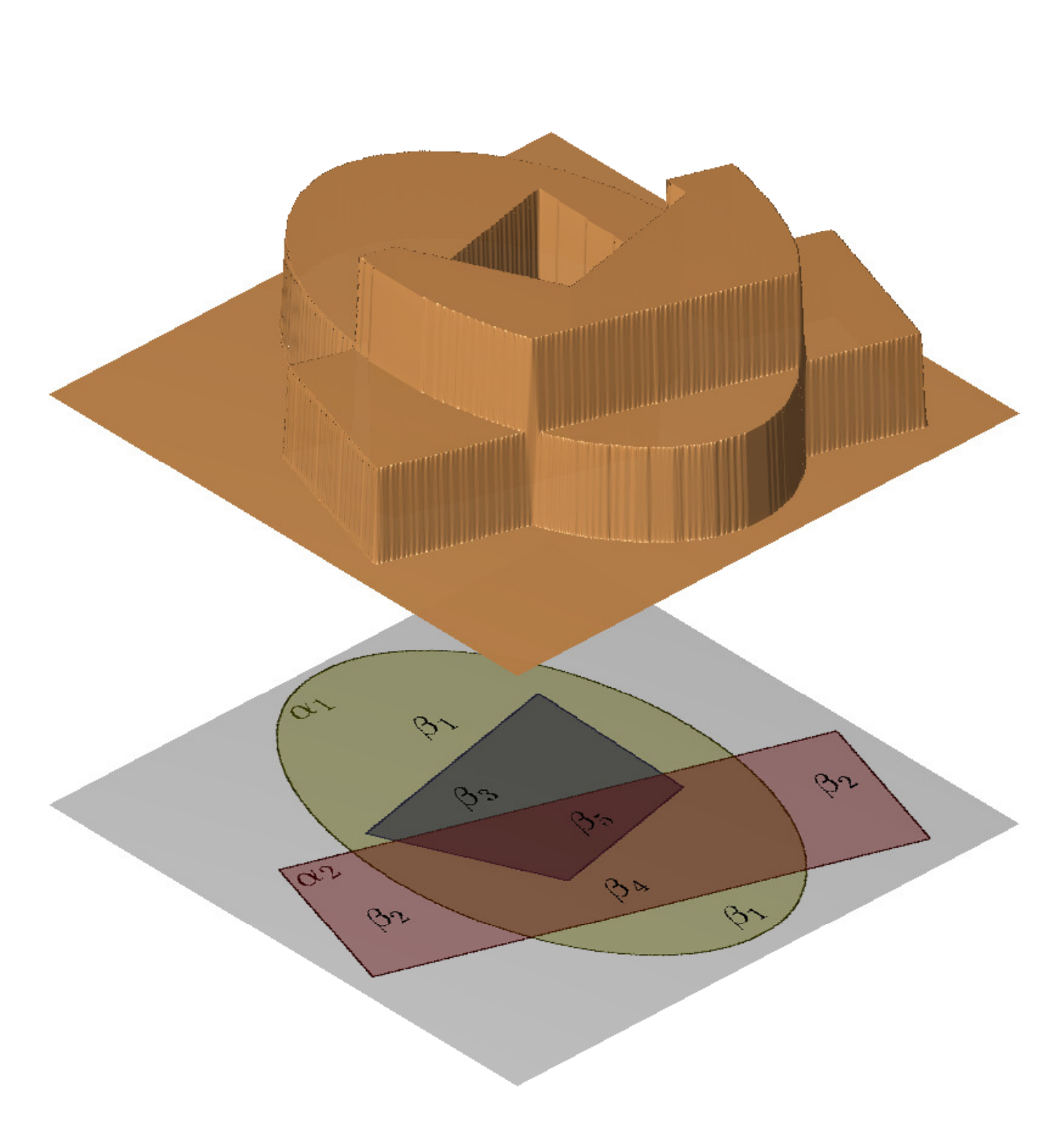}\qquad\quad
    \begin{tabular}[b]{c}
      $\boldsymbol{\beta}^\dagger  = \left( \begin{array}{ccc}
1 & 0 & 0 \\
0 & 1 & 0 \\
1 & 0 & 1 \\
1 & 1 & 0 \\
1 & 1 & 1 \end{array} \right) \begin{pmatrix} 1\\ 1\\ -2\end{pmatrix}$\\ \\
       \\ \\ \\
    \end{tabular}
    \captionlistentry[table]{A table beside a figure}
    \caption{The function $\mathpzc{L}_{\balpha}(x)$ as a representative of the non-redundant composition $\mathpzc{R}_{\{1,2\},\{3\}}$, for the shapes presented in Figure \ref{fig2}. Stepping through the linkage process for this composition simply yields $\balpha^\dagger = (1,1,-2)^T$ and accordingly $\Beta^\dagger = (1,1,-1,2,0)^T$ }\label{fig6}
  \end{figure}

\begin{proposition}\label{th2xx}
Given a set of shapes $\{\mathcal{S}_j\}_{j\in\Ip\cup\In}$ and the corresponding index sets $\Ip$ and $\In$, suppose the composition $\mathpzc{R}_{\;\Ip,\In} = ({\bigcup}_{j\in \Ip}\mathcal{S}_j) \backslash ({\bigcup}_{j\in \In}\mathcal{S}_j)$ is non-redundant, $\balpha^\dagger$ is a vector obtained through the aforementioned linkage process and $\Beta^\dagger  =\boldsymbol{B}^\mathpzc{R} \balpha^\dagger$ is the corresponding shapelet coefficient vector. The following properties hold:
\begin{itemize}
\item[(a)] for each $j\in\In$, $\alpha_j^\dagger\leq -1$;
\item[(b)] for $j\in\Ip$, there exists a unique $i\in\mathcal{I}_j$ such that $\beta_i^\dagger = 1$ (unit-valued shapelet);
\item[(c)] for $j\in\In$, there exist $i\in\mathcal{I}_j$ such that $\beta_i^\dagger = 0$ (null-valued shapelets);
\item[(d)] the mapping (\ref{eq21x}) relating $\balpha^\dagger$ to $\Beta^\dagger$ is injective.
\end{itemize}
\end{proposition}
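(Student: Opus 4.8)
The backbone of the argument is Theorem \ref{th2x}, combined with one identity: since $\alpha_j^\dagger=1$ for $j\in\Ip$, the value of $\mathpzc{L}_{\balpha^\dagger}$ on a shapelet can be written as
\[\beta_i^\dagger = |\Kp^{(i)}| + \sum_{k\in\Kn^{(i)}}\alpha_k^\dagger .\]
For every $i\in\Nn$ the feasibility constraint of the linear program (\ref{eq21}) reads $\sum_{k\in\Kn^{(i)}}\alpha_k^\dagger\leq -|\Kp^{(i)}|$, which gives $\beta_i^\dagger\leq 0$ on every shapelet carrying a negative index. I would use this bound throughout. For part (a), I would fix $j\in\In$ and invoke Theorem \ref{th2x}(b) to obtain a shapelet $\Omega_i\subset\mathcal{S}_j\cap(\bigcup_{\hat j\in\Ip}\mathcal{S}_{\hat j})$ meeting no other negative shape; its constructor vector then forces $\Kn^{(i)}=\{j\}$ with $\Kp^{(i)}\neq\emptyset$. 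As $i\in\Nn$, the matching constraint of (\ref{eq21}) collapses to $\alpha_j^\dagger\leq -|\Kp^{(i)}|\leq -1$, which is the claim (and holds for any feasible point, not just the maximizer).

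For part (b), Theorem \ref{th2x}(a) supplies, for each $j\in\Ip$, a unique shapelet $\Omega_{i_j}\subset\mathcal{S}_j$ disjoint from every other element of the composition, so $\Kp^{(i_j)}=\{j\}$, $\Kn^{(i_j)}=\emptyset$, and hence $\beta_{i_j}^\dagger=1$; this gives existence. For uniqueness I would argue that any $i\in\mathcal{I}_j$ with $\beta_i^\dagger=1$ cannot lie in $\Nn$ (else $\beta_i^\dagger\leq 0$ by the bound above), so $\Kn^{(i)}=\emptyset$; then $\beta_i^\dagger=|\Kp^{(i)}|=1$ forces $\Kp^{(i)}=\{j\}$, and the uniqueness clause of Theorem \ref{th2x}(a) pins $i$ down.

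Part (c) is the genuinely optimization-flavored step, and I expect it to be the main obstacle. Having established $\beta_i^\dagger\leq 0$ for all $i\in\Nn$, I must upgrade this to equality on at least one shapelet of each $\mathcal{I}_j$ with $j\in\In$. I would use an active-constraint argument: the only constraints of (\ref{eq21}) involving $\alpha_j$ are those indexed by $i\in\mathcal{I}_j$, and such a constraint is slack precisely when $\beta_i^\dagger<0$. If all of them were slack, one could raise $\alpha_j$ by a small $\epsilon>0$ without leaving the feasible set while strictly increasing the objective $\sum_{k\in\In}\alpha_k$, contradicting the optimality of $\balpha^\dagger$. Hence some constraint in $\mathcal{I}_j$ is tight, i.e.\ $\beta_i^\dagger=0$ for some $i\in\mathcal{I}_j$.

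For part (d), injectivity of the map (\ref{eq21x}) is equivalent to $\boldsymbol{B}^\mathpzc{R}$ having full column rank, its $j$-th column being the indicator of $\mathcal{I}_j$. I would show $\boldsymbol{B}^\mathpzc{R}\boldsymbol{v}=0\Rightarrow\boldsymbol{v}=0$ by a two-stage elimination that again rides on Theorem \ref{th2x}. Evaluating the equation on the private rows $i_j$ ($j\in\Ip$) from part (b), where only column $j$ is nonzero, forces $v_j=0$ for every $j\in\Ip$. Evaluating it next on the rows furnished by Theorem \ref{th2x}(b) for each $j\in\In$, where among the $\In$-columns only column $j$ is nonzero and the remaining entries sit in already-eliminated $\Ip$-columns, forces $v_j=0$ for every $j\in\In$. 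Thus $\boldsymbol{v}=0$ and the map is injective; the only delicacy is ordering the elimination so that the $\In$-rows see no surviving unknowns besides $v_j$.
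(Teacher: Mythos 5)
Your proposal is correct and follows essentially the same route as the paper: parts (a)--(c) lean on Theorem \ref{th2x} and the feasibility/active-constraint structure of the linear program (\ref{eq21}) exactly as the paper does, and your null-space elimination for (d) is just the paper's block-triangular rank argument for $\boldsymbol{B}^\mathpzc{R}$ rephrased. The only (welcome) difference is that you spell out the uniqueness half of (b) --- ruling out a second shapelet in $\mathcal{I}_j$ with $\beta_i^\dagger=1$ via the bound $\beta_i^\dagger\leq 0$ on $\Nn$ --- which the paper leaves implicit.
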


When the maximizer of (\ref{eq21}) is unique, the linkage process allows relating a given composition $\mathpzc{R}_{\;\Ip,\In}$ to a well-defined $\balpha_\mathpzc{R}$ vector. In this case we may simply represent the relationship by
\begin{equation*}
\balpha_\mathpzc{R} \triangleq \mathcal{A}\big(\{\mathcal{S}_j\}_{j\in\Ip\cup\In};\Ip,\In\big).
\end{equation*}
By construction, if $\balpha_\mathpzc{R} = \mathcal{A}\big(\{\mathcal{S}_j\}_{j\in\Ip\cup\In};\Ip,\In\big)$ then $\xi^+(\balpha_\mathpzc{R})=\Ip$ and $\xi^-(\balpha_\mathpzc{R})=\In$.

In general there is a possibility that stepping through
the linkage process does not yield a well defined $\balpha_\mathpzc{R}$, due to the non-uniqueness issue. An example of this case is shown in Figure \ref{fig4}(a) to construct which we have performed a careful shape selection and alignment.

\begin{figure}[]
\centering\subfigure[][]{\includegraphics[width=60mm]{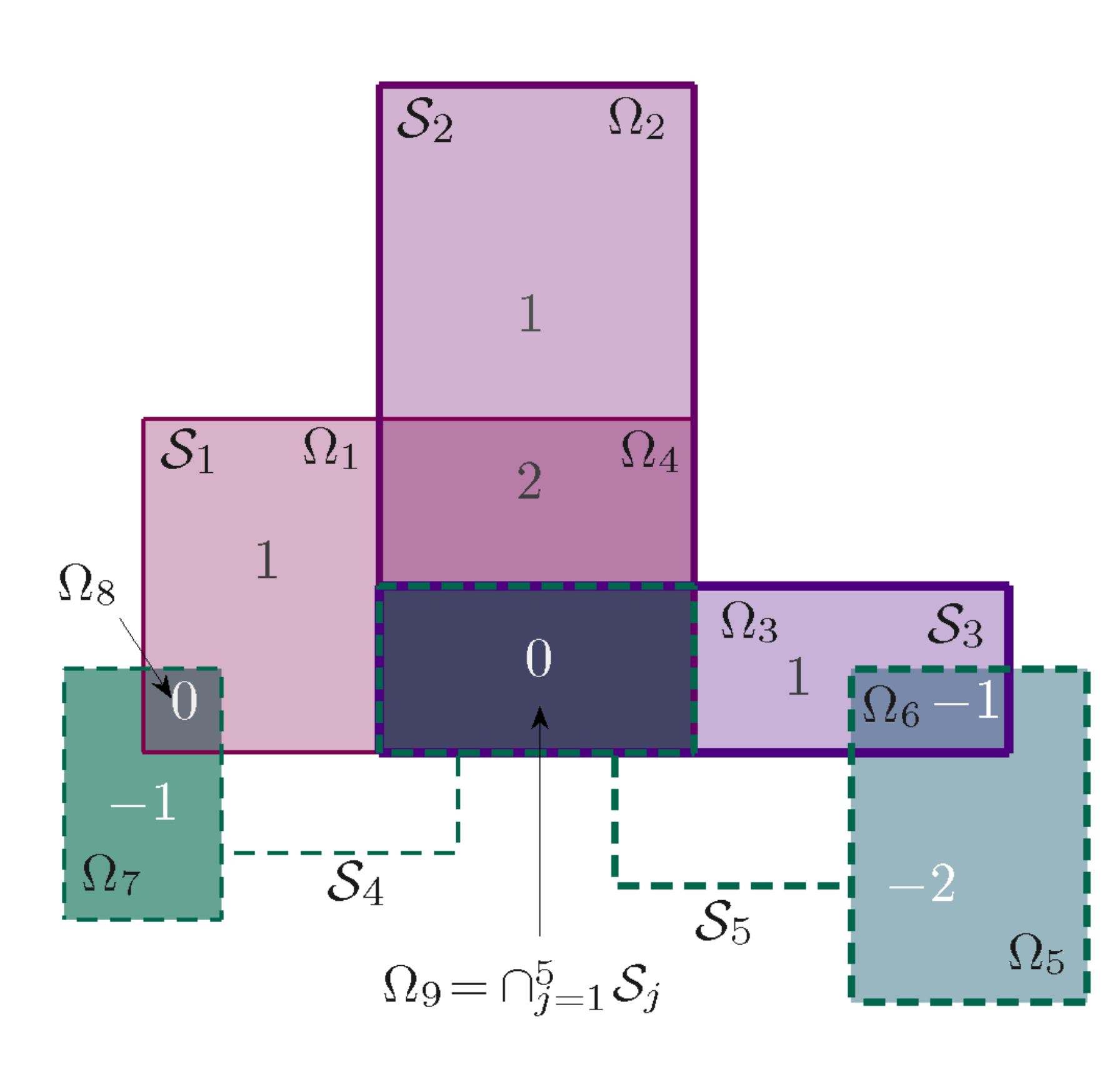}}\hspace{1cm}
\subfigure[][]{\begin{overpic}[width=0.4\textwidth,tics=10]{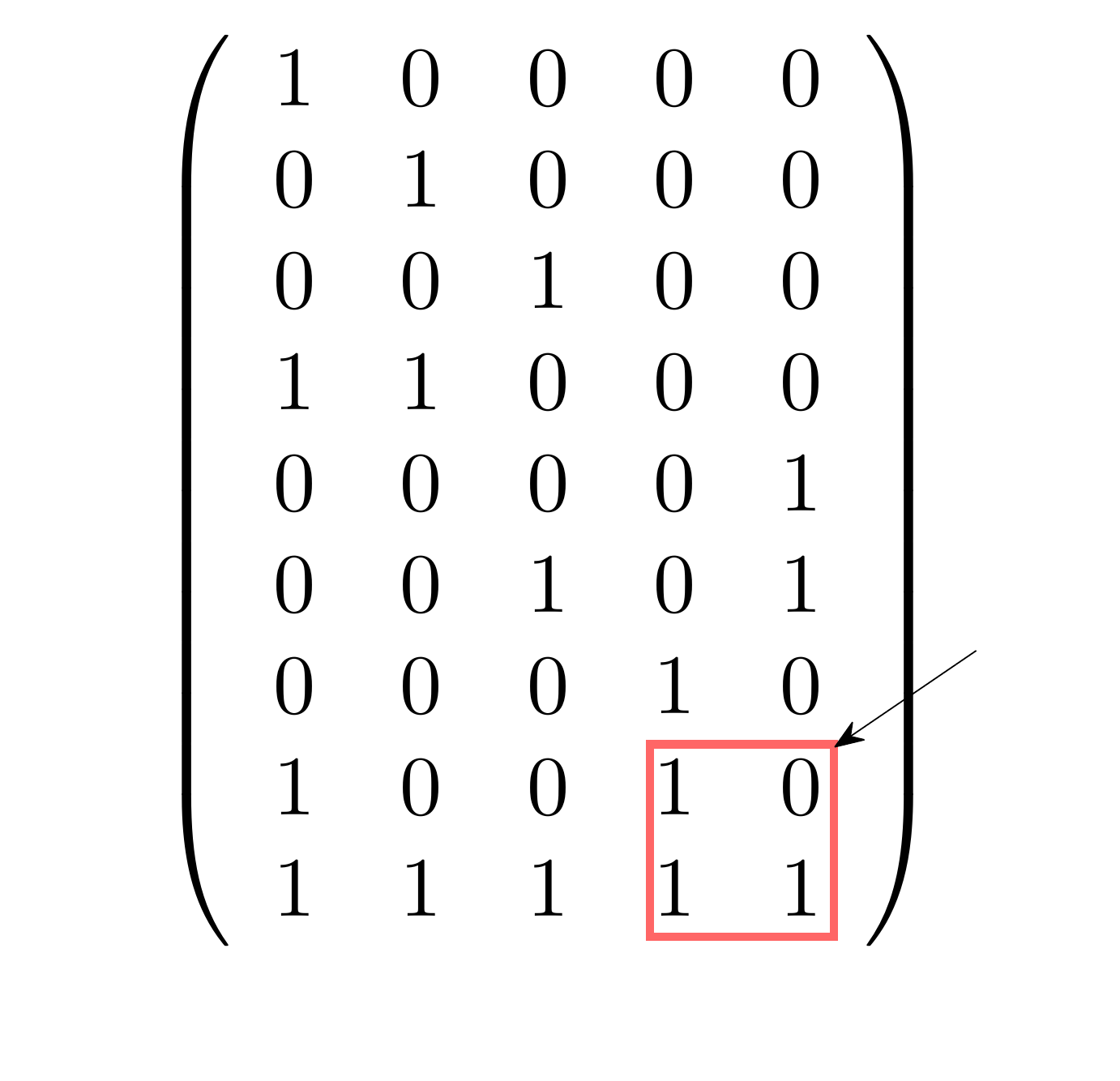}
 \put (90,35) {\large$\boldsymbol{\Delta}_\mathpzc{R}$} \put (-10,50) {\large$\boldsymbol{B}^\mathpzc{R} = $} \put (8,88) {\small$\Omega_1 $} \put (8,79) {\small$\Omega_2 $} \put (10,69) {\small$\vdots $}  \put (10,34) {\small$\vdots $}    \put (8,24) {\small$\Omega_8 $} \put (8,15) {\small$\Omega_9 $}   \put (24,97) {\small$\mathcal{S}_1 $} \put (47,97) {\small$\cdots $} \put (68,97) {\small$\mathcal{S}_5 $}
\end{overpic} }
\caption{ (a) A combination of five rectangular shapes $\mathcal{S}_1,\mathcal{S}_2,\cdots,\mathcal{S}_5$, carefully overlapped. The shapes $\mathcal{S}_4$ and $\mathcal{S}_5$ are each composed of two components connected by the dash-lines. For the composition $\mathpzc{R}_{\{1,2,3\},\{4,5\}}$, the linkage process does not produce a unique $\balpha$-vector. Some possible  outcomes of the linkage process are $\balpha_1 = (1,1,1,-1,-2)^T$, $\balpha_2 = (1,1,1,-2,-1)^T$ and $\balpha_3 = (1,1,1,-1.5,-1.5)^T$. The value of $\mathpzc{L}_{\balpha_1}(x)$ over each shapelet is displayed in the figure. (b) The bearing matrix corresponding to the composition depicted in panel (a). Considering $\balpha_1$ to be an outcome of linkage process, the resulting null-valued shapelets are $\Omega_8$ and $\Omega_9$. Although the discriminant matrix is full-rank, equation (\ref{eqxx3}) does not return a strictly positive solution}\label{fig4}
\end{figure}

A natural question would be: what class of non-redundant compositions supports the formation of $\mathcal{A}$ (or simply when (\ref{eq21}) produces a unique solution)? In the following we will discuss that since the existence of the null-valued shapelets is an intrinsic property of any non-redundant composition (Proposition \ref{th2xx}), under simple assumptions about the structure of the bearing matrix restricted to such shapelets, the uniqueness may be guaranteed.

For the sake of convenience in the derivation, we consider a matrix representation of (\ref{eq21}). It is straightforward to verify that for a non-redundant composition $\mathpzc{R}_{\;\Ip,\In}$, the bearing matrix is structured as
\vspace{-.2cm}
\begin{equation}\label{eqbr}
\begin{array}{c@{}c}
\hspace{1.cm}\hexbrace{.9 cm}{\Ip}\;\;\;\;\hexbrace{.9cm}{\In}\\[-1.5em]
\boldsymbol{B}^\mathpzc{R} =\begin{bmatrix}
 \boldsymbol{B}^{(1,1)} & \boldsymbol{0}\\[.2cm] \boldsymbol{B}^{(2,1)} & \boldsymbol{B}^{(2,2)}
 \end{bmatrix}
&\hspace{-.15cm}
\begin{array}{l}
  \\ \rdelim\}{1}{3mm}[${}^{\Np\setminus \Nn}$] \\[.2cm]   \rdelim\}{1}{2mm}[${}^{\Nn}$] \\ \\
\end{array} \\[-1ex]
\end{array}\qquad,
\end{equation}
where $\boldsymbol{B}^{(1,1)}$, $\boldsymbol{B}^{(2,1)}$ and $\boldsymbol{B}^{(2,2)}$ are binary matrices. The linear program (\ref{eq21}) may then be cast as
\begin{equation}\label{eqxx1}
\left\{
\begin{array}{lc}
       \underset{\balpha_{\In}}{\max} & \boldsymbol{1}^T\balpha_{\In} \\
       s.t.& \boldsymbol{B}^{(2,2)} \balpha_{\In} \preceq -\boldsymbol{B}^{(2,1)}\boldsymbol{1}
     \end{array}.
   \right.
\end{equation}
For an outcome of the linkage process, the unit-valued shapelets may be indicated by
\begin{align*}
\Gamma_1^\mathpzc{R} \triangleq  \{i: \boldsymbol{B}^{(1,1)}_{i,:} \boldsymbol{1} = 1 \}.
\end{align*}
Likewise, if $\balpha_{\In}^\dagger$ is a maximizer of (\ref{eqxx1}), we can indicate the resulting null-valued shapelets by
\begin{align*}
\Gamma_0^\mathpzc{R} \triangleq  \{i: \boldsymbol{B}^{(2,2)}_{i,:} \balpha_{\In}^\dagger  +\boldsymbol{B}^{(2,1)}_{i,:}\boldsymbol{1}=0\},
\end{align*}
and correspondingly define a \emph{discriminant matrix} as
\begin{equation}\label{dismat}
\boldsymbol{\Delta}_\mathpzc{R} \triangleq \boldsymbol{B}^{(2,2)}_{\Gamma_0^\mathpzc{R},:}.
\end{equation}
An argument of duality, presented in the proof of Theorem \ref{thbasic}, reveals that there always exists a vector $\boldsymbol{w}\succeq \boldsymbol{0}$ such that
\begin{equation}\label{eqxx3}
\boldsymbol{\Delta}_\mathpzc{R}^T\boldsymbol{w} = \boldsymbol{1}.
\end{equation}
It will be discussed in the sequel that if the discriminant matrix associated with a non-redundant representation maintains certain properties, the linear program (\ref{eqxx1}) attains a unique maximizer. We specifically focus on the non-redundant compositions that the number of null-valued shapelets does not exceed $|\In|$.

\begin{definition}\label{linkable}
A non-redundant composition $\mathpzc{R}_{\;\Ip,\In}$ is basic, if for a maximizer of (\ref{eqxx1}), $\rank(\boldsymbol{\Delta}_\mathpzc{R}) = |\In|=|\Gamma_0^\mathpzc{R}|$ and the (essentially non-negative) solution of (\ref{eqxx3}) is strictly positive.
\end{definition}

\begin{theorem}\label{thbasic}
If a non-redundant composition is basic, then the maximizer of (\ref{eqxx1}) is unique. Conversely, if for a non-redundant composition $|\Gamma_0^\mathpzc{R}| = |\In|$ and the maximizer of (\ref{eqxx1}) is unique, the composition is basic.
\end{theorem}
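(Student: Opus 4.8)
The plan is to recognize (\ref{eqxx1}) as a linear program and to read off both directions from its Karush--Kuhn--Tucker (KKT) optimality conditions together with the linear-programming complementary slackness theorem. First I would write the problem in standard form: maximize $\boldsymbol{1}^T\balpha_{\In}$ over the polyhedron $\{\balpha_{\In}:\boldsymbol{B}^{(2,2)}\balpha_{\In}\preceq -\boldsymbol{B}^{(2,1)}\boldsymbol{1}\}$. Non-redundancy (Proposition \ref{th2xx}, in particular $\alpha_j^\dagger\leq -1$ for $j\in\In$) guarantees the feasible region is nonempty and the objective is bounded above, so a maximizer $\balpha^\dagger_{\In}$ exists. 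Let $\Gamma_0^\mathpzc{R}$ be its active constraint set and $\boldsymbol{\Delta}_\mathpzc{R}=\boldsymbol{B}^{(2,2)}_{\Gamma_0^\mathpzc{R},:}$ the discriminant matrix. Applying Farkas' lemma to the absence of a feasible ascent direction at $\balpha^\dagger_{\In}$ yields a multiplier vector $\boldsymbol{w}\succeq\boldsymbol{0}$ supported on $\Gamma_0^\mathpzc{R}$ with $\boldsymbol{\Delta}_\mathpzc{R}^T\boldsymbol{w}=\boldsymbol{1}$; this is exactly (\ref{eqxx3}), and a short strong-duality computation shows that the zero-extension of $\boldsymbol{w}$ is an optimal dual solution.

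For the forward direction I would assume the composition is basic, so $\boldsymbol{\Delta}_\mathpzc{R}$ has full column rank $|\In|$ and $\boldsymbol{w}\succ\boldsymbol{0}$ strictly. Let $\balpha'_{\In}$ be any maximizer. Since $\boldsymbol{w}$ is dual-optimal and $\balpha'_{\In}$ is primal-optimal, complementary slackness forces every constraint with $w_i>0$ to be tight at $\balpha'_{\In}$; because $\boldsymbol{w}$ is strictly positive on all of $\Gamma_0^\mathpzc{R}$, this gives $\boldsymbol{\Delta}_\mathpzc{R}\balpha'_{\In}=\boldsymbol{\Delta}_\mathpzc{R}\balpha^\dagger_{\In}$. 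Full column rank of $\boldsymbol{\Delta}_\mathpzc{R}$ then forces $\balpha'_{\In}=\balpha^\dagger_{\In}$, so the maximizer is unique.

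For the converse I would assume $|\Gamma_0^\mathpzc{R}|=|\In|$ (so $\boldsymbol{\Delta}_\mathpzc{R}$ is square) and that the maximizer is unique, and establish both requirements of Definition \ref{linkable}. For the rank, suppose $\boldsymbol{\Delta}_\mathpzc{R}$ were singular and pick a nonzero $\boldsymbol{d}$ with $\boldsymbol{\Delta}_\mathpzc{R}\boldsymbol{d}=\boldsymbol{0}$. Then $\balpha^\dagger_{\In}+t\boldsymbol{d}$ keeps every active constraint at equality and, for $|t|$ small, every inactive constraint strictly feasible; moreover the objective is unchanged since $\boldsymbol{1}^T\boldsymbol{d}=\boldsymbol{w}^T\boldsymbol{\Delta}_\mathpzc{R}\boldsymbol{d}=0$. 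This produces a whole segment of maximizers, contradicting uniqueness, so $\rank(\boldsymbol{\Delta}_\mathpzc{R})=|\In|=|\Gamma_0^\mathpzc{R}|$ and $\boldsymbol{w}=\boldsymbol{\Delta}_\mathpzc{R}^{-T}\boldsymbol{1}$ is uniquely determined.

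For strict positivity I would argue by contradiction, supposing $w_k=0$ for some $k\in\Gamma_0^\mathpzc{R}$. Deleting the $k$-th row of $\boldsymbol{\Delta}_\mathpzc{R}$ leaves $|\In|-1$ rows of rank $|\In|-1$, whose null space is spanned by a single nonzero $\boldsymbol{d}$; since the full matrix is invertible the deleted row satisfies $(\boldsymbol{\Delta}_\mathpzc{R})_{k,:}\boldsymbol{d}\neq 0$, and I choose the sign of $\boldsymbol{d}$ so this quantity is negative. Moving along $\balpha^\dagger_{\In}+t\boldsymbol{d}$ for small $t>0$ keeps the constraints $\Gamma_0^\mathpzc{R}\setminus\{k\}$ tight, relaxes constraint $k$ into strict feasibility, leaves the inactive constraints feasible, and keeps the objective constant because $\boldsymbol{1}^T\boldsymbol{d}=\boldsymbol{w}^T\boldsymbol{\Delta}_\mathpzc{R}\boldsymbol{d}=w_k\,(\boldsymbol{\Delta}_\mathpzc{R})_{k,:}\boldsymbol{d}=0$. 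This again yields a continuum of maximizers, contradicting uniqueness, so $\boldsymbol{w}\succ\boldsymbol{0}$ and the composition is basic. The hard part will be precisely this last step: correctly handling the degenerate active set and constructing a perturbation that simultaneously preserves feasibility and optimality, which is exactly where the interplay between $\rank(\boldsymbol{\Delta}_\mathpzc{R})$ and the strict positivity of the dual multipliers becomes essential.
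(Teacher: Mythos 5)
Your proposal is correct. The forward direction (basic $\Rightarrow$ unique) is essentially the paper's own argument: the paper also passes to the dual LP, notes that the dual-optimal multipliers vanish off the active set so that $\boldsymbol{\Delta}_\mathpzc{R}^T\boldsymbol{w}=\boldsymbol{1}$ with $\boldsymbol{w}\succeq\boldsymbol{0}$, and then shows that for any second maximizer $\hat\balpha$ one has $\boldsymbol{1}^T(\balpha^\dagger-\hat\balpha)=0$ and $\boldsymbol{A}_{\Gamma,:}(\balpha^\dagger-\hat\balpha)\succeq\boldsymbol{0}$, whence strict positivity of $\boldsymbol{w}$ forces $\boldsymbol{A}_{\Gamma,:}(\balpha^\dagger-\hat\balpha)=\boldsymbol{0}$ and full rank finishes; your complementary-slackness phrasing is the same computation.

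Where you genuinely diverge is the converse. The paper proves necessity by invoking a variant of Mangasarian's theorem on uniqueness of LP solutions: $\balpha^\dagger$ is unique iff it survives every small perturbation $(\boldsymbol{1}+\epsilon\bdelta)$ of the objective; the paper then writes $\epsilon\bdelta=(\boldsymbol{A}_{\Gamma,:})^T(\boldsymbol{y}^\dagger_\Gamma-\boldsymbol{z}^\dagger_\Gamma)$ for the perturbed and unperturbed dual solutions and derives contradictions (once from a null vector of $\boldsymbol{A}_{\Gamma,:}$, once from a vector $\tilde\bdelta_0$ built from the zero-multiplier indices). You instead work entirely in the primal: you exhibit an explicit feasible direction along which the objective is constant — a null vector of $\boldsymbol{\Delta}_\mathpzc{R}$ in the rank-deficient case, and the one-dimensional null vector of the matrix with row $k$ deleted in the $w_k=0$ case, with the objective invariance $\boldsymbol{1}^T\boldsymbol{d}=\boldsymbol{w}^T\boldsymbol{\Delta}_\mathpzc{R}\boldsymbol{d}=w_k(\boldsymbol{\Delta}_\mathpzc{R})_{k,:}\boldsymbol{d}=0$ checked directly. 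Your route is more elementary and self-contained (no external uniqueness theorem, no perturbed dual), and it works cleanly precisely because the hypothesis $|\Gamma_0^\mathpzc{R}|=|\In|$ makes $\boldsymbol{\Delta}_\mathpzc{R}$ square, so the deleted-row null space is one-dimensional and the sign of $(\boldsymbol{\Delta}_\mathpzc{R})_{k,:}\boldsymbol{d}$ can be chosen to move into strict feasibility. The paper's machinery is heavier but would adapt more readily to degenerate active sets with $|\Gamma|>|\In|$. One cosmetic point: boundedness of the objective comes from Theorem \ref{th2x}(b) (each $j\in\In$ contributes a constraint of the form $\alpha_j\leq-|\Kp^{(i)}|$), rather than from Proposition \ref{th2xx}, which records a property of the maximizer; this does not affect your argument since the theorem already presupposes a maximizer exists.
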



Simply, applying the linkage process to a basic  composition yields a unique and well-defined representation of the composition in the $\balpha$-domain. As exemplified earlier, for the composition depicted in Figure \ref{fig4}(a), the linkage process fails to provide a unique representation. By analyzing the underlying discriminant matrix demonstrated in Figure \ref{fig4}(b), we easily observe that the composition fails to pass the required criteria of being basic.

As a matter of fact, the uniqueness of the maximizer in (\ref{eqxx1}) can be warranted for a broader class of compositions. Moreover, if $\balpha^\dagger$ is an outcome of the linkage process for a given composition  $\mathpzc{R}_{\;\Ip,\In}$, by construction $\xi^+(\balpha^\dagger)=\Ip$ and $\xi^-(\balpha^\dagger)=\In$, regardless of the outcome's uniqueness.  In other words, if an algorithm uses $\balpha$ as a proxy to identify the index sets $\Ip$ and $\In$, once it recovers any outcomes of the linkage process, the identification is successful. Focusing on the basic compositions allows us to markedly simplify the analysis by exploiting some of their favorable properties. However, the main ideas developed in this paper are extendible to broader classes of compositions.

For a basic composition, the conditions stated in Proposition \ref{th2xx} and Theorem \ref{thbasic} allow us to switch between different representations. Schematically, we can close the following chain
\begin{equation*}
\xymatrix{\{\Ip,\In\} \ar @/^/[r]^-{\mathcal{A}}& \balpha_\mathpzc{R} \ar@/^/[l]^-{\xi^+\cup\;\xi^-}  \ar @/^/[r]^-{}& \Beta_\mathpzc{R} \ar@/^/[l]^-{}}
\end{equation*}
where switching back and forth between different representations is conveniently possible.



\subsection{Sufficient Conditions for Unique Optimality of Sparse-CSC}\label{sec-conv}
In analyzing the Sparse-CSC program a key problem is deriving sufficient conditions under which a target vector $\balpha$ minimizes the underlying convex objective. This section is devoted to addressing the aforementioned problem in a general setup, where there are no specific restrictions on the overlapping of the dictionary elements and the spatial variability of the inhomogeneity measures. We will show that a given vector $\balpha$ is a solution to the Sparse-CSC problem if a certain system of constrained equations can be solved, where the coefficient matrix is related to the dictionary matrix and the right-hand side depends on the image data.

To maintain brevity we will use the notation
\begin{equation}\label{econv}
G(\balpha) \triangleq \int_{D}\max\Big(\big(\Pi_{in}(x) - \Pi_{ex}(x)\big)\mathpzc{L}_{\boldsymbol{\alpha}}(x), \big(\Pi_{in}(x) - \Pi_{ex}(x)\big)^-   \Big)\;\mbox{d}x,
\end{equation}
and delineate the goal as deriving sufficient conditions under which a designated vector $\balpha^*\in\mathbb{R}^{n_s}$ is the \emph{unique} minimizer to the convex program
\begin{equation}\label{ee-2}
\min_{\balpha} \;\;G(\balpha)\quad s.t. \quad \|\boldsymbol{\alpha}\|_1\leq \tau.
\end{equation}
We are certainly interested in the case that $\balpha^*$ is sparse.

As appeared in Section \ref{disjoint_sec}, the analysis of the problem becomes significantly easier when the dictionary elements are disjoint. This is certainly not the case in a general setup, however, through the DSD process we can always generate a \emph{super-dictionary} of the shapelets where every two elements are disjoint:
\[
\{\Omega_i,\boldsymbol{J}^{(i)}\}_{i=1}^{n_\Omega} = \mbox{DSD}\Big( \{\mathcal{S}_j\}_{j=1}^{n_s}\Big).
\]
Accordingly, as illustrated in Figure \ref{fig6}, one may rephrase $\mathpzc{L}_{\balpha}(x)$ in terms of the disjoint shapelets as
\begin{equation}
\mathpzc{L}_{\balpha}(x) = \sum_{j=1}^{n_s}\alpha_j\chi_{\mathcal{S}_j}(x)=\sum_{i=1}^{n_\Omega}\beta_i\chi_{\Omega_i}(x),  \label{eq41}
\end{equation}
where
\[
\beta_i=\sum_{j=1}^{n_s} \boldsymbol{J}^{(i)}_j\alpha_j.
\]
Consequently, the corresponding vectors can be linked through the dictionary bearing matrix
\begin{equation}\label{eq42}
\Beta = \boldsymbol{B}\balpha,
\end{equation}
where as discussed in Section \ref{DSDsec}, $\boldsymbol{B}_{i,j}=\boldsymbol{J}^{(i)}_j$.

By plugging (\ref{eq41}) into the convex cost (\ref{econv}) and taking a similar path as Section \ref{disjoint_sec}, we can rewrite $G(\balpha)$ in terms of $\Beta$ as
\begin{align}\nonumber
G(\balpha) &=
\sum_{i=1}^{n_\Omega}p_i\max(\beta_i,0)-q_i\min(\beta_i,1)\\ & \triangleq \mathcal{G}(\Beta) \label{eq43}
\end{align}
where
\begin{equation}\label{eqpiqi}
p_i \triangleq \int_{\Omega_i}  \big(\Pi_{in}(x) - \Pi_{ex}(x)\big)^+ \; \mbox{d}x,\qquad q_i \triangleq \int_{\Omega_i}  \big(\Pi_{ex}(x) - \Pi_{in}(x)\big)^+ \; \mbox{d}x.
\end{equation}
In other words, at the expense of the additional linear constraints (\ref{eq42}), the non-separable cost function $G(\balpha)$ can be rewritten as a separable objective $\mathcal{G}(\Beta)$.

Aside from a separable representation of the convex objective, we suggest an additional simplifying step which privileges the possibility of modeling (\ref{ee-2}) as a convex program with linear constraints. The following result plays the major role towards this purpose.

\begin{proposition}\label{te1}
Consider a target vector $\balpha^*\in \mathbb{R}^{n_s}$, supported on $\Gamma$ ($\alpha_j^*\neq 0$ for $j\in \Gamma$ and $\balpha_{\Gamma^c}^* = \boldsymbol{0}$). Let  $\boldsymbol{c}\in\mathbb{R}^{n_s}$ be a vector such that $c_j = \sign(\alpha_j^*)$ for $j\in\Gamma$ and $\|\boldsymbol{c}_{\Gamma^c}\|_\infty\leq 1$. For $\tau = \|\balpha^*\|_1$, $\balpha^*$ is the unique minimizer of the convex program
\begin{align*}
\min_{\balpha} \;\;G(\balpha)\quad s.t. \quad \|\boldsymbol{\alpha}\|_1\leq \tau,
\end{align*}
if the following conditions hold:\\
(I) $\balpha^*$ is the unique minimizer of the convex program
\begin{align}\label{ee-1}
\min_{\balpha} \;\;G(\balpha)\quad s.t. \quad \boldsymbol{c}^T\balpha = \tau,
\end{align}
(II) the convex set $\mathcal{C} = \{\balpha:G(\balpha)\leq G(\balpha^*)\}$ has a nonempty interior and
\begin{align*}
\exists \boldsymbol{\delta}\in T_\mathcal{C}(\balpha^*) \quad s.t. \quad  \boldsymbol{c}^T \boldsymbol{\delta} > 0,
\end{align*}
 where $T_\mathcal{C}(\balpha^*)$ denotes the tangent cone of $\mathcal{C}$ at $\balpha^*$.
\end{proposition}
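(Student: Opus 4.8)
The plan is to exploit the geometry created by the certificate vector $\boldsymbol{c}$: the $\ell_1$-ball $\{\|\balpha\|_1\le\tau\}$ lies inside the half-space $\{\boldsymbol{c}^T\balpha\le\tau\}$ and meets its bounding hyperplane exactly at $\balpha^*$, and the job is to promote the hyperplane-optimality asserted in (I) to optimality over the whole $\ell_1$-ball using the tangency/interior data in (II). First I would record the elementary facts that legitimize $\boldsymbol{c}$ as a direction of control: since $\|\boldsymbol{c}\|_\infty\le 1$ we have $\boldsymbol{c}^T\balpha\le\|\balpha\|_1$ for every $\balpha$, while $\boldsymbol{c}^T\balpha^*=\sum_{j\in\Gamma}\sign(\alpha_j^*)\alpha_j^*=\|\balpha^*\|_1=\tau$; hence any $\ell_1$-feasible point satisfies $\boldsymbol{c}^T\balpha\le\tau$ and $\balpha^*$ attains equality. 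I would also note that $G$ is convex (an integral of a pointwise maximum of affine functions of $\balpha$), so the sublevel set $\mathcal{C}=\{G\le G(\balpha^*)\}$ is convex with $\balpha^*\in\partial\mathcal{C}$.

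Next I would distill from (II) the two geometric ingredients actually needed. The tangent-direction hypothesis yields a point $\tilde{\balpha}\in\mathcal{C}$ with $\boldsymbol{c}^T\tilde{\balpha}>\tau$: writing $\boldsymbol{\delta}=\lim_k t_k(\balpha_k-\balpha^*)$ with $\balpha_k\in\mathcal{C}$, $t_k\ge 0$ (the tangent cone of a convex set is the closure of the cone of feasible directions), the condition $\boldsymbol{c}^T\boldsymbol{\delta}>0$ forces $t_k>0$ and $\boldsymbol{c}^T\balpha_k>\boldsymbol{c}^T\balpha^*=\tau$ for suitable $k$, so $\tilde{\balpha}:=\balpha_k$ works. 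The nonempty-interior hypothesis gives a second ingredient: a strict sublevel point $\balpha_0$ with $G(\balpha_0)<G(\balpha^*)$.

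The core is a contradiction argument. Suppose $\hat{\balpha}$ satisfies $\|\hat{\balpha}\|_1\le\tau$ and $G(\hat{\balpha})\le G(\balpha^*)$; I want to conclude $\hat{\balpha}=\balpha^*$. The first step gives $\boldsymbol{c}^T\hat{\balpha}\le\tau$. If $\boldsymbol{c}^T\hat{\balpha}=\tau$, then $\hat{\balpha}$ is feasible for (I) with no worse cost, so uniqueness in (I) gives $\hat{\balpha}=\balpha^*$. The delicate case is $\boldsymbol{c}^T\hat{\balpha}<\tau$, where (I) alone is powerless since $\hat{\balpha}$ lies off the hyperplane. Here I would mix $\hat{\balpha}$ with the strict point $\balpha_0$: for small $\epsilon>0$ the point $\balpha_\epsilon=(1-\epsilon)\hat{\balpha}+\epsilon\balpha_0$ has $G(\balpha_\epsilon)<G(\balpha^*)$ strictly (convexity with $\epsilon>0$) and, by continuity, still $\boldsymbol{c}^T\balpha_\epsilon<\tau$. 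The segment from $\balpha_\epsilon$ to $\tilde{\balpha}$ stays in $\mathcal{C}$ and crosses the hyperplane at $\bar{\balpha}=(1-\rho)\balpha_\epsilon+\rho\tilde{\balpha}$ with $\rho\in(0,1)$ (genuinely interior because $\boldsymbol{c}^T\balpha_\epsilon<\tau<\boldsymbol{c}^T\tilde{\balpha}$); then $G(\bar{\balpha})\le(1-\rho)G(\balpha_\epsilon)+\rho G(\tilde{\balpha})<G(\balpha^*)$ strictly. But $\bar{\balpha}$ is feasible for (I), contradicting the optimality of $\balpha^*$ there. Hence this case is impossible, so the only $\ell_1$-feasible point with $G\le G(\balpha^*)$ is $\balpha^*$, which is exactly the asserted unique optimality.

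The \emph{main obstacle} is precisely the off-hyperplane case $\boldsymbol{c}^T\hat{\balpha}<\tau$: condition (I) constrains only the hyperplane, so a priori an alternative minimizer could hide strictly inside the half-space, and the naive attempt to project $\hat{\balpha}$ back onto the hyperplane only reproduces $G=G(\balpha^*)$, yielding no contradiction. The decisive move is to first use the nonempty interior of $\mathcal{C}$ to upgrade the weak sublevel inequality to a strict one, and only then travel along the direction supplied by (II) to land a strictly-better point on the hyperplane. The proof therefore hinges on strictness bookkeeping — ensuring $\rho\in(0,1)$ is interior and that $\epsilon$ stays small enough to preserve $\boldsymbol{c}^T\balpha_\epsilon<\tau$ — but these checks are routine once the two ingredients from (II) are in hand.
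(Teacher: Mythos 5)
Your overall architecture matches the paper's: you show the $\ell_1$-ball sits in the half-space $\{\boldsymbol{c}^T\balpha\le\tau\}$ (your H\"older shortcut is a clean replacement for the paper's sign-splitting lemma), you extract from the tangent-cone hypothesis a point $\tilde\balpha\in\mathcal{C}$ with $\boldsymbol{c}^T\tilde\balpha>\tau$ (handling the closure of the cone of feasible directions more carefully than the paper does), and you connect a hypothetical alternative minimizer $\hat\balpha$ to it and examine where the segment crosses the hyperplane. The gap is in your treatment of the off-hyperplane case. You claim that the nonempty interior of $\mathcal{C}=\{\balpha:G(\balpha)\le G(\balpha^*)\}$ supplies a point $\balpha_0$ with $G(\balpha_0)<G(\balpha^*)$. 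That implication is false: if $\balpha^*$ happens to be a global minimizer of $G$, then $\mathcal{C}$ is the convex solution set of the unconstrained problem, which can have nonempty interior while $G$ is constant on it, so no strict sublevel point exists. This is not a pathological corner case --- it is exactly the regime in which the paper applies Proposition \ref{te1}: under the LOC every $\balpha$ satisfying (\ref{eq32}) is a global minimizer, $\mathcal{C}$ has nonempty interior, and the certificate point produced in Lemma \ref{lemloc} for condition (II) satisfies $G(\hat\balpha)=G(\balpha^*)$, not $<$. In that regime your $\balpha_\epsilon$ with $G(\balpha_\epsilon)<G(\balpha^*)$ cannot be constructed and the intended contradiction with the \emph{optimality} in (I) evaporates.

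The repair --- and what the paper actually does --- is to aim the contradiction at the \emph{uniqueness} in (I) rather than at optimality. The crossing point $\boldsymbol{z}$ of the segment $[\hat\balpha,\tilde\balpha]$ with the hyperplane lies in $\mathcal{C}$ and is feasible for (\ref{ee-1}), so it violates condition (I) as soon as $\boldsymbol{z}\ne\balpha^*$; the only way to have $\boldsymbol{z}=\balpha^*$ is for $\tilde\balpha$ to lie on the line through $\balpha^*$ and $\hat\balpha$, and the nonempty interior of $\mathcal{C}$ is used precisely to nudge $\tilde\balpha$ off that line (slide it toward an interior point while keeping $\boldsymbol{c}^T\tilde\balpha>\tau$). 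Your argument is salvageable by splitting into the two cases ``a strictly better point exists'' (where your construction works and even yields the stronger contradiction with optimality) and ``$\mathcal{C}$ is the set of global minimizers'' (which needs the perturbation argument), but as written the second case is not covered.
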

\begin{figure}[t]
    \centering
    \centering \includegraphics[width=85mm]{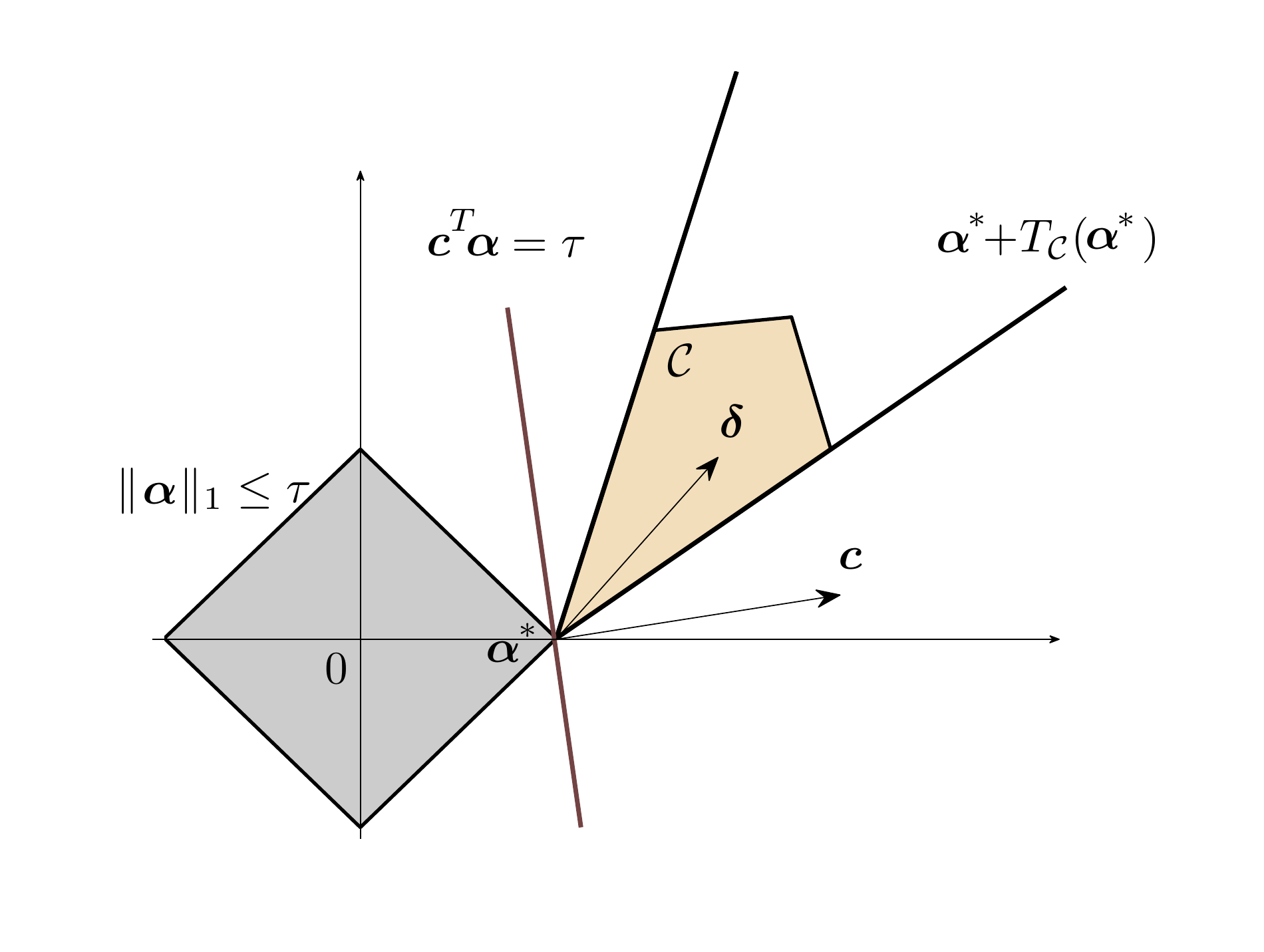}
    \caption{A schematic of the sufficient conditions indicated in Proposition \ref{te1}}\label{fig7.5}
    \end{figure}

Basically, Proposition \ref{te1} states that instead of inspecting the conditions under which $\mathcal{C}$ touches the $\ell_1$-ball at $\balpha^*$, one may look into the problem of $\mathcal{C}$ touching a separating hyperplane between the two sets, at $\balpha^*$. Condition (II) of the proposition simply assures that the two sets are standing on opposite sides of the proposed hyperplane (as illustrated in Figure \ref{fig7.5}).

Supposing that one could verify the tangent cone property (II) for a target vector $\balpha^*$, property (I) allows us to focus on the convex program (\ref{ee-1}) which enjoys a simple linear constraint. This constraint may be combined with (\ref{eq42}) through
\begin{align}\label{ee-4}
\begin{bmatrix}
\boldsymbol{B}\\ \boldsymbol{c}^T
\end{bmatrix}\balpha = \begin{bmatrix}
 \Beta\\ \tau
 \end{bmatrix},
\end{align}
which compactly presents the governing linear constraints on the $(\balpha,\Beta)$ pair.

When $n_\Omega\gg n_s$, we will have a left kernel for the left-hand side matrix in (\ref{ee-4}). Accordingly, we can build a matrix $\boldsymbol{N}\in\mathbb{R}^{(n_\Omega +1)\times n_N}$, columns of which form a spanning set for $Null([\boldsymbol{B}^T , \boldsymbol{c}])$. Applying $\boldsymbol{N}^T$ to both sides of (\ref{ee-4}) annihilates $\balpha$ and leaves us with a linear set of constraints merely in terms of $\Beta$. More specifically,
\begin{align}\label{ee-5}
\boldsymbol{N}^T\begin{bmatrix}
 \Beta\\ \tau
 \end{bmatrix} = \boldsymbol{N}^T\begin{bmatrix}
\boldsymbol{B}\\ \boldsymbol{c}^T
\end{bmatrix}\balpha = \boldsymbol{0}.
\end{align}
Making use of (\ref{ee-5}) along with (\ref{eq43}), allow us to address the convex program (\ref{ee-1}) in the $\Beta$-domain via
\begin{align}\label{ee-6}
\min_{\Beta} \;\;\mathcal{G}(\Beta)\qquad s.t. \qquad  \boldsymbol{N}^T\begin{bmatrix}
 \Beta\\ \tau
 \end{bmatrix}  = \boldsymbol{0}.
\end{align}
Unlike (\ref{ee-2}), this convex program enjoys a  linear set of constraints and a separable objective, which significantly simplify the analysis. In Theorem \ref{th7} we will derive a sufficient set of conditions under which the minimizer of (\ref{ee-6}) is unique. These conditions are directly expressed in terms of $\boldsymbol{B}$ and $\boldsymbol{c}$, and the appearance of $\boldsymbol{N}$ in the formulation is mainly for the purpose of clarifying the rationale behind the variable change. Before presenting Theorem \ref{th7} we need to proceed with introducing some new notions.

Consider a potential minimizer $\balpha^*$ and correspondingly $\Beta^* = \boldsymbol{B}\balpha^*$. We assume that all the $\beta_i^*$ values lie outside the interval $(0,1)$. This assumption is consistent with the arguments presented in Section \ref{Sec:Rel}, where the $\balpha$ vector assigned to a given composition $\mathpzc{R}_{\;\Ip,\In}$ maintains such property. Based on the value of each entry in $\Beta^*$, we may partition the set $\{1,2,\cdots,n_\Omega\}$ into four disjoint index sets denoted and labeled as follows:
\begin{align}\hspace{-1.4cm}\label{ee-7}
\left\{\begin{array}{l}
\Gamma_{0^-} \triangleq \Big\{i:\beta_i<0,i\in\{1,2,\cdots,n_\Omega\}\Big\}\\
\Gamma_{1^+} \triangleq \Big\{i:\beta_i>1,i\in\{1,2,\cdots,n_\Omega\}\Big\}
\end{array}
\right. ,\qquad \mbox{\emph{(the support set)}}
\end{align}
and
\begin{align}\label{ee-8}
\left\{\begin{array}{l}
\Gamma_0 \triangleq \Big\{i:\beta_i=0,i\in\{1,2,\cdots,n_\Omega\}\Big\}\\
\Gamma_1 \triangleq \Big\{i:\beta_i=1,i\in\{1,2,\cdots,n_\Omega\}\Big\}
\end{array}
\right., \qquad \mbox{\emph{(the off-support set)}}.
\end{align}
We also introduce the bounding vectors $\boldsymbol{l}$ and $\boldsymbol{u} \in \mathbb{R}^{|\Gamma_0\cup\Gamma_1|}$ with entries
\begin{equation}\label{ee-9}
\boldsymbol{l}_{m(\ell)} =  \left\{\begin{array}{ll}
q_\ell-p_\ell & \ell\in \Gamma_0\\
-p_\ell &\ell \in \Gamma_1
\end{array}
\right. \qquad \mbox{and}\qquad \boldsymbol{u}_{m(\ell)} =  \left\{\begin{array}{ll}
q_\ell & \ell\in \Gamma_0\\
q_\ell - p_\ell &\ell \in \Gamma_1
\end{array}
\right. ,
\end{equation}
where $m(.): \Gamma_0\cup\Gamma_1 \mapsto \{1,2,\cdots ,|\Gamma_0\cup\Gamma_1|\}$ is a simple bijective map that allows filling in the bounding vector entries in an order.

Finally, we define a so called \emph{LOC violation} vector $\boldsymbol{e}\in\mathbb{R}^{n_s}$, evaluated through
\begin{equation}\label{ee-10}
\boldsymbol{e}_j \triangleq\!\! \sum_{\ell\in\mathcal{I}_j \cap \Gamma_{1^+}}\!\!\!\!p_\ell -\!\!\sum_{\ell\in\mathcal{I}_j \cap \Gamma_{0^-}}\!\!\! q_\ell, \qquad\qquad  j=1,2,\cdots,n_s.
\end{equation}
The LOC violation vector in some sense weights the LOC disruption over each shape. Basically, when the LOC holds and $\balpha^*$ is a vector satisfying (\ref{eq32}), $\boldsymbol{e} = \boldsymbol{0}$.\footnote{It is straightforward to see that $\boldsymbol{e}$ also vanishes when $\balpha^*$ is a minimizer in the case of disjoint dictionary elements (described in Section
\ref{disjoint_sec}), basically because $\Gamma_{1^+} = \Gamma_{0^-}=\emptyset$.}

\begin{theorem}\label{th7}
Given the dictionary bearing matrix $\boldsymbol{B}\in \{0,1\}^{n_\Omega\times n_s}$, consider a target vector $\balpha^*\in\mathbb{R}^{n_s}$ that is feasible for (\ref{ee-1}) and correspondingly $\Beta^* = \boldsymbol{B}\balpha^*$, such that all entries of $\Beta^*$ lie outside the interval $(0,1)$. Further, consider $\boldsymbol{c}\in\mathbb{R}^{n_s}$ constructed as stated in Proposition \ref{te1}. If the matrix $\begin{bmatrix}(\boldsymbol{B}_{\Gamma_0\cup\Gamma_1,:})^T\;,\; \boldsymbol{c} \end{bmatrix}$ has full row rank and there exist $\boldsymbol{\eta}\in\mathbb{R}^{|\Gamma_0\cup\Gamma_1|}$ and $\eta_c\in\mathbb{R}$ that satisfy
\begin{align}\label{ee-11}
\begin{bmatrix}\big(\boldsymbol{B}_{\Gamma_0\cup\Gamma_1,:}\big)^T\;,\; \boldsymbol{c} \end{bmatrix}\begin{bmatrix}\boldsymbol{\eta}\\\eta_c\end{bmatrix}
 =  \boldsymbol{e}
\end{align}
and \footnote{The rows of $\boldsymbol{B}_{\Gamma_0\cup\Gamma_1,:}$ also need to be arranged according to the index map $m(.)$ used in (\ref{ee-9}).}
\begin{align}\label{ee-12}
\boldsymbol{l}\prec\boldsymbol{\eta}\prec \boldsymbol{u},
\end{align}
then, $\balpha^*$ is the unique minimizer of the convex program (\ref{ee-1}).
\end{theorem}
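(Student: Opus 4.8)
The plan is to carry the entire argument into the separable reformulation of the objective. By (\ref{eq41})--(\ref{eq43}) the cost equals $\mathcal{G}(\Beta)=\sum_{i=1}^{n_\Omega}F_i(\beta_i)$ with $F_i(\beta)=p_i\max(\beta,0)-q_i\min(\beta,1)$ and $\Beta=\boldsymbol{B}\balpha$, so (\ref{ee-1}) becomes $\min_{\balpha}\mathcal{G}(\boldsymbol{B}\balpha)$ subject to $\boldsymbol{c}^T\balpha=\tau$. First I would record the subdifferential of each piecewise-linear term: $\partial F_i(\beta)=\{-q_i\}$ for $\beta<0$, $\{p_i\}$ for $\beta>1$, $\{p_i-q_i\}$ for $0<\beta<1$, the interval $[-q_i,\,p_i-q_i]$ at $\beta=0$, and $[p_i-q_i,\,p_i]$ at $\beta=1$. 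Comparing these endpoints with the bounding vectors (\ref{ee-9}), the strict inequalities (\ref{ee-12}) say precisely that $-\boldsymbol{\eta}_\ell\in\mbox{int}\,\partial F_\ell(\beta_\ell^*)$ for every off-support index $\ell\in\Gamma_0\cup\Gamma_1$.

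Next I would exhibit the dual certificate. Define a subgradient $\boldsymbol{g}\in\partial\mathcal{G}(\Beta^*)$ by $g_i=p_i$ on $\Gamma_{1^+}$ and $g_i=-q_i$ on $\Gamma_{0^-}$ (the differentiable coordinates), and $g_\ell=-\boldsymbol{\eta}_\ell$ on $\Gamma_0\cup\Gamma_1$; the previous paragraph guarantees this is a legitimate subgradient. The key identity is that the support-set block of $\boldsymbol{B}^T\boldsymbol{g}$ reproduces the LOC violation vector: for each $j$ we have $\sum_{i\in\mathcal{I}_j\cap\Gamma_{1^+}}p_i-\sum_{i\in\mathcal{I}_j\cap\Gamma_{0^-}}q_i=\boldsymbol{e}_j$ by (\ref{ee-10}). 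Consequently (\ref{ee-11}) is exactly the stationarity relation $\boldsymbol{B}^T\boldsymbol{g}=\eta_c\,\boldsymbol{c}$, with $\eta_c$ playing the role of the multiplier for the single equality constraint. Optimality is then immediate from convexity: for any feasible $\balpha$ with $\Beta=\boldsymbol{B}\balpha$, the subgradient inequality together with $\boldsymbol{B}^T\boldsymbol{g}=\eta_c\boldsymbol{c}$ gives $\mathcal{G}(\Beta)-\mathcal{G}(\Beta^*)\ge \boldsymbol{g}^T(\Beta-\Beta^*)=\eta_c\,\boldsymbol{c}^T(\balpha-\balpha^*)=0$, so $\balpha^*$ minimizes (\ref{ee-1}).

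For uniqueness I would suppose $\tilde\balpha$ is another minimizer, set $\tilde\Beta=\boldsymbol{B}\tilde\balpha$, and observe that equality must then propagate through the chain above, forcing $\sum_i\big(F_i(\tilde\beta_i)-F_i(\beta_i^*)-g_i(\tilde\beta_i-\beta_i^*)\big)=0$. Every summand is nonnegative by the subgradient inequality, hence each vanishes. Here strict interiority $-\boldsymbol{\eta}_\ell\in\mbox{int}\,\partial F_\ell(\beta_\ell^*)$ is decisive: for $\ell\in\Gamma_0\cup\Gamma_1$ a vanishing summand forces $\tilde\beta_\ell=\beta_\ell^*$, because the tangent of slope $g_\ell$ at the kink lies strictly below $F_\ell$ at every other point. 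Thus $\boldsymbol{B}_{\Gamma_0\cup\Gamma_1,:}(\tilde\balpha-\balpha^*)=\boldsymbol{0}$, while feasibility gives $\boldsymbol{c}^T(\tilde\balpha-\balpha^*)=0$, so $\tilde\balpha-\balpha^*$ lies in the kernel of $\big[\,(\boldsymbol{B}_{\Gamma_0\cup\Gamma_1,:})^T,\,\boldsymbol{c}\,\big]^T$. The full-row-rank hypothesis makes this kernel trivial, whence $\tilde\balpha=\balpha^*$.

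The main obstacle I anticipate lies in the uniqueness step rather than in optimality. One has to argue carefully that the summands attached to the \emph{support} shapelets can vanish without pinning $\tilde\beta_i$ (since $F_i$ is merely affine there), so the argument cannot constrain those coordinates directly; it is precisely the combination of the off-support pinning with the lone constraint $\boldsymbol{c}^T\balpha=\tau$, assembled into $[\,(\boldsymbol{B}_{\Gamma_0\cup\Gamma_1,:})^T,\,\boldsymbol{c}\,]$, that the full-rank assumption is designed to render sufficient. A secondary, easy-to-mishandle point is the bookkeeping of the negation $\boldsymbol{\eta}=-\boldsymbol{g}_{\Gamma_0\cup\Gamma_1}$ and the sign of $\eta_c$, which must be tracked so that (\ref{ee-11})--(\ref{ee-12}) align exactly with a genuine subdifferential membership and stationarity relation.
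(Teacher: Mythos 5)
Your proposal is correct and follows essentially the same route as the paper: a dual-certificate/subgradient argument in the separable $\Beta$-domain, with the identity $\boldsymbol{B}^T\boldsymbol{g}=\eta_c\boldsymbol{c}$ recovering (\ref{ee-11}) via the LOC violation vector, strict interiority of $-\boldsymbol{\eta}_\ell$ in $\partial F_\ell(\beta_\ell^*)$ pinning the off-support coordinates, and the full-row-rank hypothesis closing uniqueness. The paper merely packages the same certificate through an auxiliary lemma built on the stacked matrix $\boldsymbol{K}=\begin{bmatrix}-\boldsymbol{I}&\boldsymbol{B}\\ \boldsymbol{0}&\boldsymbol{c}^T\end{bmatrix}$ and a quantitative bound $\varepsilon\|\boldsymbol{h}_{{\Gamma'}^c}\|_1$, which is the explicit form of your "tangent lies strictly below" observation.
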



Altogether, in order to show that $\balpha^*$ is the unique minimizer of the $\ell_1$-constrained problem (\ref{ee-2}), we need to show that the tangent cone property of Proposition \ref{te1}, and the uniqueness requirements of Theorem \ref{th7} can be established simultaneously. To make a more intuitive sense of the material presented, we conclude this section with a simple toy example.

\begin{example}\label{examp}
Consider the three shapes $\mathcal{S}_1$, $\mathcal{S}_2$ and $\mathcal{S}_3$ depicted in Figure \ref{figex}. The resulting shapelets are shown and denoted by $\Omega_1, \cdots,\Omega_7$. For simplicity, the inhomogeneity measures are somehow that $p_1 = 1-q_1 = 0$, and $q_i=1-p_i=0$ for $i=2,3,\cdots,7$. In other words, the LOC holds for $\Omega_1 =\mbox{cl}(\mathcal{S}_1\setminus (\mathcal{S}_2\cup\mathcal{S}_3))$.

\begin{figure}[t]
    \centering
    \centering \includegraphics[width=52mm]{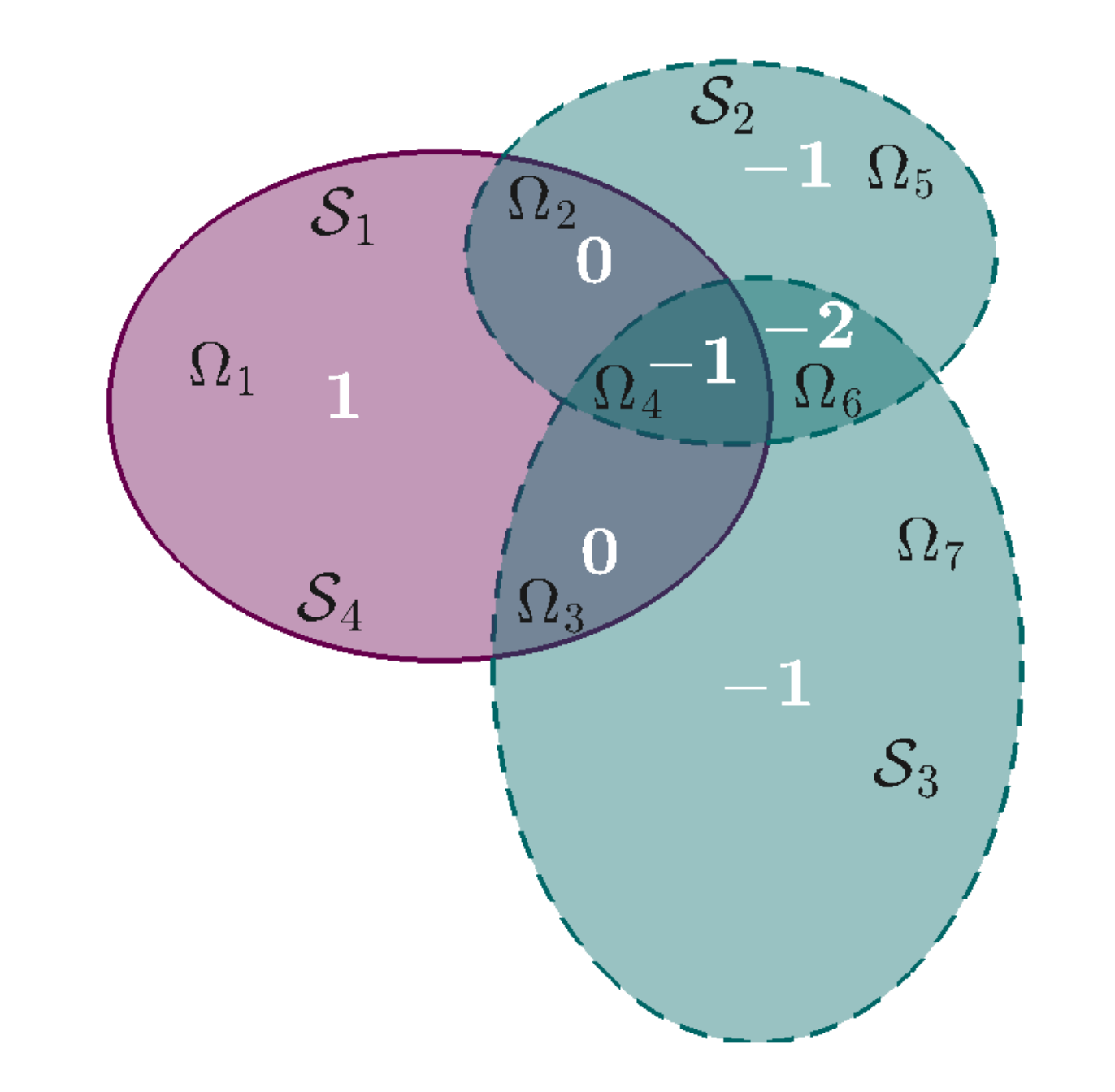}
    \begin{tabular}[b]{c}
      $\boldsymbol{B} = \left( \begin{array}{cccc}
1 & 0 & 0&1 \\
1 & 1 & 0& 0 \\
1 & 0 & 1& 0\\
1 & 1 & 1 & 0\\
0 & 1 & 0& 0\\
0 & 1 & 1 & 0 \\
0 & 0 & 1&0
\end{array} \right)$\\
       \\ \\ \\
    \end{tabular}
    \captionlistentry[table]{A table beside a figure}
    \caption{The shapes used in Example 4.1 and the corresponding bearing matrix. The values shown in light color are the entries of $\Beta_1^* = \boldsymbol{B}\boldsymbol{\balpha_1^*}$ over each shapelet when $\balpha_1^*=[1,-1,-1, 0]^T$}\label{figex}
  \end{figure}

Suppose we add a forth shape to the dictionary and carefully select it to be $\Omega_1$, i.e., $\mathcal{S}_4 = \mbox{cl}(\Omega_1)$. Focusing on the $\ell_1$-constrained problem (\ref{ee-2}) for a fixed $\tau=3$, we can easily observe that $\balpha_1^*=[1,-1,-1, 0]^T$ and $\balpha_2^*=[0,0,0,1]^T$ are two possible minimizers, as they are both feasible and satisfy condition (\ref{eq32}) of Theorem \ref{th5}. As will be detailed in the sequel, we show that there is no way to establish the conditions of Theorem \ref{th7} for $\balpha_1^*$.

According to the figure, for the given value of $\balpha_1^*$, $\Gamma_0 = \{2,3\}$ and $\Gamma_1 = \{1\}$. Also, $\boldsymbol{c}=[1,-1,-1,c_4]^T$ where $|c_4|\leq 1$ and therefore equation (\ref{ee-11}) appears as
\begin{equation}\label{ee-13}
\begin{bmatrix}
1&1&1&1\\ 0& 1& 0& -1\\ 0 & 0& 1& -1\\ 1 & 0 & 0 & c_4
\end{bmatrix}\begin{bmatrix}\boldsymbol{\eta}\\ \eta_c\end{bmatrix}=\boldsymbol{0}.
\end{equation}
On the other hand for the bounding vectors we have
\begin{equation}\label{ee-14}
\boldsymbol{l}=\begin{bmatrix}-p_1\\ q_2-p_2\\ q_3 -p_3\end{bmatrix} = \begin{bmatrix}0\\ -1\\ -1 \end{bmatrix} \qquad \mbox{and} \qquad \boldsymbol{u}=\begin{bmatrix}1\\ 0\\ 0\end{bmatrix}.
\end{equation}
We can see that for any $c_4\in[-1,1]$, equation (\ref{ee-13}) has the unique trivial solution $\boldsymbol{\eta}=\boldsymbol{0}$, $\eta_c = 0$, and given the bounding vectors specified in (\ref{ee-14}), there is no way to establish condition (\ref{ee-12}).
\end{example}

\subsection{Accurate Recovery of the Shape Elements}\label{sec:accrecovery}
The tools developed in Sections \ref{Sec:Rel} and \ref{sec-conv} can be employed to characterize the minimizers of Sparse-CSC, and explore the possibility of identifying the constituting elements of a target composition. Since considerable attention has already been drawn to the derivation of the proposed convex proxy and the related analysis tools, we will consider a setup pertinent to the preceding example that allows integrating the techniques in a concise, yet insightful way. The ideas developed here may be further extended to derive more general results.

Consider a dictionary of shape elements $\{\mathcal{S}_j\}_{j=1}^{n_s}$ and an image $D$, where the LOC holds for $\Sigma\subset D$. We assume that there exists a basic non-redundant composition $\mathpzc{R}_{\Ip,\In}$ such that $\Ip,\In\subset\{1,\cdots,n_s\}$ and $\Sigma = \mbox{cl}(\mathpzc{R}_{\Ip,\In})$. Concerning the basic composition, the outcome of the linkage process is represented by
\[\balpha_\mathpzc{R} = \mathcal{A}\big(\{\mathcal{S}_j\}_{j\in\Ip\cup\In};\Ip,\In\big).
\]
To avoid indexing complications, we simply assume that $\Ip = 1,\cdots, n_\oplus$ and $\In = n_\oplus+1, \cdots,n_\oplus+n_\ominus$. We use the terminology of \emph{exterior} for the off-target dictionary elements, index by $\{n_\oplus+n_\ominus+1,\cdots,n_s\}$.

We will assert that when $n_s = n_\oplus+n_\ominus$, for $\tau = \big\|\mathcal{A}\big(\{\mathcal{S}_j\}_{j\in\Ip\cup\In};\Ip,\In\big)\big\|_1$, the unique outcome of the Sparse-CSC is $\balpha^* = \balpha_\mathpzc{R}$. Moreover, for $n_s>n_\oplus+n_\ominus$, if the exterior shapes are placed in an ``unstructured'' way and maintain a restricted level of overlap with $\{\mathcal{S}_j\}_{j\in\Ip\cup\In}$, the Sparse-CSC program still attains a unique minimizer $\balpha^*$, such that $\balpha^*_{\Ip\cup\In}=\balpha_\mathpzc{R}$ and $\balpha^*_{(\Ip\cup\In)^c}=\boldsymbol{0}$. In a sense, under such conditions, the exterior shapes cannot distract the convex proxy from identifying $\Ip$ and $\In$.

Consider $\{\Omega^\mathpzc{R}_\ell\}_{\ell=1}^{n_{\Omega^\mathpzc{R}}}$ and $\boldsymbol{B}^\mathpzc{R} \in \{0,1\}^{n_{\Omega^\mathpzc{R}}\times (n_\oplus+n_\ominus)}$ to be the shapelets and the bearing matrix associated with the DSD process
\begin{equation*}
\{\Omega^\mathpzc{R}_\ell,\boldsymbol{B}^\mathpzc{R}_{\ell,:}\}_{\ell=1}^{n_{\Omega^\mathpzc{R}}} = \mbox{DSD}\Big( \{\mathcal{S}_j\}_{j\in\Ip\cup\In}\Big).
\end{equation*}
When a shape is added to the collection $\{\mathcal{S}_j\}_{j\in\Ip\cup\In}$, depending on its overlap with the present elements, the outcome of the DSD process may change to finer partitions. Strictly speaking, when $n_s > n_\oplus+n_\ominus$ and
\begin{equation*}
\{\Omega_i,\boldsymbol{B}_{i,:}\}_{i=1}^{n_{\Omega}} = \mbox{DSD}\Big( \{\mathcal{S}_j\}_{j=1}^{n_s}\Big),
\end{equation*}
there exist index sets $\mathcal{J}_\ell$, such that
\begin{equation}\label{eqmesh}
\Omega^\mathpzc{R}_\ell = \bigcup_{i\in \mathcal{J}_\ell}\Omega_i, \qquad \ell = 1,2,\cdots, n_{\Omega^\mathpzc{R}}.
\end{equation}
To avoid confusion, we will refer to the super-shapelets $\{\Omega_i\}_{i=1}^{n_\Omega}$ as $cells$.

The DSD process generates disjoint cells from the elements in the dictionary. For the sake of discussion in this section, it might be more convenient to look at a reversed process. Here a collection of disjoint closed sets, yet referred to as cells, is fixed and each shape in the dictionary is constructed by making a union over a number of them.

Although the fixed cells are not generated by any specific DSD process, we will still denote them by $\{\Omega_i\}_{i=1}^{n_\Omega}$ (and note that (\ref{eqmesh}) holds). This setup allows us to more conveniently track the process of adding a new element without updating the shapelet architecture every time. The only update made on the dictionary matrix would be adding a new column, while the number of rows always remains to be $n_\Omega$ (see Figure \ref{figcell}).

\begin{figure}[t]
\hspace{-.6cm}\begin{overpic}[width=0.4\textwidth,tics=10]{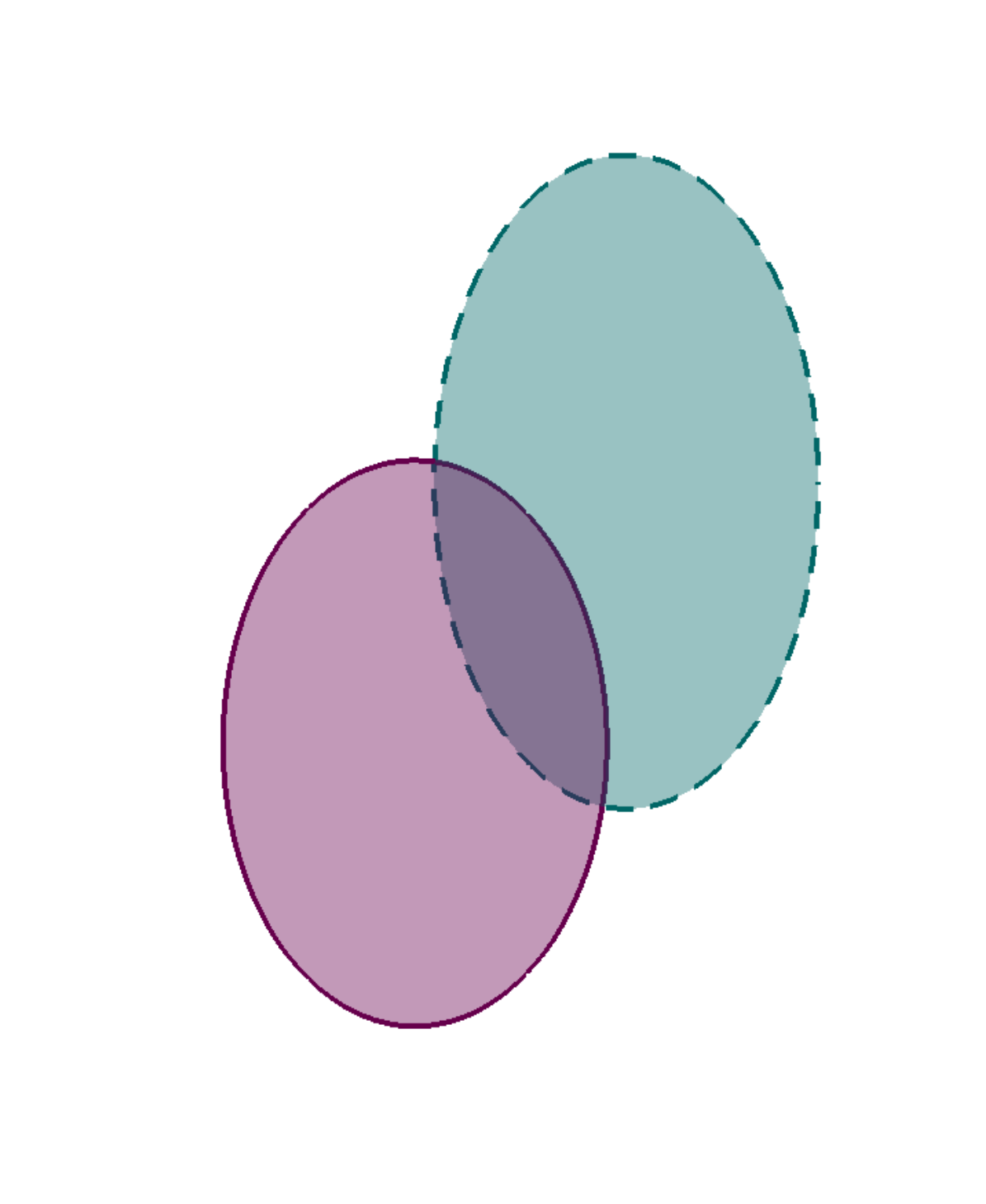}
 \put (20,35) {$\mathcal{S}_1$} \put (31,19) {$\Omega^\mathpzc{R}_1$} \put (47,80) {$\Omega^\mathpzc{R}_2$}  \put (40,45) {$\Omega^\mathpzc{R}_3$} \put (60,55) {$\mathcal{S}_2$}
\end{overpic}
\hspace{-1.5cm}
    \begin{tabular}[b]{c}
      $\boldsymbol{B}^{\mathpzc{R}} = \left( \begin{array}{cccc}
1 & 0 \\
0 & 1  \\
1 & 1
\end{array} \right)$\\
       \\ \\ \\
    \end{tabular}\hspace{-.3cm}
 \includegraphics[width=50mm]{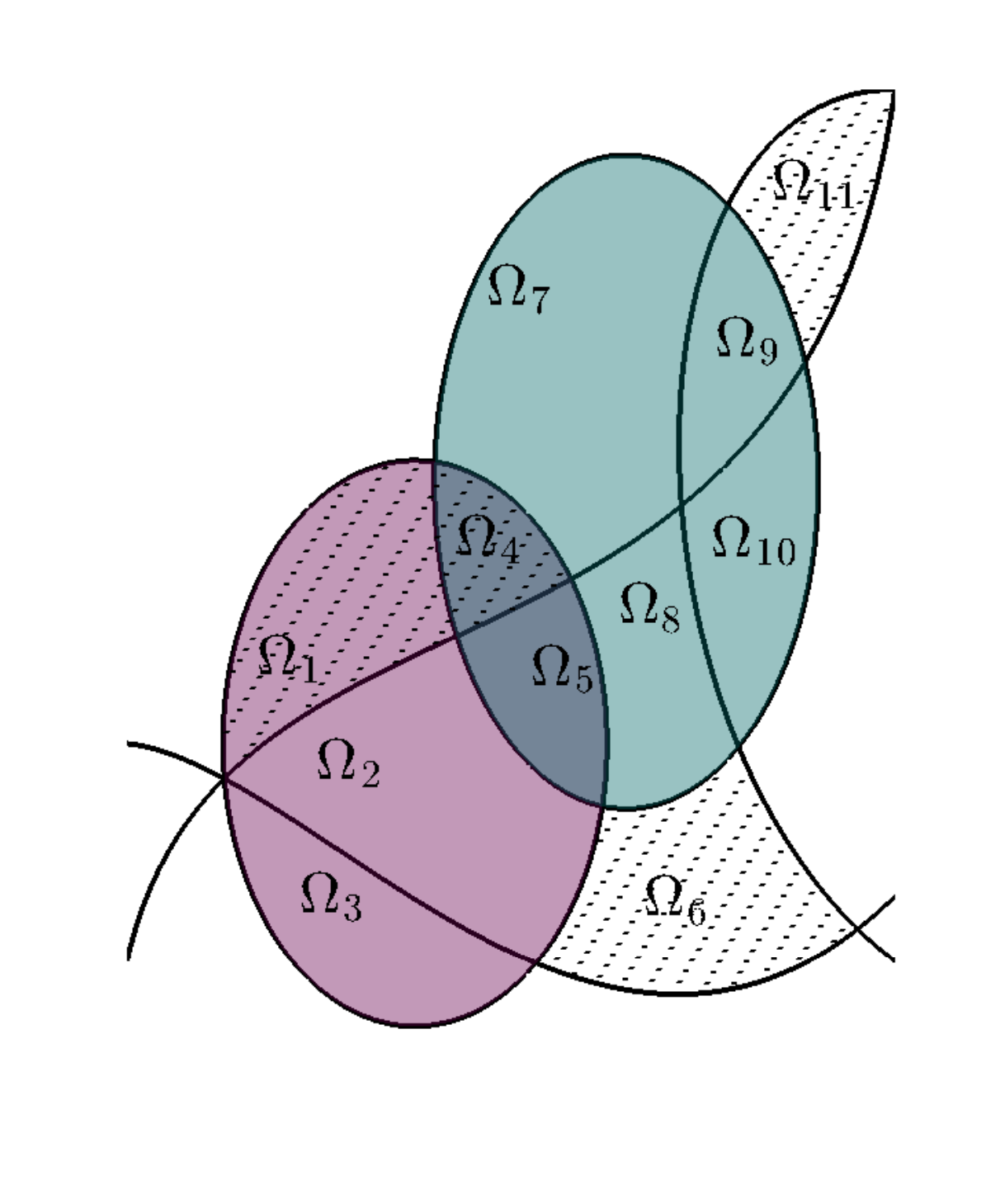}\hspace{-.7cm}
    \begin{tabular}[b]{c}
      $\boldsymbol{B} = \left( \begin{array}{cccc}
1 & 0 & 1\\
1 & 0 & 0 \\
1 & 0 & 0\\
1 & 1 & 1 \\
1 & 1 & 0\\
0 & 0 & 1 \\
0 & 1 & 0 \\
0 & 1 & 0 \\
0 & 1 & 0 \\
0 & 1 & 0 \\
0 & 0 & 1
\end{array} \right)$\\
       \\ \\ \\
    \end{tabular}\vspace{-.6cm}
    \captionlistentry[table]{A table beside a figure}
    \caption{Left: the bearing matrix associated with the overlapped shapes $\mathcal{S}_1$ and $\mathcal{S}_2$. Right: a collection of fixed cells indicated by $\Omega_1,\cdots,\Omega_{11}$, which can be used as the building blocks for $\mathcal{S}_1$ and $\mathcal{S}_2$. The shaded cells form a third shape $\mathcal{S}_3=\bigcup_{i=1,4,6,11}\Omega_i$. The overall dictionary matrix $\boldsymbol{B}$ is presented}\label{figcell}
  \end{figure}

While we do not consider any specific geometry for the cells, at the finest partitioning level, the cells may simply be taken as the image pixels, and each element of the dictionary to be a collection of pixels. We assume each cell contributes in the construction of at least one shape (hence, the final dictionary matrix does not contain zero rows).

%
%

As we will discuss, the null and unit-valued shapelets play a key role in recovering a basic composition through the Sparse-CSC. Denoting the unit and null-valued shapelets by $\Gamma_1^\mathpzc{R}$ and $\Gamma_0^\mathpzc{R}$, a key property of basic compositions is presented as follows.
\begin{proposition} \label{lemcons} Let $\boldsymbol{c}\in \mathbb{R}^{n_\oplus +n_\ominus}$, where $\boldsymbol{c}_{\Ip} = \boldsymbol{1}$ and $\boldsymbol{c}_{\In} = -\boldsymbol{1}$. Considering the basic non-redundant composition $\mathpzc{R}_{\Ip,\In}$, the linear system
\begin{equation}\label{eqbw}
\big(\boldsymbol{B}_{\Gamma_0^\mathpzc{R}\cup\Gamma_1^\mathpzc{R},:}^\mathpzc{R}\big)^T\boldsymbol{w}= -\boldsymbol{c}
\end{equation}
has a unique solution, which satisfies $-(1+n_\ominus)\boldsymbol{1} \preceq \boldsymbol{w}_{\Gamma_1^\mathpzc{R}}\preceq -\boldsymbol{1}$ and $\boldsymbol{0}\prec \boldsymbol{}\boldsymbol{w}_{\Gamma_0^\mathpzc{R}}\preceq \boldsymbol{1}$.
\end{proposition}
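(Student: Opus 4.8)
The plan is to exploit the block structure that a basic non-redundant composition imposes on the restricted bearing matrix $\boldsymbol{B}_{\Gamma_0^\mathpzc{R}\cup\Gamma_1^\mathpzc{R},:}^\mathpzc{R}$, which reduces the claim to the already-established properties of the discriminant matrix. Ordering the rows so that the unit-valued shapelets $\Gamma_1^\mathpzc{R}$ come first (followed by the null-valued shapelets $\Gamma_0^\mathpzc{R}$), and the columns so that $\Ip$ precedes $\In$, the structure (\ref{eqbr}) shows $\Gamma_1^\mathpzc{R}\subseteq \Np\setminus\Nn$ and $\Gamma_0^\mathpzc{R}\subseteq \Nn$, so that the restricted matrix takes the block lower-triangular form
\begin{equation*}
\boldsymbol{M} \triangleq \boldsymbol{B}_{\Gamma_0^\mathpzc{R}\cup\Gamma_1^\mathpzc{R},:}^\mathpzc{R} = \begin{bmatrix} \boldsymbol{P} & \boldsymbol{0}\\ \boldsymbol{Q} & \boldsymbol{\Delta}_\mathpzc{R}\end{bmatrix},
\end{equation*}
where $\boldsymbol{P} = \boldsymbol{B}^{(1,1)}_{\Gamma_1^\mathpzc{R},:}$, $\boldsymbol{Q}=\boldsymbol{B}^{(2,1)}_{\Gamma_0^\mathpzc{R},:}$, and the upper-right block vanishes exactly as in (\ref{eqbr}) (a shapelet in $\Np\setminus\Nn$ meets no $\In$ shape). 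First I would argue that $\boldsymbol{P}$ is a permutation matrix: by Theorem \ref{th2x}(a) and Proposition \ref{th2xx}(b) every $j\in\Ip$ owns exactly one unit-valued shapelet, and a unit-valued shapelet lies in exactly one $\Ip$ shape (its coefficient equals $|\Kp^{(i)}|=1$), so $|\Gamma_1^\mathpzc{R}|=n_\oplus$ and $\boldsymbol{P}$ carries a single $1$ in each row and column. Since the composition is basic, Definition \ref{linkable} gives $\rank(\boldsymbol{\Delta}_\mathpzc{R})=|\In|=|\Gamma_0^\mathpzc{R}|=n_\ominus$, so $\boldsymbol{\Delta}_\mathpzc{R}$ is square and invertible. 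Consequently $\boldsymbol{M}$ is invertible, which immediately delivers the unique solvability of $(\boldsymbol{M})^T\boldsymbol{w}=-\boldsymbol{c}$.

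For the sign and magnitude bounds I would transpose and solve the resulting block upper-triangular system, noting that the right-hand side $-\boldsymbol{c}$ equals $-\boldsymbol{1}$ on the $\Ip$-rows and $\boldsymbol{1}$ on the $\In$-rows (using $\boldsymbol{c}_{\Ip}=\boldsymbol{1}$, $\boldsymbol{c}_{\In}=-\boldsymbol{1}$). The bottom block reads $(\boldsymbol{\Delta}_\mathpzc{R})^T\boldsymbol{w}_{\Gamma_0^\mathpzc{R}}=\boldsymbol{1}$, which is precisely (\ref{eqxx3}); the basic assumption guarantees a strictly positive solution, so $\boldsymbol{0}\prec\boldsymbol{w}_{\Gamma_0^\mathpzc{R}}$. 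Because every row of $\boldsymbol{\Delta}_\mathpzc{R}$ is nonzero (a null-valued shapelet belongs to $\Nn$, hence to at least one $\In$ shape), each $w_i$ with $i\in\Gamma_0^\mathpzc{R}$ appears in at least one column-sum that equals $1$ with all summands positive, forcing $w_i\leq 1$ and establishing $\boldsymbol{w}_{\Gamma_0^\mathpzc{R}}\preceq\boldsymbol{1}$. The top block then yields $(\boldsymbol{P})^T\boldsymbol{w}_{\Gamma_1^\mathpzc{R}}=-\boldsymbol{1}-(\boldsymbol{Q})^T\boldsymbol{w}_{\Gamma_0^\mathpzc{R}}$, and since $(\boldsymbol{P})^T$ is a permutation the entries of $\boldsymbol{w}_{\Gamma_1^\mathpzc{R}}$ are a reordering of those of $-\boldsymbol{1}-(\boldsymbol{Q})^T\boldsymbol{w}_{\Gamma_0^\mathpzc{R}}$. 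Each entry of $(\boldsymbol{Q})^T\boldsymbol{w}_{\Gamma_0^\mathpzc{R}}$ is a sum of nonnegative terms, so every entry of $\boldsymbol{w}_{\Gamma_1^\mathpzc{R}}$ is at most $-1$; and since that sum ranges over at most $|\Gamma_0^\mathpzc{R}|=n_\ominus$ terms each bounded by $1$, every entry is at least $-(1+n_\ominus)$, giving $-(1+n_\ominus)\boldsymbol{1}\preceq\boldsymbol{w}_{\Gamma_1^\mathpzc{R}}\preceq-\boldsymbol{1}$.

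I expect the main obstacle to be the bookkeeping that certifies $\boldsymbol{P}$ is a genuine permutation matrix and that the row and column index sets align with the $\Ip$/$\In$ and $\Gamma_1^\mathpzc{R}$/$\Gamma_0^\mathpzc{R}$ partitions as claimed; in particular, verifying $|\Gamma_1^\mathpzc{R}|=n_\oplus$ and the cardinality identity $|\Gamma_0^\mathpzc{R}|=n_\ominus$ needed for squareness, both of which rest on the structural results of Section \ref{Sec:Rel}. Once the block-triangular reduction is secured, identifying the bottom equation with (\ref{eqxx3}) makes strict positivity immediate, and the remaining bounds follow from elementary counting arguments using the nonnegativity of the $w_i$ together with the fact that each relevant column-sum equals $1$.
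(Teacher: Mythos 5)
Your proposal is correct and follows essentially the same route as the paper: both extract the block lower-triangular form of $\boldsymbol{B}^{\mathpzc{R}}_{\Gamma_0^{\mathpzc{R}}\cup\Gamma_1^{\mathpzc{R}},:}$ with a permutation block on the unit-valued shapelets and $\boldsymbol{\Delta}_{\mathpzc{R}}$ on the null-valued ones, identify the bottom equation with (\ref{eqxx3}) to get $\boldsymbol{0}\prec\boldsymbol{w}_{\Gamma_0^{\mathpzc{R}}}\preceq\boldsymbol{1}$ from the basic-composition property, and then back-substitute through the permutation block to bound $\boldsymbol{w}_{\Gamma_1^{\mathpzc{R}}}$. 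Your extra bookkeeping for why the top-left block is a genuine permutation matrix only makes explicit what the paper attributes to Theorem \ref{th2x}(a).
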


As may be noticed from the proof of Proposition \ref{lemcons}, a basic composition $\mathpzc{R}_{\Ip,\In}$ has exactly $n_\oplus$ unit-valued (and $n_\ominus$ null-valued) shapelets. Solving (\ref{eqbw}) for $\boldsymbol{w}$ assigns negative quantities to the unit-valued (and positive quantities to the null-valued) shapelets. We will refer to the entries of $\boldsymbol{w}$ as the \emph{bearing constants}. The purpose of considering the specific equation (\ref{eqbw}), and the connection between $\boldsymbol{w}$ and the shape identification problem will be revealed later in this section.

Now, consider a vector $\balpha^*\in\mathbb{R}^{n_s}$ such that
\begin{equation}\label{eqalph}
\balpha^*_{\Ip\cup\In}=\balpha_\mathpzc{R}\qquad \mbox{and}\qquad  \balpha^*_{(\Ip\cup\In)^c}=\boldsymbol{0}.
\end{equation}
Focusing on Proposition \ref{te1} and Theorem \ref{th7}, we proceed to derive  sufficient conditions for $\balpha^*$ to be the unique minimizer of the convex program
\begin{equation}\label{eqcon2}
\min_{\balpha} \;\;G(\balpha) \qquad s.t. \qquad \|\balpha\|_1\leq \big\|\mathcal{A}\big(\{\mathcal{S}_j\}_{j\in\Ip\cup\In};\Ip,\In\big)\big\|_1,
\end{equation}
where $G(.)$ is our proposed convex objective (\ref{econv}). We begin by showing that when the LOC holds for $\Sigma = \mbox{cl}(\mathpzc{R}_{\Ip,\In})$, the tangent cone property of Proposition \ref{te1} can be conveniently established.
\begin{lemma} \label{lemloc}
Given $\balpha^*$ as stated in (\ref{eqalph}), consider a vector $\boldsymbol{c}\in \mathbb{R}^{n_s}$ such that $\boldsymbol{c}_{\Ip\cup\In} = \sign(\balpha^*_{\Ip\cup\In})$ and $\|\boldsymbol{c}_{(\Ip\cup\In)^c}\|_\infty \leq 1$. There exists a point $\hat\balpha$ inside the convex set $\mathcal{C}=\{\balpha:G(\balpha)\leq G(\balpha^*)\}$ such that $\boldsymbol{c}^T(\hat\balpha - \balpha^*)>0$.
\end{lemma}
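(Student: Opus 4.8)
The plan is to verify condition (II) of Proposition \ref{te1} for $\balpha^*$, that is, to exhibit a point of $\mathcal{C}$ from which the direction toward $\balpha^*$ has a strictly positive inner product with $\boldsymbol{c}$. The key observation, already flagged in the discussion following Theorem \ref{th5}, is the scale invariance of the minimizer set under the LOC: if a vector satisfies the separation conditions (\ref{eq32}), then any positive scaling $k\balpha$ with $k\geq 1$ satisfies them as well and is therefore again a global minimizer of $G$. This suggests simply taking $\hat\balpha = k\balpha^*$ for a single fixed $k>1$.

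First I would confirm that $\balpha^*$ itself satisfies (\ref{eq32}) relative to $\Sigma = \mbox{cl}(\mathpzc{R}_{\Ip,\In})$. Because $\balpha^*$ coincides with the linkage outcome $\balpha_\mathpzc{R}$ on $\Ip\cup\In$ and vanishes on the exterior indices, the exterior shapes contribute nothing and $\mathpzc{L}_{\balpha^*} = \mathpzc{L}_{\balpha_\mathpzc{R}}$. By the construction of the linkage process in Section \ref{Sec:Rel}, on each cell $\Omega_i$ with $i\in\Np\setminus\Nn$ one has $\Kn^{(i)}=\emptyset$, so the value of $\mathpzc{L}_{\balpha^*}$ there equals $|\Kp^{(i)}|\geq 1$, while on $\mbox{int}(D\setminus\Sigma)$ it is nonpositive by (\ref{eq16})--(\ref{eq17}); thus (\ref{eq32}) holds. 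Theorem \ref{th5} then certifies that $\balpha^*$ is a global minimizer of $G$, with optimal value $-\int_\Sigma\Pi_{ex}(x)\,\mbox{d}x$ as in Proposition \ref{prop4}.

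Next I would set $\hat\balpha = k\balpha^*$ with any fixed $k>1$. Since $\mathpzc{L}_{k\balpha^*} = k\,\mathpzc{L}_{\balpha^*}$, the first line of (\ref{eq32}) yields $\mathpzc{L}_{k\balpha^*}\geq k\geq 1$ on $\mbox{int}(\Sigma)$ and the second yields $\mathpzc{L}_{k\balpha^*}\leq 0$ on $\mbox{int}(D\setminus\Sigma)$, so $k\balpha^*$ again meets (\ref{eq32}) and, by Theorem \ref{th5}, is a minimizer of $G$. Consequently $G(\hat\balpha) = G(\balpha^*)$, which places $\hat\balpha$ in $\mathcal{C}$; moreover $\hat\balpha-\balpha^* = (k-1)\balpha^*$ lies in $T_\mathcal{C}(\balpha^*)$ by convexity. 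It remains to evaluate the inner product: using $\boldsymbol{c}_{\Ip\cup\In} = \sign(\balpha^*_{\Ip\cup\In})$ and $\balpha^*_{(\Ip\cup\In)^c}=\boldsymbol{0}$ gives $\boldsymbol{c}^T\balpha^* = \|\balpha^*\|_1 > 0$, whence $\boldsymbol{c}^T(\hat\balpha-\balpha^*) = (k-1)\|\balpha^*\|_1 > 0$, exactly as required.

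I do not expect a genuine obstacle: the statement is essentially a corollary of the scale invariance built into Theorem \ref{th5}. The only points requiring care are bookkeeping ones — checking that the vanishing exterior coefficients leave $\mathpzc{L}_{\balpha^*}$ unchanged so that Theorem \ref{th5} can be invoked, and verifying that the sign pattern of $\boldsymbol{c}$ on $\Ip\cup\In$ (namely $+1$ on $\Ip$ and $-1$ on $\In$, consistent with $\alpha_j^*\leq -1$ from Proposition \ref{th2xx}(a)) makes $\boldsymbol{c}^T\balpha^*$ collapse precisely to $\|\balpha^*\|_1$ rather than to a smaller quantity.
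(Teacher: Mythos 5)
Your construction is, at its core, the same one the paper uses: exploit the scale invariance of the LOC minimizer set certified by Theorem \ref{th5}, take a dilate of $\balpha^*$, and use $\boldsymbol{c}^T\balpha^* = \|\balpha^*\|_1$ to get a strictly positive inner product. Your verification that $\balpha^*$ satisfies (\ref{eq32}) and the inner-product computation are both correct, and $k\balpha^*$ does lie in $\mathcal{C}$.

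The one substantive difference is that the paper does not take $\hat\balpha = k\balpha^*$; it takes $\hat\balpha = \balpha' + k\balpha^*$, where $\balpha'$ agrees with $\balpha^*$ on $\Ip\cup\In$ but equals $-\epsilon\boldsymbol{1}$ on the exterior indices (with $\epsilon$ small enough that $\supp^+(\mathpzc{L}_{\balpha'}) = \Sigma$), and then chooses $k$ large enough that $k\|\balpha^*\|_1 - \epsilon\sum_{j\in(\Ip\cup\In)^c}\boldsymbol{c}_j > 0$. This extra perturbation is not decorative: the lemma is invoked to establish condition (II) of Proposition \ref{te1}, which asks both for a tangent direction with $\boldsymbol{c}^T\boldsymbol{\delta}>0$ \emph{and} for $\mathcal{C}$ to have nonempty interior (the latter is what lets the proof of Proposition \ref{te1} move $\tilde\balpha$ off the line through $\balpha^*$ and a putative second minimizer), and the paper's proof accordingly opens by announcing a point ``in the interior of $\mathcal{C}$.'' Your point $k\balpha^*$ has all exterior coordinates exactly zero, so whenever some exterior shape $\mathcal{S}_j$ has interior mass in $D\setminus\Sigma$, any perturbation with $\alpha_j = \delta>0$ pushes $\mathpzc{L}_{\balpha}$ above $0$ there and strictly increases $G$; hence $k\balpha^*$ sits on the boundary of $\mathcal{C}$ and, moreover, lies on the ray through the origin and $\balpha^*$ — exactly the degenerate configuration Proposition \ref{te1}'s argument must rule out. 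So if the lemma is read as producing only a member of $\mathcal{C}$, your proof is complete; if it is read (as the paper's own proof and its downstream use suggest) as producing an interior point, you need the additional $-\epsilon$ nudge on the exterior coefficients, compensated by enlarging $k$, precisely as in the paper.
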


Since the lemma warrants the tangent cone property, based on Proposition \ref{te1} we would only need to focus on the uniqueness conditions for the minimizer of (\ref{ee-1}).

Suppose that $\Beta^*\in \mathbb{R}^{n_\Omega}$ contains the values of $\mathcal{L}_{\balpha^*}(x)$ over the cells $\{\Omega_i\}_{i=1}^{n_\Omega}$. Let $T$ denote the index set associated with the cells that do not overlap with the constituting elements of $\Sigma$, i.e.,
\begin{equation*}
T = \Big\{i:\Omega_i \subset \Big(\bigcup_{j=1}^{n_s} \mathcal{S}_j\Big) \big\backslash \Big(\bigcup_{j=1}^{n_\oplus+n_\ominus} \mathcal{S}_j\Big)  ,i\in\{1,2,\cdots,n_\Omega\}\Big\}.
\end{equation*}
By looking at the image of $\balpha^*$ in the $\Beta$-domain, we expect $\Beta^*$ to match the values of $\mathcal{L}_{\balpha_\mathpzc{R}}(x)$ over the cells within $\bigcup_{j=1}^{n_\oplus+n_\ominus} \mathcal{S}_j$ and to vanish over the remaining cells. More specifically,
\begin{equation*}
 \Beta^*_i = \left \{\begin{array}{lc}
 \Beta_\ell^\mathpzc{R} & i\in \mathcal{J}_\ell , \quad \ell \in \Gamma_0^\mathpzc{R}, \Gamma_1^\mathpzc{R}, \Gamma_{0^-}^\mathpzc{R}, \Gamma_{1^+}^\mathpzc{R}\\
 0 & i\in T
\end{array}.
\right.
\end{equation*}
Similar to (\ref{ee-8}), the off-support index sets associated with $\Beta^*$ may be indicated by
\begin{equation*}
\Gamma_0 = T\cup \bigcup_{\ell\in \Gamma_0^\mathpzc{R}} \mathcal{J}_\ell \qquad \mbox{and}\qquad \Gamma_1 =  \bigcup_{\ell\in \Gamma_1^\mathpzc{R}} \mathcal{J}_\ell.
\end{equation*}
Using Theorem \ref{th7}, we will show that the shape identification condition is closely related to the structure of the dictionary matrix, and the overlap between the exterior shapes and the composition elements. To present the result in a more propitious and intuitive way, we proceed by introducing the related matrix blocks and an overlapping measure.

Let us assume the cell index assignment is performed in a way that $\Gamma_0\cup\Gamma_1 = \{1,2,\cdots, |\Gamma_0\cup\Gamma_1|\}$. Such assumption would avoid index mapping complications. Now, consider the binary matrix $\boldsymbol{B}'\in\{0,1\}^{(n_\oplus+n_\ominus)\times|\Gamma_0\cup\Gamma_1|}$ constructed as
\begin{equation*}
\boldsymbol{B}_{j,i}'= 1_{\{\Omega_i\subset\mathcal{S}_j\}} = \left\{\begin{array}{lc}1& \Omega_i\subset\mathcal{S}_j\\0 & \Omega_i\nsubset\mathcal{S}_j \end{array} \right.,  \qquad j=1,\cdots, n_\oplus+n_\ominus, \quad i\in \Gamma_0\cup\Gamma_1.
\end{equation*}
The columns of $\boldsymbol{B}'$ are clearly zero over $T$. One can verify that the remaining columns are multiple replications of the columns of $(\boldsymbol{B}_{\Gamma_0^\mathpzc{R}\cup\Gamma_1^\mathpzc{R},:}^\mathpzc{R})^T$, which yields \vspace{-.25cm}
\begin{equation*}
\rank(\boldsymbol{B}') = \rank(\boldsymbol{B}_{\Gamma_0^\mathpzc{R}\cup\Gamma_1^\mathpzc{R},:}^\mathpzc{R}) = n_\oplus + n_\ominus.
\end{equation*}
We may follow a similar pattern for the exterior shapes to construct a matrix $\boldsymbol{B}''$ as
\begin{equation*}
\boldsymbol{B}_{j-n_\oplus-n_\ominus,i}''= 1_{\{\Omega_i\subset\mathcal{S}_j\}}, \qquad j=n_\oplus+n_\ominus+1, \cdots,  n_s, \quad i\in \Gamma_0\cup\Gamma_1.
\end{equation*}
As a measure of overlap between the exterior shapes and the null and unit-valued shapelets, we define the quantities $\gamma_{\ell,j}\in[0,1]$ as
\begin{equation}\label{gammalj}
 \gamma_{\ell,j} \triangleq \frac{1}{|\mathcal{J}_\ell|}    \sum_{i\in\mathcal{J}_\ell}   1_{\{\Omega_i\subset\mathcal{S}_j\}},\qquad  \ell\in \Gamma_0^\mathpzc{R}\cup\Gamma_1^\mathpzc{R} , \;\; j= n_\oplus + n_\ominus + 1, \cdots, n_s.
\end{equation}
These quantities are simply the relative number of cells that are common between the shapelet $\Omega^\mathpzc{R}_\ell$ and an exterior shape $\mathcal{S}_j$.

\begin{theorem}\label{thglast}
Following the preceding setup, suppose $\Sigma = \mbox{cl}(\mathpzc{R}_{\;\Ip,\In})$ satisfies the LOC and $\mathpzc{R}_{\;\Ip,\In}$ is a basic composition. If the cellular standing of the exterior dictionary elements is in a way that \vspace{-.2cm}
\begin{align}\label{eqrowrank}\begin{bmatrix} \boldsymbol{B}'\\ \boldsymbol{B}''\end{bmatrix}\vspace{-.2cm}
\end{align}
has full row rank and
\begin{align}\label{eqcoh}
\big|  \sum_{\ell\in \Gamma_0^\mathpzc{R}\cup \Gamma_1^\mathpzc{R}} \gamma_{\ell,j}\boldsymbol{w}_\ell\big |<1\qquad \forall j\in\{ n_\oplus+n_\ominus+1, \cdots, n_s \},
\end{align}
then the unique minimizer of the convex program (\ref{eqcon2}) is $\balpha^*$, which satisfies $\balpha^*_{\Ip\cup\In}=\balpha_\mathpzc{R}$ and $\balpha^*_{(\Ip\cup\In)^c}=\boldsymbol{0}$.
\end{theorem}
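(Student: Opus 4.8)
The plan is to certify $\balpha^*$ as the unique minimizer of (\ref{eqcon2}) by assembling the sufficient conditions of Theorem \ref{th7} and feeding them through Proposition \ref{te1}. Lemma \ref{lemloc} already secures the tangent cone property (II) of Proposition \ref{te1} for \emph{any} admissible $\boldsymbol{c}$ (any $\boldsymbol{c}$ with $\boldsymbol{c}_{\Ip\cup\In}=\sign(\balpha^*_{\Ip\cup\In})$ and $\|\boldsymbol{c}_{(\Ip\cup\In)^c}\|_\infty\le 1$), so it suffices to exhibit one such $\boldsymbol{c}$ together with a pair $(\boldsymbol{\eta},\eta_c)$ meeting the rank, equality, and strict-bound requirements of Theorem \ref{th7}; uniqueness for (\ref{ee-1}) then follows and Proposition \ref{te1} upgrades it to uniqueness for the $\ell_1$-constrained program. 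The full row rank hypothesis is immediate: $(\boldsymbol{B}_{\Gamma_0\cup\Gamma_1,:})^T$ coincides with $\begin{bmatrix}\boldsymbol{B}'\\ \boldsymbol{B}''\end{bmatrix}$, which has full row rank by (\ref{eqrowrank}), and appending the column $\boldsymbol{c}$ cannot lower the rank.

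\textbf{Killing the right-hand side.} Next I would use the LOC to show $\boldsymbol{e}=\boldsymbol{0}$. Since $\Sigma=\mbox{cl}(\mathpzc{R}_{\Ip,\In})$ satisfies the LOC and $\balpha^*$ obeys (\ref{eq32}), any cell with $\beta_i^*>1$ lies in $\mbox{int}(\Sigma)$, where $p_i=0$, and any cell with $\beta_i^*<0$ lies in $\mbox{int}(D\setminus\Sigma)$, where $q_i=0$; hence every term of (\ref{ee-10}) vanishes. Thus (\ref{ee-11}) becomes the homogeneous relation $(\boldsymbol{B}_{\Gamma_0\cup\Gamma_1,:})^T\boldsymbol{\eta}+\eta_c\boldsymbol{c}=\boldsymbol{0}$, which I split into target rows $\boldsymbol{B}'\boldsymbol{\eta}=-\eta_c\boldsymbol{c}_{\Ip\cup\In}$ and exterior rows $\boldsymbol{B}''\boldsymbol{\eta}=-\eta_c\boldsymbol{c}_{(\Ip\cup\In)^c}$. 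The LOC also pins the bounds (\ref{ee-9}): on off-support cells in $\Gamma_0$ one has $\boldsymbol{l}=-p_i<0=\boldsymbol{u}$, while on those in $\Gamma_1$ one has $\boldsymbol{l}=0<q_i=\boldsymbol{u}$, so (\ref{ee-12}) forces $\boldsymbol{\eta}$ strictly negative on $\Gamma_0$ and strictly positive on $\Gamma_1$.

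\textbf{The certificate.} The heart of the construction is to build $(\boldsymbol{\eta},\eta_c)$ from the bearing constants of Proposition \ref{lemcons}. Writing $\eta_c=-t$ with small $t>0$, I set $\eta_i=-t\,\boldsymbol{w}_\ell/|\mathcal{J}_\ell|$ for each $i\in\mathcal{J}_\ell$ with $\ell\in\Gamma_0^\mathpzc{R}\cup\Gamma_1^\mathpzc{R}$, and $\eta_i=-t\,\epsilon_i$ for the exterior-only cells $i\in T$, with fixed $\epsilon_i>0$. Because the columns of $\boldsymbol{B}'$ indexed by $\mathcal{J}_\ell$ replicate the $\ell$-th column of $(\boldsymbol{B}^\mathpzc{R}_{\Gamma_0^\mathpzc{R}\cup\Gamma_1^\mathpzc{R},:})^T$ and the $T$-columns vanish, summing over each $\mathcal{J}_\ell$ collapses the target rows to $(\boldsymbol{B}^\mathpzc{R}_{\Gamma_0^\mathpzc{R}\cup\Gamma_1^\mathpzc{R},:})^T(\eta_c\boldsymbol{w})=-\eta_c\boldsymbol{c}_{\Ip\cup\In}$, which holds exactly by (\ref{eqbw}); the target rows are thus satisfied automatically. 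The sign/magnitude bounds $-(1+n_\ominus)\boldsymbol{1}\preceq\boldsymbol{w}_{\Gamma_1^\mathpzc{R}}\preceq-\boldsymbol{1}$ and $\boldsymbol{0}\prec\boldsymbol{w}_{\Gamma_0^\mathpzc{R}}\preceq\boldsymbol{1}$ give $\boldsymbol{\eta}>0$ on $\Gamma_1$ and $\boldsymbol{\eta}<0$ on $\Gamma_0$, all entries tending to $0$ from the correct side as $t\to0$, so (\ref{ee-12}) holds for $t$ small (using $p_i,q_i>0$ from the strict LOC). For the exterior rows I define $\boldsymbol{c}_{(\Ip\cup\In)^c}=-\eta_c^{-1}\boldsymbol{B}''\boldsymbol{\eta}$; a computation with (\ref{gammalj}) shows the $t$-factors cancel, giving the $t$-independent value $\boldsymbol{c}_j=-\big(\sum_{\ell}\gamma_{\ell,j}\boldsymbol{w}_\ell+\sum_{i\in T,\ \Omega_i\subset\mathcal{S}_j}\epsilon_i\big)$.

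\textbf{Main obstacle.} The crux is enforcing $\|\boldsymbol{c}_{(\Ip\cup\In)^c}\|_\infty\le 1$ and (\ref{ee-12}) at once, and I expect this to be where the real care is needed. Fortunately the two requirements decouple: the coherence hypothesis (\ref{eqcoh}) gives $|\sum_\ell\gamma_{\ell,j}\boldsymbol{w}_\ell|<1$ strictly, so choosing the fixed $\epsilon_i$ small enough yields $|\boldsymbol{c}_j|<1$ for every exterior $j$, with a bound that does not depend on $t$; the interior bounds (\ref{ee-12}) are arranged afterward by shrinking $t$. With this admissible $\boldsymbol{c}$ and the certificate $(\boldsymbol{\eta},\eta_c)$ in hand, Theorem \ref{th7} gives uniqueness for (\ref{ee-1}), Lemma \ref{lemloc} supplies condition (II), and Proposition \ref{te1} concludes that $\balpha^*$ with $\balpha^*_{\Ip\cup\In}=\balpha_\mathpzc{R}$ and $\balpha^*_{(\Ip\cup\In)^c}=\boldsymbol{0}$ is the unique minimizer of (\ref{eqcon2}). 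The two points I would treat most carefully are the cell-by-cell verification that $\boldsymbol{e}=\boldsymbol{0}$, and the replication/collapse identity relating $\boldsymbol{B}'$ to $\boldsymbol{B}^\mathpzc{R}$ through the $\gamma_{\ell,j}$, since the whole certificate rests on it.
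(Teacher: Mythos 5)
Your proposal is correct and takes essentially the same route as the paper: both establish the tangent-cone condition via Lemma \ref{lemloc}, reduce to Theorem \ref{th7} with $\boldsymbol{e}=\boldsymbol{0}$ under the LOC, and build the dual certificate $\boldsymbol{\eta}$ from the bearing constants $\boldsymbol{w}$ of Proposition \ref{lemcons} distributed over the cells $\mathcal{J}_\ell$, using the coherence bound (\ref{eqcoh}) to keep $\|\boldsymbol{c}_{(\Ip\cup\In)^c}\|_\infty\leq 1$. The only cosmetic difference is normalization: the paper fixes the $T$-entries of $\boldsymbol{\eta}$ at $-1$ and takes $|\eta_c|$ large (with a cost-rescaling remark to meet the strict bounds (\ref{ee-12})), whereas you shrink the whole certificate by $t$ while keeping the ratios $\epsilon_i$ fixed and small --- these are equivalent.
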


The magnitude of the quantity
\begin{equation}\label{eqcoherence}
C_j \triangleq \big|  \sum_{\ell\in \Gamma_0^\mathpzc{R}\cup \Gamma_1^\mathpzc{R}} \gamma_{\ell,j}\boldsymbol{w}_\ell\big |, \qquad j\in (\Ip\cup\In)^c,
\end{equation}
is somehow related to the way an exterior shape overlaps with the elements of the target composition. The critical shapelets that contribute in the value of $C_j$ are only the null and unit valued shapelets associated with $\mathpzc{R}_{\Ip,\In}$. When the exterior shapes stay away from the elements of $\mathpzc{R}_{\Ip,\In}$, the result of Theorem \ref{thglast} is quite consistent with what we expect to observe as the outcome of the convex program (\ref{eqcon2}). We can somehow describe $C_j$ as the ``geometric coherence'' between an exterior shape $\mathcal{S}_j$ and $\mathpzc{R}_{\Ip,\In}$.

An intuitive interpretation of Theorem \ref{thglast} is upon having a dense grid of fixed cells ($n_\Omega \sim |\Gamma_0\cup\Gamma_1|\gg n_s$), when the exterior shapes are posed in an unstructured way (to satisfy the row-rank constraint) and maintain a sufficiently small level of overlap with the elements of $\mathpzc{R}_{\Ip,\In}$, we can expect to identify the composition elements through the proposed convex formulation. As a matter of fact, it can be shown that the exterior shapes completely disjoint with $\bigcup_{j=1}^{n_\oplus+n_\ominus}\mathcal{S}_j$ are even exempt from the row-rank condition of the matrix presented in (\ref{eqrowrank}).

In view of $ \boldsymbol{w}_{\Gamma_0^\mathpzc{R}} \preceq \boldsymbol{1}\preceq |\boldsymbol{w}_{\Gamma_1^\mathpzc{R}}|$, overlapping of an exterior shape with the null-valued shapelets maintains a lower risk of violating (\ref{eqcoh}) than overlapping with the unit-valued shapelets. Moreover, since the bearing constants take opposite signs over the null and unit-valued shapelets, an exterior shape that overlaps with both class of shapelets yet has the potential to maintain a small geometric coherence. While we have specifically focused on the case that the LOC holds for $\Sigma$, a line of argument similar to the proof of Theorem \ref{thglast} would allow us to talk about the possibility of identifying the composition elements when the entries of the LOC violation vector (see equation (\ref{ee-10})) are sufficiently small.

An interesting problem arises when the exterior shapes are constructed through a random selection of the fixed cells. Bringing randomness into the problem would allow us to derive more qualitative results regarding the performance of the convex formulation in different setups, as well as relating the minimizers of convex proxy to the outcomes of the Cardinal-SC problem under certain regimes. While this type of analysis is very well-established in the compressive sensing community \cite{candes2006robust, foucart2013mathematical}, we would leave that as a future work due to the present load of the paper.

Finally, an essential piece of information that plays a key role in the Sparse-CSC formulation is the value of $\tau$. Based on the suggested formulation in (\ref{eqcon2}), setting $\tau$ to $\tau^* = \|\mathcal{A}(\{\mathcal{S}_j\}_{j\in\Ip\cup\In};\Ip,\In)\|_1$ grants the possibility of recovering the constituting elements of a composition. Generally, the value of $\tau^*$ is not known a priori, however, the fact that for many basic compositions the entries of $\balpha_\mathpzc{R}$ are simply integer quantities, limits the selection of $\tau^*$ among integer possibilities. In the simplest case, when $\In = \emptyset$, the value of $\tau^*$ is  $|\Ip|$, the number of elements in the composition. In the next section we will present some related simulations which demonstrate that trying successive integer values for $\tau^*$ may allow us to control the number of active elements in the recovered composition.

\section{Simulation Results}\label{sec5}

In this section we test the efficiency of the proposed scheme in some basic examples. For the current simulations we have used the CVX Matlab toolbox \cite{cvx}, and have tried to maintain a moderate size for the shape dictionary\footnote{Examples of the code are available at: \href{http://users.ece.gatech.edu/aaghasi3/software.html}{http://users.ece.gatech.edu/aaghasi3/software.html}}. Exploring more extensive simulations (emphasizing on the optimization tools to address the Sparse-CSC problem) remains a future work.


The inhomogeneity measure used throughout the simulations is the basic Chan-Vese, where $\Pi_{in/ ex}(x)=(u(x)-\tilde u_{in/ ex})^2$. For images that are close to being binary, the values of $\tilde u_{in}$ and $\tilde u_{ex}$ are fixed and simply set to the minimum and maximum intensity values in the image. For the more noisy images (later specified in the context), $\tilde u_{in}$ and $\tilde u_{ex}$ are naively selected to be the 15\% and 85\% quantiles of the image histogram. In other words, the promising results presented are obtained with the least effort on optimizing the texture measures, and future developments in this area would further strengthen the technique.

\subsection{Basic Image Segmentation}
As the first experiment, we consider an image segmentation problem where very little information about the image content is available. In this case the elements of the shape dictionary are simply selected to be square blocks of the same size distributed throughout the imaging domain.

A test image of size $120\times 100$ pixels is shown in Figure \ref{figsegmentation}(a). Each element of the shape dictionary is a square of size $15\times 15$ pixels, centered at a certain location in the domain. The shape centroids are taken to be on a uniform $40\times 30$ grid over the imaging domain (i.e., $3\times 3\frac{1}{3}$ pixels spacing in the vertical and horizontal directions), which together result in a dictionary of size $n_s = 1200$ shape elements.

\begin{figure*}
\centering
\begin{tabular}{ccc}
&\hspace{-.5cm}\includegraphics[width=47mm]{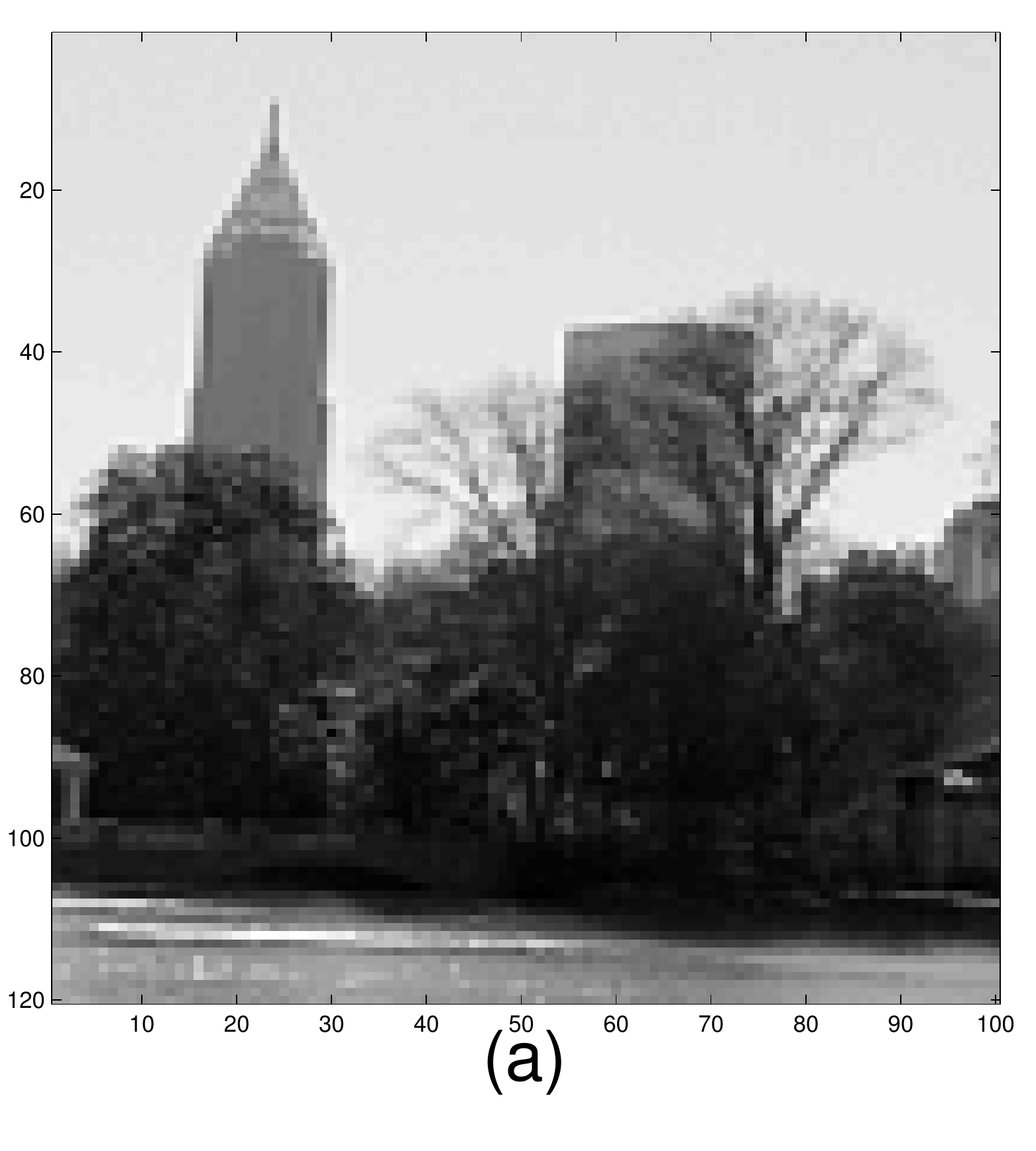}& \\[-.2cm]
\includegraphics[width=42mm]{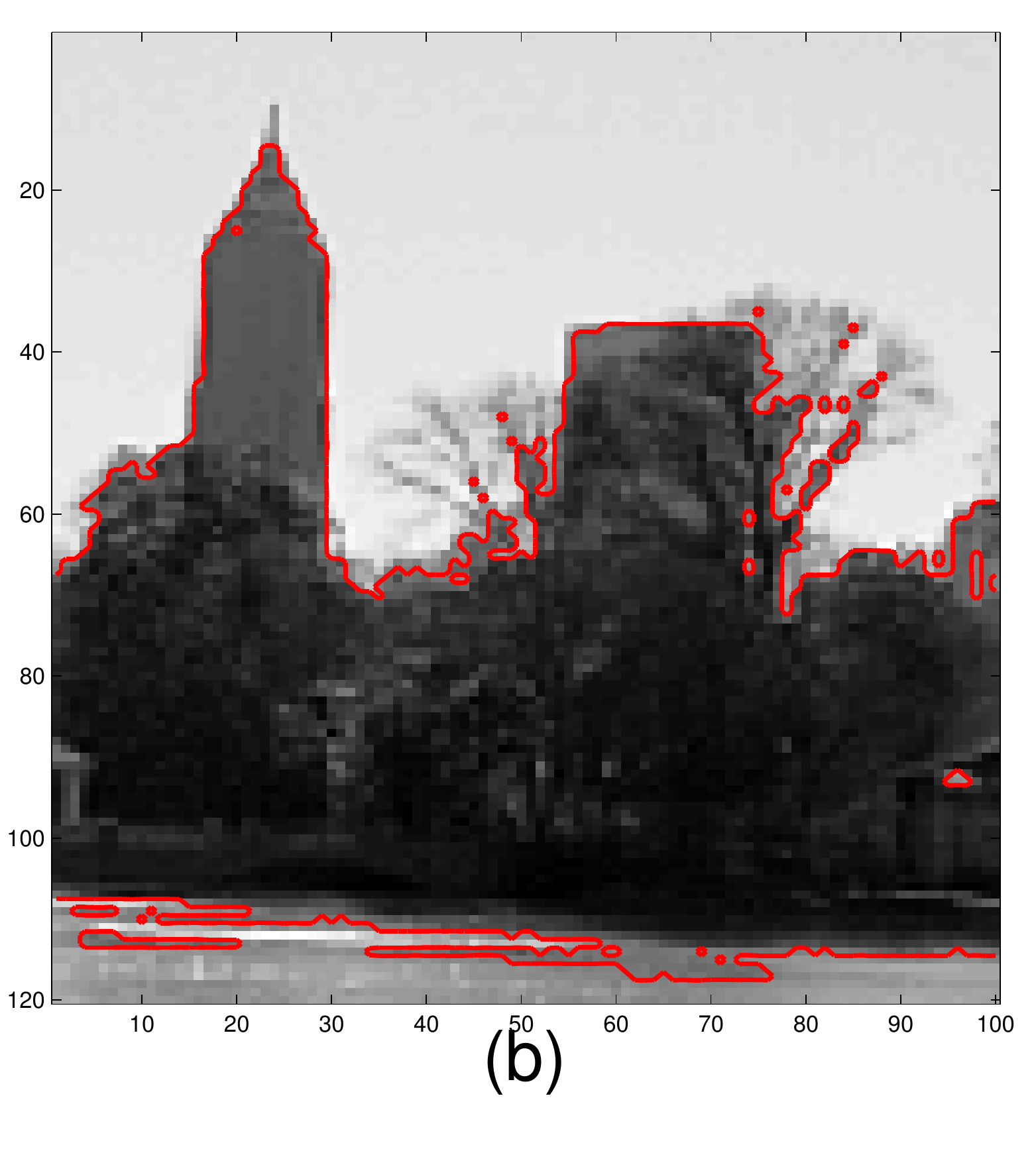}&\hspace{-.5cm}\includegraphics[width=42mm]{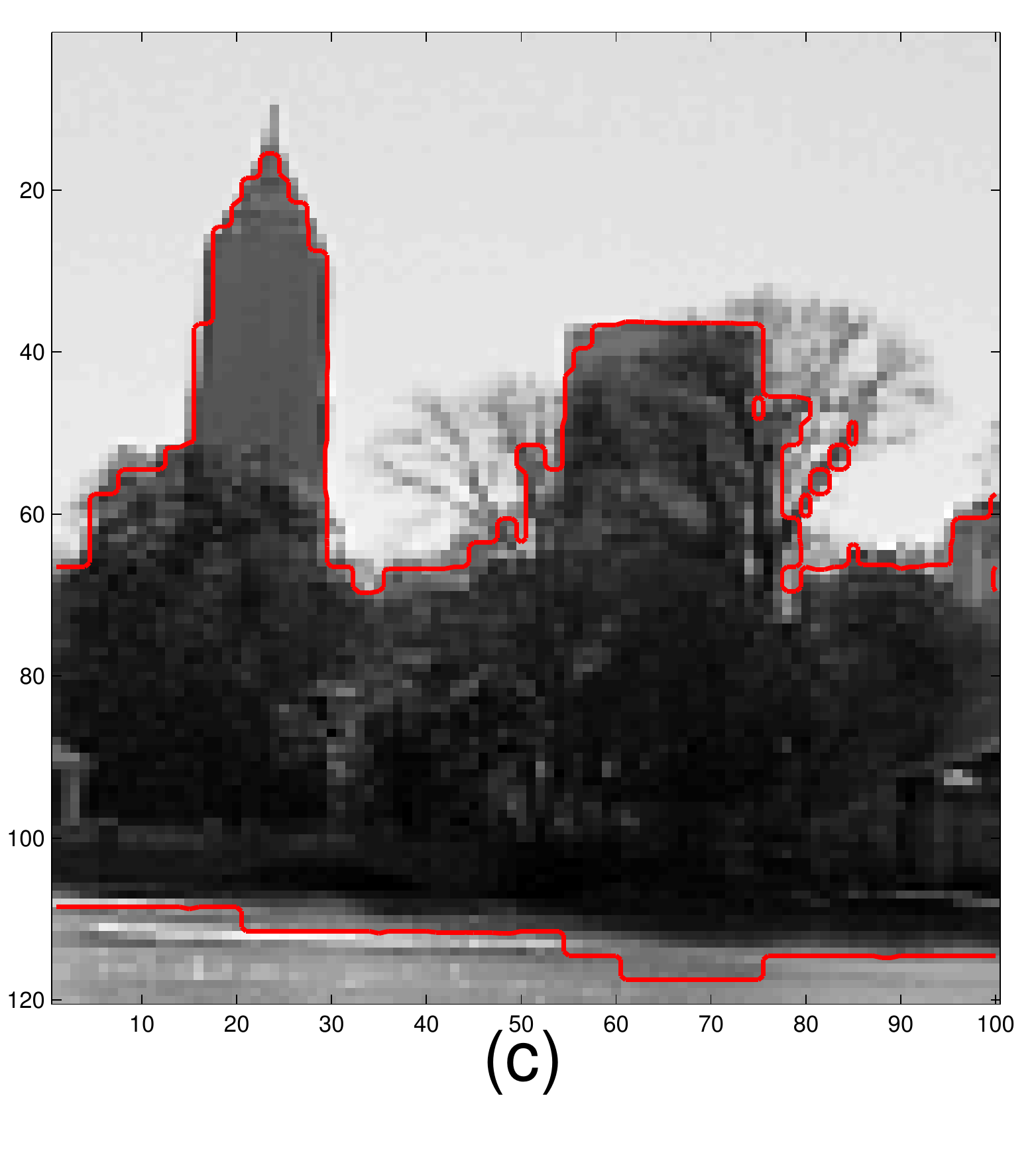}&\hspace{-.5cm}\includegraphics[width=42mm]{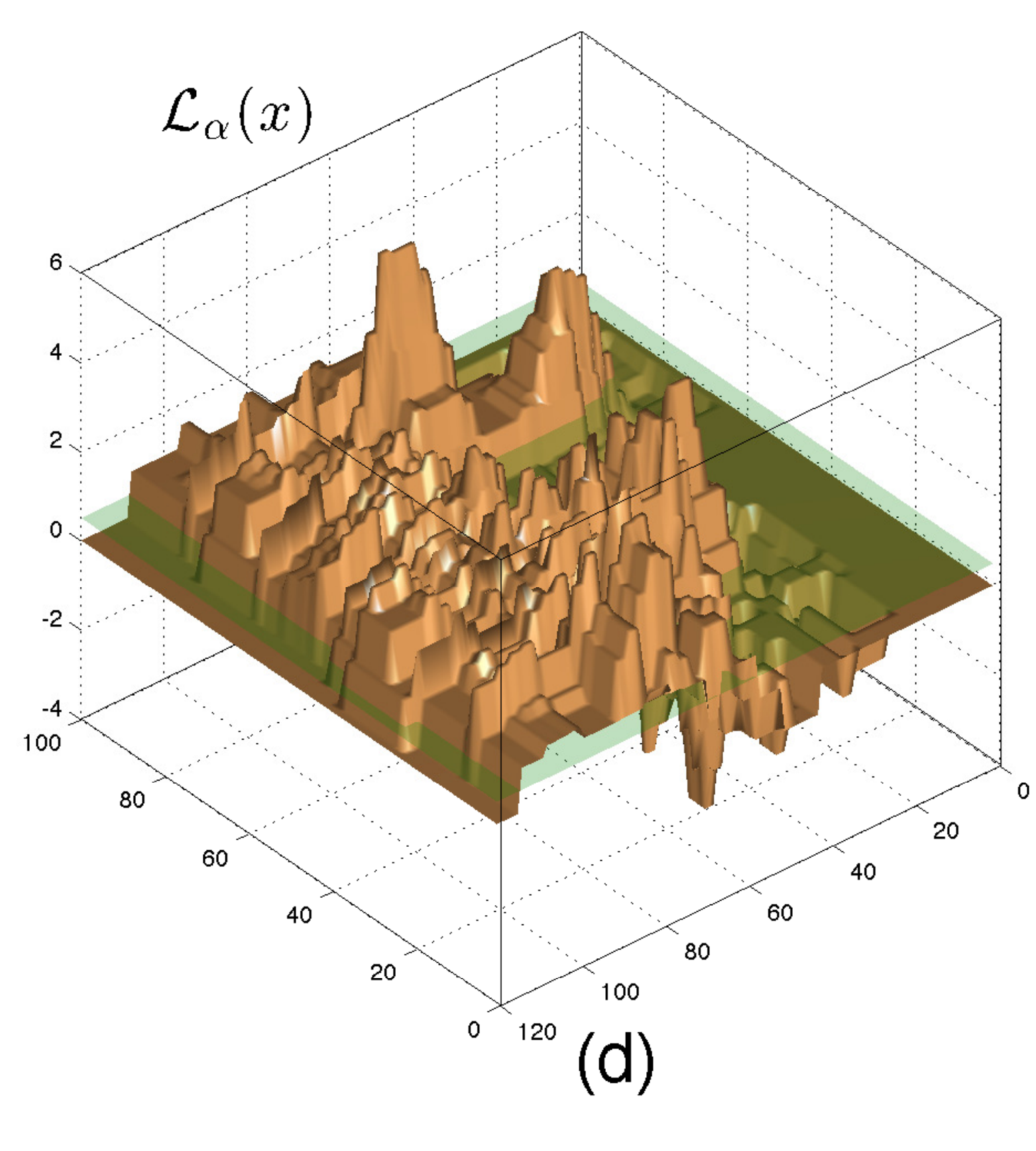} \\[-.2cm]

\includegraphics[width=42mm]{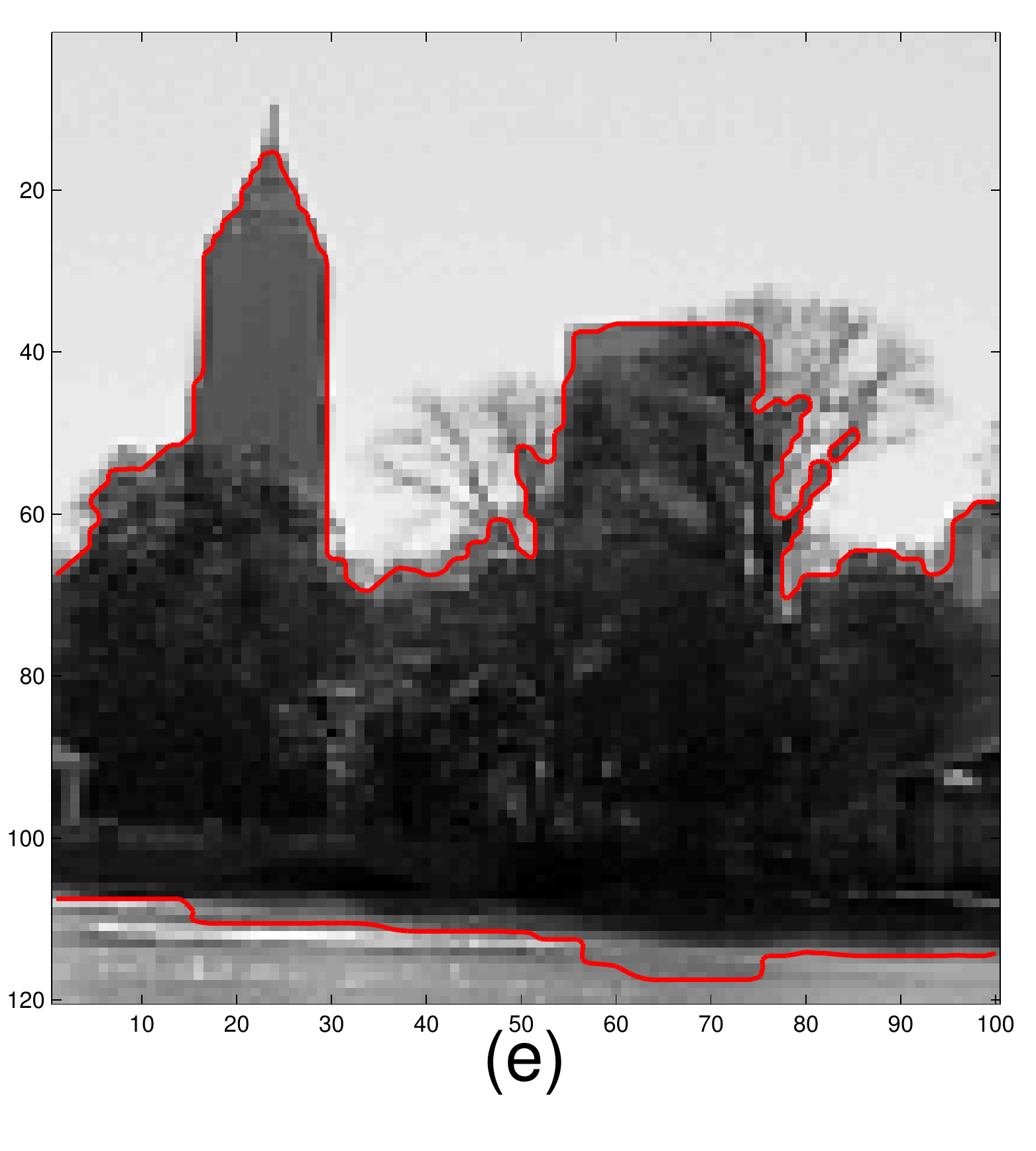}&\hspace{-.5cm}\includegraphics[width=42mm]{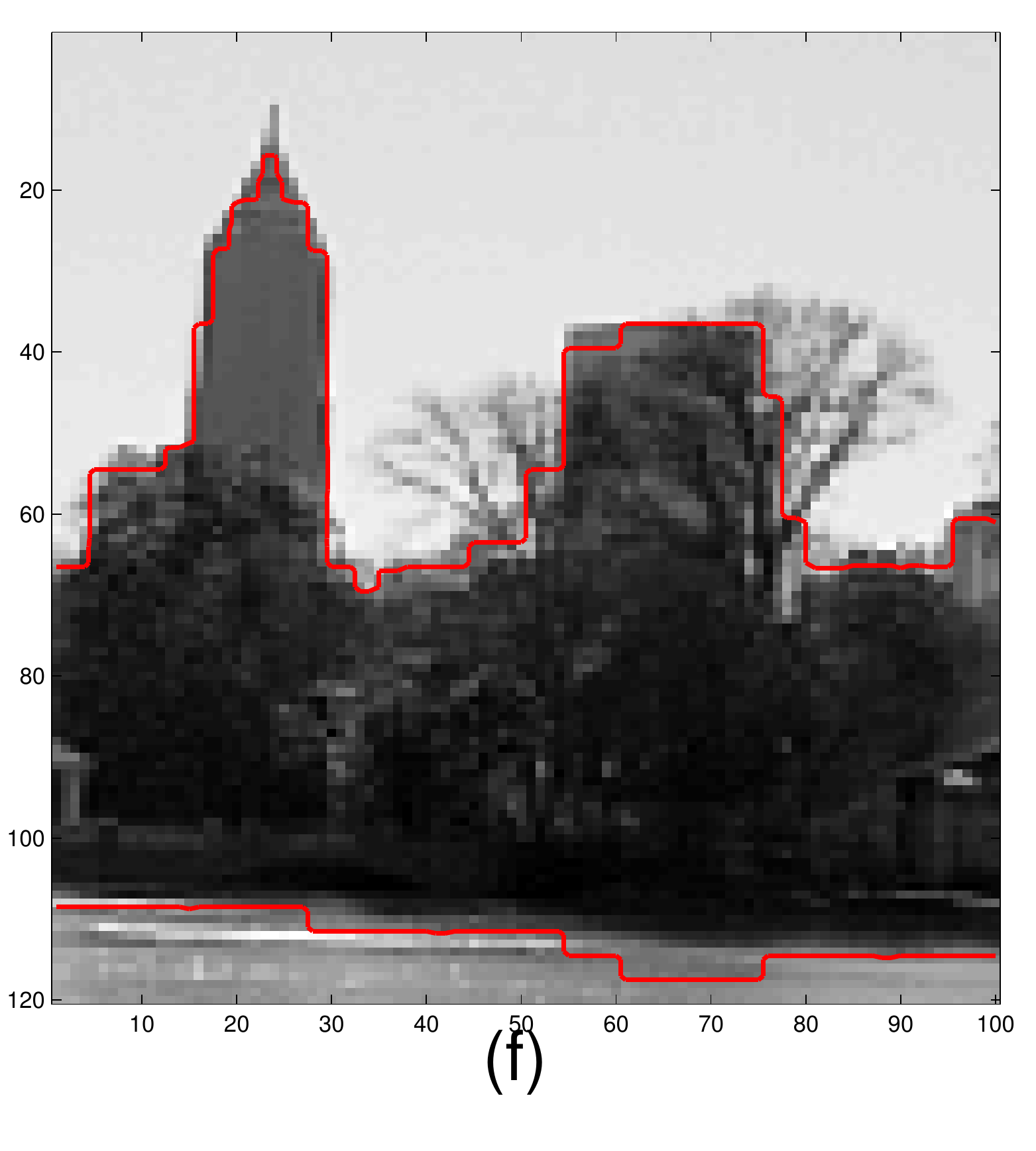}&\hspace{-.5cm}\includegraphics[width=42mm]{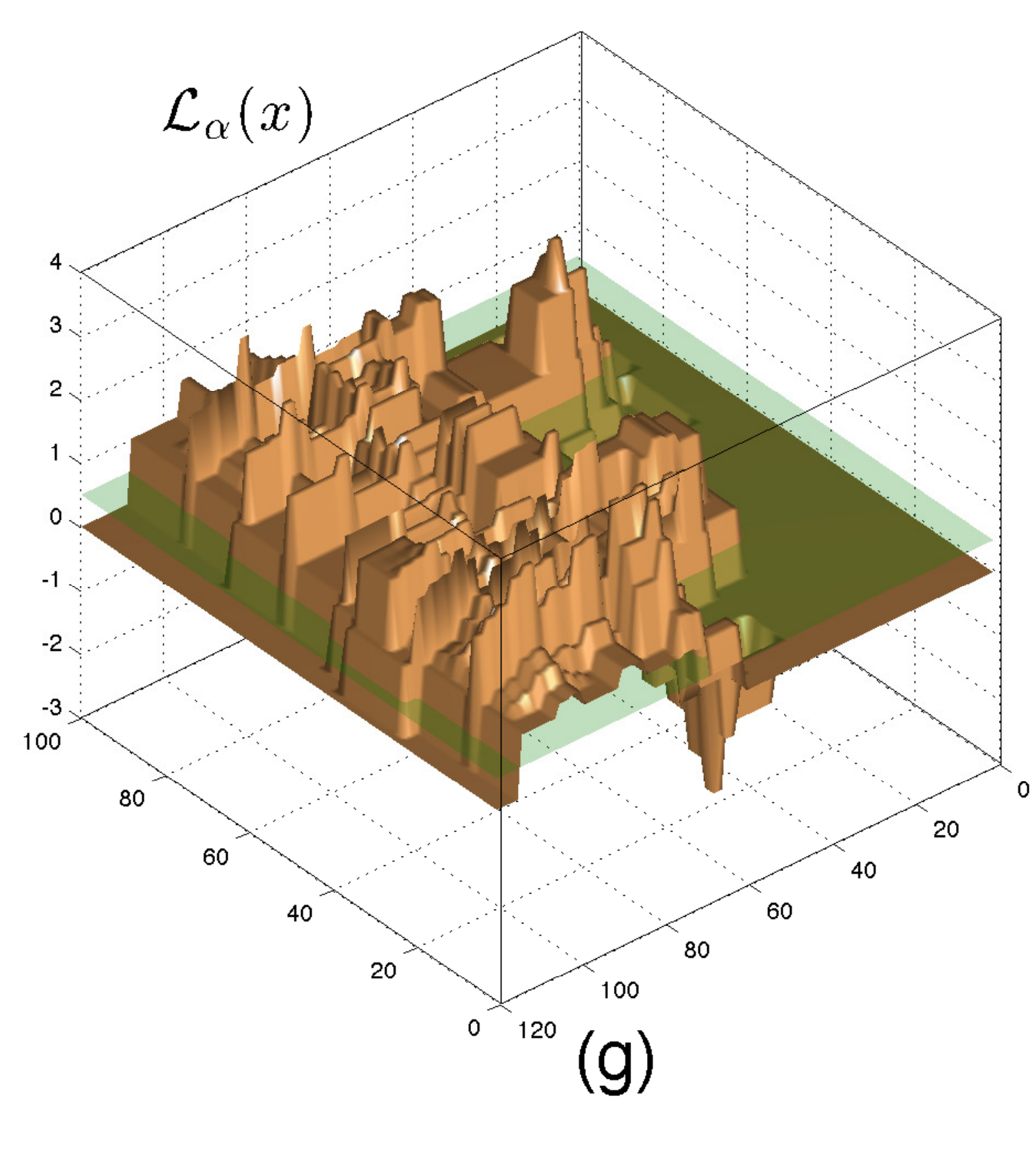} \\[-.2cm]

\includegraphics[width=42mm]{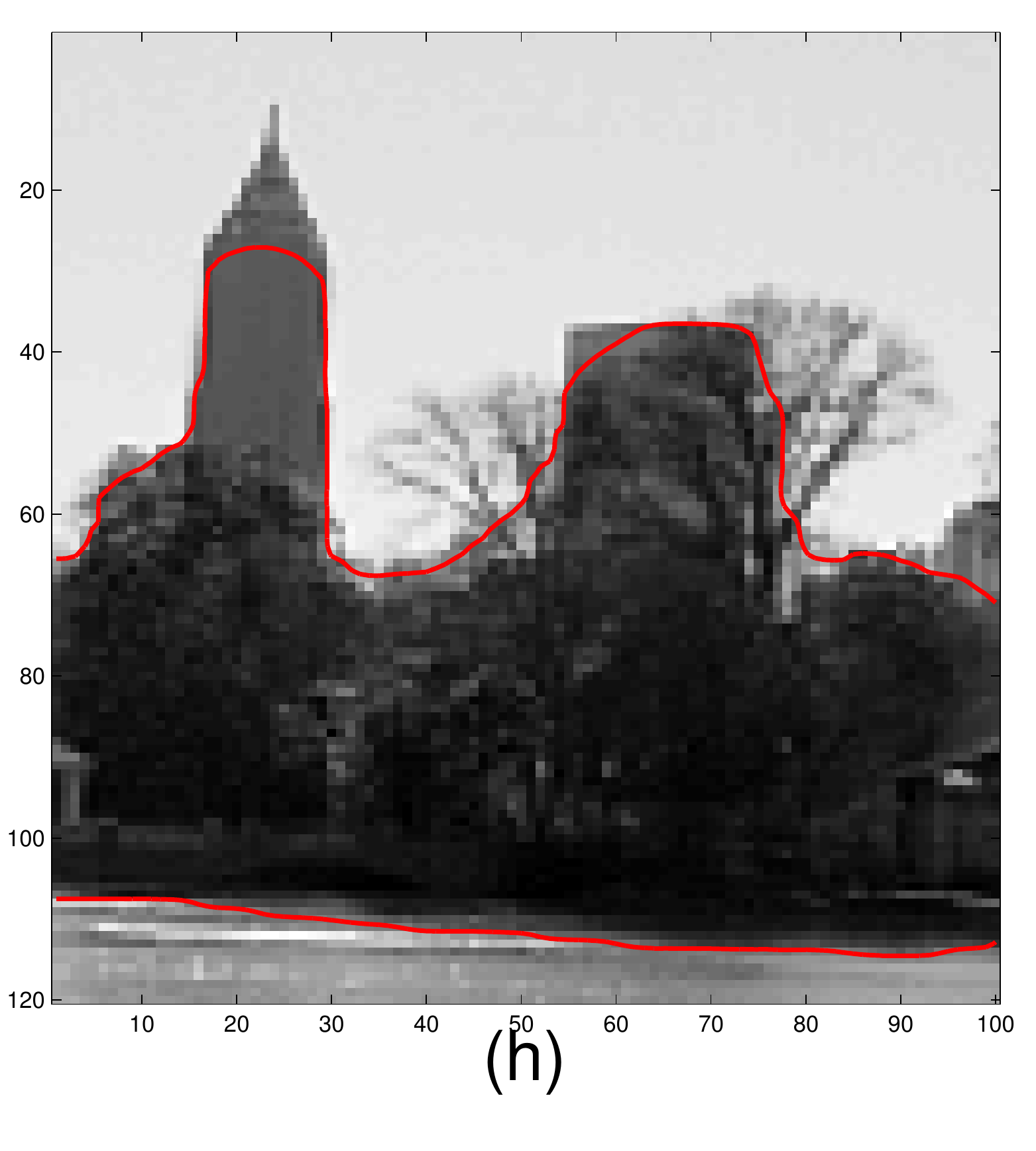}&\hspace{-.5cm}\includegraphics[width=42mm]{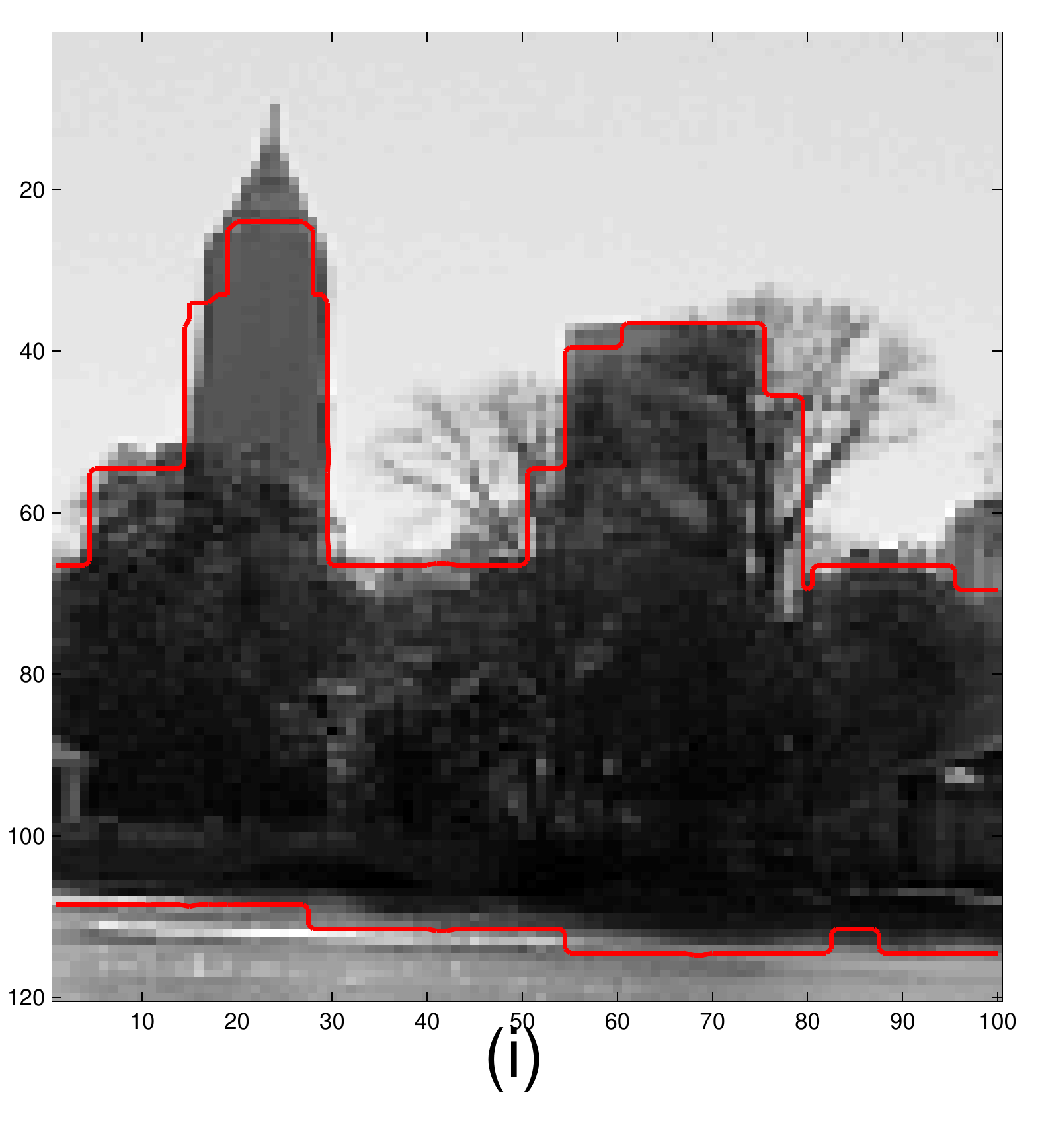}&\hspace{-.5cm}\includegraphics[width=42mm]{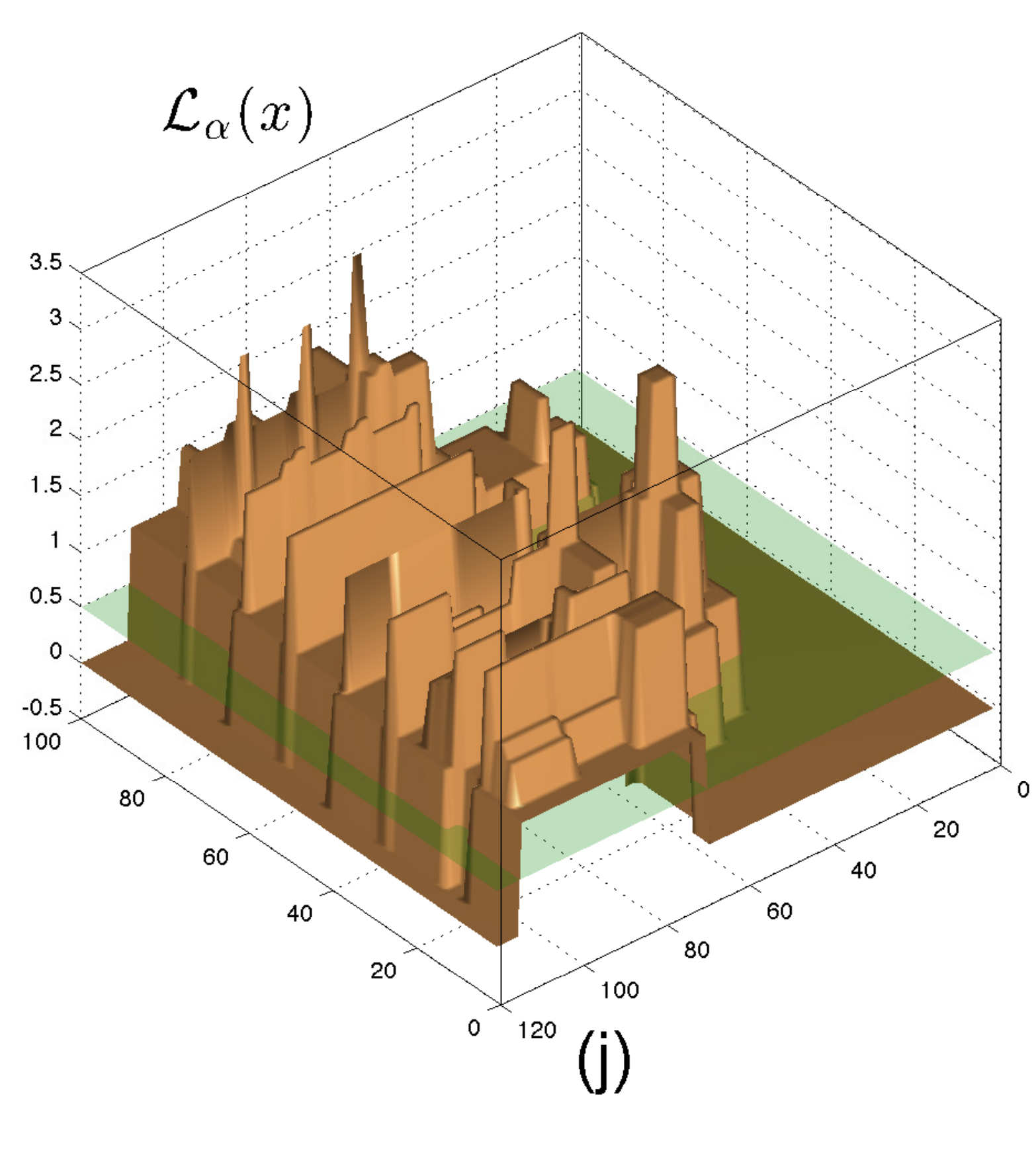}
\end{tabular}\vspace{-.3cm}
\caption{Segmentation results on the test image in panel (a); left column corresponds to the CC-CV model; middle column corresponds to the Sparse-CSC model and the right column shows $\mathpzc{L}_{\boldsymbol{\alpha}}(x)$ for each result in the middle column; (b,c) small penalty: $\lambda_s = 10^{-3}$, $\lambda=10^{-2}$; (e,f) medium valued penalty: $\lambda_s = 10^{-1}$, $\lambda=1$; (h,i) large penalty: $\lambda_s = 1$, $\lambda=10$;}\label{figsegmentation}
\end{figure*}

Since an object identification is not a primary goal of this problem and we are not seeking to match the content of the image with the dictionary elements, we simply use the regularized form of the Sparse-CSC model, parametrized by $\lambda$, as specified in (\ref{eq22yy}). The segmentation results are compared with the convex constrained Chan-Vese (CC-CV) model proposed in\cite{chan2006algorithms}, where an optimal partitioner, characterized by $\pi^*(x)$, is obtained via the minimization
\begin{equation}\label{essedcost}
\pi^*(x)= \operatorname*{arg\,min}_{0\leq \pi(x)\leq 1} \int_{D} \big(\Pi_{in}(x) - \Pi_{ex}(x)\big)\pi(x)\;\mbox{d}x + \lambda_s\int_{D} |\nabla \pi(x)|\;\mbox{d}x.
\end{equation}
The smoothing penalty parameterized by $\lambda_s$ controls the geometric complexity of the resulting partitioner. For both segmentation models the quantities $\tilde u_{in}$ and $\tilde u_{ex}$ are  set to be the 15\% and 85\% quantiles of the image histogram. To provide a qualitative comparison, the segmentations are performed for low, mid-range and high values of $\lambda$ and $\lambda_s$.

From a qualitative stand-point, a comparison between the outcomes of the CC-CV model and the Sparse-CSC problem (the left and middle columns in Figure \ref{figsegmentation}) reveals that the latter is capable of producing simpler partitioners and is less sensitive to local details. For instance comparing Figures \ref{figsegmentation}(b,e) with \ref{figsegmentation}(c,f) shows that the Sparse-CSC model does not overreact to small image details such as the tree branches. In fact, the restriction of choosing elements from a dictionary provides a natural regularizer. Specifically, in the case of square blocks (super-pixels) used as the shape elements, one expects to observe a block-wise structure in the resulting partitioners.

Another interesting observation is the outcomes of the two models when a simple geometry is sought (large values of $\lambda$ and $\lambda_s$ are considered). Increasing the length penalty in (\ref{essedcost}) causes the resulting partitioner to bypass the local details at the expense of producing smooth corners (Figure \ref{figsegmentation}(h)), while Sparse-CSC can yet track the details in a block-wise way (Figure \ref{figsegmentation}(i)).

In general, increasing the $\ell_1$ penalty in the Sparse-CSC problem controls the complexity of the resulting $\mathpzc{L}_{\boldsymbol{\alpha}}(x)$ (the right column in Figure \ref{figsegmentation}). A small penalty allows the finer details to be captured and a larger penalty provides a coarse segmentation highlighting the principal shape components. Using the CVX toolbox, each Sparse-CSC segmentation takes slightly more than 4 minutes on a desktop computer with a 3.4 GHz Intel CPU and 16 GB memory. We would like to note that implementing a specific solver using the subgradient information (e.g. \cite{alber1998projected}) can significantly reduce the computation time, however, given the current load and theoretical focus of this work, such extension is left to a future work. The remaining set of experiments take advantage of the shape identification property of the Sparse-CSC problem.

\subsection{Image Segmentation with Missing Pixels}
As a more challenging example, we consider an image segmentation problem with a significant portion of the pixel values missing. Basically, the information about the pixels is only available over a subset of the domain, $D'\subset D$, while the segmentation needs to take place, globally. Prior information about the geometry of the objects in the image would allow us to yet perform a successful segmentation, as well as identifying the principal shape components inside the image.

\begin{figure*}
\centering
\begin{tabular}{ccc}
\includegraphics[width=85mm, height = 86mm]{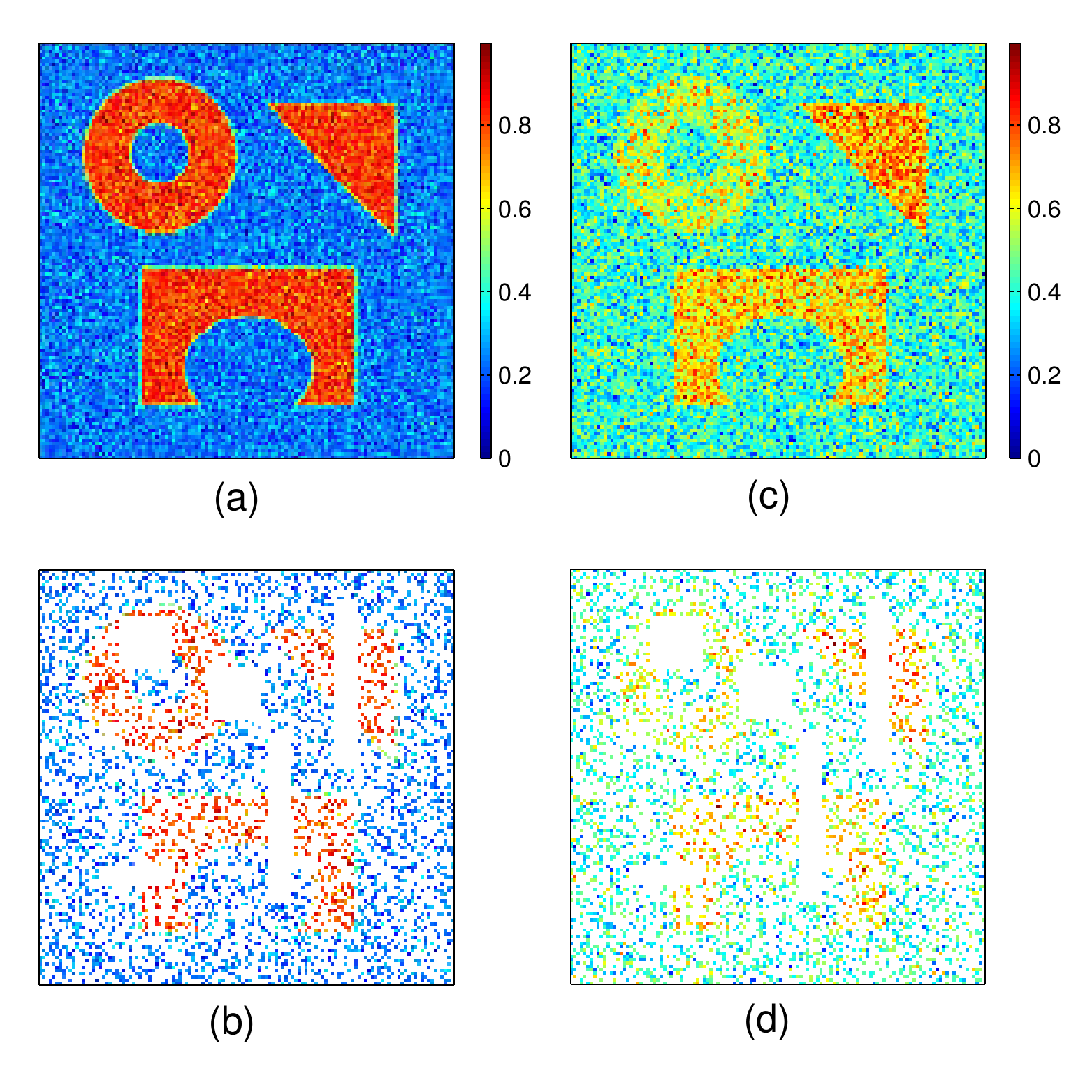}
\includegraphics[width=43mm, height = 86mm]{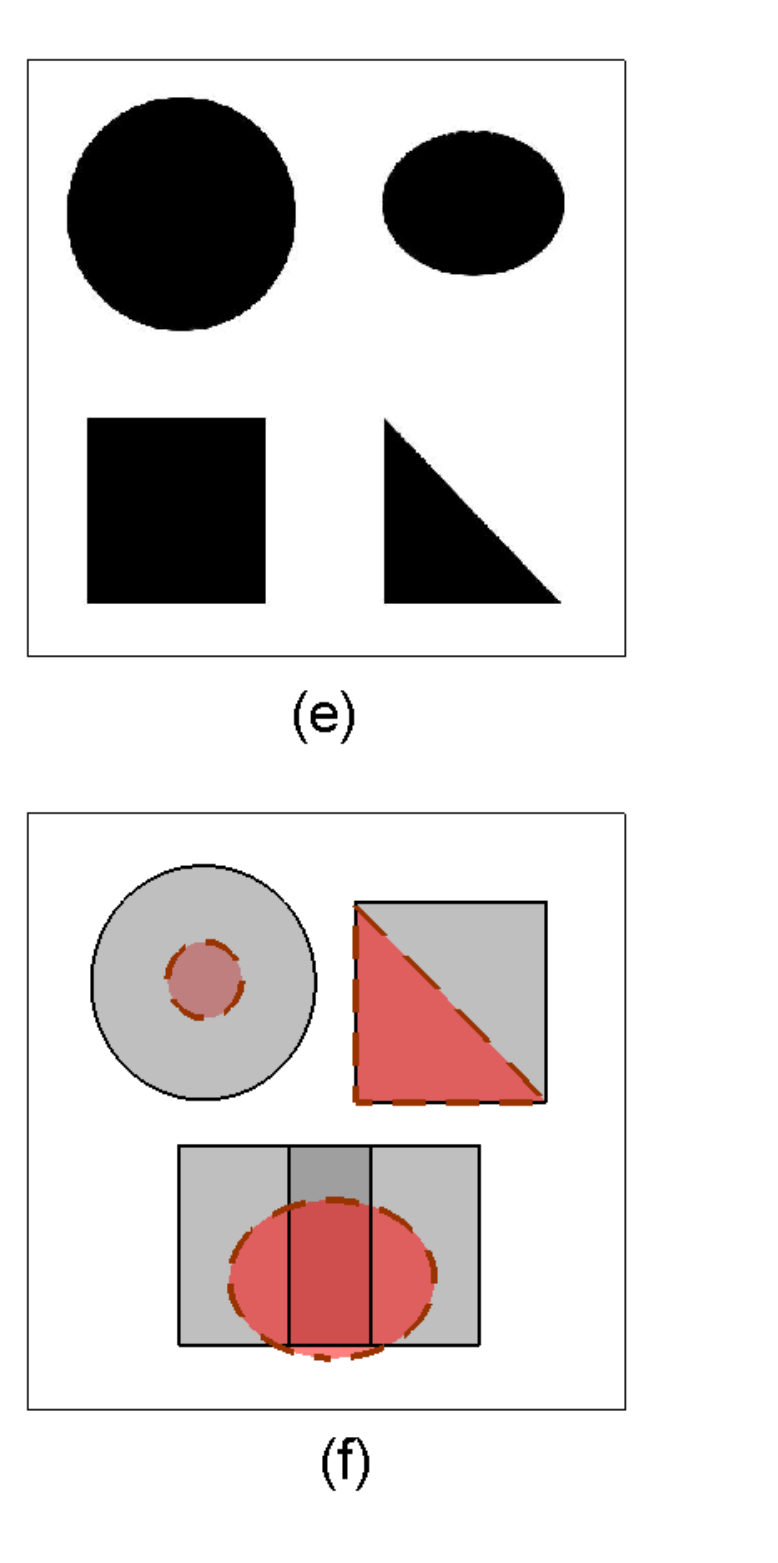}
\end{tabular}\vspace{-.5cm}
\caption{(a) A noisy grayscale image to be used as a reference; (b) The image used for the first set of simulations, with the majority of the pixels missing as patches and random samples; (c) A more noisy image to be used as a reference; (d) The image used for the second set of simulations, with a similar pattern of missing pixels; (e) The shapes used to build up the dictionary: instances of these four shapes with different sizes are placed throughout the imaging domain; (f) The way the shapes in the dictionary need to be combined to form the objects in the reference image. Taking away the red shapes (with dashed-line boundaries) from the gray ones would provide us with the target objects}\label{fig8}
\end{figure*}

Figure \ref{fig8}(a) shows a noisy image before significantly discarding the pixel information, and solely presented as a reference image. The Gaussian noise causes a relative discrepancy of approximately 24\% compared to the ideal binary image. The pixel removal applied is in the form of rectangular patches and random samples. The resulting image is shown in Figure \ref{fig8}(b), which will be used in the first set of experiments.

Figure \ref{fig8}(c) shows a second noisy image, where the mean intensities differ for the objects present in the image. The Gaussian noise is higher in this case, causing a relative discrepancy of approximately 90\% compared to the ideal piecewise-constant image. The pixel removal pattern is identical to the pattern in Figure \ref{fig8}(b). The resulting test image is presented in Figure \ref{fig8}(d), which will be used in a second set of experiments.

Our prior information about the geometry of objects in the image is reflected in the selection of the dictionary elements. To build up the dictionary we make use of four basic shapes: a circle, square, triangle and an ellipse, as shown in Figure \ref{fig8}(e). The shape dictionary consists of $n_s=999$ instances of these four shapes at various size and locations. In Figure \ref{fig8}(f), we have shown how a suitable composition of the objects in the dictionary would ideally reconstruct the objects in the reference image. It is however worth noting that, aside from the large missing pixel information, there is still a misalignment between the shapes in the dictionary and the objects appeared in the image.

Figure \ref{fig9} presents the segmentation results on the first test image following the preceding setup. To highlight the elegant performance of the method, we have shown the results for various selections of $\tau$ in the optimization. The values of $\tau$ are successively increased by integer steps until a full reconstruction is accomplished for $\tau=8$ (Figure \ref{fig9}(d)). It is striking to see how the objects are identified one after the other, as $\tau$ increases and the convenience in tuning this parameter as an integer parameter.

\begin{figure*}
\centering
\begin{tabular}{c}
\includegraphics[width=127mm]{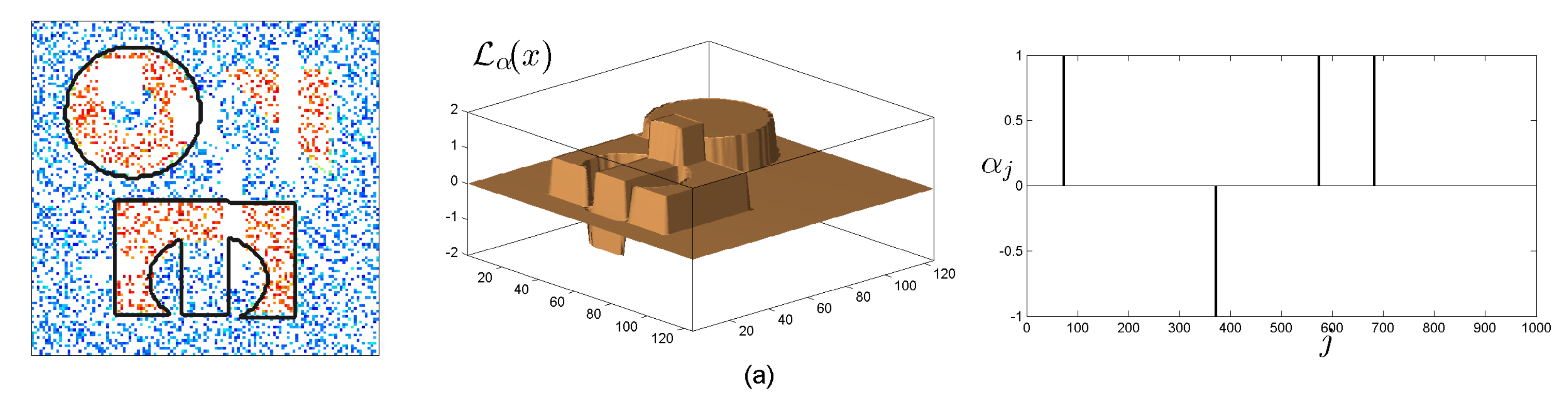} \\[-.2cm]
\includegraphics[width=127mm]{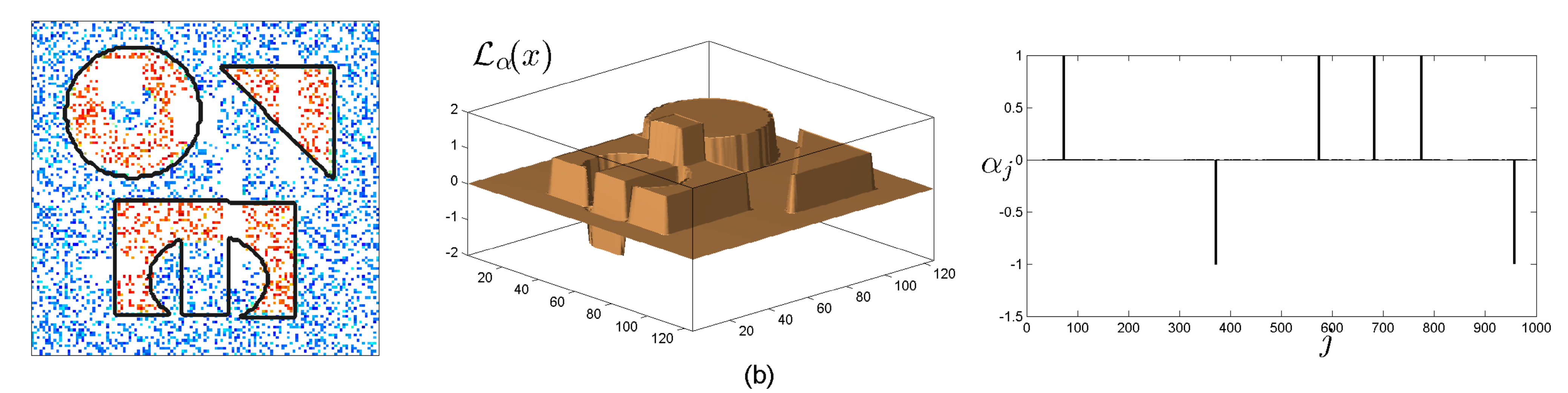} \\[-.2cm]
\includegraphics[width=127mm]{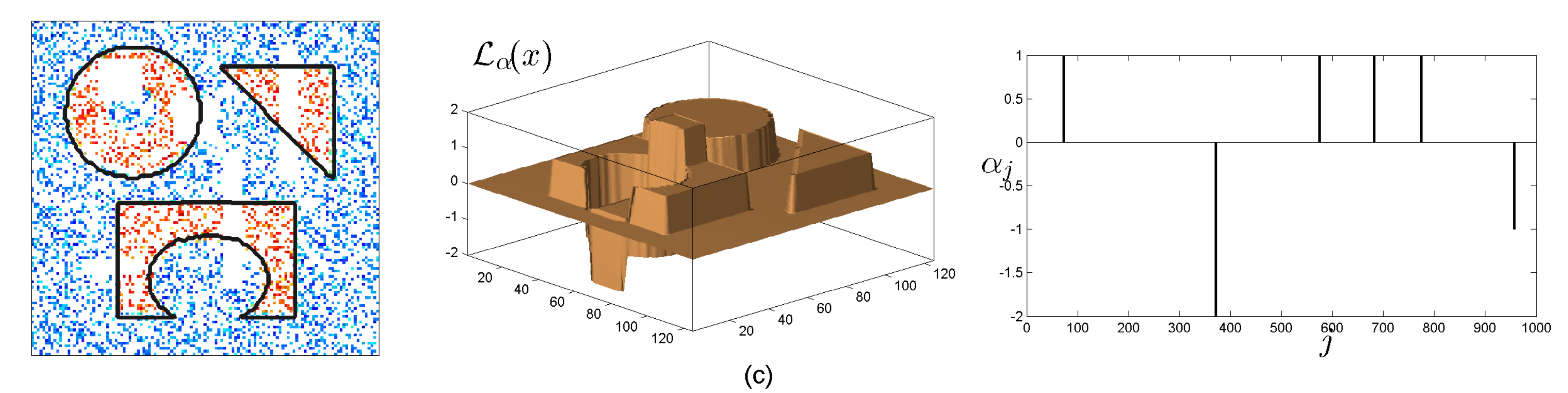} \\[-.2cm]
\includegraphics[width=127mm]{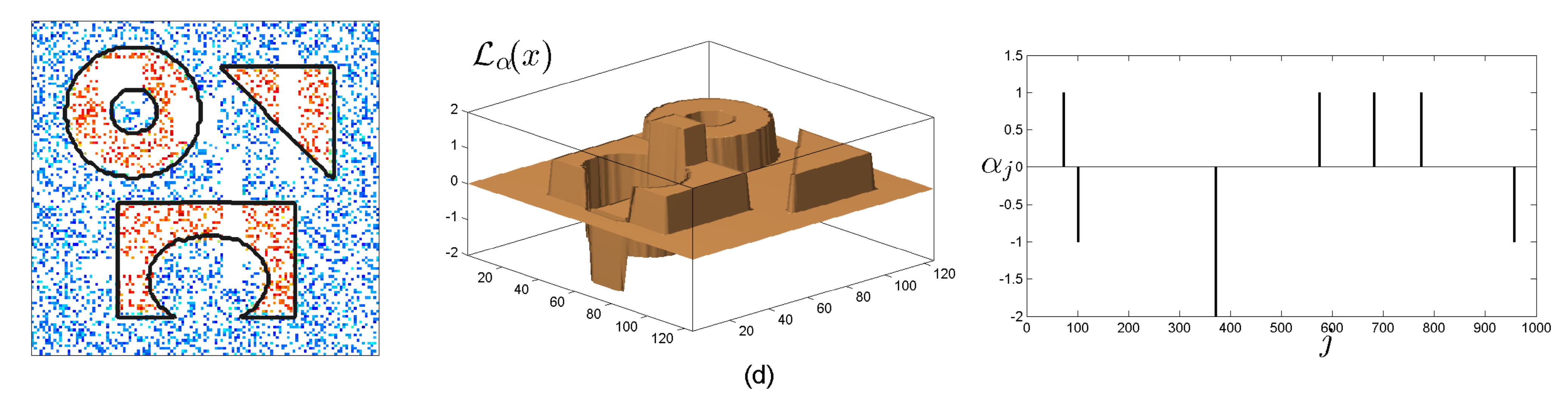} \\[-.2cm]
\includegraphics[width=127mm]{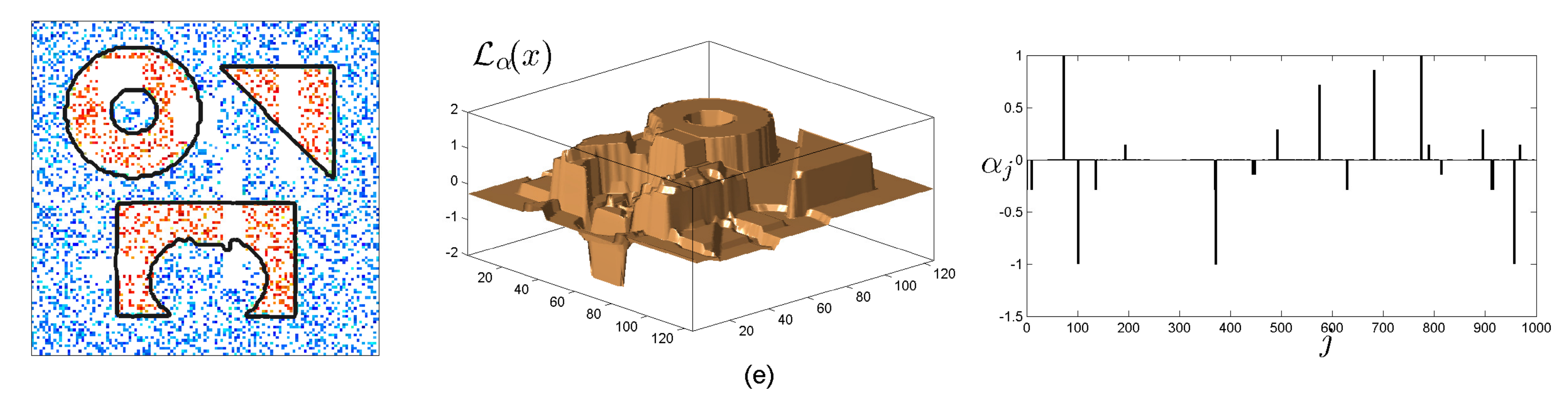} \\[-.2cm]
\includegraphics[width=127mm]{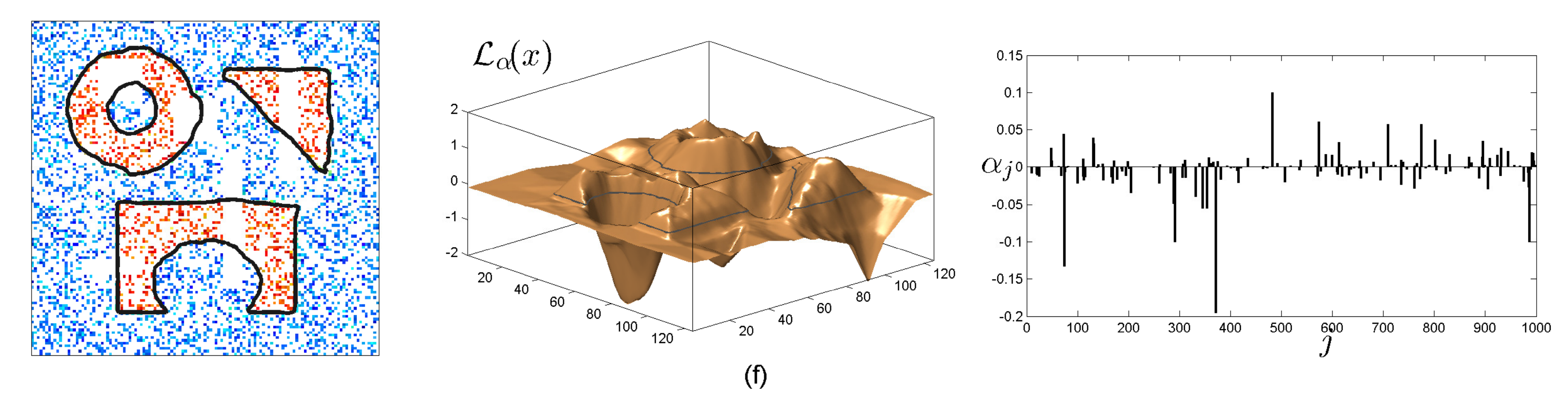} \\[-.2cm]
\end{tabular}
\caption{The segmentation results for the test image in Figure \ref{fig8}(b);
results of different settings presented on the left column, the corresponding $\mathcal{L}_{\balpha}(x)$ depicted in the middle column and the resulting $\balpha$ vector plotted at the right column. (a) $\tau = 4$; (b) $\tau = 6$; (c) $\tau = 7$; (d) $\tau = 8$; (e) $\tau = 10$; (f) Using the technique proposed in \cite{aghasi2013sparse}}\label{fig9}
\end{figure*}

From a technical standpoint, based on the composition shown in Figure \ref{fig8}(d), the value of $\|\mathcal{A}\big(\{\mathcal{S}_j\}_{j\in\Ip\cup\In};\Ip,\In\big)\|_1$ is 8. Setting $\tau$ to this value yields a set of objects that are comprised of very few geometric components and yet reasonably perform the segmentation task. While the optimal value of $\tau$ is not expected to be known a priori, the fact that in many scenarios this value is simply an integer facilitates the algorithm with a convenient tuning process.

Figure \ref{fig9}(e) depicts the segmentation result when $\tau$ steps beyond 8, yet indicating that the segmentation maintains a reasonable geometry and only few more shapes have become active to further reduce the convex cost. In Figure \ref{fig9}(f) we have employed the method presented in \cite{aghasi2013sparse}, highlighting that due to the lack of control over the sparsity level and the non-convex nature of the approach taken in \cite{aghasi2013sparse}, the results stand in a lower quality level as the ones acquired using the presented technique. Extracting the principal shape components from the outcomes of Figure \ref{fig9}(f) does not necessarily lead us to a reliable result.

Figure \ref{fig9_2} presents the segmentation results for the second test image, where the noise corruption is more severe and the mean intensities vary for different objects. For this case $\tilde u_{ex}$ and $\tilde u_{in}$ are selected to be the 15\% and 85\% quantiles of the partially observed image histogram. Similar to the first set of experiments, incrementing $\tau$ by unit steps causes the shapes to become identified one after the other.

\begin{figure*}
\centering
\begin{tabular}{c}
\includegraphics[width=127mm]{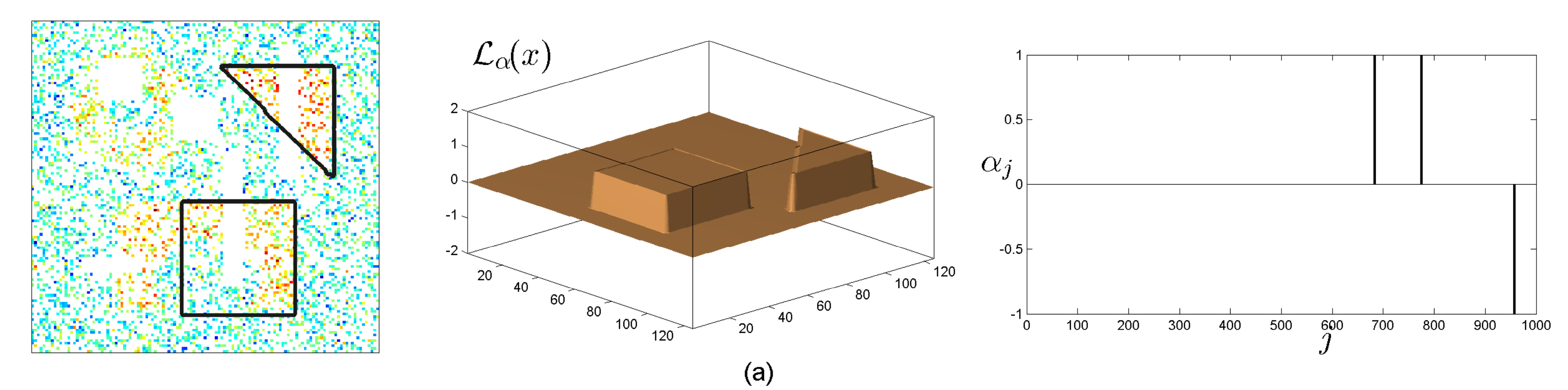} \\[-.2cm]
\includegraphics[width=127mm]{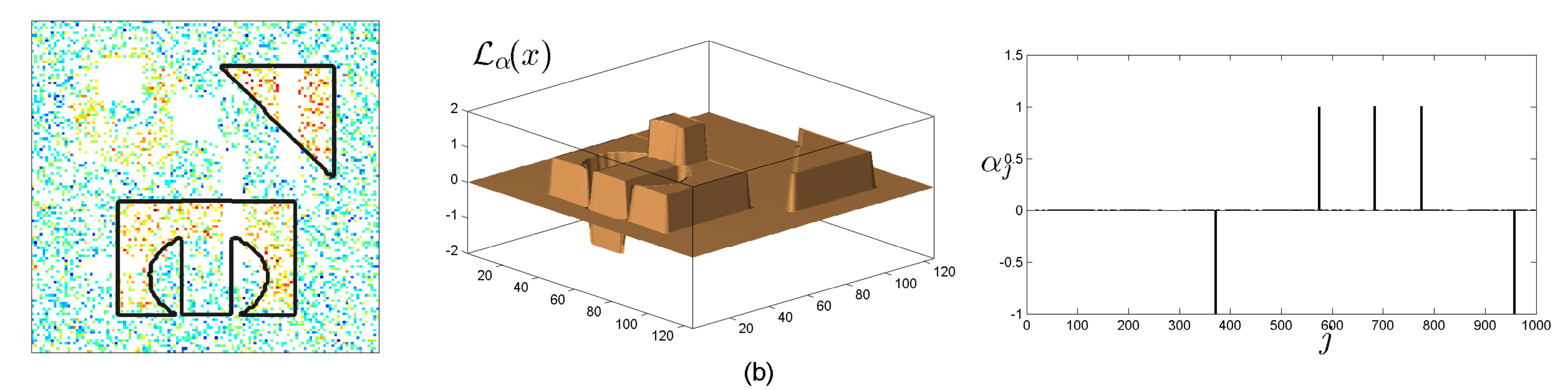} \\[-.2cm]
\includegraphics[width=127mm]{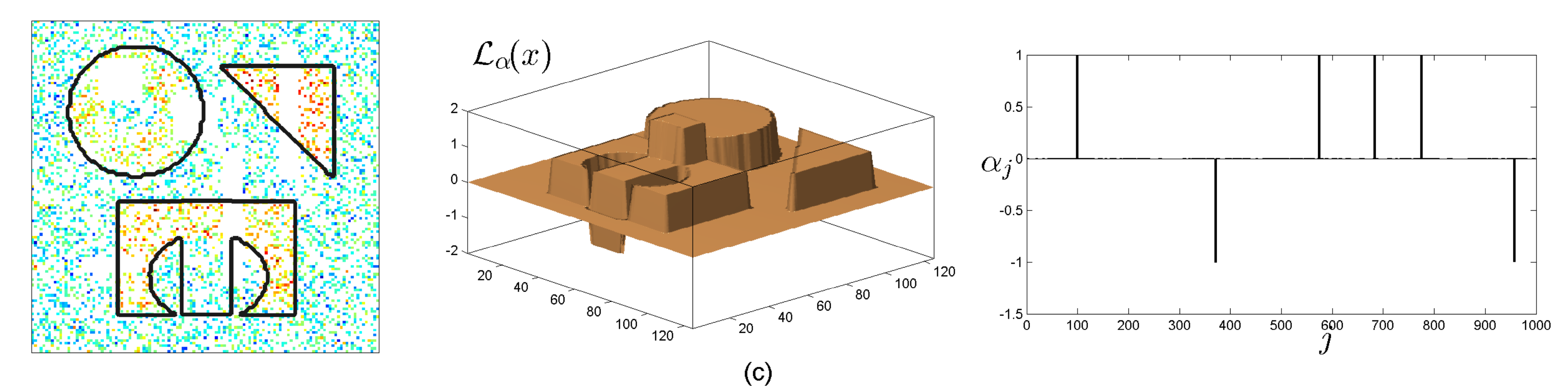} \\[-.2cm]
\includegraphics[width=127mm]{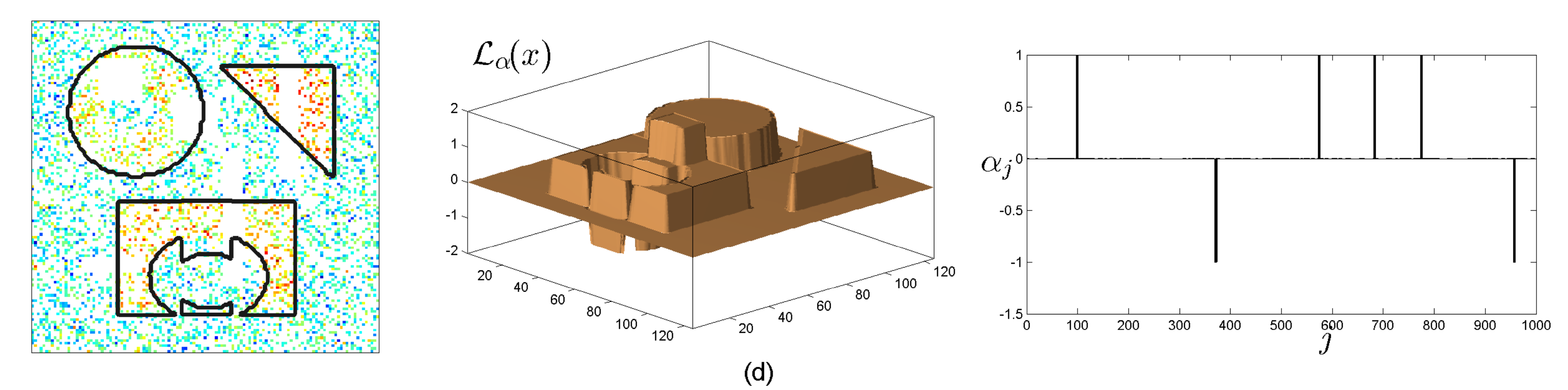} \\[-.2cm]
\includegraphics[width=127mm]{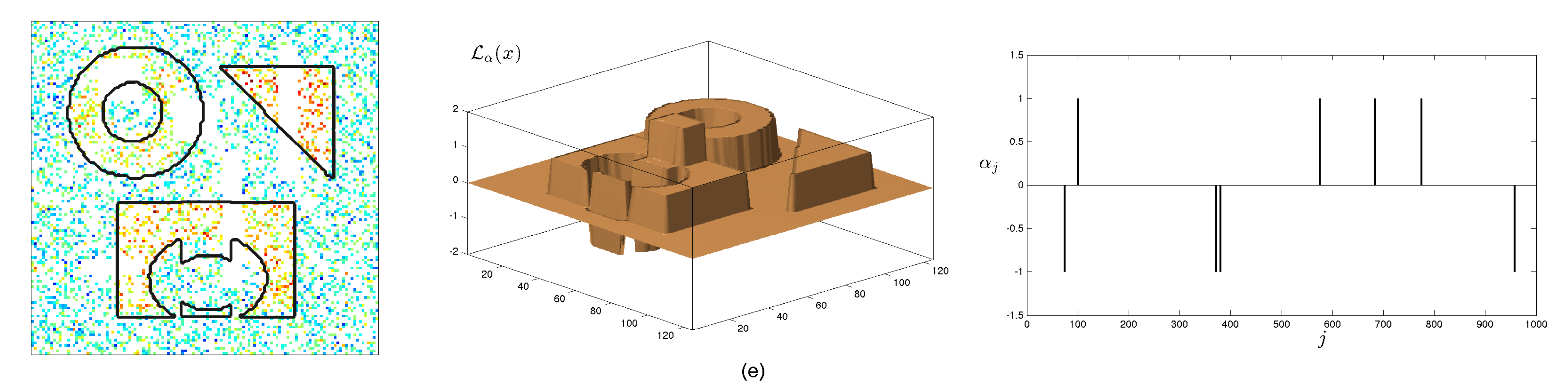} \\[-.2cm]
\includegraphics[width=127mm]{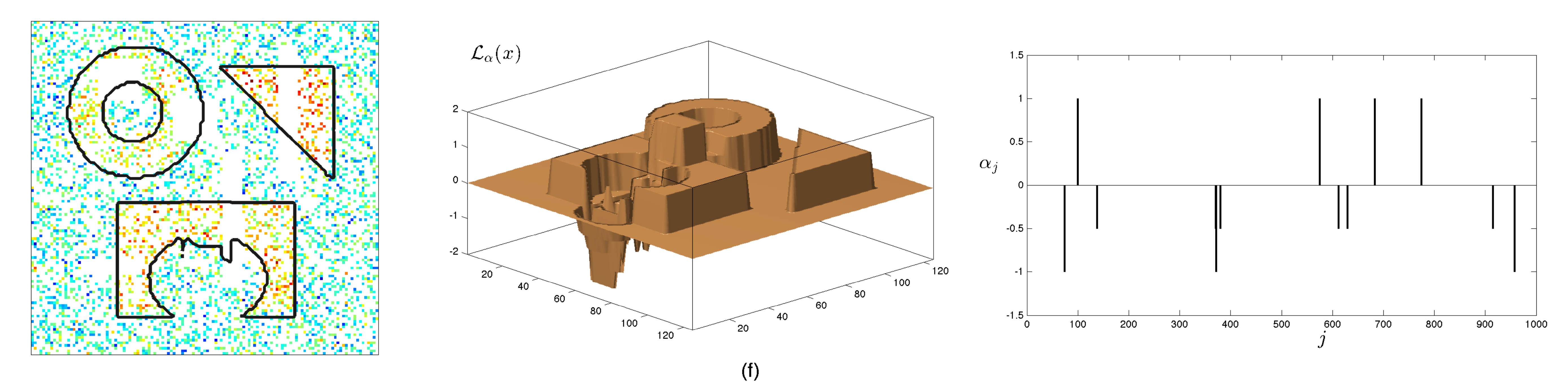} \\[-.2cm]
\end{tabular}
\caption{The segmentation results for the test image in Figure \ref{fig8}(d);
results of different settings presented on the left column, the corresponding $\mathcal{L}_{\balpha}(x)$ depicted in the middle column and the resulting $\balpha$ vector plotted at the right column. (a) $\tau = 3$; (b) $\tau = 5$; (c) $\tau = 6$; (d) $\tau = 7$; (e) $\tau = 8$; (f) $\tau = 10$}\label{fig9_2}
\end{figure*}

It is interesting to note that objects which contribute less to a decrease in the segmentation cost have the least priority in being identified (e.g., the doughnut-shaped object). Moreover, since the LOC violation is more significant in this experiment, for $\tau = 8$ we observe some level of difficulty in identifying the ellipse. More specifically, unlike the previous example in which the coefficient of the ellipse characteristic was recovered to be -2 (see the plot of $\balpha$ in Figure \ref{fig9}(d)), in this example it remains to be -1 and instead a smaller ellipse inside it becomes negatively active. Basically, due to the low image contrast, fully taking out the large ellipse causes some of the pixels highly contributing to the cost reduction to stay outside the partitioner. For $\tau = 10$ the algorithm finds enough freedom to produce a more compact partitioner and at the same time keep the highly contributing pixels inside the partitioner. The average CVX runtime for each instance of the experiments in this section was less than 2 minutes.

\subsection{Challenging OCR Scenarios}
As the proposed technique allows us to identify the principal shape elements through a segmentation task, it would fit very well into an optical character recognition (OCR) application, were the primary objective is the identification of letters appeared in an image. The proposed algorithm would be capable of handling challenging OCR problems were the constituting letters are overlapping or misaligned, and the underlying image is cluttered.

As the first basic experiment in this category, we consider the identification of the letters inside an image of size $63\times 160$ pixels (shown in Figure \ref{fig10}) which represents the word ``\emph{SampLE}''. The letters in the image undergo different case, size, rotation and level of overlap. To build up the character dictionary, yet maintaining a reasonable-sized dictionary, we use characters of a similar font, but in various uppercase/lowercase formats. The character shapes are placed throughout the image in different size and orientations. With this setup, we build up a dictionary of $n_s = 2056$ elements.

The segmentation results are shown for various values of $\tau$ in Figure \ref{fig10}, followed by the plots of the corresponding $\balpha$ vector. A simple index map relating each index in $\balpha$ to the corresponding alphabetical representation, indicates the active letters in each segmentation problem. Again, successive increments of $\tau$ from 1 to 6, allow us to identify the constituting letters one after the other. It is worth highlighting the transition of the results from $\tau=3$ to $\tau=4$, and how a slightly different letter ``m'' is identified to better match the overall shape.

In Figure \ref{fig10}(g), we have again compared the results against the method in \cite{aghasi2013sparse}, where the letters with the largest weights are highlighted. In fact, the approach taken in \cite{aghasi2013sparse} highly relies on an accurate alignment between the dictionary elements and the objects in the image. When this is not the case, a multi-stage reconstruction is proposed, where after each reconstruction, the dictionary elements with smaller weights are eliminated and are replaced with instances of the large-weighted letters at a finer resolution. This process is continued until convergence to a local minimizer is achieved. Clearly, using the proposed convex scheme allows us to perform the task more reliably and in a single step.

\begin{figure*}
\centering
\begin{tabular}{c}
\includegraphics[width=106mm]{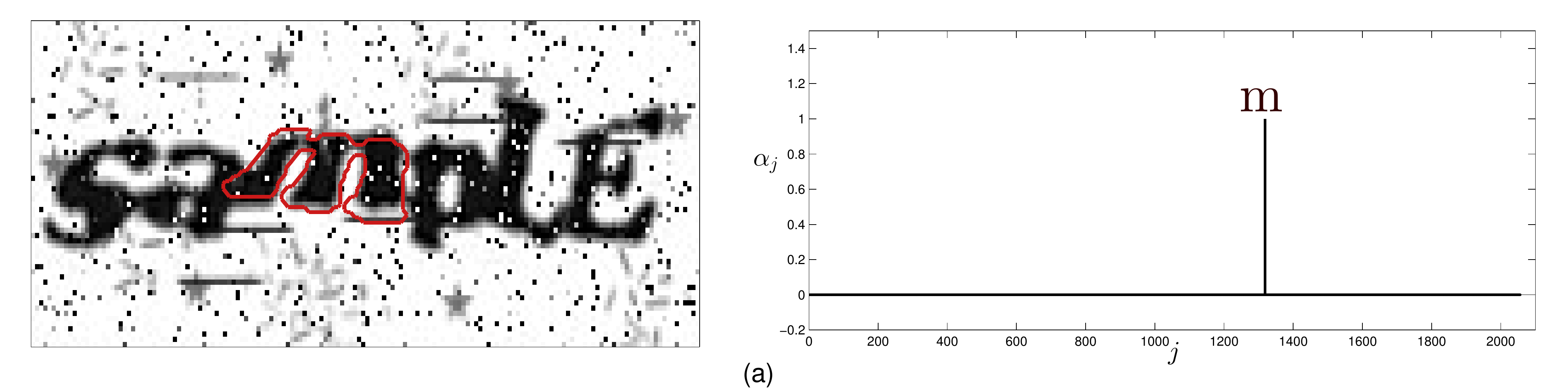} \\[-.15cm]
\includegraphics[width=106mm]{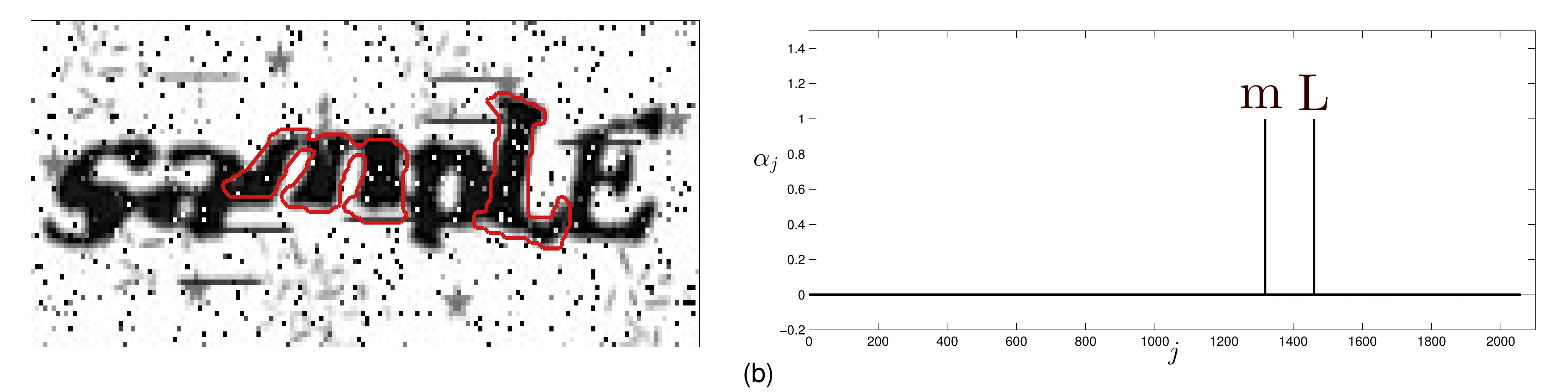} \\[-.15cm]
\includegraphics[width=106mm]{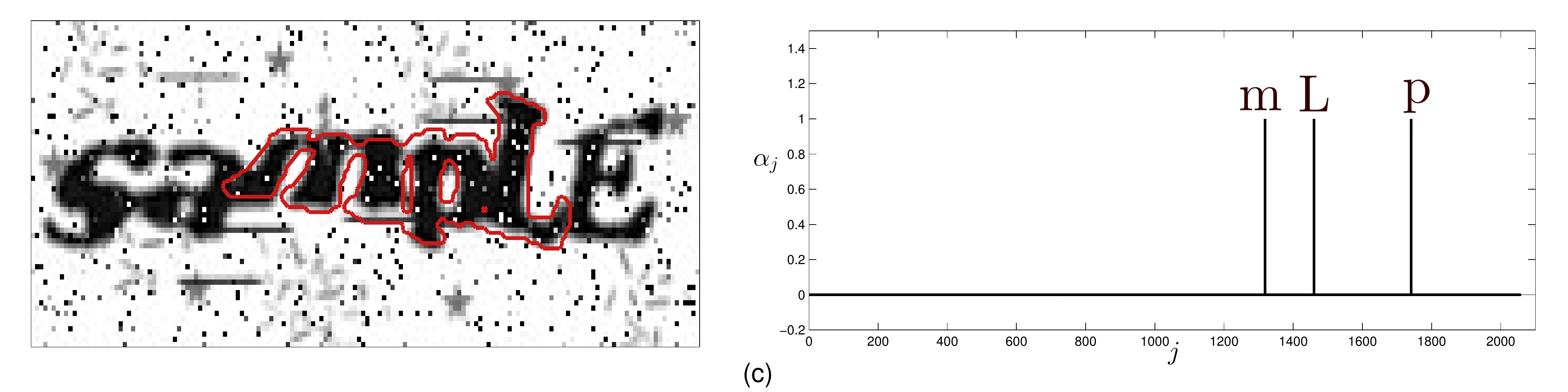} \\[-.15cm]
\includegraphics[width=106mm]{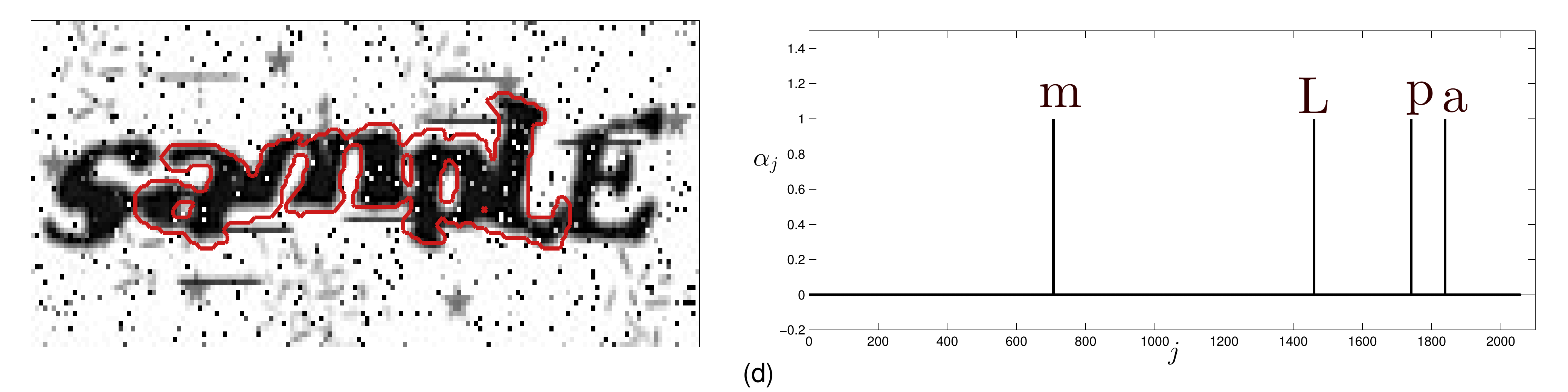} \\[-.15cm]
\includegraphics[width=106mm]{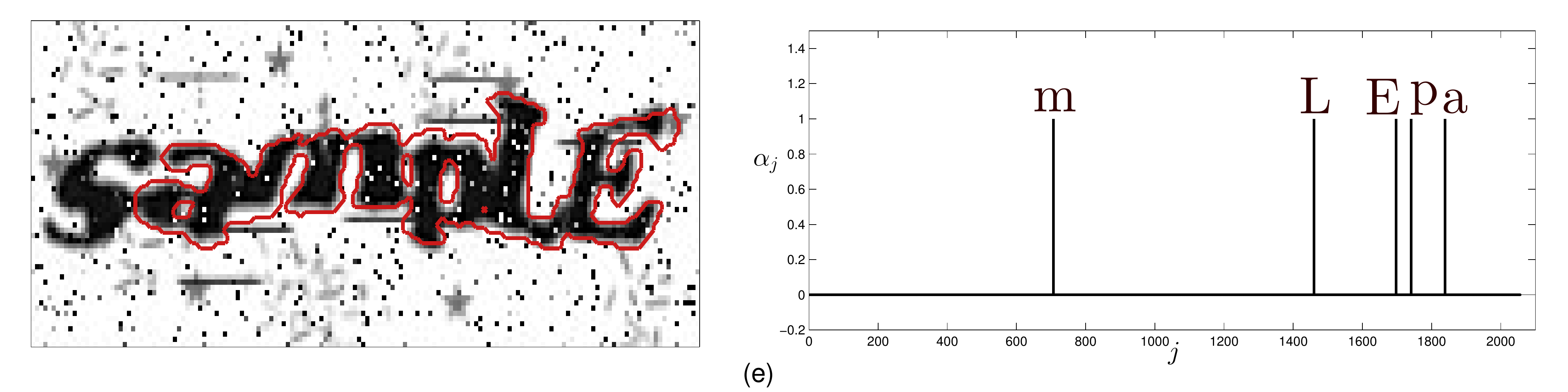} \\[-.15cm]
\includegraphics[width=106mm]{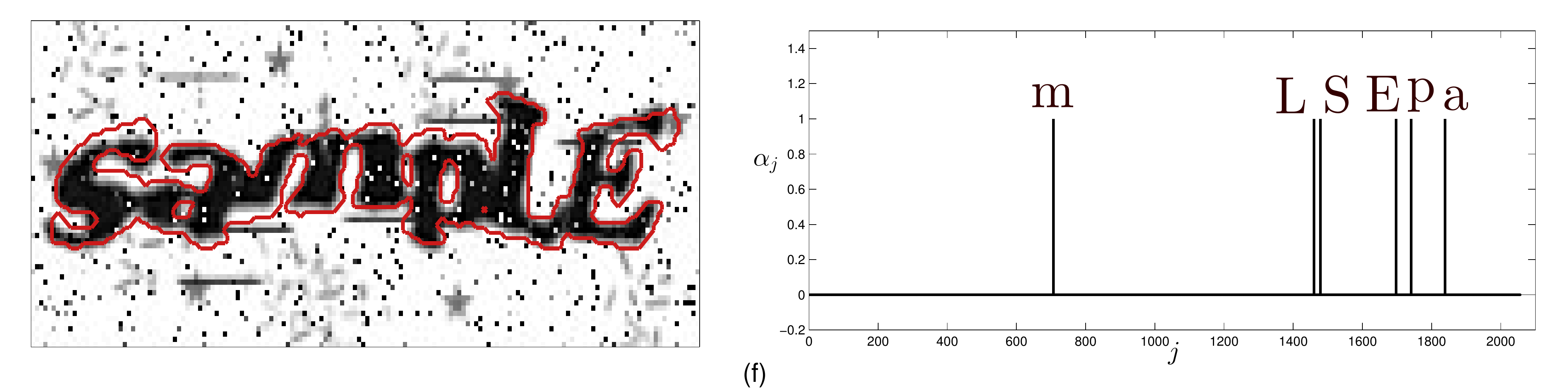} \\[-.15cm]
\includegraphics[width=106mm]{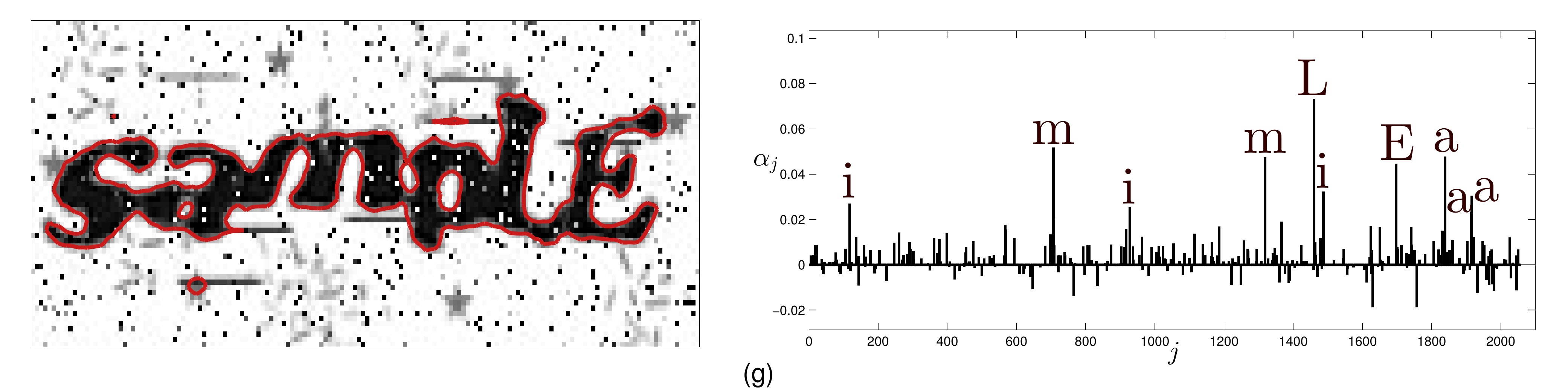}
\end{tabular}
\caption{An OCR problem addressed by segmentation: the red contour in the left column pictures shows a level-set of $\mathcal{L}_{\balpha}(x)$ at some value between zero and one. The right column corresponds to the reconstructed weights and an indication of the letter each index corresponds to; (a) $\tau = 1$; (b) $\tau = 2$; (c) $\tau = 3$; (d) $\tau = 4$; (e) $\tau = 5$; (f) $\tau = 6$; (g) Using the technique  proposed in \cite{aghasi2013sparse}}\label{fig10}
\end{figure*}

In Figure \ref{fig10-2} we have presented the OCR results for a problem of similar setup, where the test image is noisier and the overlap among the characters is more significant. The discrepancy (relative to the ideal binary image) resulted by the clutter and the Gaussian noise in the image is more than 90\%. Similar to the previous noisy cases, $\tilde u_{in}$ and $\tilde u_{ex}$ are selected to be the 15\% and 85\% quantiles of the image histogram. Again successive increments of $\tau$ from 1 to 6, allow us to identify the characters which build up the optimal partitioner. 

In this case that we have a tighter spacing among the letters, for $\tau=4$ (Figure \ref{fig10-2}(d)) we observe that Sparse-CSC identifies a letter ``d'' instead of identifying one of the letters ``a'', ``p'' or ``L''. This is clearly because such selection causes more overlap of the resulting partitioner with the dark region in the image and hence produces a lower segmentation cost. For larger values of $\tau$ the remaining characters are successively identified. Ultimately, Figure \ref{fig10-2}(g) shows the segmentation result without the $\ell_1$-constraint (i.e., $\tau=\infty$), where all the dark regions are captured, however no inference about the principal shapes can be made. The average CVX runtime for each instance of the aforementioned experiments was approximately 8 minutes.  

\begin{figure*}
\centering
\begin{tabular}{c}
\includegraphics[width=106mm]{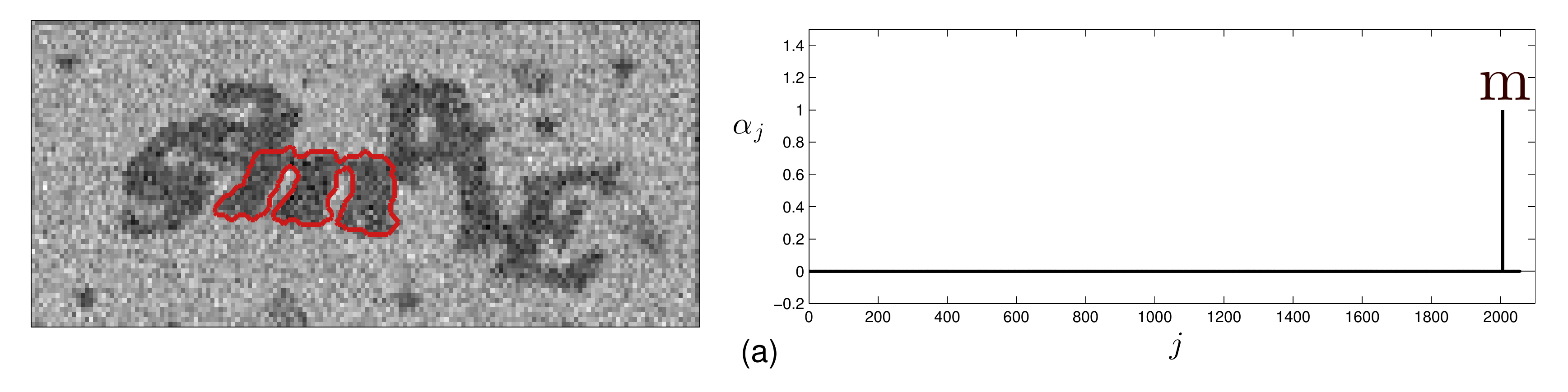} \\[-.15cm]
\includegraphics[width=106mm]{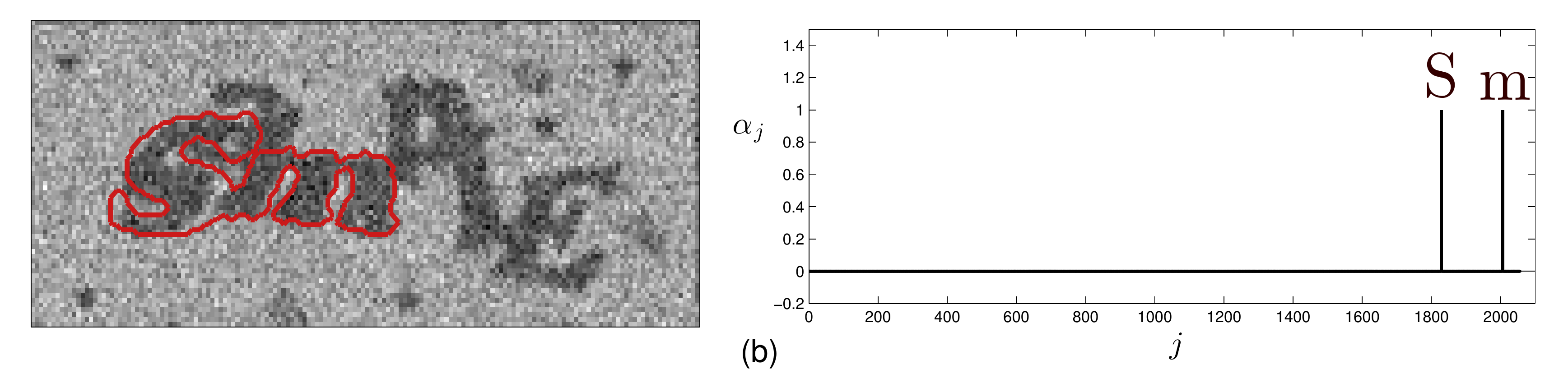} \\[-.15cm]
\includegraphics[width=106mm]{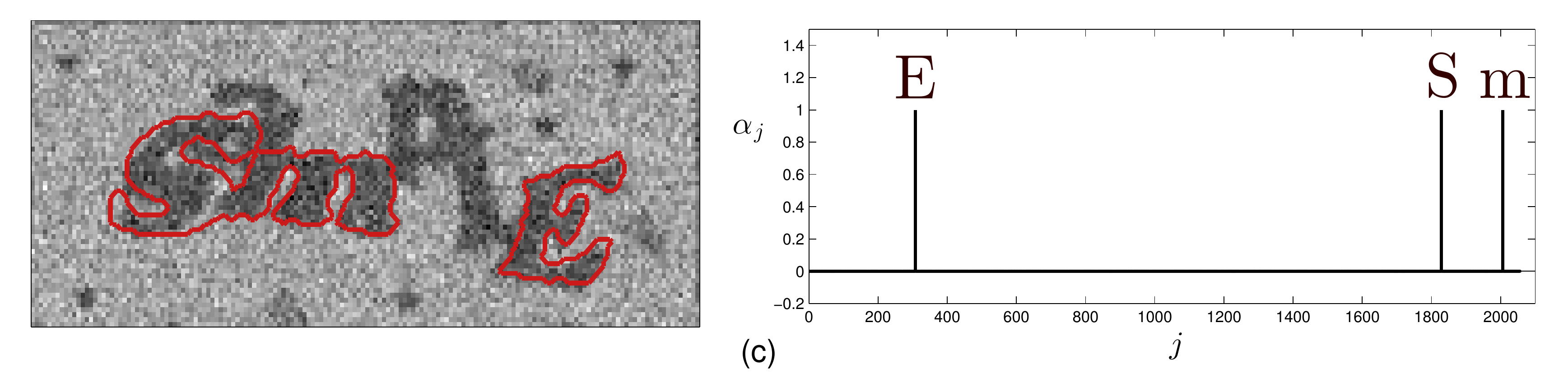} \\[-.15cm]
\includegraphics[width=106mm]{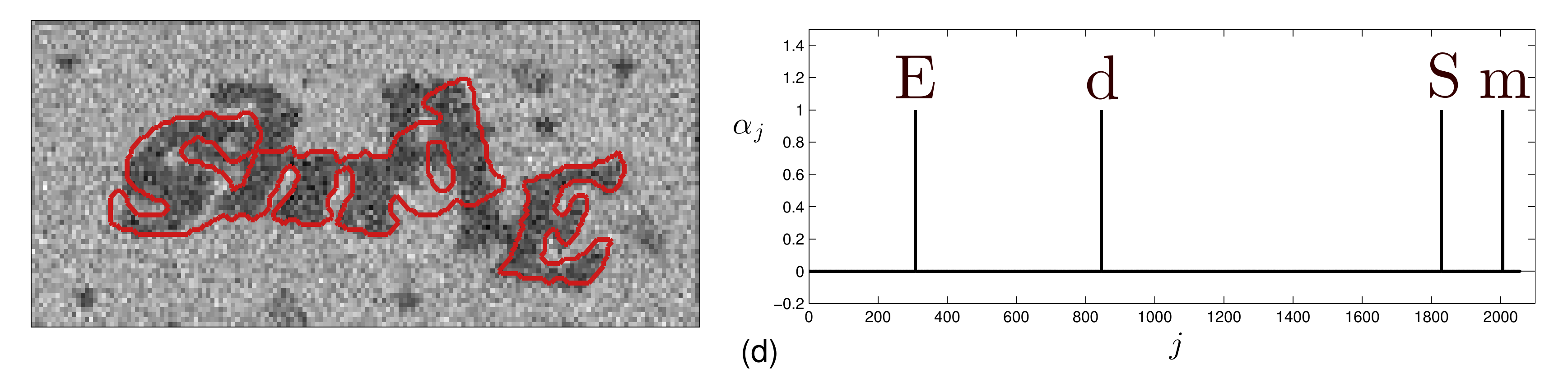} \\[-.15cm]
\includegraphics[width=106mm]{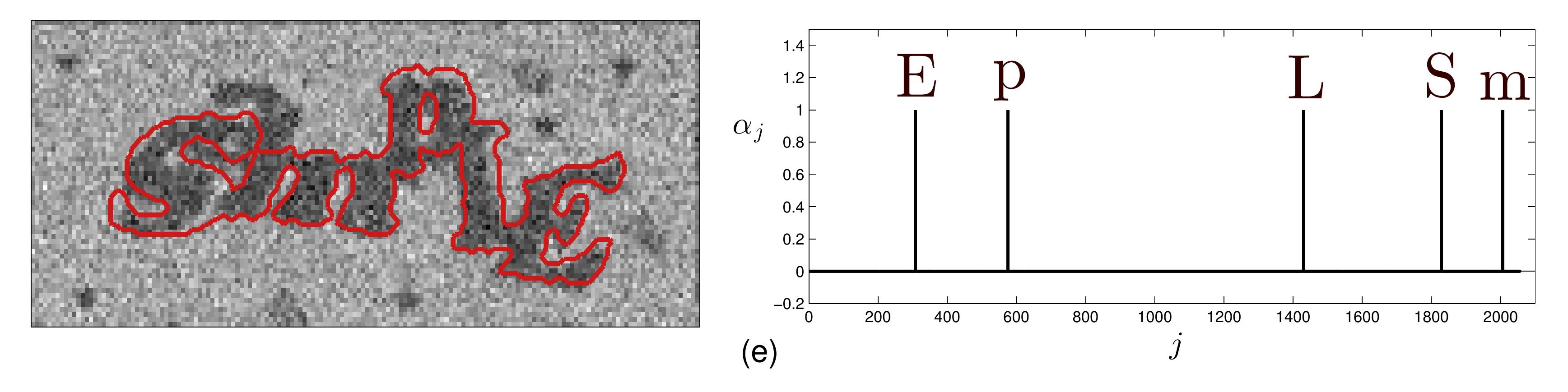} \\[-.15cm]
\includegraphics[width=106mm]{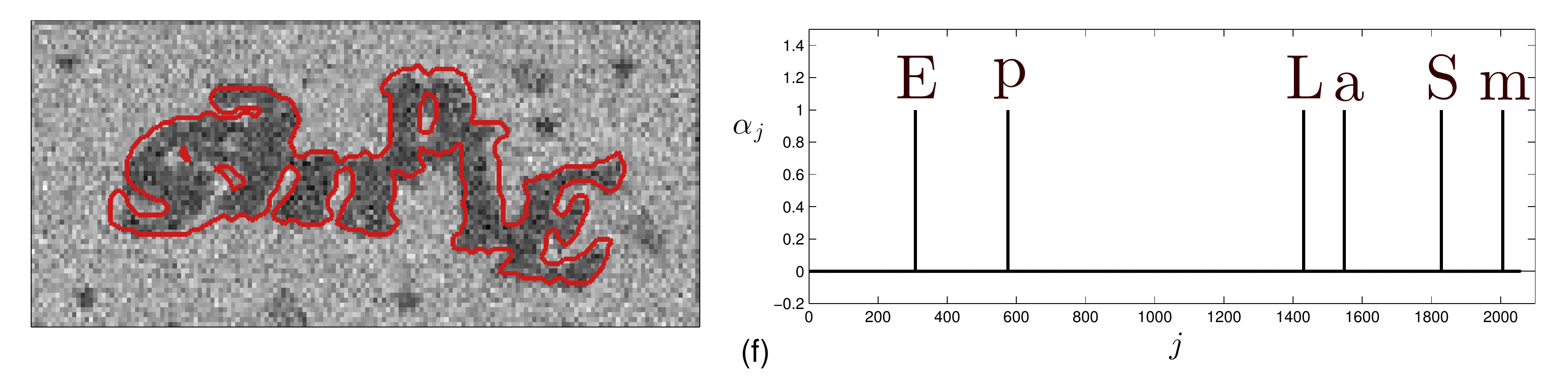} \\[-.15cm]
\includegraphics[width=106mm]{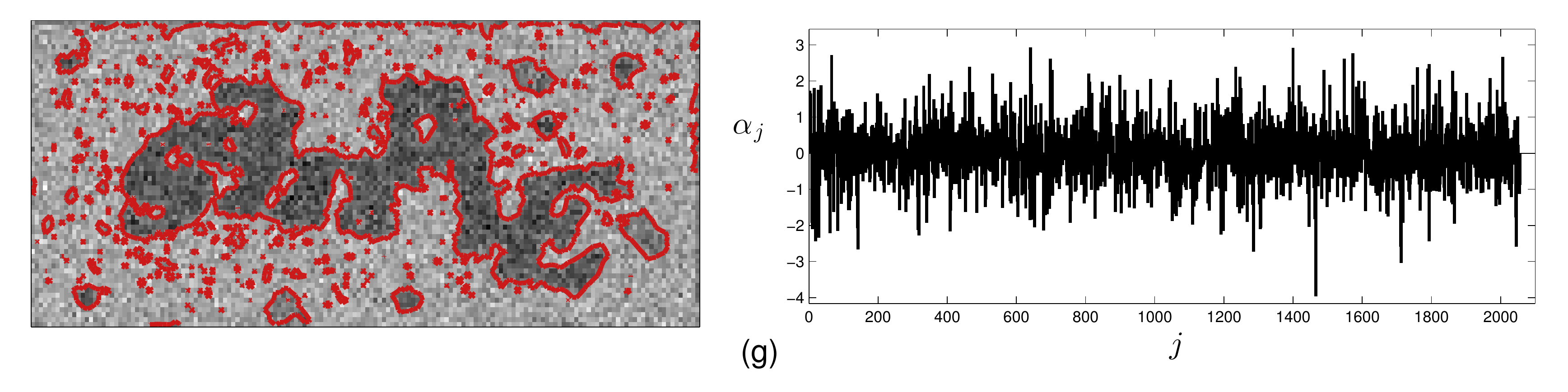}
\end{tabular}
\caption{A more challenging OCR problem addressed by segmentation: the red contour in the left column pictures shows a level-set of $\mathcal{L}_{\balpha}(x)$ at some value between zero and one. The right column corresponds to the reconstructed weights and an indication of the letter each index corresponds to; (a) $\tau = 1$; (b) $\tau = 2$; (c) $\tau = 3$; (d) $\tau = 4$; (e) $\tau = 5$; (f) $\tau = 6$; (g) $\tau = \infty$}\label{fig10-2}
\end{figure*}

As an additional OCR experiment, we consider a cluttered version of the Georgia Tech logo, where this time the letters in the dictionary take a different font than the version appeared in the image. The dictionary consists of 1068 letters of various size, rotation and placement. Still the striking performance of the method for successive values of $\tau$ is presented in Figure \ref{fig11}. We would like to note that the dictionary consists of a letter ``C'' very close to where the letter ``G'' has appeared and yet the algorithm identifies the true letter. We have also presented the segmentation results without an $\ell_1$-constraint (i.e., $\tau=\infty$) in Figure \ref{fig11}(e). One can clearly observe the regularizing effect of the $\ell_1$ constraint in the course of reconstruction. The average CVX runtime for each instance of this experiments was approximately 1 minute.

\begin{figure*}
\centering 
\begin{tabular}{c}
\includegraphics[width=131mm, height=38mm]{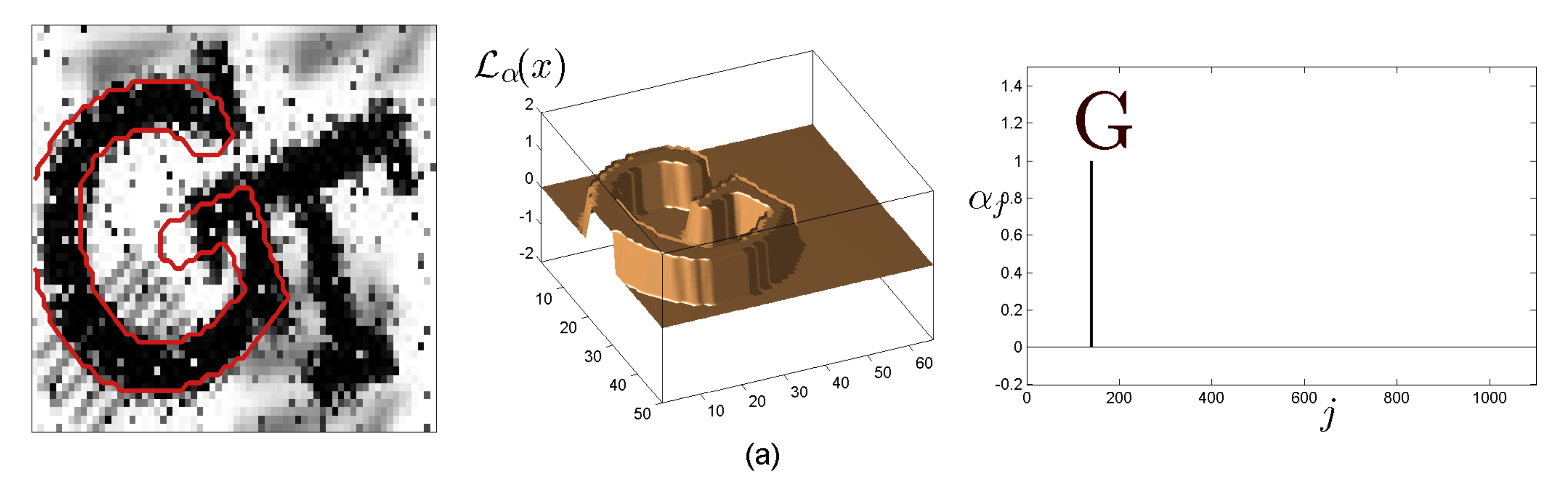} \\[-.2cm]
\includegraphics[width=131mm]{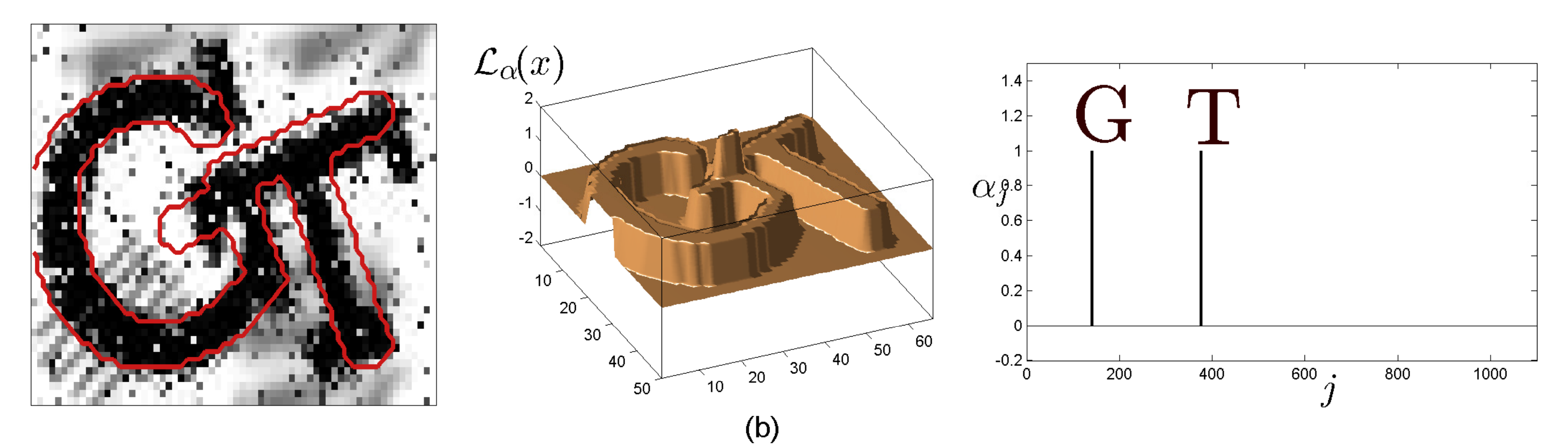} \\[-.2cm]
\includegraphics[width=131mm]{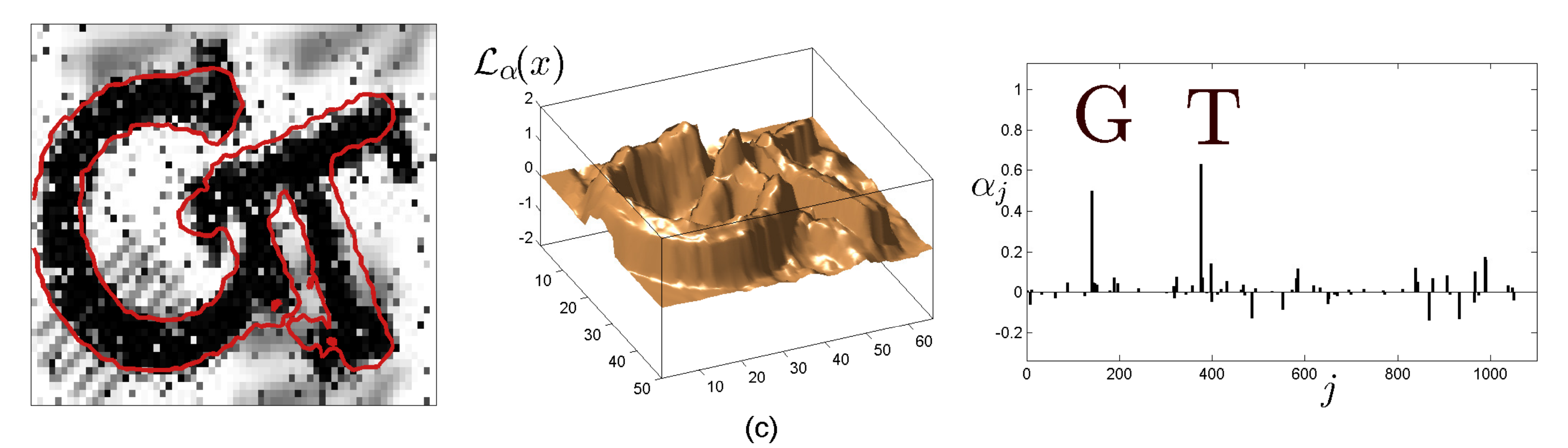} \\[-.2cm]
\includegraphics[width=131mm]{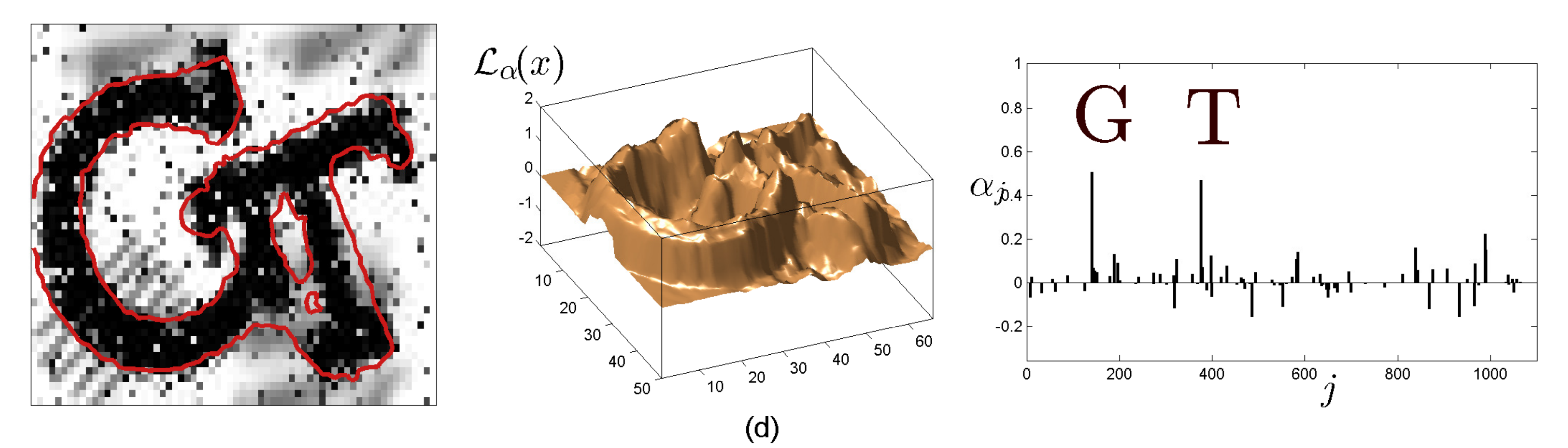} \\[-.2cm]
\includegraphics[width=131mm]{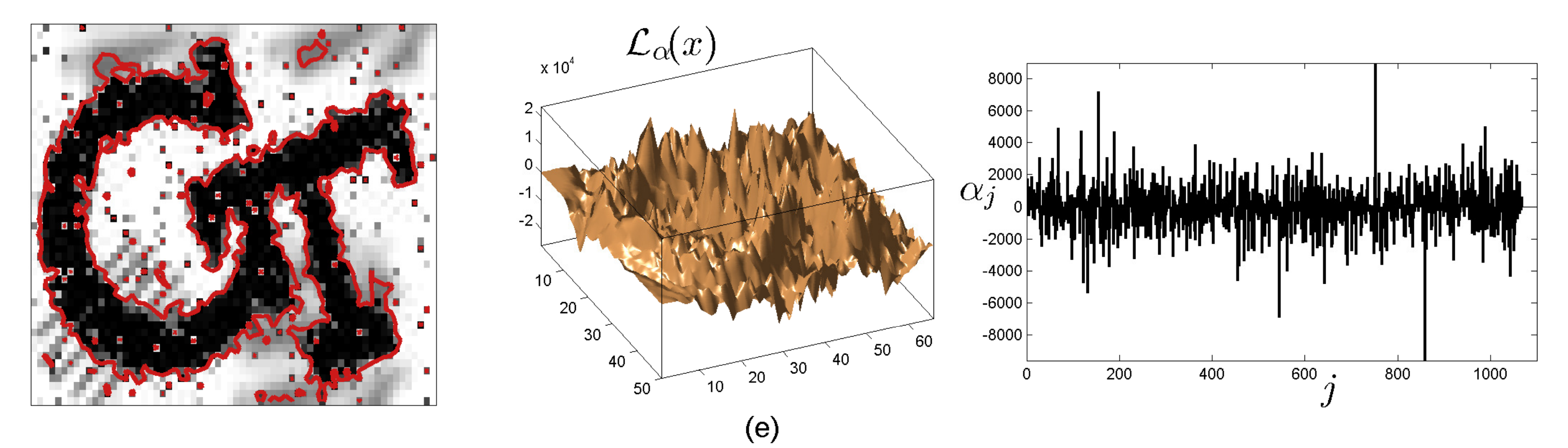} \\[-.2cm]
\end{tabular}
\caption{Another OCR problem where the image and the dictionary use different fonts: the red contour in the left column pictures shows a level-set of $\mathcal{L}_{\balpha}(x)$ at some value between zero and one. The middle column depicts $\mathcal{L}_{\balpha}(x)$ and the right column corresponds to the reconstructed weights along with an indication of the letter each index corresponds to; (a) $\tau = 1$; (b) $\tau = 2$; (c) $\tau = 4$; (d) $\tau = 5$; (e) $\tau = \infty$}\label{fig11}
\end{figure*}

\subsection{Applications in Dense Packing}

Packing problems are a class of problems concerned with the arrangement of given objects inside larger containers \cite{dowsland1992packing}. Specifically, a relevant application that might be immediately linked to the material presented in this paper is densely packing a container with the minimum number of (non-overlapping) objects in $\mathbb{R}^d$, $d=1,2,\cdots$.

The packing problems are generally hard combinatorial problems. For instance, the two-dimensional rectangular packing problem, where both the container and the objects are rectangular geometries is shown to be NP-complete \cite{fowler1981optimal}.  In higher dimensions and the case of irregular objects, the problem clearly becomes more complex and yet regarded as an NP-complete problem \cite{hopper2001review}.

Depending on the problem size and the quality of the shape dictionary, the approach presented in this paper may be employed to address instances of the packing problem, where the objects and container have irregular geometries.

As a proof of concept, we consider the basic example of solving a 12-piece jigsaw puzzle, where the elements of the puzzle are shown in Figure \ref{fig12}(a). Considering the puzzle pad to be $\Sigma\subset D$, the goal would be to tile $\Sigma$ with the puzzle elements. For a shape dictionary consisting of the puzzle elements, this problem may be cast as performing the proposed segmentation task over an image defined in $D$, with unit pixel values inside $\Sigma$ and zero outside.

\begin{figure}%
\centering
\subfigure[]{\includegraphics[width=68mm, angle =90]{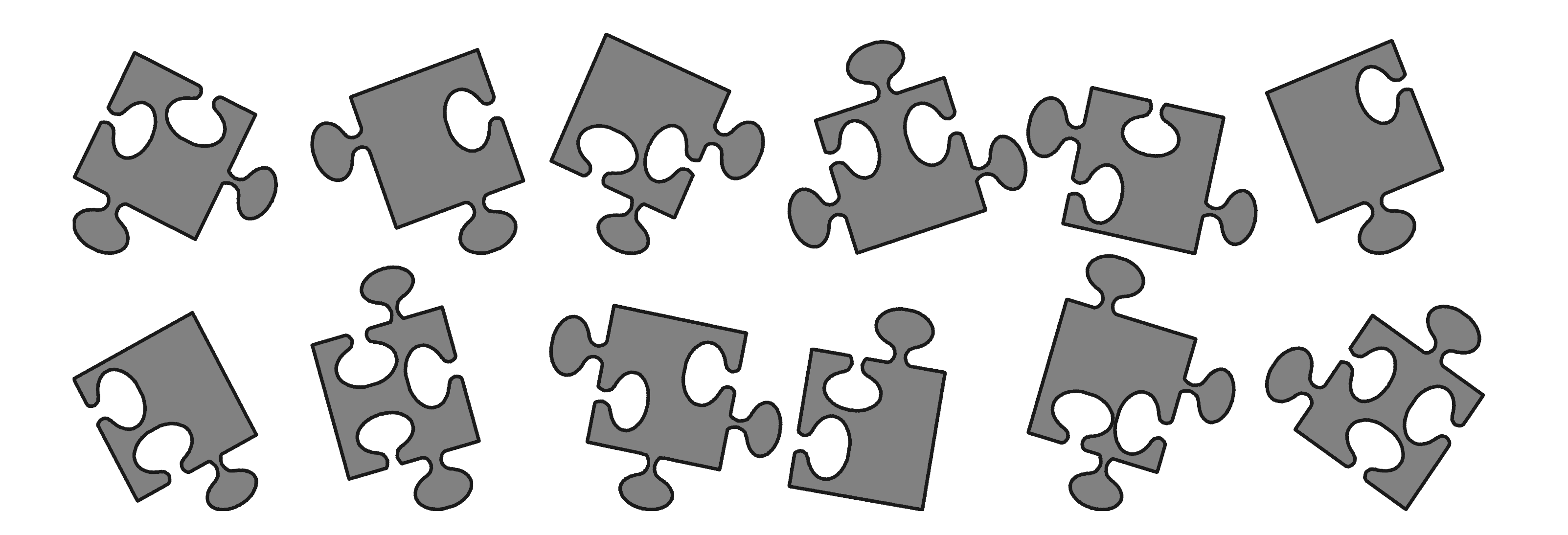} }\hspace{-.2cm}
\subfigure[]{\includegraphics[width=85mm, height=66mm]{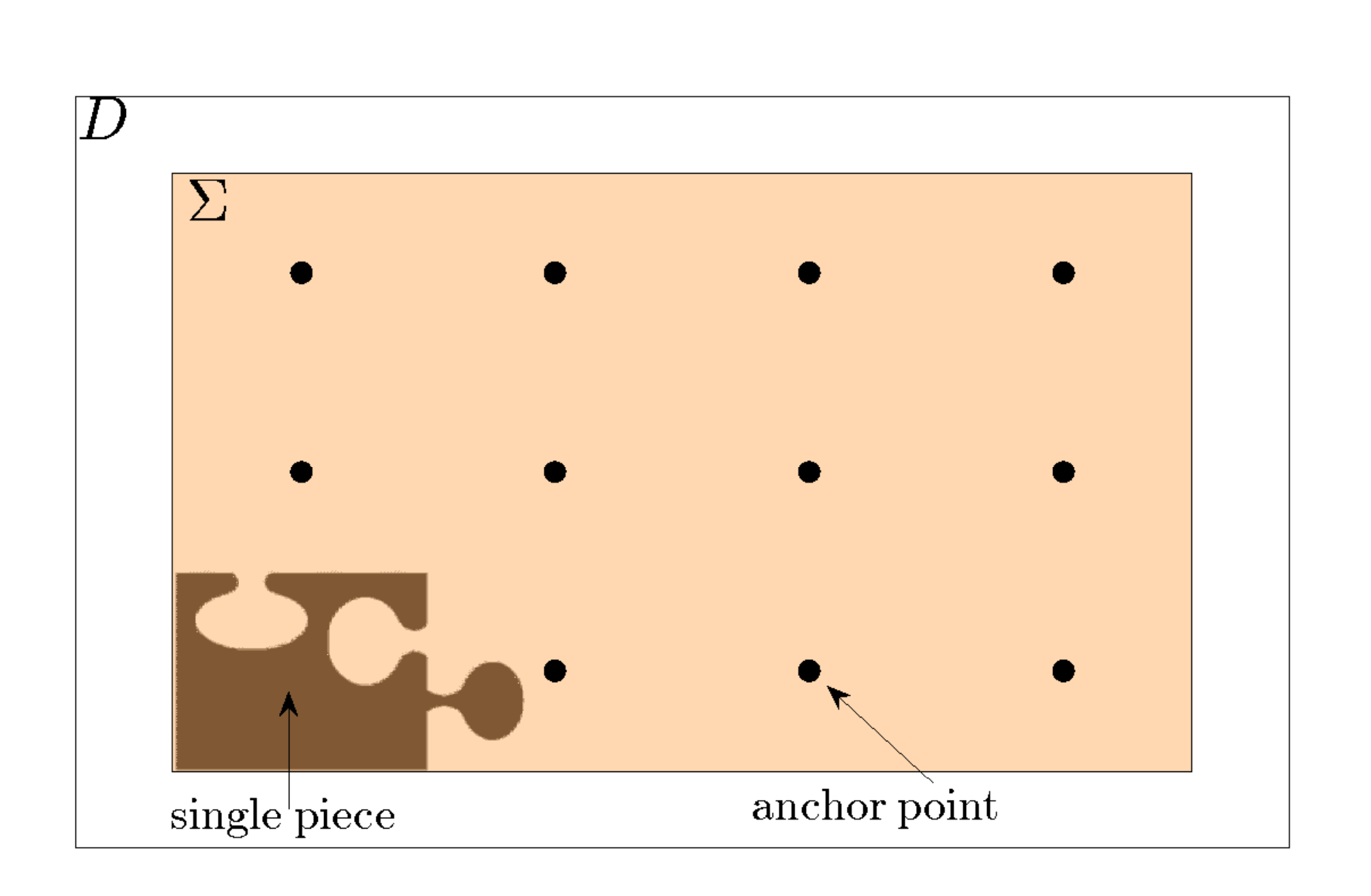} }\hspace{-.2cm}
\subfigure[]{\includegraphics[height=68mm]{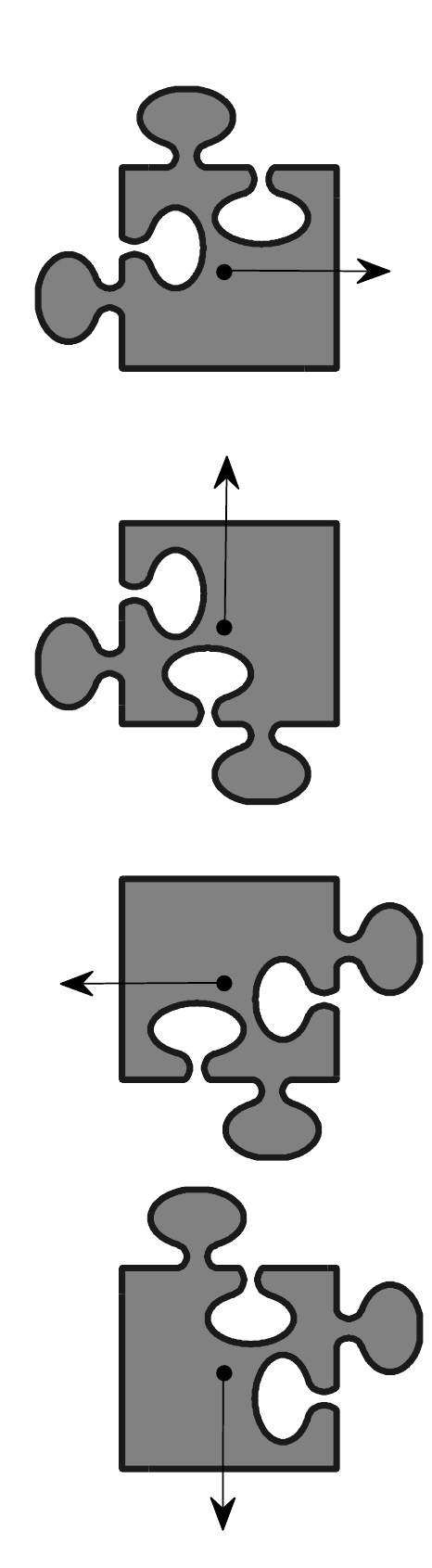} }%
\caption{(a) Twelve pieces of a puzzle to be put together; (b) Building up the shape dictionary by placing only one piece at the bottom left corner and placing rotated versions of the remaining pieces at the anchor points indicated by dots; (c) Each puzzle piece is rotated at a multiple of 90 degrees around an anchor point}
\label{fig12}
\end{figure}

To avoid a large shape dictionary, we consider twelve equispaced anchor points within $\Sigma$, centered at which a puzzle piece may be placed. Clearly, for this example the puzzle pad is symmetric and there might be multiple ways of tiling $\Sigma$. For instance if a combination of the elements tiles $\Sigma$, a 180-degrees rotated version of this combination does the same job, and we may expect a non-uniqueness issue in the Cardinal-SC problem. This phenomenon is prevented by the way the shape dictionary is generated, as follows.

As the first dictionary element, we place the shape corresponding to a true placement at the bottom-left corner anchor. This would be the sole element of the dictionary that is placed in this location, assuring that the Cardinal-SC problem yields a unique solution. For the remaining eleven anchor points, at each location we consider four rectangular rotations of the pieces of the puzzle excluding the bottom-left piece. This process is illustrated in Figure \ref{fig12}(b)-(c), which yields a dictionary of size $1+11\times 11\times 4 = 485$ elements.

Figure \ref{fig13}(a) shows the segmentation result for $\tau=12$, which indicates a successful completion of the puzzle. Panels (b) and (c) show the results for $\tau=11$ and $\tau=10$. It can be observed that in the latter case, the convex program is not able to maintain the sparsity level to the values indicated by $\tau$ (i.e., 11 and 10), and to minimize the convex cost, $\Sigma$ is covered by bringing into play more number of dictionary elements.

\begin{figure*}
\centering 
\begin{tabular}{ccc}
\includegraphics[width=45mm]{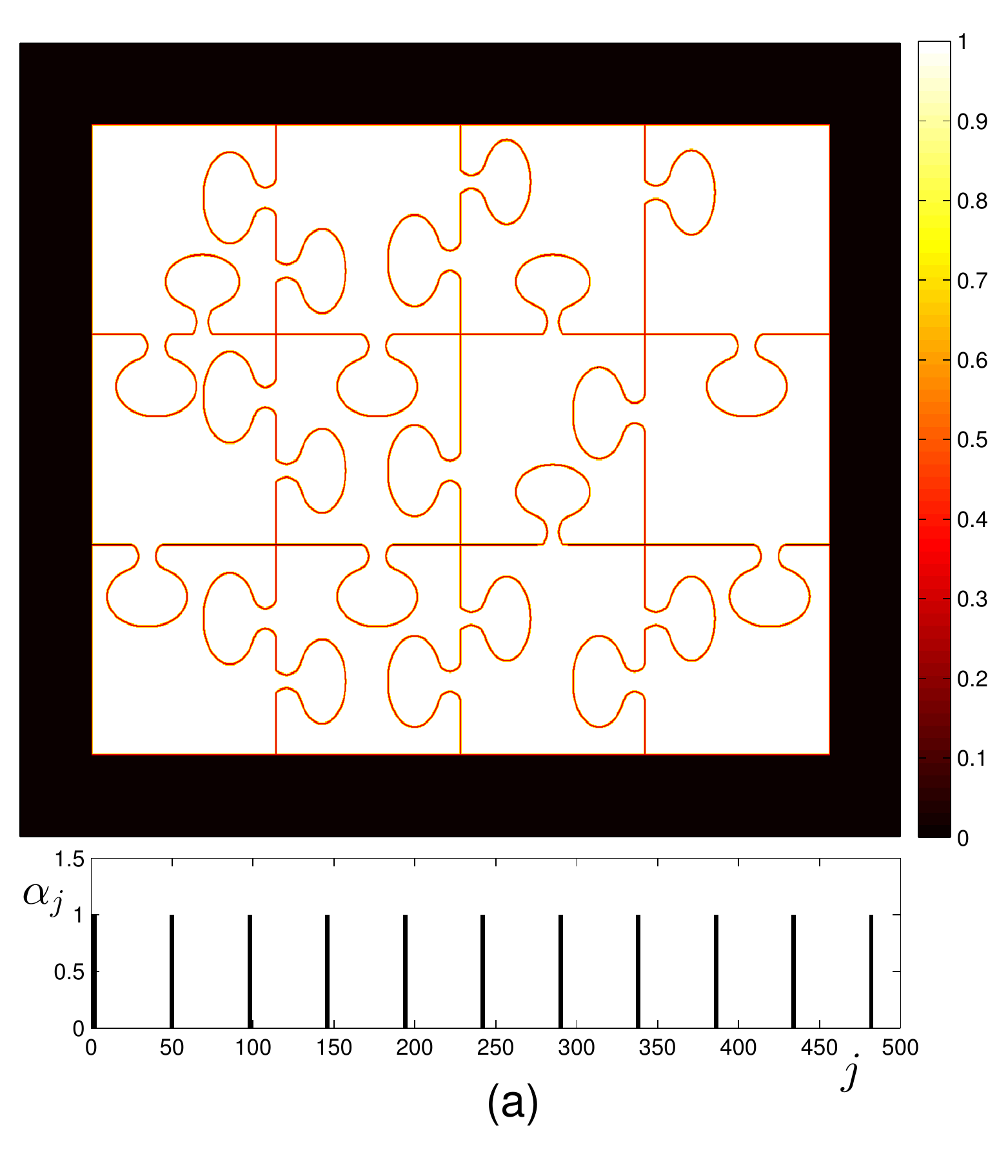} &\hspace{-.5cm}
\includegraphics[width=45mm]{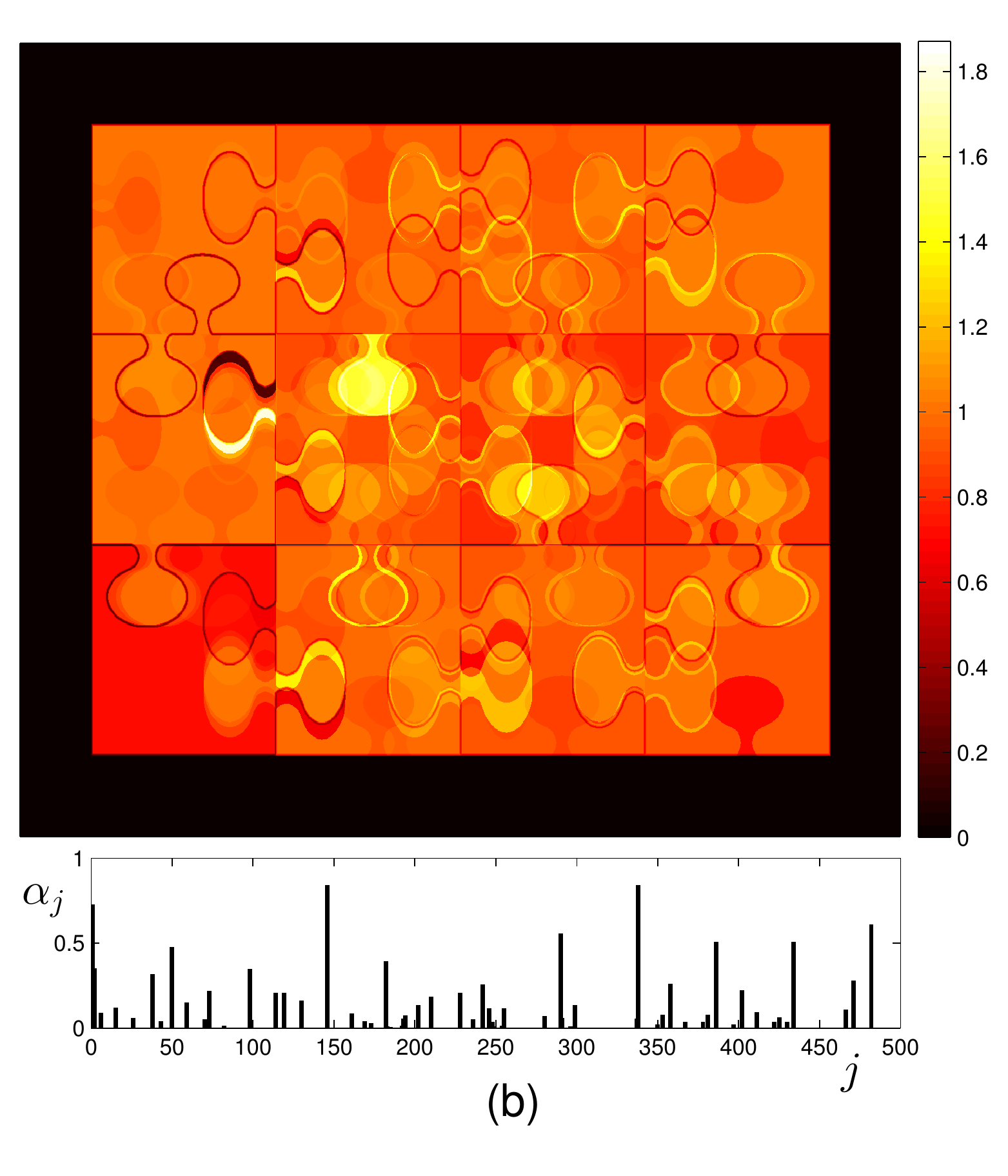} &\hspace{-.5cm}
\includegraphics[width=45mm]{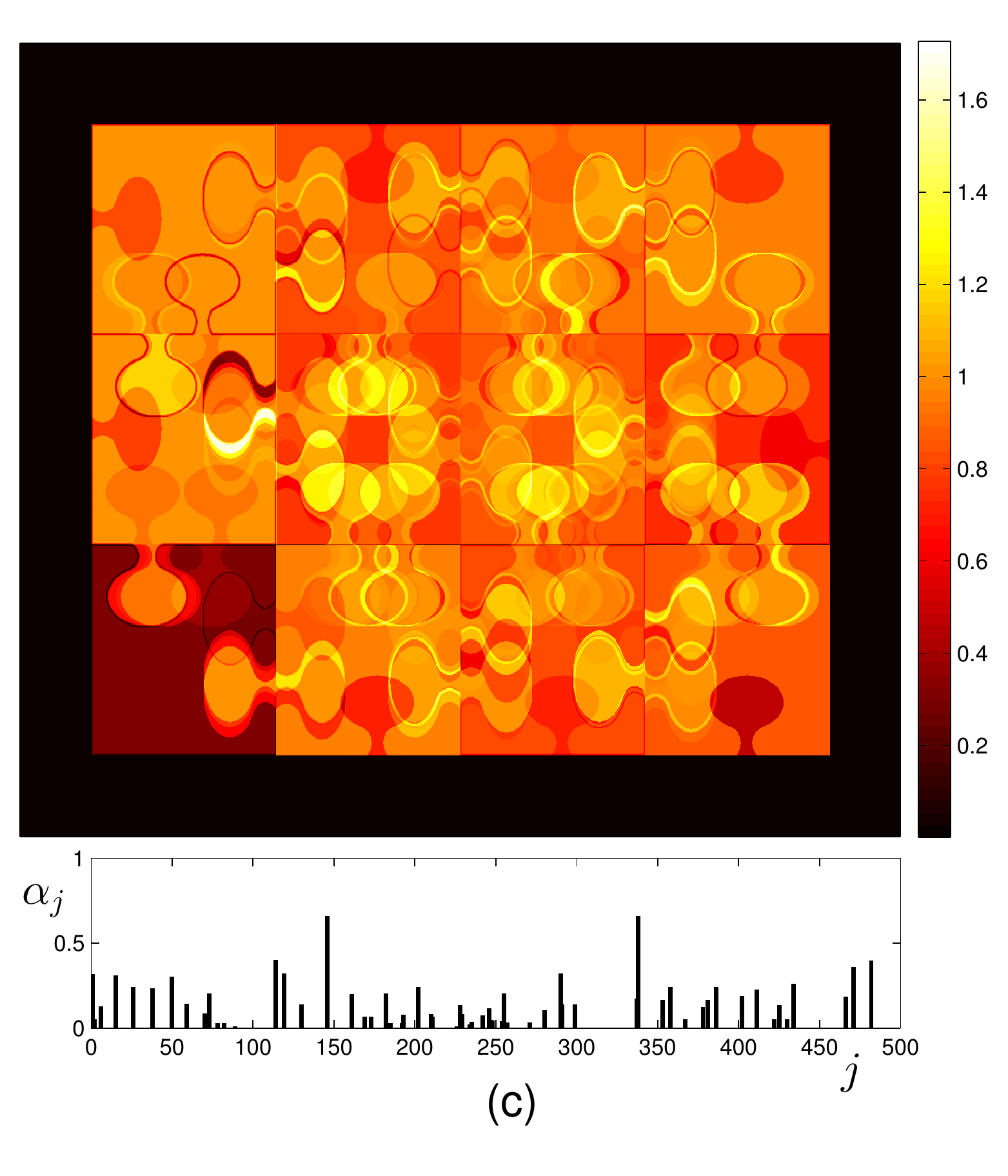}
\end{tabular}
\caption{Solving a puzzle by shape composition. The values of the resulting $\mathcal{L}_{\balpha}(x)$ shown as a colormap, and the corresponding $\balpha$ plotted below each reconstruction; (a) $\tau = 12$; (b) $\tau = 11$; (c) $\tau = 10$}\label{fig13}
\end{figure*}

One way to interpret the algorithm's success in values of $\tau$ below the true inclusion's value, is the toleration against the possible inaccuracies in the given setup. In other words, the reconstruction failure for $\tau<12$ in the preceding experiment, somehow reports a low toleration against the non-ideal setup (phenomena such as the LOC violation and inaccurate shape element alignments).

As a second example, we consider a 9-piece puzzle with a similar setup as the aforementioned example, using $1+8\times 8\times 4 = 257$ shape elements. Figure \ref{fig14} shows the successful reconstruction results for $\tau=9,\;8,\;7$. The experiment justifies the higher toleration of the proposed scheme against the possible sources of error when the true object is comprised of fewer components.

\begin{figure*}
\centering 
\begin{tabular}{ccc}
\includegraphics[width=45mm]{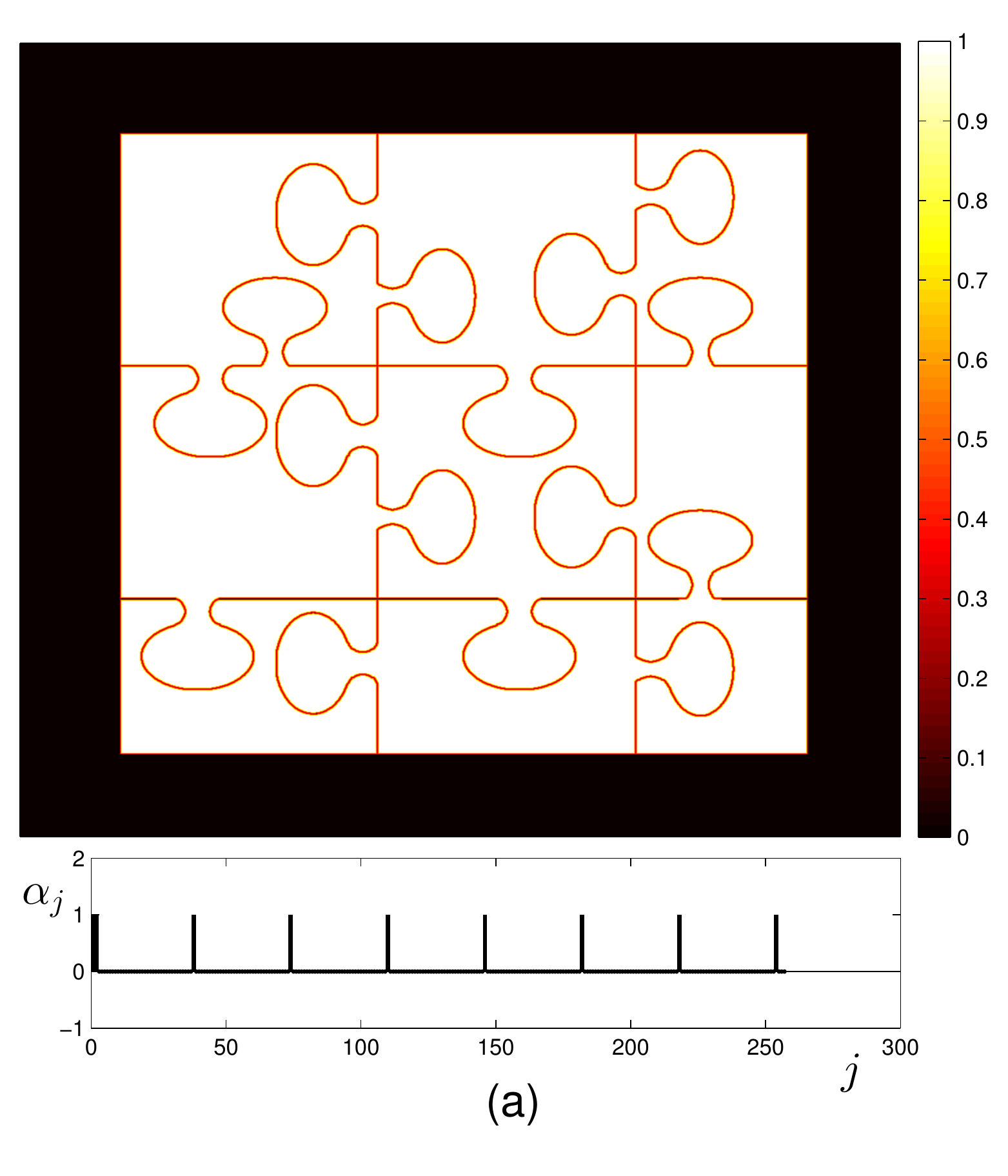} &\hspace{-.5cm}
\includegraphics[width=45mm]{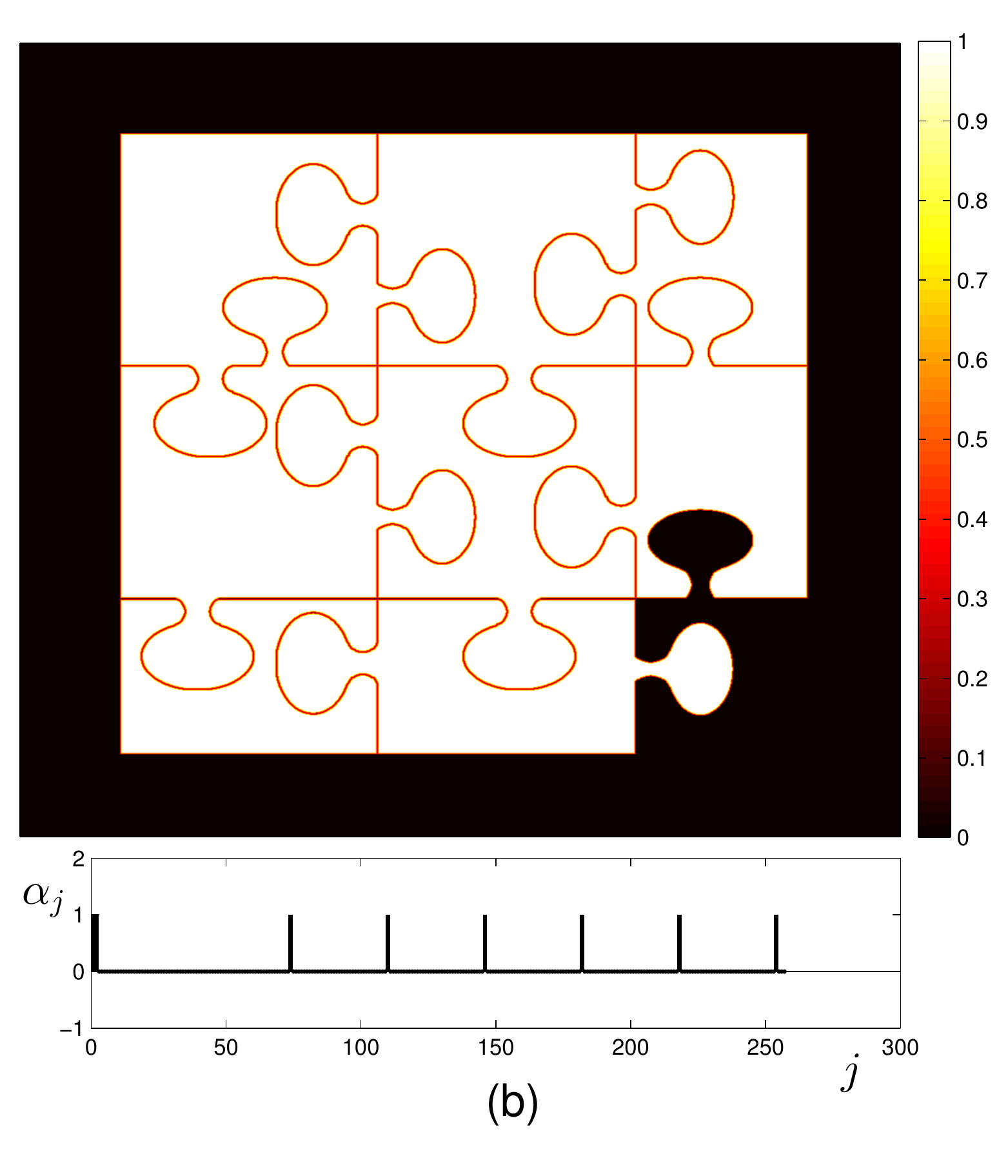} &\hspace{-.5cm}
\includegraphics[width=45mm]{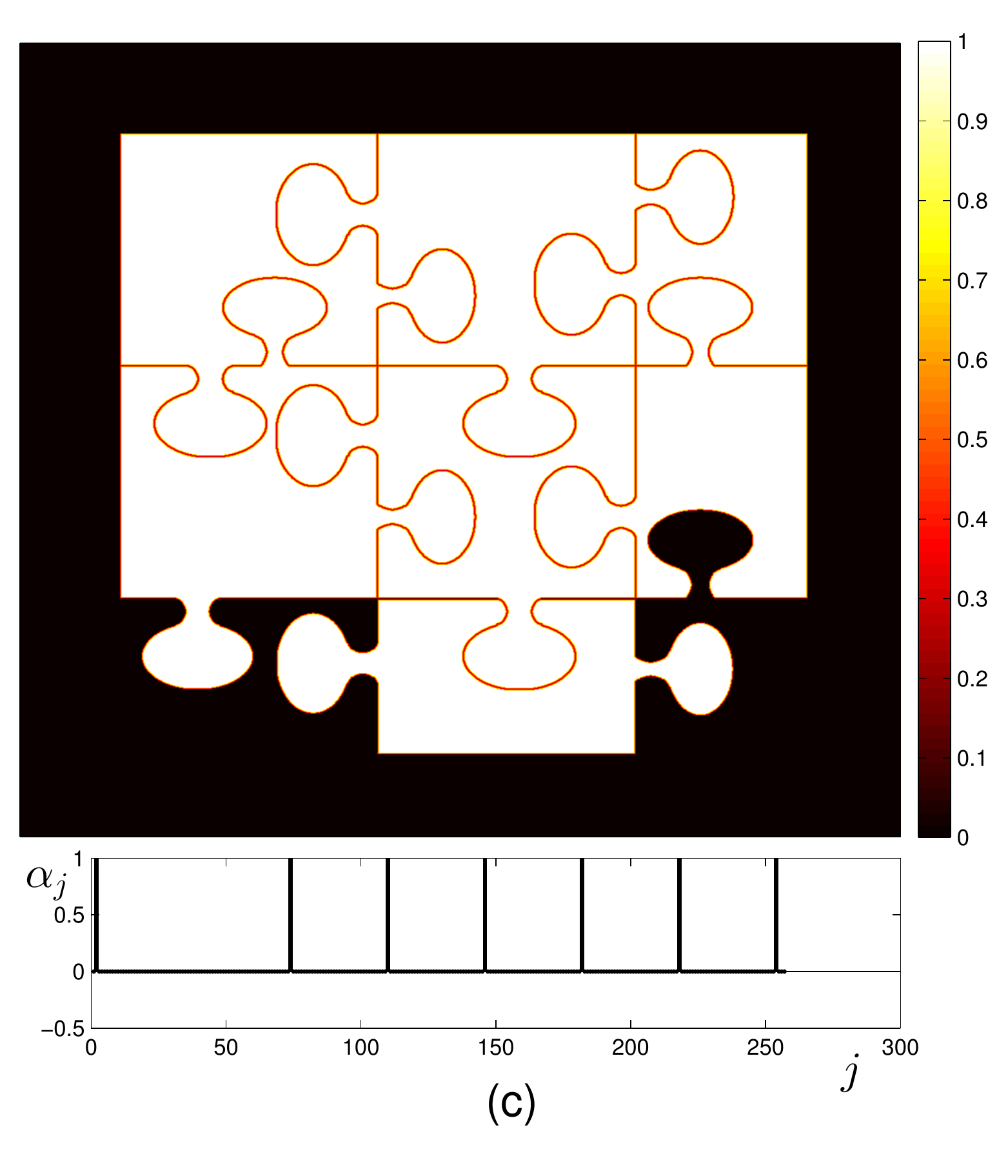}
\end{tabular}
\caption{An attempt on a smaller puzzle with successful reconstructions for consecutive values of $\tau$; (a) $\tau = 9$; (b) $\tau = 8$; (c) $\tau = 7$}\label{fig14}
\end{figure*}


\section{Future Extensions}\label{sec6}
 Our emphasis on the binary image segmentation problem is to avoid unnecessary complication in the presentation and analysis. Several ideas developed in the paper are extendible to the multi-phase  segmentation problem. Moreover, the idea of composing prototype shapes to represent more complex geometries extends beyond the context of image segmentation and can be employed in other shape-based imaging disciplines.

In this paper we specifically focused on the composition rule (\ref{eq3}), which was earlier presented in \cite{aghasi2013sparse} and provides a flexible, yet simple model to combine the shape elements. A key component of our analysis is Theorem \ref{th2x} which particularly focuses on the proposed composition rule. Depending on the application, we may consider other composition rules and inspired by the ideas developed in this paper derive new convex models. However, such extension requires stably relating the representations in the shape-domain and $\balpha$-domain as performed in Sections \ref{DSDsec} and \ref{Sec:Rel}.

In Section \ref{sec-conv} we showed the possibility of verifying the unique optimality of a given vector for the Sparse-CSC program. The optimality is cast as verifying the existence of the solution for a constrained system of equations. Although the conditions stated in Section \ref{sec-conv} are general, to maintain brevity in Section \ref{sec:accrecovery} we combined the results under the assumption that LOC holds for the target composition. These results can be extended to a more general case, where the LOC violation is sufficiently small. Such extension would provide us with quantitative results regarding the required contrast between the target shape and the background.

While the basic convex formulation of the Sparse-CSC problem allows us to employ standard solvers, developing efficient optimization algorithms to numerically handle large-scale problems remains a future avenue of research. Specific interest would be on fast, memory-efficient and parallelizable techniques which allow breaking down the problem into smaller sub-problem (e.g., combining the composition results from different sections of the image, processed in parallel). Such algorithms may be equipped with updating rules of the inhomogeneity measures in the course of shape reconstruction.

\section{Proof of the Main Results}\label{sec:proof}
The technical proofs associated with the presented results are detailed in this section.

\subsection{Proof of Proposition \ref{th1}}
For $j\in \Ip$ we define
$\mathpzc{L}_{\boldsymbol{\alpha}}^\oplus(x) = \sum_{j\in \Ip}\chi_{\mathcal{S}_j}.$
Clearly,
\[\big\{x:\mathpzc{L}_{\boldsymbol{\alpha}}^\oplus(x)\geq 1\big\} = \bigcup_{j\in \Ip}\mathcal{S}_j.
\]
We set $\eta_0=\sup_{x\in D}\mathpzc{L}_{\boldsymbol{\alpha}}^\oplus(x)$ and for an arbitrary $\eta\geq\eta_0\geq 1$ we define
\[\mathpzc{L}_{\boldsymbol{\alpha}}^\ominus(x) = \sum_{j\in \In}\eta\chi_{\mathcal{S}_j}.
\]
In this case we have
\[\big\{x:\mathpzc{L}_{\boldsymbol{\alpha}}^\ominus(x)\geq \eta\big\} = \bigcup_{j\in \In}\mathcal{S}_j.
\]
We now show $\mathpzc{L}_{\boldsymbol{\alpha}}(x)= \mathpzc{L}_{\boldsymbol{\alpha}}^\oplus(x) - \mathpzc{L}_{\boldsymbol{\alpha}}^\ominus(x)$ meets the requirements of the problem. Clearly,
\begin{equation} \label{et1}
\mathpzc{L}_{\boldsymbol{\alpha}}^\oplus(x) - \mathpzc{L}_{\boldsymbol{\alpha}}^\ominus(x)\left\{
     \begin{array}{cc}
       =\mathpzc{L}_{\boldsymbol{\alpha}}^\oplus(x)\geq 1 & x\in (\bigcup_{j\in \Ip}\mathcal{S}_j)\setminus (\bigcup_{j\in \In}\mathcal{S}_j)\\\leq 0 & x\in(\bigcup_{j\in \Ip}\mathcal{S}_j)\cap(\bigcup_{j\in \In}\mathcal{S}_j)\\ = - \mathpzc{L}_{\boldsymbol{\alpha}}^\ominus(x)\leq 0&x\in (\bigcup_{j\in \In}\mathcal{S}_j)\setminus (\bigcup_{j\in \Ip}\mathcal{S}_j)\\\leq 0 & x\in D\setminus \bigcup_{j\in \Ip}\mathcal{S}_j
     \end{array}.
   \right.
\end{equation}
For arbitrary sets $A$, and $B\subset C$ we have
\begin{equation*}
C\setminus(A\setminus B) = (B\setminus A)\cup (B\cap A)\cup(C\setminus A),
\end{equation*}
and as a result
\begin{align}\nonumber
D\setminus \Sigma &= D\setminus \Big((\bigcup_{j\in \Ip}\mathcal{S}_j) \setminus (\bigcup_{j\in \In}\mathcal{S}_j)\Big)\\ &= \Big((\bigcup_{j\in \In}\mathcal{S}_j) \setminus (\bigcup_{j\in \Ip}\mathcal{S}_j)\Big) \cup \Big((\bigcup_{j\in \Ip}\mathcal{S}_j)\cap (\bigcup_{j\in \In}\mathcal{S}_j)\Big)\cup \Big(D\setminus \bigcup_{j\in \Ip}\mathcal{S}_j\Big).\label{et2}
\end{align}
Combination of (\ref{et1}) and (\ref{et2}) yields
\begin{equation*}
\mathpzc{L}_{\boldsymbol{\alpha}}^\oplus(x) - \mathpzc{L}_{\boldsymbol{\alpha}}^\ominus(x)\left\{
     \begin{array}{cc}
       \geq 1 & x\in \Sigma\\
       \leq 0 & x\in D\setminus \Sigma
     \end{array},
   \right.
\end{equation*}
which completes the proof. \qed

\subsection{Proof of Proposition \ref{th3}}
We know $\Ip^\Sigma$ and $\In^\Sigma$ are the only non-redundant index sets that $\mathpzc{R}_{\;\Ip^\Sigma,\In^\Sigma}\seq\Sigma$. Therefore for any other non-redundant representation $\Sigma' = \mathpzc{R}_{\;\Ip',\In'}$, we have $\Sigma'\nseq\Sigma$ or basically $\Sigma'\cap(D\setminus \Sigma)\nseq \emptyset$. By evaluating the SC cost for $\Sigma'$ under the LOC we get
\begin{align*}
\int_{\Sigma'}\big(\Pi_{in}(x)- \Pi_{ex}(x)\big)\;\mbox{d}x&= \underset{\Sigma'\cap\Sigma}{\int}\big(\Pi_{in}(x) - \Pi_{ex}(x)\big)\;\mbox{d}x + \underset{\Sigma'\cap(D\setminus \Sigma)}{\int}\!\!\!\big(\Pi_{in}(x)- \Pi_{ex}(x)\big)\;\mbox{d}x\\&>\underset{\Sigma'\cap\Sigma}{\int}\big(\Pi_{in}(x) - \Pi_{ex}(x)\big)\;\mbox{d}x + \underset{\Sigma\cap(D\setminus \Sigma')}{\int}\!\!\!\big(\Pi_{in}(x) - \Pi_{ex}(x)\big)\;\mbox{d}x\\&= \int_{\Sigma}\big(\Pi_{in}(x)- \Pi_{ex}(x)\big)\;\mbox{d}x,
\end{align*}
where the inequality is trivially true due to the sign of each term. Therefore, any $\Sigma'\nseq\Sigma$ generates a cost strictly greater than the cost evaluated for $\Sigma$. Since $\Sigma$ has a unique representation, $\{\Ip^\Sigma,\In^\Sigma\}$ is the strict minimizer of the SC cost.\qed

\subsection{Proof of Proposition \ref{prop4}}
We define
\begin{equation*}
\left\{
\begin{array}{l}
\Sigma_0 \triangleq \{x: \mathpzc{L}_{\balpha}(x)<0\}\\
\Sigma_1 \triangleq \{x: 0\leq \mathpzc{L}_{\balpha}(x)<1\}\\
\Sigma_2 \triangleq \{x: \mathpzc{L}_{\balpha}(x)\geq 1\}
\end{array}
\right.
\end{equation*}
For the sake of convenience we use the following notations throughout the proof:
\begin{equation}\label{et3x}
\Delta_{in}(x) = \big(\Pi_{in}(x)-\Pi_{ex}(x)\big)^+ \quad \mbox{and} \quad \Delta_{ex}(x) = \big(\Pi_{ex}(x)-\Pi_{in}(x)\big)^+.
\end{equation}
Clearly,
\begin{align}\label{et3}
\underset{D}{\int}\Delta_{in}(x) \max\big(\mathpzc{L}_{\boldsymbol{\alpha}}(x),0\big)\;\mbox{d}x = \underset{\Sigma_1\cup\Sigma_2}{\int}\Delta_{in}(x) \mathpzc{L}_{\boldsymbol{\alpha}}(x)\;\mbox{d}x= \!\!\!\!\underset{(\Sigma_1\cup\Sigma_2)\cap(D\setminus\Sigma)}{\int} \!\!\!\!\Delta_{in}(x) \mathpzc{L}_{\boldsymbol{\alpha}}(x)\;\mbox{d}x
\end{align}
where the last equality is thanks to the LOC. In a similar fashion
\begin{align}\nonumber
\underset{D}{\int}\Delta_{ex}(x)\min\big(\mathpzc{L}_{\boldsymbol{\alpha}}(x),1\big)\;\mbox{d}x &= \underset{\Sigma}{\int}\Delta_{ex}(x)\min\big(\mathpzc{L}_{\boldsymbol{\alpha}}(x),1\big)\;\mbox{d}x\\ &= \underset{\Sigma\cap\Sigma_2}{\int}\Delta_{ex}(x)\;\mbox{d}x + \underset{\Sigma\cap(\Sigma_1\cup\Sigma_0)}{\int}\Delta_{ex}(x)\mathpzc{L}_{\boldsymbol{\alpha}}(x)\;\mbox{d}x.
\label{et4}
\end{align}
Also
\begin{equation*}
\underset{\Sigma}{\int}\Delta_{ex}(x)\;\mbox{d}x = \underset{\Sigma\cap\Sigma_2}{\int}\Delta_{ex}(x)\;\mbox{d}x + \underset{\Sigma\cap(\Sigma_1\cup\Sigma_0)}{\int}\Delta_{ex}(x)\;\mbox{d}x,
\end{equation*}
combining which with (\ref{et4}) yields
\begin{equation}\label{et5}
\underset{D}{\int}\Delta_{ex}(x)\min\big(\mathpzc{L}_{\boldsymbol{\alpha}}(x),1\big)\;\mbox{d}x = \underset{\Sigma}{\int}\Delta_{ex}(x)\;\mbox{d}x - \underset{\Sigma\cap(\Sigma_1\cup\Sigma_0)}{\int}\Delta_{ex}(x)\;\mbox{d}x.
\end{equation}
From (\ref{et3}) and (\ref{et5}) we arrive at
\begin{align*}
&\underset{D}{\int}\Delta_{in}(x)\max\big(\mathpzc{L}_{\boldsymbol{\alpha}}(x),0\big)\;\mbox{d}x-\underset{D}{\int}\Delta_{ex}(x)\min\big(\mathpzc{L}_{\boldsymbol{\alpha}}(x),1\big)\;\mbox{d}x\\=&\underset{(\Sigma_1\cup\Sigma_2)\cap(D\setminus\Sigma)}{\int} \!\!\!\!\Delta_{in}(x)\mathpzc{L}_{\boldsymbol{\alpha}}(x)\;\mbox{d}x   + \underset{\Sigma\cap(\Sigma_1\cup\Sigma_0)}{\int}\Delta_{ex}(x)\;\mbox{d}x-\underset{\Sigma}{\int}\Delta_{ex}(x)\;\mbox{d}x\\\geq& -\underset{\Sigma}{\int}\Delta_{ex}(x)\;\mbox{d}x.\qquad\qed
\end{align*}

\subsection{Proof of Theorem \ref{th5}} We continue to use the notations (\ref{et3x}).

We first show if (\ref{eq32}) holds, $\balpha^*$ minimizes (\ref{eq31}). For this purpose we evaluate each component of the convex cost assuming that (\ref{eq32}) holds:
\begin{align}\nonumber \label{et6}
\underset{D}{\int}\Delta_{in}(x)\max\big(\mathpzc{L}_{\boldsymbol{\alpha}^*}(x),0\big)\;\mbox{d}x &= \underset{\Sigma}{\int}\Delta_{in}(x)\max\big(\mathpzc{L}_{\boldsymbol{\alpha}^*}(x),0\big)\;\mbox{d}x \\&+ \underset{D\setminus \Sigma}{\int}\Delta_{in}(x)\max\big(\mathpzc{L}_{\boldsymbol{\alpha}^*}(x),0\big)\;\mbox{d}x\nonumber\\  &=0,
\end{align}
where the first term vanishes by the LOC and the second term becomes zero using (\ref{eq32}). Moreover,
\begin{align}\nonumber
\underset{D}{\int}\Delta_{ex}(x)\min\big(\mathpzc{L}_{\boldsymbol{\alpha}^*}(x),1\big)\;\mbox{d}x &= \underset{\Sigma}{\int}\Delta_{ex}(x)\min\big(\mathpzc{L}_{\boldsymbol{\alpha}^*}(x),1\big)\;\mbox{d}x \\\nonumber &+ \underset{D\setminus \Sigma}{\int}\Delta_{ex}(x)\min\big(\mathpzc{L}_{\boldsymbol{\alpha}^*}(x),1\big)\;\mbox{d}x\\ &= \underset{\Sigma}{\int}\Delta_{ex}(x)\;\mbox{d}x.\label{et7}
\end{align}
Comparing (\ref{et6}) and (\ref{et7}) with the result from Proposition \ref{prop4} we can see that for $\balpha^*$ the convex cost (\ref{eq31}) reaches its lower bound and therefore $\balpha^*$ is a minimizer.

We now show if $\balpha^*$ minimizes (\ref{eq31}) then (\ref{eq32}) should hold. From Proposition \ref{th1} we know there exists $\tilde\balpha\in \mathbb{R}^{n_s}$ such that
\begin{equation}\label{et8}
\left\{\begin{array}{lc}
\mathpzc{L}_{\boldsymbol{\tilde\alpha}}(x)\geq 1&x\in\Sigma\\
\mathpzc{L}_{\boldsymbol{\tilde\alpha}}(x)\leq 0&x\in D\setminus \Sigma
\end{array}.
\right.
\end{equation}
Knowing (\ref{et8}) holds, a similar argument as (\ref{et6}) and (\ref{et7}) results in
\begin{equation}\label{et9}
\underset{D}{\int}\Delta_{in}(x)\max\big(\mathpzc{L}_{\boldsymbol{\tilde\alpha}}(x),0\big)\;\mbox{d}x-\underset{D}{\int}\Delta_{ex}(x)\min\big(\mathpzc{L}_{\boldsymbol{\tilde\alpha}}(x),1\big)\;\mbox{d}x = -\underset{\Sigma}{\int}\Delta_{ex}(x)\;\mbox{d}x.
\end{equation}
From Proposition \ref{prop4} we know if $\balpha^*$ is a minimizer, then
\begin{equation}\label{et10}
\underset{D}{\int}\Delta_{in}(x)\max\big(\mathpzc{L}_{\boldsymbol{\alpha}^*}(x),0\big)\;\mbox{d}x-\underset{D}{\int}\Delta_{ex}(x)\min\big(\mathpzc{L}_{\boldsymbol{\alpha}^*}(x),1\big)\;\mbox{d}x = -\underset{\Sigma}{\int}\Delta_{ex}(x)\;\mbox{d}x.
\end{equation}
This is due to the fact that $-\int_\Sigma\Delta_{ex}(x)\;\mbox{d}x$ is not only a lower bound and according to (\ref{et9}) it is also attainable. Thanks to the LOC we have
\begin{equation}\label{et11}
\underset{D}{\int}\Delta_{in}(x)\max\big(\mathpzc{L}_{\boldsymbol{\alpha}^*}(x),0\big)\;\mbox{d}x = \underset{D\setminus \Sigma}{\int}\Delta_{in}(x)\max\big(\mathpzc{L}_{\boldsymbol{\alpha}^*}(x),0\big)\;\mbox{d}x
\end{equation}
and
\begin{equation}\label{et12}
\underset{D}{\int}\Delta_{ex}(x)\min\big(\mathpzc{L}_{\boldsymbol{\alpha}^*}(x),1\big)\;\mbox{d}x = \underset{\Sigma}{\int}\Delta_{ex}(x)\min\big(\mathpzc{L}_{\boldsymbol{\alpha}^*}(x),1\big)\;\mbox{d}x.
\end{equation}
Using (\ref{et11}) and (\ref{et12}) in (\ref{et10}) yields
\begin{equation}\label{et13}
\underset{D\setminus \Sigma}{\int}\Delta_{in}(x)\max\big(\mathpzc{L}_{\boldsymbol{\alpha}^*}(x),0\big)\;\mbox{d}x+ \underset{\Sigma}{\int}\Delta_{ex}(x)\Big(1-\min\big(\mathpzc{L}_{\boldsymbol{\alpha}^*}(x),1\big)\Big)\;\mbox{d}x=0.
\end{equation}
Since the integrants in \ref{et13} are non-negative and $\Delta_{in}(x)>0$ for $x\in D\setminus \Sigma$, and $\Delta_{ex}(x)>0$ for $x\in \Sigma$, we must have
\[\left \{\begin{array}{lc}
\min\big(\mathpzc{L}_{\boldsymbol{\alpha}^*}(x),1\big) = 1 & x\in \mbox{int}(\Sigma)\\
\max\big(\mathpzc{L}_{\boldsymbol{\alpha}^*}(x),0\big) = 0 & x\in \mbox{int}(D\setminus \Sigma)
\end{array},
\right.
\]
which leads us to (\ref{eq32}).\qed

\subsection{Proof of Proposition \ref{pr2}}
(a) The proof of this part is straightforward:
\begin{align*}
&i_1\neq i_2 \Rightarrow \exists j\in\{1,2,\cdots,n\}\quad s.t.\quad \boldsymbol{J}_j^{(i_1)}\neq\boldsymbol{J}_j^{(i_2)}\\
\therefore \quad&\Theta_{\boldsymbol{J}_j^{(i_1)}}(\mathcal{S}_j) = \Big(\Theta_{\boldsymbol{J}_j^{(i_2)}}(\mathcal{S}_j)\Big)^c \Rightarrow \Omega_{i_1}\cap\Omega_{i_2}\seq\emptyset.
\end{align*}

(b) We prove this part for the case $n_\Omega = 2^n-1$ for which we use induction. The argument certainly holds for $n=2$. We now assume the argument holds for $n-1$, i.e.,
\[\bigcup_{i=1}^{2^{n-1}-1}\Omega_i =\bigcup_{j=1}^{n-1}\mathcal{S}_j.
\]
We add $\mathcal{S}_n$ to the collection and perform a DSD to obtain $\Omega'$ shapelets. Based on (\ref{eq10}) by using an appropriate indexing we can formulate the $\Omega'$ shapelets for the worst case $n_{\Omega'}=2^n-1$, as
\begin{equation*}
\Omega_i' = \left \{
\begin{array}{lr}
\Omega_i\cap\mathcal{S}_n& i =1,2,\cdots,2^{n-1}-1\\
\bigcap_{j=1}^{n-1}\mbox{cl}(\mathcal{S}_i^c)\cap \mathcal{S}_n & i=2^{n-1}\\
\Omega_{i-2^{n-1}}\cap\mbox{cl}(\mathcal{S}_n^c) & i =2^{n-1}+1,\cdots,2^n-1
\end{array}.
\right .
\end{equation*}
Therefore
\begin{align*}
\bigcup_{i=1}^{2^n-1} \Omega_i' &=
 \Big(\bigcup_{i=1}^{2^{n-1}-1}\big( \Omega_i\cap\mathcal{S}_n \big) \Big)\cup \Big(\bigcup_{i=1}^{2^{n-1}-1}\big( \Omega_i\cap\mbox{cl}(\mathcal{S}_n^c)\big) \Big)\cup \Big( \bigcap_{j=1}^{n-1}\mbox{cl}(\mathcal{S}_i^c)\cap \mathcal{S}_n \Big)\\&=
 \bigg(\bigcup_{i=1}^{2^{n-1}-1}\Big( \Omega_i\cap\big(\mathcal{S}_n\cup\mbox{cl}(\mathcal{S}_n^c)\big) \Big) \bigg)\cup \Big( \bigcap_{j=1}^{n-1}\mbox{cl}(\mathcal{S}_i^c)\cap \mathcal{S}_n \Big)\\&=
 \Big( \bigcup_{i=1}^{2^{n-1}-1}(\Omega_i\cap D)\Big) \cup \Big( \bigcap_{j=1}^{n-1}\mbox{cl}(\mathcal{S}_i^c)\cap \mathcal{S}_n \Big)\\& =
 \Big( \bigcup_{i=1}^{2^{n-1}-1}\Omega_i\Big) \cup \Big( \bigcap_{j=1}^{n-1}\mbox{cl}(\mathcal{S}_i^c)\cap \mathcal{S}_n \Big)\\& =
 \Big( \bigcup_{j=1}^{n-1}\mathcal{S}_j\Big) \cup \bigg( \mbox{cl}\Big(\big(\bigcup_{j=1}^{n-1}\mathcal{S}_i\big)^c\Big)\cap \mathcal{S}_n \bigg)\\&=  D \cap (\bigcup_{j=1}^{n}\mathcal{S}_j)\\&= \bigcup_{j=1}^{n}\mathcal{S}_j
\end{align*}
Which completes the inductive proof. When $n_\Omega < 2^n-1$ the argument still holds as neglecting the empty shapelets does not affect the proof.

(c) We prove the claim for an arbitrary $j=j_0$. We exclude $\mathcal{S}_{j_0}$ from the collection and perform a DSD on $\{\mathcal{S}_1, \mathcal{S}_2, \cdots , \mathcal{S}_n\}\setminus \{\mathcal{S}_{j_0}\}$ to obtain $\Omega'$ shapelets. We again consider the worst case that $n_{\Omega'}=2^{n-1}-1$. Using the result from part (b) we certainly have
\[\bigcup_{i=1}^{2^{n-1}-1}\Omega_i' = \bigcup_{\begin{subarray}{l}
        1\leq j\leq n\\ j\neq j_0
      \end{subarray}}\mathcal{S}_j.
\]
Consider momentarily that $j_0=n$. For $i\in \mathcal{I}_n$, the constructor vectors associated with the $\Omega$ shapelets are either in the form of $[\boldsymbol{0}_{1\times (n-1)}, 1]^T$ or $[{\boldsymbol{J}'}^T, 1]^T$, where $\boldsymbol{J}'$ is a constructor vector associated with the $\Omega'$ shapelets. Based on this argument and the definition of a shapelet in (\ref{eq10}), we have
\begin{align*}
\bigcup_{i\in \mathcal{I}_{j_0}} \Omega_i &= \Big(\bigcup_{i=1}^{2^{n-1}-1}\Omega_i'\cap\mathcal{S}_{j_0}\Big)\cup \Big(\mathcal{S}_{j_0}\cap\big(\bigcap_{\begin{subarray}{l}
        1\leq j\leq n\\ j\neq j_0
      \end{subarray}}\mbox{cl}(\mathcal{S}_j^c)\big) \Big)\\&=\Big(\mathcal{S}_{j_0}\cap \big(\bigcup_{i=1}^{2^{n-1}-1}\Omega_i'\big)\Big)\cup \bigg(\mathcal{S}_{j_0}\cap\mbox{cl}\Big(\big(\bigcup_{\begin{subarray}{l}
        1\leq j\leq n\\ j\neq j_0
      \end{subarray}}\mathcal{S}_j\big)^c\Big) \bigg)\\&=\Big(\mathcal{S}_{j_0}\cap \big(\bigcup_{\begin{subarray}{l}
        1\leq j\leq n\\ j\neq j_0
      \end{subarray}}\mathcal{S}_j\big)\Big)\cup \bigg(\mathcal{S}_{j_0}\cap\mbox{cl}\Big(\big(\bigcup_{\begin{subarray}{l}
        1\leq j\leq n\\ j\neq j_0
      \end{subarray}}\mathcal{S}_j\big)^c \Big)\bigg)\\&=\mathcal{S}_{j_0}\cap D\\&= \mathcal{S}_{j_0},
\end{align*}
which completes the proof. \qed

\subsection{Proof of Theorem \ref{th2x}}
Let $\Sigma = (\bigcup_{j\in \Ip}\mathcal{S}_j) \setminus (\bigcup_{j\in \In}\mathcal{S}_j)$. We start by proving part (a).

Consider $j_0\in \Ip$. To avoid long expressions we use the notations
\[\mathcal{S}_+ = \bigcup_{\begin{subarray}{l}
        j\in \Ip\\ j\neq j_0
      \end{subarray}}\mathcal{S}_j \quad\mbox{and} \quad \mathcal{S}_- = \bigcup_{j\in \In}\mathcal{S}_j.
\]
The non-redundancy of $\Sigma$ requires that excluding $\mathcal{S}_{j_0}$ from the composition should cause a change in the measure of $\Sigma$. In other words, since in general
\[(\mathcal{S}_+\setminus \mathcal{S}_- )   \subseteq    \big ( (\mathcal{S}_{j_0}\cup \mathcal{S}_+)\setminus \mathcal{S}_-\big),
\]
we must have $\big ( (\mathcal{S}_{j_0}\cup \mathcal{S}_+)\setminus \mathcal{S}_-\big) \setminus  (\mathcal{S}_+\setminus \mathcal{S}_- )\nseq \emptyset$, viz.,
\begin{align*}
\exists \tilde\Omega \quad s.t. \quad \mbox{int}(\tilde\Omega)\neq \emptyset \quad \mbox{and}\quad \tilde\Omega &=\big ( (\mathcal{S}_{j_0}\cup \mathcal{S}_+)\setminus \mathcal{S}_-\big) \setminus  (\mathcal{S}_+\setminus \mathcal{S}_- )\\& =  \big ( (\mathcal{S}_{j_0}\setminus  \mathcal{S}_-)\cup  (\mathcal{S}_+\setminus \mathcal{S}_- )\big) \setminus  (\mathcal{S}_+\setminus \mathcal{S}_- )\\ &=  (\mathcal{S}_{j_0}\setminus  \mathcal{S}_-) \setminus  (\mathcal{S}_+\setminus \mathcal{S}_- )\\& = (\mathcal{S}_{j_0}\cap  \mathcal{S}_-^c) \cap (\mathcal{S}_+^c\cup \mathcal{S}_- )\\& = (\mathcal{S}_{j_0}\cap  \mathcal{S}_-^c\cap \mathcal{S}_+^c) \cup (\mathcal{S}_{j_0}\cap  \mathcal{S}_-^c \cap \mathcal{S}_- )\\& = \mathcal{S}_{j_0}\cap  \mathcal{S}_-^c\cap \mathcal{S}_+^c.
\end{align*}
Another way of interpreting this result is there exists a region $\tilde \Omega \subset \mathcal{S}_{j_0}$ that
\begin{equation}\label{et2x}
\mbox{int}(\tilde\Omega)\neq \emptyset \quad \mbox{and}\quad \forall j\in (\Ip\setminus \{j_0\})\cup\In: \quad \tilde\Omega\cap \mathcal{S}_j=\emptyset.
\end{equation}
Based on Proposition \ref{pr2}, we know the DSD process performs a partitioning over the shapes (neglecting the overlaps on the boundaries which are null sets). In other words, the outcome of set operations $\cup$, $\cap$ and $\setminus$ amongst the $\mathcal{S}_j$, $j\in \Ip\cup\In$, can always be written as the union of some shapelets, up to a null set. Therefore (\ref{et2x}) implies that there exists a shapelet $\Omega\subset\mathcal{S}_{j_0}$ that satisfies
\begin{equation}\label{et2xx}
\forall j\in (\Ip\cup \In) \; \mbox{and}\; j\neq j_0 : \quad \Omega \cap \mathcal{S}_{j}\seq \emptyset.
\end{equation}
It is straightforward to see that if
\[\{\Omega_i,\boldsymbol{J}^{(i)}\}_{i=1}^{n_\Omega} = \mbox{DSD}\Big( \{\mathcal{S}_j\}_{j\in \Ip \cup \In}\Big),
\]
the only shapelet $\Omega\subset\mathcal{S}_{j_0}$ that satisfies (\ref{et2xx}) is the one with a constructor vector that is zeros everywhere except the $j_0$-th element.

To prove part (b), we take a similar strategy. Consider this time $j_0\in \In$. With a slight reuse of notation, this time we define
\[\mathcal{S}_+ = \bigcup_{j\in \Ip}\mathcal{S}_j \quad \mbox{and} \quad \mathcal{S}_- = \bigcup_{\begin{subarray}{l}
        j\in \In\\ j\neq j_0
      \end{subarray}}\mathcal{S}_j.
\]
Excluding $\mathcal{S}_{j_0}$ from the composition should cause a change in the measure of $\Sigma$. Since in general
\[ \big ( \mathcal{S}_+  \setminus  (\mathcal{S}_-\cup \mathcal{S}_{j_0}) \big)    \subseteq   (\mathcal{S}_+\setminus \mathcal{S}_- ) ,
\]
the non-redundancy of $\Sigma$ requires
\begin{align*}
\exists \tilde\Omega \quad s.t. \quad \mbox{int}(\tilde\Omega)\neq \emptyset \quad \mbox{and}\quad     \tilde\Omega &=  (\mathcal{S}_+\setminus \mathcal{S}_- ) \setminus \big ( \mathcal{S}_{+}\setminus (\mathcal{S}_-\cup \mathcal{S}_{j_0})   \big) \\& =  (\mathcal{S}_+\setminus \mathcal{S}_- ) \cap \big ( \mathcal{S}_{+}\setminus (\mathcal{S}_-\cup \mathcal{S}_{j_0})   \big)^c  \\&= (\mathcal{S}_+\setminus \mathcal{S}_- ) \cap  ( \mathcal{S}_{+}^c\cup \mathcal{S}_-\cup \mathcal{S}_{j_0}) \\ & = (\mathcal{S}_+\setminus \mathcal{S}_- ) \cap  \big(( \mathcal{S}_{+}\setminus \mathcal{S}_-)^c\cup \mathcal{S}_{j_0}\big)\\& = \mathcal{S}_{j_0}\cap \mathcal{S}_+\cap \mathcal{S}_-^c
\end{align*}
This result basically means that there exists a region $\tilde \Omega \subset (\mathcal{S}_{j_0}\cap\mathcal{S}_+)$ such that
\[\mbox{int}(\tilde\Omega)\neq \emptyset \quad \mbox{and}\quad \forall j\in \In\setminus \{j_0\}: \quad \tilde\Omega \cap \mathcal{S}_j=\emptyset.
\]
A similar argument as part (a) justifies the existence of some shapelet $\Omega \subset \mathcal{S}_{j_0} \cap\mathcal{S}_+$ such that
\[\forall j\in \In \; \mbox{and}\; j\neq j_0 : \quad \Omega \cap \mathcal{S}_{j}\seq \emptyset.\qed
\]

\subsection{Proof of Proposition \ref{th2xx}}
We consider a DSD as
\begin{equation*}
\{\Omega_i,\boldsymbol{J}^{(i)}\}_{i=1}^{n_\Omega} = \mbox{DSD}\Big( \{\mathcal{S}_j\}_{j\in \Ip \cup \In}\Big).
\end{equation*}
The proof to each part is presented as follows:

(a) From Theorem \ref{th2x}(b) we have
\[\forall j\in \In, \; \exists i \in \mathcal{I}_j, \quad s.t. \quad \Omega_i \subset \mathcal{S}_j\cap (\bigcup_{\hat{j}\in \Ip}\mathcal{S}_{\hat{j}})
\]
and for $j'\in \In$, $j'\neq j:$
\[\Omega_i \cap \mathcal{S}_{j'} \seq \emptyset.
\]
As a result, the constraint (\ref{eq20}) corresponding to the shapelet $\Omega_i$ simplifies to
\[\alpha_j \leq -|\Kp^{(i)}|.
\]
Since $\Omega_i\subset (\bigcup_{\hat{j}\in \Ip}\mathcal{S}_{\hat{j}})$, we have $|\Kp^{(i)}|\geq 1$. In other words, any solution of the convex program (\ref{eq21}) needs to satisfy
\[\alpha_j^\dagger \leq -|\Kp^{(i)}|\leq -1.
\]

(b) For each $j\in \Ip$, a result of Theorem \ref{th2x}(a) is the existence of a unique shapelet $\Omega_i\subset \mathcal{S}_j$ that does not overlap with any other shapes $\mathcal{S}_{j'}$, $j'\in ({\Ip\setminus\{j\})\cup \In}$. Since for all $j\in \Ip$ we set $\alpha_j^\dagger$ to one, we must have $\beta_i^\dagger=1$.

(c) For a given $j\in \In$, by construction, the set of constraints (\ref{eq19}) assure that for every $i\in \mathcal{I}_j$, $\beta_i^\dagger\leq 0$ (this is another way of saying $\mathcal{L}_{\balpha^\dagger}(x)\leq 0$ over $\mbox{int}(\Omega_i)$). We only need to show that for a given $j\in \In$, it is not possible to have $\beta_i^\dagger<0$ for every $i\in \mathcal{I}_j$.

Suppose this is the case and for a $j\in \In$, $\forall i\in\mathcal{I}_j$, $\beta_i^\dagger<0$. This means none of the constraints in (\ref{eq21}), that $\alpha_j$ contributes in, are active (attain an equality). But if this is the case, we can increase $\alpha_j$ away from $\alpha_j^\dagger$ until one of the inequalities becomes active. In other words, concerning the linear program (\ref{eq21}), we can find a point that produces a larger cost than $\balpha_{\In}^\dagger$, which contradicts the theorem's supposition.

(d) To facilitate the proof, we prescribe the index sets to be
\[\Ip = \{1,2,\cdots,n_\oplus\}\quad \mbox{and}\quad \In = \{n_\oplus+1,\cdots,n_\oplus+n_\ominus\},
\]
where $n_\oplus,n_\ominus\in\mathbb{N}$. We form the corresponding bearing matrix $\boldsymbol{B}^\mathpzc{R}\in \{0,1\}^{n_\Omega\times(n_\oplus+n_\ominus)}$, through which $\Beta^\dagger = \boldsymbol{B}^\mathpzc{R}\balpha^\dagger$. To prove the claim, it suffices to show that $\boldsymbol{B}^\mathpzc{R}$ is full column rank.

By applying suitable permutation on the rows of $\boldsymbol{B}^\mathpzc{R}$ we can reshape it as

  \[
\boldsymbol{P}\boldsymbol{B}^\mathpzc{R}=
\begin{array}{c@{}c}
\xleftrightarrow{\makebox[.63cm]{$n_\oplus$}}\xleftrightarrow{\makebox[.43cm]{$n_\ominus$}}\\[-.5em]\left[
  \begin{BMAT}{c}{c:c}
      \begin{BMAT}[4pt]{c:c}{cc}
       \boldsymbol{I} & \;\boldsymbol{0} \\
       \boldsymbol{B_1} & \;\boldsymbol{I}
      \end{BMAT}
      \\
      \begin{BMAT}[4pt]{c}{c}
        \boldsymbol{B}_2
      \end{BMAT}
    \end{BMAT}
\right]
\end{array}.
\]
Here, $\boldsymbol{P}$ is the underlying permutation matrix, $\boldsymbol{I}$ represents the identity matrix and the blocks $\boldsymbol{B}_1$ and $\boldsymbol{B}_2$ are some binary $0-1$ matrices. The structure of the top $n_\oplus\times(n_\oplus+n_\ominus)$ block of $\boldsymbol{P}\boldsymbol{B}^\mathpzc{R}$ is a direct result of Theorem \ref{th2x}(a) that guarantees to have $n_\oplus$ independent constructor vectors that are all zeros, except exactly one unit-valued element that occurs between the indices 1 and $n_\oplus$. The middle block structure $[\boldsymbol{B}_1\; \boldsymbol{I}]$ is thanks to Theorem \ref{th2x}(b) that guarantees to have at least $n_\ominus$ constructor vectors that are all zeros on the index set $\In$, except exactly one active index the location of which differs for every $j\in\In$ (Figure \ref{fig3} and the example at the end of Section \ref{DSDsec} can be helpful here).

Demonstrating the aforementioned structure for $\boldsymbol{P}\boldsymbol{B}^\mathpzc{R}$ proves that it is a full column rank matrix, regardless of what the binary matrices $\boldsymbol{B}_1$ and $\boldsymbol{B}_2$ are.\qed

\subsection{Proof of Theorem \ref{thbasic}}
The claim is a corollary of the following lemma.
\begin{lemma}
Suppose $\balpha^\dagger\in\mathbb{R}^n$ is a maximizer of the linear program
\begin{equation}\label{eqxx2}
       \underset{\balpha}{\max} \;\; \boldsymbol{1}^T\balpha\qquad s.t.\quad \boldsymbol{A} \balpha \preceq \boldsymbol{b}
\end{equation}
and $\Gamma = \{i: \boldsymbol{A}_{i,:}\balpha^\dagger = \boldsymbol{b}_i \}$.\\
(a) There exists a vector $\boldsymbol{w}\succeq \boldsymbol{0}$ such that
\begin{equation}\label{eqxx4}
(\boldsymbol{A}_{\Gamma,:})^T\boldsymbol{w}=\boldsymbol{1}.
\end{equation}
(b) If $\rank(\boldsymbol{A}_{\Gamma,:}) = |\Gamma|=n$ and the essentially non-negative solution of (\ref{eqxx4}) is strictly positive, then $\balpha^\dagger$ is the unique maximizer of (\ref{eqxx2}).\\
(c) If $|\Gamma|=n$ and $\balpha^\dagger$ is the unique maximizer of (\ref{eqxx2}), then $\rank(\boldsymbol{A}_{\Gamma,:}) = n$ and the solution of (\ref{eqxx4}) is strictly positive.
\end{lemma}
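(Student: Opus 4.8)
The plan is to read this as the standard first-order optimality characterization of a linear program, with the uniqueness parts (b) and (c) coming from strict complementary slackness combined with the rank hypothesis. Throughout I write $\boldsymbol{A}_\Gamma = \boldsymbol{A}_{\Gamma,:}$ and $\boldsymbol{b}_\Gamma$ for the active rows, so that by definition of $\Gamma$ we have $\boldsymbol{A}_\Gamma\balpha^\dagger = \boldsymbol{b}_\Gamma$ while the remaining constraints are strictly slack, $\boldsymbol{A}_{\Gamma^c,:}\balpha^\dagger \prec \boldsymbol{b}_{\Gamma^c}$. For part (a), I would invoke Farkas' lemma in the dichotomy form: either there is $\boldsymbol{w}\succeq\boldsymbol{0}$ with $\boldsymbol{A}_\Gamma^T\boldsymbol{w}=\boldsymbol{1}$, or there is a direction $\boldsymbol{d}$ with $\boldsymbol{A}_\Gamma\boldsymbol{d}\preceq\boldsymbol{0}$ and $\boldsymbol{1}^T\boldsymbol{d}>0$, and never both. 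Suppose the second alternative held. Then $\balpha^\dagger+t\boldsymbol{d}$ keeps the active constraints feasible for every $t>0$ since $\boldsymbol{A}_\Gamma(\balpha^\dagger+t\boldsymbol{d})=\boldsymbol{b}_\Gamma+t\boldsymbol{A}_\Gamma\boldsymbol{d}\preceq\boldsymbol{b}_\Gamma$, and, by the strict slack of the inactive constraints, it stays feasible for all sufficiently small $t>0$; but then $\boldsymbol{1}^T(\balpha^\dagger+t\boldsymbol{d})=\boldsymbol{1}^T\balpha^\dagger+t\,\boldsymbol{1}^T\boldsymbol{d}$ strictly exceeds the optimal value, contradicting optimality of $\balpha^\dagger$. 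Hence the first alternative holds and the desired $\boldsymbol{w}\succeq\boldsymbol{0}$ exists.

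For part (b), the hypotheses make $\boldsymbol{A}_\Gamma$ a square $n\times n$ invertible matrix, so the $\boldsymbol{w}$ from part (a) is unique, and by assumption $\boldsymbol{w}\succ\boldsymbol{0}$. For any feasible maximizer $\balpha'$ I would substitute $\boldsymbol{1}=\boldsymbol{A}_\Gamma^T\boldsymbol{w}$ to get $\boldsymbol{1}^T\balpha'=\boldsymbol{w}^T\boldsymbol{A}_\Gamma\balpha'\le\boldsymbol{w}^T\boldsymbol{b}_\Gamma=\boldsymbol{w}^T\boldsymbol{A}_\Gamma\balpha^\dagger=\boldsymbol{1}^T\balpha^\dagger$, where the inequality uses $\boldsymbol{w}\succeq\boldsymbol{0}$ and $\boldsymbol{A}_\Gamma\balpha'\preceq\boldsymbol{b}_\Gamma$. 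Since $\balpha'$ is also optimal, equality holds, so $\boldsymbol{w}^T(\boldsymbol{b}_\Gamma-\boldsymbol{A}_\Gamma\balpha')=0$; as every entry of $\boldsymbol{w}$ is strictly positive and $\boldsymbol{b}_\Gamma-\boldsymbol{A}_\Gamma\balpha'\succeq\boldsymbol{0}$, this forces $\boldsymbol{A}_\Gamma\balpha'=\boldsymbol{b}_\Gamma$ componentwise, and invertibility of $\boldsymbol{A}_\Gamma$ gives $\balpha'=\balpha^\dagger$, i.e. uniqueness.

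For part (c), I would produce feasible objective-preserving perturbations to contradict uniqueness. If $\rank(\boldsymbol{A}_\Gamma)<n=|\Gamma|$, choose $\boldsymbol{0}\neq\boldsymbol{d}\in\mathrm{Null}(\boldsymbol{A}_\Gamma)$; then $\balpha^\dagger\pm t\boldsymbol{d}$ keeps the active constraints tight and (for small $t$) the inactive ones slack, while $\boldsymbol{1}^T\boldsymbol{d}=\boldsymbol{w}^T\boldsymbol{A}_\Gamma\boldsymbol{d}=0$ leaves the objective unchanged, yielding a second maximizer and a contradiction; hence $\boldsymbol{A}_\Gamma$ is invertible and $\boldsymbol{w}$ is unique. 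For strict positivity, suppose $\boldsymbol{w}_k=0$ for some $k$ and set $\boldsymbol{d}=-\boldsymbol{A}_\Gamma^{-1}\boldsymbol{e}_k$, so that $\boldsymbol{A}_\Gamma\boldsymbol{d}=-\boldsymbol{e}_k$. For small $t>0$ the point $\balpha^\dagger+t\boldsymbol{d}$ relaxes exactly the $k$-th active constraint ($\boldsymbol{A}_{\Gamma,:}$ row $k$ drops to $\boldsymbol{b}_{\Gamma(k)}-t$) while keeping the other active rows at equality and the inactive rows slack, hence is feasible; yet $\boldsymbol{1}^T(\balpha^\dagger+t\boldsymbol{d})=\boldsymbol{1}^T\balpha^\dagger-t\,\boldsymbol{w}_k=\boldsymbol{1}^T\balpha^\dagger$ keeps it optimal and distinct from $\balpha^\dagger$, again contradicting uniqueness. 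Therefore $\boldsymbol{w}\succ\boldsymbol{0}$.

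The main obstacle I anticipate is bookkeeping rather than conceptual: making the three perturbation arguments airtight by verifying that the finitely many strictly slack constraints $\boldsymbol{A}_{\Gamma^c,:}\balpha^\dagger\prec\boldsymbol{b}_{\Gamma^c}$ survive for all sufficiently small $|t|$ (a uniform slack estimate over a finite index set), and by fixing the orientation of the Farkas alternative in part (a) so that the improving direction $\boldsymbol{d}$ is genuinely feasible along the active faces. Once these are pinned down, the remaining content is precisely strict complementary slackness together with the invertibility of $\boldsymbol{A}_\Gamma$, and Theorem~\ref{thbasic} follows by applying the lemma with $\boldsymbol{A}=\boldsymbol{B}^{(2,2)}$, $\boldsymbol{b}=-\boldsymbol{B}^{(2,1)}\boldsymbol{1}$, $n=|\In|$, and $\Gamma=\Gamma_0^\mathpzc{R}$.
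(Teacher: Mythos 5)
Your proposal is correct, and for part (c) it takes a genuinely different route from the paper. For part (a) the two arguments are essentially equivalent: you invoke Farkas' lemma and rule out the improving-direction alternative by a feasibility perturbation, while the paper passes to the dual LP and reads off $\boldsymbol{w}=\boldsymbol{z}^\dagger_\Gamma$ from complementary slackness; these are the same fact in two dialects. For part (b) the underlying mechanism is again the same (strictly positive multipliers force every optimal point to saturate all constraints in $\Gamma$, and invertibility of $\boldsymbol{A}_{\Gamma,:}$ then pins the point down), though your weighted-inequality chain $\boldsymbol{1}^T\balpha'=\boldsymbol{w}^T\boldsymbol{A}_{\Gamma,:}\balpha'\leq\boldsymbol{w}^T\boldsymbol{b}_\Gamma$ is arguably cleaner than the paper's device of averaging two putative maximizers. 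The real divergence is part (c): the paper imports a perturbation characterization of LP uniqueness due to Mangasarian (its Theorem \ref{thwq}) and derives contradictions by manipulating dual solutions of objective-perturbed programs, whereas you directly construct a second optimal point — a null-space direction of $\boldsymbol{A}_{\Gamma,:}$ when the rank is deficient, and $\boldsymbol{d}=-\boldsymbol{A}_{\Gamma,:}^{-1}\boldsymbol{e}_k$ when some $\boldsymbol{w}_k=0$, using $\boldsymbol{1}^T\boldsymbol{d}=\boldsymbol{w}^T\boldsymbol{A}_{\Gamma,:}\boldsymbol{d}$ to see the objective is preserved in both cases. Your construction is more elementary and self-contained (no external uniqueness theorem needed), at the cost of the small bookkeeping you already flagged: a uniform slack margin over the finitely many inactive constraints, which is immediate. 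The paper's route, by contrast, generalizes more readily to settings where $|\Gamma|>n$ and one cannot simply invert $\boldsymbol{A}_{\Gamma,:}$. Your closing remark on how the lemma specializes to Theorem \ref{thbasic} (with $\boldsymbol{A}=\boldsymbol{B}^{(2,2)}$, $\boldsymbol{b}=-\boldsymbol{B}^{(2,1)}\boldsymbol{1}$, $\Gamma=\Gamma_0^\mathpzc{R}$) matches the paper's usage.
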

\begin{proof}
Parts of the proof follow similar lines of argument as \cite{mangasarian1979uniqueness}, which are appropriately modified and included here to keep the paper self-contained.

The dual linear program associated with (\ref{eqxx2}) is
\begin{equation*}
\left\{
\begin{array}{lc}
       \underset{\boldsymbol{z}}{\min} & \boldsymbol{b}^T\boldsymbol{z} \\
       s.t.& \boldsymbol{A}^T \boldsymbol{z} =\boldsymbol{1}\\& \boldsymbol{z}\succeq \boldsymbol{0}
     \end{array}.
   \right.
\end{equation*}
Considering $\boldsymbol{z}^\dagger$ to be a minimizer of the dual problem, the optimality of $\balpha^\dagger$ requires $\boldsymbol{z}^\dagger_{\Gamma^c}=\boldsymbol{0}$, and subsequently $(\boldsymbol{A}_{\Gamma,:})^T\boldsymbol{z}_\Gamma^\dagger =\boldsymbol{1}$. To establish part (a), it suffices to set $\boldsymbol{w}$ to $\boldsymbol{z}_\Gamma^\dagger$.

To prove the uniqueness of $\balpha^\dagger$, suppose there exists another maximizer $\hat\balpha\neq \balpha^\dagger$. Based on the linearity of the problem, any convex combination of the maximizers, such as $(\balpha^\dagger + \hat\balpha)/2$, is also a maximizer. More specifically, $\boldsymbol{1}^T \balpha^\dagger = \boldsymbol{1}^T (\balpha^\dagger + \hat\balpha)/2$ or
\begin{equation}
\boldsymbol{1}^T(\balpha^\dagger - \hat\balpha) = \boldsymbol{0}.
\end{equation}
The feasibility of the maximizer also requires
\begin{equation*}
\frac{1}{2}\boldsymbol{A}_{\Gamma,:}\balpha^\dagger + \frac{1}{2}\boldsymbol{A}_{\Gamma,:} \hat\balpha\preceq \boldsymbol{b}_{\Gamma} = \boldsymbol{A}_{\Gamma,:}\balpha^\dagger,
\end{equation*}
or equivalently
\begin{equation}
\boldsymbol{A}_{\Gamma,:}(\balpha^\dagger - \hat\balpha)\succeq \boldsymbol{0}.
\end{equation}
Suppose there exists $j\in \Gamma$ such that
\[\boldsymbol{A}_{j,:}(\balpha^\dagger - \hat\balpha)>0.
\]
Based on the lemma's assumptions we have $\boldsymbol{w}=\boldsymbol{z}^\dagger_\Gamma\succ \boldsymbol{0}$ and subsequently
\begin{align*}
{0}&< {\boldsymbol{z}^\dagger_\Gamma}^T \boldsymbol{A}_{\Gamma,:}(\balpha^\dagger - \hat\balpha)\\& = {\boldsymbol{z}^\dagger}^T \boldsymbol{A}(\balpha^\dagger - \hat\balpha)\\& = \boldsymbol{1}^T(\balpha^\dagger - \hat\balpha)\\ &={0},
\end{align*}
which is not possible. As a result, $\boldsymbol{A}_{\Gamma,:}(\balpha^\dagger - \hat\balpha)=\boldsymbol{0}$, that given the full rank property of $\boldsymbol{A}_{\Gamma,:}$, is only possible when $\hat\balpha = \balpha^\dagger$. This completes the proof for part (b).

To prove the necessary condition (c), we will make use of the following theorem which is a variant of Theorem 1 proved in \cite{mangasarian1979uniqueness}:
\begin{theorem}\label{thwq}
A solution $\balpha^\dagger$ of the linear program (\ref{eqxx2}) is unique, if and only if for any $\bdelta\in \mathbb{R}^n$ there exists a real positive number $\epsilon$ such that $\balpha^\dagger$ remains a solution of the perturbed linear program
\begin{equation}\label{wq1}
\underset{\balpha}{\max} \;\; (\boldsymbol{1}+\epsilon \bdelta)^T\balpha\qquad s.t.\quad \boldsymbol{A} \balpha \preceq \boldsymbol{b}.
\end{equation}
\end{theorem}
Suppose $\boldsymbol{y}^\dagger$ is a solution to the dual program of (\ref{wq1}), which is cast as
\begin{equation*}
\left\{
\begin{array}{lc}
       \underset{\boldsymbol{y}}{\min} & \boldsymbol{b}^T\boldsymbol{y} \\
       s.t.& \boldsymbol{A}^T \boldsymbol{y} =\boldsymbol{1}+\epsilon \bdelta\\& \boldsymbol{y}\succeq \boldsymbol{0}
     \end{array}.
   \right.
\end{equation*}
We first assume $\boldsymbol{A}_{\Gamma,:}$ is rank deficient and the solution to (\ref{eqxx2}) is unique, and exhibit a contradiction. Based on this assumption $\rank(\boldsymbol{A}_{\Gamma,:})<n$ and we can find a vector $\bdelta_0\neq \boldsymbol{0}$ such that
\begin{equation*}
\boldsymbol{A}_{\Gamma,:}\bdelta_0 = \boldsymbol{0}.
\end{equation*}
On the other hand, for any $(\epsilon,\bdelta)$ pair used in (\ref{wq1}) we have
\begin{equation}\label{wq2}
\epsilon \bdelta = (\boldsymbol{A}_{\Gamma,:})^T(\boldsymbol{y}_\Gamma^\dagger - \boldsymbol{z}_\Gamma^\dagger),
\end{equation}
which holds in view of $\boldsymbol{y}^\dagger_{\Gamma^c}=\boldsymbol{0}$ and
\begin{align*}\nonumber
(\boldsymbol{A}_{\Gamma,:})^T\boldsymbol{y}_\Gamma^\dagger &= \boldsymbol{A}^T\boldsymbol{y}^\dagger \\ \nonumber &= \boldsymbol{1}+\epsilon \bdelta \\ &= (\boldsymbol{A}_{\Gamma,:})^T\boldsymbol{z}_\Gamma^\dagger + \epsilon \bdelta.
\end{align*}
If we set $\bdelta = \bdelta_0$, the uniqueness of the solution to (\ref{eqxx2}) warrants the existence of $\epsilon_0>0$ for which (\ref{wq2}) holds. A pre-multiplication of both sides by $\bdelta_0^T$ yields
\begin{equation*}
0<\epsilon_0 \bdelta_0^T \bdelta_0 = \bdelta_0^T(\boldsymbol{A}_{\Gamma,:})^T(\boldsymbol{y}_\Gamma^\dagger - \boldsymbol{z}_\Gamma^\dagger)=0,
\end{equation*}
which is a contradiction.

As the other possible case, assume that $\rank(\boldsymbol{A}_{\Gamma,:})=n$ but the solution of (\ref{eqxx4}) is not strictly positive. In this case we have $\Gamma = \Gamma' \cup \Gamma''$, where
\begin{equation*}
\Gamma' = \{i: \boldsymbol{A}_{i,:}\balpha^\dagger = \boldsymbol{b}_i , \boldsymbol{z}_i^\dagger > 0\}, \quad \mbox{and}\quad \Gamma'' = \{i: \boldsymbol{A}_{i,:}\balpha^\dagger = \boldsymbol{b}_i , \boldsymbol{z}_i^\dagger = 0\}.
\end{equation*}
Consider a vector $\boldsymbol{\xi}\in\mathbb{R}^n$ such that
\begin{equation*}
\boldsymbol{\xi}_{\Gamma'}=\boldsymbol{0},
\qquad \mbox{and}\qquad \boldsymbol{\xi}_{\Gamma''}=-\boldsymbol{1}.
\end{equation*}
Since $\rank(\boldsymbol{A}_{\Gamma,:})=n$, the equation
\begin{equation*}
\boldsymbol{A}_{\Gamma,:}\tilde \bdelta_0 = \boldsymbol{\xi}
\end{equation*}
has a non-zero solution $\tilde \bdelta_0$. Again, appealing to (\ref{wq2}) and taking into account that $\Gamma = \Gamma' \cup \Gamma''$ yields
\begin{align}\nonumber
\epsilon \bdelta &= (\boldsymbol{A}_{\Gamma',:})^T(\boldsymbol{y}_{\Gamma'}^\dagger - \boldsymbol{z}_{\Gamma'}^\dagger) + (\boldsymbol{A}_{\Gamma'',:})^T(\boldsymbol{y}_{\Gamma''}^\dagger - \boldsymbol{z}_{\Gamma''}^\dagger)\\& = (\boldsymbol{A}_{\Gamma',:})^T(\boldsymbol{y}_{\Gamma'}^\dagger - \boldsymbol{z}_{\Gamma'}^\dagger) + (\boldsymbol{A}_{\Gamma'',:})^T\boldsymbol{y}_{\Gamma''}^\dagger. \label{wq3}
\end{align}
If the solution to (\ref{eqxx2}) is unique, using Theorem \ref{thwq} we can set $\bdelta$ to $\tilde\bdelta_0$ and expect to have $\tilde\epsilon_0>0$ for which (\ref{wq3}) holds. Again a pre-multiplication of both sides by $\tilde\bdelta_0^T$ yields a contradiction as
\begin{align*}\nonumber
0<\tilde\epsilon_0 \tilde\bdelta_0^T \tilde\bdelta_0 &= \tilde\bdelta_0^T(\boldsymbol{A}_{\Gamma',:})^T(\boldsymbol{y}_{\Gamma'}^\dagger - \boldsymbol{z}_{\Gamma'}^\dagger) + \tilde\bdelta_0^T(\boldsymbol{A}_{\Gamma'',:})^T\boldsymbol{y}_{\Gamma''}^\dagger \\ \nonumber & = 0 - \boldsymbol{1}^T \boldsymbol{y}_{\Gamma''}^\dagger\\& \leq 0.
\end{align*}
In other words, if $\rank(\boldsymbol{A}_{\Gamma,:}) < n$ or the solution of (\ref{eqxx4}) is not strictly positive, the solution of (\ref{eqxx2})  cannot be unique.
\end{proof}

\subsection{Proof of Proposition \ref{te1}} We first show the following lemma holds.
\begin{lemma}\label{leme1}
Under the conditions stated in Proposition \ref{te1}, for any $\balpha\in\mathbb{R}^{n_s}$, if $\|\balpha\|_1\leq \tau$, then $\boldsymbol{c}^T\balpha\leq \tau$.
\end{lemma}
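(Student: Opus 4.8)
The plan is to observe that the vector $\boldsymbol{c}$ constructed in Proposition \ref{te1} satisfies $\|\boldsymbol{c}\|_\infty \leq 1$ globally, and then to invoke the elementary duality between the $\ell_1$ and $\ell_\infty$ norms (H\"older's inequality). The statement is really just a convexity/duality fact, so the work is in checking the $\ell_\infty$ bound on $\boldsymbol{c}$ rather than in any substantive estimate.

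First I would verify that $\|\boldsymbol{c}\|_\infty \leq 1$. By construction, for every $j\in\Gamma$ we have $c_j = \sign(\alpha_j^*)\in\{-1,+1\}$, so $|c_j| = 1$; and for every $j\in\Gamma^c$ the hypothesis $\|\boldsymbol{c}_{\Gamma^c}\|_\infty \leq 1$ gives $|c_j|\leq 1$. Partitioning the index set as $\{1,\dots,n_s\} = \Gamma\cup\Gamma^c$ therefore yields $\max_{j} |c_j| \leq 1$, i.e. $\|\boldsymbol{c}\|_\infty \leq 1$.

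Next I would apply H\"older's inequality. For any $\balpha\in\mathbb{R}^{n_s}$,
\[
\boldsymbol{c}^T\balpha \;\leq\; |\boldsymbol{c}^T\balpha| \;\leq\; \sum_{j=1}^{n_s}|c_j|\,|\alpha_j| \;\leq\; \|\boldsymbol{c}\|_\infty\,\|\balpha\|_1 \;\leq\; \|\balpha\|_1.
\]
Combining this with the hypothesis $\|\balpha\|_1\leq\tau$ immediately gives $\boldsymbol{c}^T\balpha\leq\tau$, which is the assertion.

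I do not anticipate any real obstacle; the only point requiring care is that the bound on $\boldsymbol{c}$ must be a full $\ell_\infty$ bound (not merely a bound on the off-support entries), and this is precisely what the construction in Proposition \ref{te1} supplies. This lemma is the building block showing that the entire $\ell_1$-ball $\{\|\balpha\|_1\leq\tau\}$ lies in the half-space $\{\boldsymbol{c}^T\balpha\leq\tau\}$; the complementary fact, that the target $\balpha^*$ lies on its boundary, follows since $\boldsymbol{c}^T\balpha^* = \sum_{j\in\Gamma}\sign(\alpha_j^*)\,\alpha_j^* = \|\balpha^*\|_1 = \tau$, so the hyperplane $\{\boldsymbol{c}^T\balpha=\tau\}$ supports the $\ell_1$-ball at $\balpha^*$, which is exactly the geometric picture needed for the reduction in Proposition \ref{te1}.
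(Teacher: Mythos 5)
Your proof is correct and rests on the same underlying fact as the paper's: the bound $\boldsymbol{c}^T\balpha\leq\|\boldsymbol{c}\|_\infty\|\balpha\|_1$ with $\|\boldsymbol{c}\|_\infty\leq 1$. The paper phrases this as a contradiction argument, splitting the sum over $\Gamma$ and $\Gamma^c$ and applying H\"older only to the off-support part, but your direct global application of H\"older is the same idea stated more cleanly.
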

\begin{proof}
Suppose the claim is not true and there exists $\hat\balpha$ such that
\begin{align}\label{ee1}
\|\hat\balpha\|_1\leq \tau
\end{align}
and
\begin{align}\label{ee2}
 \boldsymbol{c}^T\balpha> \tau.
\end{align}
Since $\boldsymbol{c}_{\Gamma} \in \{1,-1\}^{|\Gamma|}$ we have
\begin{align*}
\sum_{j\in\Gamma}c_j\hat\alpha_j\leq \sum_{j\in\Gamma}|\hat\alpha_j|,
\end{align*}
using which in (\ref{ee2}) gives
\begin{align}\label{ee4}
\sum_{j\in\Gamma}|\hat\alpha_j|+\sum_{j\in\Gamma^c}c_j\hat\alpha_j>\tau.
\end{align}
Consequently,
\begin{align}
\sum_{j\in\Gamma^c}|\hat\alpha_j|<\sum_{j\in\Gamma^c}c_j\hat\alpha_j\leq |\sum_{j\in\Gamma^c}c_j\hat\alpha_j|\leq \|\boldsymbol{c}_{\Gamma^c}\|_\infty \sum_{j\in\Gamma^c}|\hat\alpha_j|,
\end{align}
where the first inequality is a direct result of combining (\ref{ee4}) and (\ref{ee1}), and the last relation is basically the H\"{o}lder's inequality. Ultimately,
we must have
\[(1-\|\boldsymbol{c}_{\Gamma^c}\|_\infty)\sum_{j\in\Gamma^c}|\hat\alpha_j|<0
\]
which is only possible when $\|\boldsymbol{c}_{\Gamma^c}\|_\infty>1$. This would be a contradiction with the assumptions made about $\boldsymbol{c}$.
\end{proof}

We now proceed by showing that $\balpha^*$ is the unique minimizer of (\ref{ee-2}). Suppose this is not true and there exists $\hat\balpha\neq \balpha^*$ such that $G(\hat\balpha)\leq G(\balpha^*)$ (i.e., $\hat\balpha\in \mathcal{C}$) and $\|\hat\balpha\|\leq \tau$. By Lemma \ref{leme1}, we must have
\begin{align}\label{ee5}
\boldsymbol{c}^T\hat\balpha \leq \tau.
\end{align}
On the other hand, by definition of the tangent cone
\begin{align*}
\exists \epsilon>0 \quad s.t.\quad \balpha^* + \epsilon \boldsymbol{\delta} \in \mathcal{C}.
\end{align*}
Setting $\tilde \balpha = \balpha^* + \epsilon \boldsymbol{\delta}$, we must have $\tilde \balpha \in \mathcal{C}$ and
\begin{align}\label{ee6}
\boldsymbol{c}^T\tilde\balpha = \boldsymbol{c}^T\balpha^* + \epsilon \boldsymbol{c}^T\boldsymbol{\delta}>\tau.
\end{align}
Consider now the line segment connecting $\hat\balpha$ and $\tilde\balpha$ represented by
\begin{align*}
L=\big\{t\hat\balpha + (1-t)\tilde\balpha:t\in[0,1]\big\}.
\end{align*}
By the convexity of $\mathcal{C}$ we have $L\subset\mathcal{C}$. Also, the continuity of the line segment along with (\ref{ee5}) and (\ref{ee6}) require us to have a $\boldsymbol{z}\in L$ such that $\boldsymbol{c}^T\boldsymbol{z}=\tau$. The point $\boldsymbol{z}$ does not need to be the same as $\balpha^*$, because to have that, $\tilde\balpha$ needs to lie on the line passing through $\balpha^*$ and $\hat\balpha$. Since $\mathcal{C}$ has a nonempty interior, we can always choose $\tilde \balpha$ somehow that this is not the case. In other words, $\boldsymbol{z}$ is another minimizer of (\ref{ee-1}) and this is a contradiction with condition (I).\qed

\subsection{Proof of Theorem \ref{th7}}
As discussed at the beginning of Section \ref{sec-conv}, using the variable change $\Beta = \boldsymbol{B}\balpha$ makes the objective function in (\ref{ee-1}) separable and the corresponding convex program is cast as
\begin{equation}\label{ew1}
\min_{\Beta} \;\; \mathcal{G}(\Beta)\quad s.t. \quad \boldsymbol{K}\begin{bmatrix}\Beta\\ \balpha\end{bmatrix} = \begin{bmatrix}\boldsymbol{0}\\ \tau\end{bmatrix},
\end{equation}
where
\begin{align*}
\boldsymbol{K} = \begin{bmatrix} -\boldsymbol{I} & \boldsymbol{B}\\ \boldsymbol{0} & \boldsymbol{c}^T\end{bmatrix}.
\end{align*}
We start the proof by the following lemma, which provides a set of sufficient conditions for optimality and uniqueness, while treating the linear constraint in a rather general way. We will then discuss how these condition translate into the ones stated in the theorem.

\begin{lemma}\label{lm7}
Consider $\balpha^*\in\mathbb{R}^{n_s}$ and $\Beta^* = \boldsymbol{B}\balpha^* \in \mathbb{R}^{n_\Omega}$, which together are feasible for the convex program (\ref{ew1}), and all the entries of $\Beta^*$ lie outside the interval $(0,1)$. If there exists a vector $\Blambda\in \mathbb{R}^{n_\Omega +n_s}$ in the range of $\boldsymbol{K}^T$ such that
\begin{align}\label{ee7}
\left\{\begin{array}{ll}
\Blambda_\ell \in (-q_\ell,p_\ell-q_\ell) & \ell\in \Gamma_0\\
\Blambda_\ell \in (p_\ell-q_\ell,p_\ell) & \ell \in \Gamma_1
\end{array}
\right. and\;
\left\{\begin{array}{ll}
\Blambda_\ell =p_\ell & \ell\in \Gamma_{1^+}\\
\Blambda_\ell =-q_\ell & \ell \in \Gamma_{0^-}\\
\Blambda_\ell = 0 & \ell \in \big\{n_\Omega +1, \cdots ,n_\Omega +n_s\big\}
\end{array}
\right. ,
\end{align}
and for
\begin{equation}\label{ee7.5}\Gamma' = \Gamma_{0^-}\cup \Gamma_{1^+}\cup \big\{n_\Omega +1, \cdots ,n_\Omega +n_s\big\}
\end{equation}
the matrix $\boldsymbol{K}_{:,\Gamma'}$ is full column rank, then $\Beta^*$ is the unique minimizer of (\ref{ew1}) (and accordingly, $\balpha^*$ is the unique minimizer of (\ref{ee-1})).
\end{lemma}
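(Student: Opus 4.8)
The plan is to treat (\ref{ew1}) as a convex program in the joint variable $\boldsymbol{x} = [\Beta^T,\balpha^T]^T \in \mathbb{R}^{n_\Omega+n_s}$ whose objective $\widetilde{\mathcal{G}}(\boldsymbol{x}) \triangleq \mathcal{G}(\Beta)$ depends only on the first $n_\Omega$ coordinates, and to certify optimality through a single subgradient. First I would exploit the separability $\mathcal{G}(\Beta)=\sum_\ell g_\ell(\beta_\ell)$ with $g_\ell(\beta)=p_\ell\max(\beta,0)-q_\ell\min(\beta,1)$, so that the subdifferential factors into the one-dimensional subdifferentials of the piecewise-linear summands: $\partial g_\ell=\{-q_\ell\}$ on $\{\beta_\ell<0\}$, $\{p_\ell\}$ on $\{\beta_\ell>1\}$, $[-q_\ell,p_\ell-q_\ell]$ at $\beta_\ell=0$, and $[p_\ell-q_\ell,p_\ell]$ at $\beta_\ell=1$. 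The hypotheses (\ref{ee7}) state precisely that the first $n_\Omega$ entries of $\Blambda$ form a subgradient of $\mathcal{G}$ at $\Beta^*$ (landing in the open interior of $\partial g_\ell$ on $\Gamma_0\cup\Gamma_1$ and at the unique derivative value on $\Gamma_{0^-}\cup\Gamma_{1^+}$), while the trailing $n_s$ entries vanish; hence $\Blambda\in\partial\widetilde{\mathcal{G}}(\boldsymbol{x}^*)$.

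Optimality is then immediate. Since $\Blambda\in\mathrm{range}(\boldsymbol{K}^T)$, write $\Blambda=\boldsymbol{K}^T\boldsymbol{\nu}$; for any feasible $\hat{\boldsymbol{x}}$ we have $\boldsymbol{K}(\hat{\boldsymbol{x}}-\boldsymbol{x}^*)=\boldsymbol{0}$, so the subgradient inequality yields $\widetilde{\mathcal{G}}(\hat{\boldsymbol{x}})\geq\widetilde{\mathcal{G}}(\boldsymbol{x}^*)+\langle\Blambda,\hat{\boldsymbol{x}}-\boldsymbol{x}^*\rangle=\widetilde{\mathcal{G}}(\boldsymbol{x}^*)+\langle\boldsymbol{\nu},\boldsymbol{K}(\hat{\boldsymbol{x}}-\boldsymbol{x}^*)\rangle=\widetilde{\mathcal{G}}(\boldsymbol{x}^*)$. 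Thus $\boldsymbol{x}^*$ minimizes (\ref{ew1}).

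For uniqueness I would suppose $\hat{\boldsymbol{x}}$ is another minimizer and set $\boldsymbol{\delta}=\hat{\boldsymbol{x}}-\boldsymbol{x}^*\in\mathrm{null}(\boldsymbol{K})$, so that $\langle\Blambda,\boldsymbol{\delta}\rangle=\langle\boldsymbol{\nu},\boldsymbol{K}\boldsymbol{\delta}\rangle=0$ and the chain above collapses to an equality. Because each summand obeys $g_\ell(\hat\beta_\ell)\geq g_\ell(\beta_\ell^*)+\Blambda_\ell(\hat\beta_\ell-\beta_\ell^*)$ and the sum is an equality, every one of these must hold with equality. The crux is the per-coordinate analysis: on $\Gamma_0\cup\Gamma_1$, where $\Blambda_\ell$ lies strictly inside $\partial g_\ell$, the supporting line of slope $\Blambda_\ell$ lies strictly below the genuinely kinked $g_\ell$ everywhere except at $\beta_\ell^*$ (the strict inequalities in (\ref{ee7}) guarantee the kink is nondegenerate, i.e. $p_\ell>0$ on $\Gamma_0$ and $q_\ell>0$ on $\Gamma_1$), which forces $\hat\beta_\ell=\beta_\ell^*$, that is $\delta_\ell=0$. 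On $\Gamma_{0^-}\cup\Gamma_{1^+}$, where $\Blambda_\ell$ equals the slope of the linear piece containing $\beta_\ell^*$, equality merely confines $\hat\beta_\ell$ to that same linear piece and imposes no restriction on $\delta_\ell$.

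Consequently $\boldsymbol{\delta}$ is supported in $\Gamma'=\Gamma_{0^-}\cup\Gamma_{1^+}\cup\{n_\Omega+1,\dots,n_\Omega+n_s\}$, the trailing $\balpha$-block being free since $\widetilde{\mathcal{G}}$ ignores it. Finally $\boldsymbol{K}\boldsymbol{\delta}=\boldsymbol{0}$ reduces to $\boldsymbol{K}_{:,\Gamma'}\,\boldsymbol{\delta}_{\Gamma'}=\boldsymbol{0}$, and the assumed full column rank of $\boldsymbol{K}_{:,\Gamma'}$ gives $\boldsymbol{\delta}_{\Gamma'}=\boldsymbol{0}$, hence $\boldsymbol{\delta}=\boldsymbol{0}$ and $\hat{\boldsymbol{x}}=\boldsymbol{x}^*$. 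Uniqueness of the joint minimizer delivers uniqueness of $\Beta^*$, and since any minimizer $\hat\balpha$ of (\ref{ee-1}) lifts to the feasible pair $(\boldsymbol{B}\hat\balpha,\hat\balpha)$, uniqueness of $\balpha^*$ for (\ref{ee-1}) follows. The step I expect to be the main obstacle is making the per-coordinate equality analysis airtight --- specifically, justifying that strict interiority of $\Blambda_\ell$ in $\partial g_\ell$ excludes every $\hat\beta_\ell\neq\beta_\ell^*$ --- which rests entirely on the piecewise-linear geometry of $g_\ell$ together with the strict inequalities in (\ref{ee7}); the remaining steps are routine duality bookkeeping.
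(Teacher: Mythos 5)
Your proposal is correct and follows essentially the same route as the paper: lift the problem to the joint variable $[\Beta^T,\balpha^T]^T$, use separability to compute the subdifferential, certify optimality via the subgradient $\Blambda\in\mathrm{range}(\boldsymbol{K}^T)$, and invoke the full column rank of $\boldsymbol{K}_{:,\Gamma'}$ to finish. The only difference is cosmetic: for the strictness step the paper constructs an extreme subgradient $\By$ adapted to the sign pattern of the difference vector and extracts a quantitative margin $\varepsilon\|\boldsymbol{h}_{{\Gamma'}^c}\|_1$, whereas you argue per coordinate that strict interiority of $\Blambda_\ell$ in $\partial g_\ell$ forces equality only at $\beta_\ell^*$ --- both rest on exactly the same strict inequalities in (\ref{ee7}).
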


\begin{proof}
We start by defining an objective function $\hat{\mathcal{G}}(.):\mathbb{R}^{n_\Omega+n_s}\to \mathbb{R}$:
\[\hat{\mathcal{G}}\Big(\begin{bmatrix}\Beta\\ \balpha\end{bmatrix}\Big) = \mathcal{G}(\Beta).
\]
Clearly, an equivalent formulation of the convex program (\ref{ew1}) is
\begin{equation}\label{ee8}
\min_{\Beta,\balpha} \;\; \hat{\mathcal{G}}\Big(\begin{bmatrix}\Beta\\ \balpha\end{bmatrix}\Big) \quad s.t. \quad \boldsymbol{K}\begin{bmatrix}\Beta\\ \balpha\end{bmatrix} = \begin{bmatrix}\boldsymbol{0}\\ \tau\end{bmatrix}.
\end{equation}
Using basic subgradient calculus \cite{boyd2004convex}, it can be verified that a vector $\Bz\in\mathbb{R}^{n_\Omega+n_s}$ is a subgradient of $\hat{\mathcal{G}}(.)$ at $[\Beta^T,\balpha^T]^T$ (denoted as $\Bz \in \partial \hat{\mathcal{G}}([\Beta^T,\balpha^T]^T)$) when $\Bz_\ell = 0$ for $\ell \in \big\{n_\Omega +1, \cdots ,n_\Omega +n_s\big\}$ and
\begin{align*}
\left\{\begin{array}{lll}
\Bz_\ell \in [-q_\ell,p_\ell-q_\ell] & \mbox{:}\!\!\!&\Beta_\ell = 0\\
\Bz_\ell \in [p_\ell-q_\ell,p_\ell] & \mbox{:}\!\!\!& \Beta_\ell = 1
\end{array}
\right. and\;
\left\{\begin{array}{llc}
\Bz_\ell =p_\ell & \mbox{:}\!\!\!&\Beta_\ell>1\\
\Bz_\ell =p_\ell - q_\ell & \mbox{:}\!\!\! & 0<\Beta_\ell<1\\
\Bz_\ell =-q_\ell & \mbox{:}\!\!\! &\Beta_\ell<0\\
\end{array}
\right. .
\end{align*}
Since $\Beta^*_\ell \notin (0,1)$ for all $\ell\in\{1,\cdots,n_\Omega\}$, (\ref{ee7}) implies that $\Blambda\in \partial \hat{\mathcal{G}}([{\Beta^*}^T,{\balpha^*}^T]^T)$.

To prove that $[{\Beta^*}^T\!\!,{\balpha^*}^T ]$ is the unique minimizer of (\ref{ee8}), we need to show that any other feasible vector $[\hat\Beta^T\!\!,\hat\balpha^T ] \neq [{\Beta^*}^T\!\!,{\balpha^*}^T ]$ generates a greater cost, i.e.,
\[\hat{\mathcal{G}}\Big(\begin{bmatrix}\hat\Beta\\ \hat\balpha\end{bmatrix}\Big)>\hat{\mathcal{G}}\Big(\begin{bmatrix}\Beta^*\\ \balpha^*\end{bmatrix}\Big).
\]
We proceed by defining a difference vector
\[\Bh=\begin{bmatrix}\hat\Beta\\ \hat\balpha\end{bmatrix} - \begin{bmatrix}\Beta^*\\ \balpha^*\end{bmatrix}.
\]
From the basic properties of the subgradient we have\cite{boyd2004convex}
\begin{equation}\label{ee10}
\forall \Bz\in\partial \hat{\mathcal{G}}\Big(\begin{bmatrix}\Beta^*\\ \balpha^*\end{bmatrix}\Big): \qquad \hat{\mathcal{G}}\Big(\begin{bmatrix}\hat\Beta\\ \hat\balpha\end{bmatrix}\Big)\geq \hat{\mathcal{G}}\Big(\begin{bmatrix}\Beta^*\\ \balpha^*\end{bmatrix}\Big) + \boldsymbol{h}^T \Bz.
\end{equation}
As $\Beta^*_\ell \notin (0,1)$ for all $\ell\in\{1,\cdots,n_\Omega\}$,  for the index set $\Gamma'$ in (\ref{ee7.5}) we can consider the complement set
\[{\Gamma'}^c = \Gamma_0\cup\Gamma_1,
\]
and decompose $\Blambda$ and $\Bz\in \partial \hat{\mathcal{G}}([{\Beta^*}^T,{\balpha^*}^T]^T)$
 as $\Blambda = \mathcal{P}_{\Gamma'} (\Blambda) + \mathcal{P}_{{\Gamma'}^c} (\Blambda)$ and $\Bz = \mathcal{P}_{\Gamma'} (\Bz) + \mathcal{P}_{{\Gamma'}^c} (\Bz)$, where $\mathcal{P}_A(.)$ is the projection operator onto the index set $A$:
\[ [\mathcal{P}_A(\boldsymbol{z})]_\ell = \left\{\begin{array}{lc}
\boldsymbol{z}_\ell & \ell\in A\\
0 & \ell\notin A
\end{array}
\right..
\]
As $\mathcal{P}_{\Gamma'}(\Bz) = \mathcal{P}_{\Gamma'}(\Blambda)$, (\ref{ee10}) can be rewritten as
\begin{align}\nonumber
\hat{\mathcal{G}}\big([\hat\Beta^T\!\!,\hat\balpha^T]^T\big)  & \geq  \hat{\mathcal{G}}\big([{\Beta^*}^T\!\!,{\balpha^*}^T]^T\big)  + \boldsymbol{h}^T \Bz \\ \nonumber & = \hat{\mathcal{G}}\big([{\Beta^*}^T\!\!,{\balpha^*}^T]^T\big) + \boldsymbol{h}^T \big(\Blambda - \mathcal{P}_{{\Gamma'}^c}(\Blambda) + \mathcal{P}_{{\Gamma'}^c} (\Bz)\big)\\ & = \hat{\mathcal{G}}\big([{\Beta^*}^T\!\!,{\balpha^*}^T]^T\big) + \boldsymbol{h}^T   \mathcal{P}_{{\Gamma'}^c}(\Bz) - \boldsymbol{h}^T   \mathcal{P}_{{\Gamma'}^c}(\Blambda),\label{ee11}
\end{align}
where, to establish the last equality we used the fact that $\boldsymbol{K}\boldsymbol{h}=\boldsymbol{0}$ and $\Blambda$ is in the range of $\boldsymbol{K}^T$.

Based on the sign of the entries in $\boldsymbol{h}$, we can partition ${\Gamma'}^c$ into disjoint index sets denoted as follows:
\begin{align*}
{\Gamma'}^c =& \underbrace{\big( \Gamma_0 \cap \{\ell:\boldsymbol{h}_\ell\geq 0\}\big)}_{\mathcal{I}_{+}^{0}} \;\cup\; \underbrace{\big( \Gamma_0 \cap \{\ell:\boldsymbol{h}_\ell< 0\}\big)}_{\mathcal{I}_{-}^{0}}  \\& \cup \; \underbrace{\big( \Gamma_1 \cap \{\ell:\boldsymbol{h}_\ell\geq 0\}\big)}_{\mathcal{I}_{+}^{1}} \;\cup\; \underbrace{\big( \Gamma_1 \cap \{\ell:\boldsymbol{h}_\ell< 0\}\big)}_{\mathcal{I}_{-}^{1}}.
\end{align*}
Since inequality (\ref{ee11}) needs to hold for any $\Bz\in \partial \mathcal{G}([{\Beta^*}^T\!\!,{\balpha^*}^T]^T)$, we can narrow our choice to a specific subgradient vector $\By$ such that $\By_{\Gamma'} =\Blambda_{\Gamma'}$ and
\begin{equation}\By_\ell = \left\{\begin{array}{cl}p_\ell - q_\ell & \ell\in \mathcal{I}_{+}^{0}\cup \mathcal{I}_{-}^{1}\\
- q_\ell & \ell\in \mathcal{I}_{-}^{0}\\
p_\ell & \ell \in \mathcal{I}_{+}^{1}
\end{array}
\right..\label{ee12}
\end{equation}
As every entry of $\By$ is set to an extreme possible value in ${\Gamma'}^c$, (\ref{ee12}) and (\ref{ee7}) trivially reveal that
\[\left\{\begin{array}{cc}\By_\ell - \Blambda_\ell>0 & \ell\in  \mathcal{I}_{+}^{0}\cup \mathcal{I}_{+}^{1}\\
\By_\ell - \Blambda_\ell<0  & \ell\in  \mathcal{I}_{-}^{0}\cup \mathcal{I}_{-}^{1}\end{array}
\right..
\]
Based on this observation, we set
\begin{align*}\varepsilon = \min\Big\{ &\{\By_\ell - \Blambda_\ell: \ell\in  \mathcal{I}_{+}^{0}\cup \mathcal{I}_{+}^{1}\}\cup \{ \Blambda_\ell - \By_\ell: \ell\in  \mathcal{I}_{-}^{0}\cup \mathcal{I}_{-}^{1}\}\Big\},
\end{align*}
which, based on the properties stated for $\By$ and $\Blambda$, is a strictly positive quantity. Accordingly,
\begin{align*}\nonumber
\boldsymbol{h}^T \big(  \mathcal{P}_{{\Gamma'}^c}(\By) -    \mathcal{P}_{{\Gamma'}^c}(\Blambda)\big)  &= \sum_{\ell\in {\Gamma'}^c}\boldsymbol{h}_\ell(\By_\ell  - \Blambda_\ell )\\& = \!\!\!\sum_{\ell\in  \mathcal{I}_{+}^{0}\cup \mathcal{I}_{+}^{1}}\!\!\!\boldsymbol{h}_\ell(\By_\ell  - \Blambda_\ell ) - \!\!\!\sum_{\ell\in  \mathcal{I}_{-}^{0}\cup \mathcal{I}_{-}^{1}}\!\!\!\boldsymbol{h}_\ell(  \Blambda_\ell - \By_\ell  ) \nonumber\\&\nonumber = \sum_{\ell\in {\Gamma'}^c}|\boldsymbol{h}_\ell||\By_\ell  - \Blambda_\ell |\\ &\geq \varepsilon \sum_{\ell\in {\Gamma'}^c}|\boldsymbol{h}_\ell|,
\end{align*}
using which in (\ref{ee11}) gives
\begin{equation}\label{ee14}
\hat{\mathcal{G}}\Big(\begin{bmatrix}\hat\Beta\\ \hat\balpha\end{bmatrix}\Big)\geq \hat{\mathcal{G}}\Big(\begin{bmatrix}\Beta^*\\ \balpha^*\end{bmatrix}\Big) + \varepsilon \|\boldsymbol{h}_{{\Gamma'}^c}\|_1.
\end{equation}
Since $\varepsilon > 0$, we always have $\hat{\mathcal{G}}\big([\hat\Beta^T\!\!,\hat\balpha^T]^T\big)  >  \hat{\mathcal{G}}\big([{\Beta^*}^T\!\!,{\balpha^*}^T]^T\big)$ except when $\boldsymbol{h}_{{\Gamma'}^c}=\boldsymbol{0}$ that an equality can happen. The possibility of having $\boldsymbol{h}\neq\boldsymbol{0}$ when $\boldsymbol{h}_{{\Gamma'}^c}=\boldsymbol{0}$ is abolished, as $\boldsymbol{K}_{:,\Gamma'}$ is full rank and the linear equation $\boldsymbol{K}\boldsymbol{h}=\boldsymbol{0}$ subject to $\boldsymbol{h}_{{\Gamma'}^c}=\boldsymbol{0}$ has the unique solution $\boldsymbol{h}=\boldsymbol{0}$. In other words, the equality in (\ref{ee14}) only happens when
\begin{equation*}
\begin{bmatrix}\hat\Beta\\ \hat\balpha\end{bmatrix} = \begin{bmatrix}\Beta^*\\ \balpha^*\end{bmatrix}.
\end{equation*}
Since for any pair of vectors $\Beta$ and $\balpha$ related by $\Beta = \boldsymbol{B}\balpha$, we have $\hat{\mathcal{G}}(\begin{bmatrix}\Beta^T, \balpha^T\end{bmatrix}^T) = \mathcal{G}(\Beta) = G(\balpha)$, the preceding line of argument reveals that $\Beta^*$ is the unique minimizer of (\ref{ew1}) and accordingly $\balpha^*$ is the unique minimizer of (\ref{ee-1}).
\end{proof}

We can use the result of Lemma \ref{lm7} to complete the theorem's proof. To show the full column rank property of $\boldsymbol{K}_{:,\Gamma'}$ we have
\[\boldsymbol{K}_{:,\Gamma'} = \begin{bmatrix}-\boldsymbol{I}_{:,\Gamma_{0^-}\cup\Gamma_{1^+}}& \boldsymbol{B}\\ \boldsymbol{0}&\boldsymbol{c^T}
  \end{bmatrix}.
\]
By applying basic rank preserving operations we can reform $\boldsymbol{K}_{:,\Gamma'}$ as
\[\boldsymbol{K}_{:,\Gamma'} \rightarrow \begin{bmatrix}-\boldsymbol{I} & \boldsymbol{0}\\ \boldsymbol{0}&\boldsymbol{B}_{\Gamma_0\cup\Gamma_1,:}\\ \boldsymbol{0}&\boldsymbol{c^T}
  \end{bmatrix},
\]
where the identity block has a width $|\Gamma_{0^-}\cup\Gamma_{1^+}|$. Clearly, if $\begin{bmatrix}\big(\boldsymbol{B}_{\Gamma_0\cup\Gamma_1,:}\big)^T\;,\; \boldsymbol{c} \end{bmatrix}$ has a full row rank, $\boldsymbol{K}_{:,\Gamma'}$ will be full column rank.

Further, suppose there exist $\boldsymbol{\eta}$ and $\eta_c$ that satisfy (\ref{ee-11}), (\ref{ee-12}). We introduce a vector $\tilde{\boldsymbol{\eta}}$, the entries of which are set to be
\begin{equation}\label{ew2}
\tilde{\boldsymbol{\eta}}_\ell  = \left \{\begin{array}{ll}   q_\ell & \ell\in \Gamma_{0^-} \\  -p_\ell & \ell\in \Gamma_{1^+} \\   \boldsymbol{\eta}_{m(\ell)} & \ell\in \Gamma_{0} \cup \Gamma_{1}\end{array} \right. .
\end{equation}
We show the vector $\boldsymbol{K}^T\begin{bmatrix}\tilde{\boldsymbol{\eta}}^T, \eta_c \end{bmatrix}^T$ satisfies conditions (\ref{ee7}) of Lemma \ref{lm7}. Showing this will complete the theorem's proof.

Clearly,
\begin{equation*}
\boldsymbol{K}^T\begin{bmatrix}\tilde{\boldsymbol{\eta}}\\ \eta_c \end{bmatrix} = \begin{bmatrix} -\boldsymbol{I}&\boldsymbol{0}\\\boldsymbol{B}^T & \boldsymbol{c}\end{bmatrix}\begin{bmatrix}\tilde{\boldsymbol{\eta}}\\ \eta_c \end{bmatrix} = \begin{bmatrix} - \tilde{\boldsymbol{\eta}} \\ \boldsymbol{B}^T \tilde{\boldsymbol{\eta}} + \eta_c\boldsymbol{c} \end{bmatrix}.
\end{equation*}
By the construction (\ref{ew2}), it is straightforward to verify that $- \tilde{\boldsymbol{\eta}}_\ell$ satisfies the conditions specified in (\ref{ee7}) for $\ell=1,2,\cdots,n_\Omega$, and we only need to show that $\boldsymbol{B}^T \tilde{\boldsymbol{\eta}} + \eta_c\boldsymbol{c} = \boldsymbol{0}$. For this purpose we have
\begin{align*}\nonumber
\eta_c\boldsymbol{c} + \boldsymbol{B}^T \tilde{\boldsymbol{\eta}}  &=\eta_c\boldsymbol{c} +  \big(\boldsymbol{B}_{\Gamma_1\cup\Gamma_0,:}\big)^T \tilde{\boldsymbol{\eta}}_{\Gamma_1\cup\Gamma_0} + \big(\boldsymbol{B}_{\Gamma_{1^+}\cup\Gamma_{0^-},:}\big)^T\tilde{\boldsymbol{\eta}}_{\Gamma_{1^+}\cup\Gamma_{0^-}} \\\nonumber  & = \eta_c\boldsymbol{c} + \big(\boldsymbol{B}_{\Gamma_1\cup\Gamma_0,:}\big)^T \boldsymbol{\eta} + \sum_{\ell\in \Gamma_{1^+}\cup\Gamma_{0^-}}\tilde{\boldsymbol{\eta}}_\ell \boldsymbol{J}^{(\ell)} \\[-.25 cm]&= \boldsymbol{e} + \sum_{\ell\in \Gamma_{1^+}\cup\Gamma_{0^-}}\tilde{\boldsymbol{\eta}}_\ell \boldsymbol{J}^{(\ell)}.
\end{align*}
However, every element of the last expression vanishes since
\begin{align*}\nonumber
\Big(\sum_{\ell\in \Gamma_{1^+}\cup\Gamma_{0^-}}\tilde{\boldsymbol{\eta}}_\ell \boldsymbol{J}^{(\ell)}\Big)_j &= \sum_{\ell\in \Gamma_{1^+}\cup\Gamma_{0^-}}\tilde{\boldsymbol{\eta}}_\ell \boldsymbol{J}^{(\ell)}_j \\ \nonumber& = \sum_{\ell\in (\Gamma_{1^+}\cup\Gamma_{0^-})\cap\mathcal{I}_j}\tilde{\boldsymbol{\eta}}_\ell \\ \nonumber& = \sum_{\ell\in \mathcal{I}_j\cap\Gamma_{1^+}}\tilde{\boldsymbol{\eta}}_\ell + \sum_{\ell\in \mathcal{I}_j\cap \Gamma_{0^-}}\tilde{\boldsymbol{\eta}}_\ell \\\nonumber & = -\sum_{\ell\in \mathcal{I}_j\cap \Gamma_{1^+}}p_\ell + \sum_{\ell\in \mathcal{I}_j\cap \Gamma_{0^-}} q_\ell\\&=-\boldsymbol{e}_j.\qquad \qed
\end{align*}

\subsection{Proof of Proposition \ref{lemcons}}
From (\ref{eqbr}), it is straightforward to verify that
\begin{equation}\label{eqbrt}
\boldsymbol{B}^\mathpzc{R}_{\Gamma_0^\mathpzc{R}\cup\Gamma_1^\mathpzc{R},:} =\begin{bmatrix}
 \boldsymbol{P} & \boldsymbol{0}\\[.2cm] \boldsymbol{B}^{(2,1)}_{\Gamma_0^\mathpzc{R},:} & \boldsymbol{\Delta}_\mathpzc{R}
 \end{bmatrix},
\end{equation}
where $\boldsymbol{P}\in \{0,1\}^{n_\oplus\times n_\oplus}$ is a permutation matrix. The appearance of the permutation block is thanks to
 to Theorem \ref{th2x}(a).

The invertibility of $\boldsymbol{\Delta}_\mathpzc{R}$ for a basic composition warrants the invertibility of the matrix in (\ref{eqbrt}), and hence the existence of a unique solution for (\ref{eqbw}). Accordingly, (\ref{eqbw}) may be cast as
\begin{equation}\label{eqg1}
\left\{\begin{array}{l}\boldsymbol{\Delta}_\mathpzc{R}^T \boldsymbol{w}_{\Gamma_0^\mathpzc{R}} = \boldsymbol{1} \\ \boldsymbol{P}^T \boldsymbol{w}_{\Gamma_1^\mathpzc{R}} + \tilde{\boldsymbol{B}} \boldsymbol{w}_{\Gamma_0^\mathpzc{R}} = - \boldsymbol{1}
\end{array}\right.,
\end{equation}
where $\tilde{\boldsymbol{B}} = {{\boldsymbol{B}^{(2,1)}_{\Gamma_0^\mathpzc{R},:}}}^T$\!\!.

The first equation in (\ref{eqg1}) is identical to (\ref{eqxx3}), the strictly positive solution of which is a characteristic of basic compositions. Since $\boldsymbol{\Delta}_\mathpzc{R}$ is a binary matrix and $\boldsymbol{w}_{\Gamma_0^\mathpzc{R}}$ is a strictly positive vector, its entries cannot exceed 1 and we must have $\boldsymbol{0} \prec \boldsymbol{w}_{\Gamma_0^\mathpzc{R}} \preceq \boldsymbol{1}$.

The solution to the second equation in (\ref{eqg1}) is simply $\vspace{-1.3mm}\boldsymbol{w}_{\Gamma_1^\mathpzc{R}} = - \boldsymbol{1} - \boldsymbol{P}\tilde{\boldsymbol{B}} \boldsymbol{w}_{\Gamma_0^\mathpzc{R}}$. As $\vspace{-1.mm}\boldsymbol{P}\tilde{\boldsymbol{B}}\in \{0,1\}^{n_\oplus \times n_\ominus}$ and $\boldsymbol{0} \prec \boldsymbol{w}_{\Gamma_0^\mathpzc{R}} \preceq \boldsymbol{1}$, we immediately come to the conclusion that $\vspace{-1.3mm} \boldsymbol{0} \preceq \boldsymbol{P}\tilde{\boldsymbol{B}}\boldsymbol{w}_{\Gamma_0^\mathpzc{R}}\preceq n_\ominus \boldsymbol{1}$. This would validate the claimed bounds on $\boldsymbol{w}_{\Gamma_1^\mathpzc{R}}$.\qed

\subsection{Proof of Lemma \ref{lemloc}}

We show the possibility of finding a point $\hat\balpha$ in the interior of $\mathcal{C}$, such that $\boldsymbol{c}^T\hat\balpha > \boldsymbol{c}^T \balpha^* = \|\balpha^*\|_1$.

Consider $\balpha'\in \mathbb{R}^{n_s}$, where $\balpha_{\Ip\cup\In}' = \balpha^*_{\Ip\cup\In}$ and $\balpha_{(\Ip\cup\In)^c}' = -\epsilon \boldsymbol{1}$. The strictly positive scalar $\epsilon\ll 1$ is taken to be sufficiently small so that $\supp^+(\mathcal{L}_{\balpha'}(x))=\Sigma$.

Further, consider $\balpha'' = k\balpha^*$, where $k>1$. Thanks to the LOC, $\balpha''$ is a minimizer of $G(.)$ as it satisfies (\ref{eq32}). We set $\hat\balpha = \balpha'+\balpha''$. For a similar reason, $\hat\balpha$ is a minimizer of $G(.)$, which means $G(\hat\balpha) = G(\balpha^*)$, or simply $\hat\balpha\in \mathcal{C}$.

Finally,
\[\boldsymbol{c}^T\hat\balpha - \|\balpha^*\|_1  = k\|\balpha^*\|_1 - \epsilon \sum_{j\in (\Ip\cup\In)^c}\boldsymbol{c}_j,
\]
where the right-hand side expression is capable of becoming strictly positive by choosing $k$ sufficiently large.\qed

\subsection{Proof of Theorem \ref{thglast}}
Since LOC holds for $\mbox{cl}(\mathpzc{R}_{\;\Ip,\In})$, using Theorem \ref{th7}, we need to verify the possibility of finding a vector $\boldsymbol{\eta}\in\mathbb{R}^{|\Gamma_0\cup\Gamma_1|}$ and a scalar $\eta_c$ such that
\begin{equation}\label{eq-g1}\begin{bmatrix}\boldsymbol{B}'\\ \boldsymbol{B}''\end{bmatrix}\boldsymbol{\eta} + \eta_c\boldsymbol{c}=\boldsymbol{0},
\end{equation}
where $\boldsymbol{c}_{\Ip} = \boldsymbol{1}$, $\boldsymbol{c}_{\In} = -\boldsymbol{1}$ and $\|\boldsymbol{c}_{(\Ip\cup\In)^c}\|_\infty \leq 1$. Taking LOC into account, to satisfy (\ref{ee-12}), the entries of $\boldsymbol{\eta}$ need to satisfy
\begin{equation*}
\left\{\begin{array}{lc} -p_i = q_i-p_i <\boldsymbol{\eta}_i <q_i = 0 & i\in \Gamma_0\\ 0 = -p_i <\boldsymbol{\eta}_i <q_i-p_i = q_i& i\in \Gamma_1
\end{array}\right..
\end{equation*}
The cost function in (\ref{eq43}) can be scaled by multiplying a constant, without changing the problem. Therefore, the entries of $\boldsymbol{\eta}$ only need to satisfy
\begin{equation}\label{eq-g2}
\left\{\begin{array}{lc}\boldsymbol{\eta}_i < 0 & i\in \Gamma_0\\ \boldsymbol{\eta}_i> 0& i\in \Gamma_1
\end{array}\right..
\end{equation}
We will show that when $\eta_c<0$ is chosen in a way that $|\eta_c|$ is sufficiently large, a vector with the following entries would satisfy the aforementioned requirements:
\begin{equation}\label{eq-g3}
 \boldsymbol{\eta}_i = \left\{\begin{array}{cl}
 \frac{\eta_c}{|\mathcal{J}_\ell|}\boldsymbol{w}_\ell & i\in \mathcal{J}_\ell , \quad \ell \in \Gamma_0^\mathpzc{R}, \Gamma_1^\mathpzc{R}\\[.1cm]
-1 & i\in T
\end{array}
\right. .
\end{equation}
Verifying that (\ref{eq-g3}) is in agreement with (\ref{eq-g2}) is straightforward. To demonstrate that (\ref{eq-g1}) holds, for $j\in \Ip$ we have
\begin{align}\label{eq-g4}
\boldsymbol{B}_{j,:}'\boldsymbol{\eta} = \sum_{i\in \Gamma_0\cup\Gamma_1} 1_{\{\Omega_i\subset\mathcal{S}_j\}}\boldsymbol{\eta}_i = \sum_{i\in\mathcal{J}_\ell}   \sum_{\ell\in \Gamma_0^\mathpzc{R}\cup \Gamma_1^\mathpzc{R}}  1_{\{\Omega_i\subset\mathcal{S}_j\}} \frac{\eta_c}{|\mathcal{J}_\ell|}\boldsymbol{w}_\ell - \sum_{i\in T} 1_{\{\Omega_i\subset\mathcal{S}_j\}}.
\end{align}
The second term on the right hand side of (\ref{eq-g4}) is simply zero. Using the fact that $1_{\{\Omega_i\subset\mathcal{S}_j\}} = 1_{\{\Omega_i\subset\Omega_\ell^\mathpzc{R}\}} 1_{\{\Omega_\ell^\mathpzc{R}\subset\mathcal{S}_j\}}$ we get
\begin{align*}
\boldsymbol{B}_{j,:}'\boldsymbol{\eta} &=  \eta_c \sum_{\ell\in \Gamma_0^\mathpzc{R}\cup \Gamma_1^\mathpzc{R}} \frac{1}{|\mathcal{J}_\ell|}    1_{\{\Omega_\ell^\mathpzc{R}\subset\mathcal{S}_j\}} \boldsymbol{w}_\ell   \sum_{i\in\mathcal{J}_\ell}   1_{\{\Omega_i\subset\Omega_\ell^\mathpzc{R}\}}\\&= \eta_c \sum_{\ell\in \Gamma_0^\mathpzc{R}\cup \Gamma_1^\mathpzc{R}} 1_{\{\Omega_\ell^\mathpzc{R}\subset\mathcal{S}_j\}} \boldsymbol{w}_\ell\\ &=   \eta_c \boldsymbol{w}^T \boldsymbol{B}_{\Gamma_0^\mathpzc{R}\cup \Gamma_1^\mathpzc{R},j}^\mathpzc{R}  \\& =-\eta_c.
\end{align*}
A similar line of argument shows that $\boldsymbol{B}_{j,:}'\boldsymbol{\eta} = \eta_c$ for $j\in \In$. We are only left to show the possibility of finding $\boldsymbol{c}_j \in [-1,1]$ such that
\begin{align*}
\boldsymbol{B}_{j-n_\oplus-n_\ominus,:}''\boldsymbol{\eta} + \eta_c \boldsymbol{c}_j=  0, \qquad j=n_\oplus+n_\ominus +1, \cdots, n_s.
\end{align*}
By setting $\boldsymbol{c}_j = - (\eta_c)^{-1}\boldsymbol{B}_{j-n_\oplus-n_\ominus,:}''\boldsymbol{\eta}$, we have
\begin{align*}
| \boldsymbol{c}_j| &= \frac{1}{|\eta_c|}\Big|-\sum_{i\in T} 1_{\{\Omega_i\subset\mathcal{S}_j\}}     +    \eta_c \sum_{\ell\in \Gamma_0^\mathpzc{R}\cup \Gamma_1^\mathpzc{R}} \frac{1}{|\mathcal{J}_\ell|}\boldsymbol{w}_\ell    \sum_{i\in\mathcal{J}_\ell}   1_{\{\Omega_i\subset\mathcal{S}_j\}}  \Big| \\ &=  \frac{1}{|\eta_c|}\Big|-\sum_{i\in T} 1_{\{\Omega_i\subset\mathcal{S}_j\}}     +    \eta_c \sum_{\ell\in \Gamma_0^\mathpzc{R}\cup \Gamma_1^\mathpzc{R}} \gamma_{\ell,j}\boldsymbol{w}_\ell  \Big|\\ &\leq  \frac{1}{|\eta_c|}\Big|\sum_{i\in T} 1_{\{\Omega_i\subset\mathcal{S}_j\}} \Big|    +  \Big|  \sum_{\ell\in \Gamma_0^\mathpzc{R}\cup \Gamma_1^\mathpzc{R}} \gamma_{\ell,j}\boldsymbol{w}_\ell  \Big|.
\end{align*}
When $|  \sum_{\ell\in \Gamma_0^\mathpzc{R}\cup \Gamma_1^\mathpzc{R}} \gamma_{\ell,j}\boldsymbol{w}_\ell|<1$, we can take $|\eta_c|$ to be sufficiently large so that $|\boldsymbol{c}_j|\leq1$ for all $j\in\{n_\oplus+n_\ominus+1, \cdots, n_s\}$.\qed

\section*{Appendix} The following table provides a quick reference to some important notations:\\

\hspace{-1cm}
\begin{tabular}[h]{c|c||c|c}
     \!\! \small\bf{Notation}\!\! & \!\!\small \bf{Description} \!\!& \small \!\!\bf{Notation}\!\! & \!\!\small \bf{Description}\!\! \\ \hline
      $D$ & \small imaging domain&  $\Omega$ & \small a shapelet / cell\\

      $u(x)$ & \small image pixel value at $x$        &$n_\Omega$ & \small number of shapelets \\

      $\Pi_{in/ex}(x)$& \small inhomogeneity measures&  $\mathcal{I}_j$ & \small see (\ref{eq11})\\

      $\tilde u_{in}$, $\tilde u_{ex}$&\small Chan-Vese mean pixels values & $\boldsymbol{B}$, $\boldsymbol{B}^\mathpzc{R}$ & \small dictionary / bearing matrices \\

      $n_s$& \small number of dictionary elements&  $\xi^\pm(\balpha)$ &\small see (\ref{eq13})\\

      $\mathcal{S}_j$&\small a shape in the dictionary& $\Kp$, $\Kn$ &\small index sets, see (\ref{eq18})\\

      $\mathpzc{R}_{\Ip,\In}$&\small  shape composition, see (\ref{eq3})&  $\boldsymbol{\beta}$, $\beta_i$ &\small shapelet coefficients, see (\ref{eq21x}) \\

      $\Ip,\In$ & \small index sets, see (\ref{eq3})& $\mathcal{A}(.)$ & \small linkage operation \\

            $|\;.\;|$ & \small set cardinality / magnitude & $\boldsymbol{\Delta}_\mathpzc{R}$ &\small discriminant matrix, see (\ref{dismat}) \\

      $\chi_\mathcal{S}$&\small see (\ref{eq9xyz})&$G(\balpha)$ &\small convex cost, see (\ref{econv}) \\

      $\mathpzc{L}_{\boldsymbol{\alpha}}$&\small see (\ref{eq9xyz}) &$\mathcal{G}(\Beta)$ &\small convex cost, see (\ref{eq43}) \\

      $\balpha$, $\alpha_j$&\small shape coefficients, see (\ref{eq9xyz}) &$p_i$, $q_i$ &\small see (\ref{eqpiqi}) \\

$\supp^+(.)$&\small positive support, see (\ref{eqsupp}) &$\Gamma_{0^-}$, $\Gamma_{1^+}$ &\small see (\ref{ee-7})  \\

        $\mbox{int}(.)$, $\mbox{cl}(.)$&\small interior and closure of a set &$\Gamma_0$, $\Gamma_1$ &\small see (\ref{ee-8})\\

 $\seq$, $\nseq$&\small see Section \ref{secdef} & $\boldsymbol{e}$ &\small LOC violation, see (\ref{ee-10}) \\

  $\boldsymbol{J}$&\small see (\ref{eq10}) &
$\gamma_{\ell,j}$ &\small see (\ref{gammalj})\\

$\Theta_{m}$&\small see (\ref{eq10xyz})&
$C_j$ &\small shape coherence, see (\ref{eqcoherence})
      \end{tabular}

\bibliographystyle{siam}
\small

\end{document}